\numberwithin{equation}{section}
\newtheorem{theorem}{Theorem}[section]
\newtheorem{lemma}[theorem]{Lemma}
\newtheorem{corollary}[theorem]{Corollary}
\newtheorem{exm}[theorem]{Example}
\newenvironment{example}{\begin{exm}\em}{\end{exm}}
\newcommand\V{\bigvee}
\newcommand\Max{\operatorname{Max}}
\newcommand\ie{i.e.}
\newcommand\eg{e.g.}
\newcommand\pwset[1]{\wp(#1)}
\newcommand\st{\mid}
\newcommand\cf{\textrm{cf.}}
\newcommand\topology{\operatorname{\Omega}}
\newcommand\downsegment{{\downarrow}}
\newcommand\Loc{\textit{Loc}}
\newcommand\Top{\textit{Top}}
\newcommand\ident{\mathrm{id}}
\newcommand\ipi{\mathcal I}
\newcommand\CC{\mathbb{C}}
\newcommand\ZZ{\mathbb{Z}}
\newcommand\RR{\mathbb{R}}
\newcommand\QQ{\mathbb{Q}}
\newcommand\lcc{\operatorname{{\mathcal L}^{\vee}}}
\newcommand\supp{\operatorname{supp}}
\newcommand\osupp{\operatorname{supp}^\circ}
\newcommand\Sub{\operatorname{Sub}}
\newcommand\norm[1]{\| #1\|}
\newcommand\rnorm[1]{\widehat{\| #1\|}}
\newcommand\sections{\mathit{\Gamma}}
\newcommand\dom{\operatorname{dom}}
\newcommand\spanmap{\operatorname{span}}
\newcommand\linspan[1]{\left\langle #1\right\rangle}
\newcommand\eval{\operatorname{eval}}
\newcommand\interior{\operatorname{int}}
\newcommand\GB{\mathfrak B}
\newcommand\un{\boldsymbol 1}
\newcommand\signor{\boldsymbol\sigma}
\newcommand\SG{\mathfrak S}
\newcommand\ts{\operatorname{T}}
\newcommand\germs{\operatorname{Germs}}
\begin{document}

\title{Quantales and Fell bundles\thanks{Work funded by FCT/Portugal through project PEst-OE/EEI/LA0009/2013 and by COST (European Cooperation in Science and Technology) through COST Action MP1405 QSPACE.}}
\author{{\sc Pedro Resende}}

\date{~}

\maketitle

\vspace*{-1cm}
\begin{abstract}
We study Fell bundles on groupoids from the viewpoint of quantale theory.
Given any saturated upper semicontinuous Fell bundle $\pi:E\to G$ on an \'etale groupoid $G$ with $G_0$ locally compact Hausdorff, equipped with a suitable completion C*-algebra $A$ of its convolution algebra, we obtain a map of involutive quantales $p:\Max A\to\topology(G)$, where $\Max A$ consists of the closed linear subspaces of $A$ and $\topology(G)$ is the topology of $G$. We study various properties of $p$ which mimick, to various degrees, those of open maps of topological spaces. These are closely related to properties of $G$, $\pi$, and $A$, such as $G$ being Hausdorff, principal, or topological principal, or $\pi$ being a line bundle. Under suitable conditions, which include $G$ being Hausdorff, but without requiring saturation of the Fell bundle, $A$ is an algebra of sections of the bundle if and only if it is the reduced C*-algebra $C_r^*(G,E)$. We also prove that $\Max A$ is stably Gelfand. This implies the existence of a pseudogroup $\ipi_B$ and of an \'etale groupoid $\GB$ associated canonically to any sub-C*-algebra $B\subset A$. We study a correspondence between Fell bundles and sub-C*-algebras based on these constructions, and compare it to the construction of Weyl groupoids from Cartan subalgebras.
\\
\vspace*{-2mm}~\\
\textit{Keywords:} Locally compact \'etale groupoids, Fell bundles, reduced and full C*-algebras, stably Gelfand quantales, maps of involutive quantales, open maps, Cartan subalgebras, Weyl groupoids.\\
\vspace*{-2mm}~\\
2010 \textit{Mathematics Subject
Classification}: 06F07, 20M18, 22A22, 46L05, 46L85, 46L89, 46M99
\end{abstract}

\newpage
\setcounter{tocdepth}{2}
\tableofcontents

\section{Introduction}

This paper draws its motivation from well known relations between C*-algebras, groupoids, and quantales. These have been, so far, pairwise relations: one is the very prolific interplay between C*-algebras and locally compact groupoids \cites{Paterson,RenaultLNMath, Connes}, which pervades much of the modern literature on operator algebras and noncommutative geometry and, in the case of \'etale groupoids, has led to fruitful notions of ``diagonal'' for C*-algebras along with a geometric understanding of them in terms of \'etale groupoids, inverse semigroups, and Fell bundles \cites{Exel,Kumjian,Kumjian98,RenaultLNMath,Renault,BE11,BE12,BM16}; another is the relation between groupoids and quantales \cites{Re07,PR12}, which in particular yields a biequivalence between the bicategories of localic \'etale groupoids and inverse quantal frames \cite{Re15}, and also a representation of \'etendues by inverse quantal frames \cite{GSQS} which is an instance of the general representation of Grothendieck toposes by Grothendieck quantales meanwhile developed in \cites{HeymansGrQu,HS3,HeymansPhD}; finally, each C*-algebra $A$ has an associated quantale $\Max A$ \cites{Curacao,MP1,MP2} that can be regarded as the ``noncommutative spectrum'' of $A$ and, if $A$ is unital, classifies $A$ up to a $*$-isomorphism \cite{KR}. It is fair to say that the relation between quantales and C*-algebras, which is the one that motivated the terminology ``quantale'' in the first place \cite{M86}, is still the least well understood: in particular, despite the fact that the functor $\Max$ is a complete invariant of unital C*-algebras, it is not full and it is not clear how to obtain from it an equivalence of categories that generalizes Gelfand duality. See also \cite{KPRR}.

The purpose of this paper is to merge these three threads, ultimately in order to gain better understanding about the quantales $\Max A$, but also hoping to reframe the study of diagonals and Fell bundles in a language that may yield new insights and in particular provide generalized notions of diagonal that function as an algebraic counterpart for Fell bundles on more general \'etale groupoids than those hitherto considered, or even more geometric objects such as arbitrary Lie groupoids.

\paragraph{Overview.}

In section~\ref{sec:maps} we shall define \emph{maps} of involutive quantales $p:Q\to X$ to be homomorphisms $p^*:X\to Q$, as is done in locale theory and was already the case for quantales in~\cite{MP1}. This is in line with the view of quantales as spaces in their own right. Any reasonable notion of open map should require at least the existence of a left adjoint $p_!$ of $p^*$ (the ``direct image homomorphism'' of $p$). If $p_!$ exists $p$ is said to \emph{semiopen}. Semiopen maps for which $p_!$ is a homomorphism of $X$-modules provide a naive generalization of the notion of open map of locales, and we call them \emph{weakly open}. A variant of this will be the notion of \emph{quantic bundle}, which consists of a map $p:Q\to \topology(G)$, where $\topology(G)$ is the quantale of an \'etale groupoid $G$, which is both surjective and semiopen and satisfies a two-sided version of a partial form of weak openness. A more restrictive notion is that of a \emph{stable} quantic bundle, which is necessarily weakly open and can be regarded as an actual open map because of its pullback stability properties \cite{OMIQ}.

In section~\ref{sec:maps} we also introduce a slightly weaker form of stability for quantic bundles, called \emph{$\ipi$-stability}, in whose definition the quantale $Q$ is assumed to be stably Gelfand so that we may use the fact, proved in \cite{SGQ}, that each projection $b\in Q$ determines a pseudogroup $\ipi_b$. This is relevant in this paper because the quantales $\Max A$ are stably Gelfand, as we show in Theorem~\ref{thm:cstaralgebrasasspaces} (\cf\ Example~\ref{exm:MaxASG}).
Quantic bundles can be regarded as a particular type of ``quantale over an \'etale groupoid'', in analogy with the view of Fell bundles as C*-algebras over groupoids.

In section~\ref{sec:bundlesasmaps} we address a fairly general type of Fell bundle, requiring only upper semicontinuity of the norm and that the base groupoid $G$ be \'etale with locally compact space of objects $G_0$, as in~\cite{BM16}. For a saturated Fell bundle of this kind, we begin by defining a notion of \emph{compatible C*-norm} on its convolution algebra. The completion of the convolution algebra in a compatible C*-norm is a \emph{compatible C*-completion}. The conditions on compatible norms are very general, and for a large class of Fell bundles and groupoids they include the full norm and the reduced norm as examples (appendix~\ref{appendixbundleCstaralgebras}). We show, using results from \cite{Kumjian98}, that for a continuous second countable Fell bundle on a Hausdorff groupoid the reduced C*-algebra is the unique compatible C*-completion which is \emph{faithful} in the sense that it can be regarded concretely as an algebra of sections of the bundle (Theorem~\ref{thm:localizableimpliesreduced}).

From a saturated Fell bundle $\pi:E\to G$ and a compatible C*-completion $A$ one obtains a map of quantales $p:\Max A\to\topology(G)$ (Theorem~\ref{fellquant1}). Several properties of Fell bundles have a natural correspondence in the quantale language, for instance nondegeneracy of $\pi$ is equivalent to $p$ being a surjection. The purpose of this paper is to develop a dictionary that relates Fell bundles and their compatible completions to maps of quantales, in particular addressing openness-like properties of these.
So in section~\ref{sec:prelinloc} we study Fell bundles $\pi$ and a special class of compatible completions $A$, called \emph{localizable completions}, which are necessarily faithful (Theorem~\ref{lem:locimpfaith}) and for which the associated quantale map $p$ is semiopen. Localizability is a strong property, as at least for trivial Fell bundles (so $A$ is in fact an algebra of functions) it forces the base groupoids to be Hausdorff (Theorem~\ref{thm:locimplieshaus}). Conversely, we show, for Hausdorff groupoids that are also compact, that localizability of $A$ is equivalent to faithfulness (Lemma~\ref{lem:locifffaith}, Theorem~\ref{thm:localizableimpliesreducedcgrps}). Another example of localizable completion is $C_0(X,E)$ for a C*-bundle $\pi:E\to X$ (Theorem~\ref{thm:Cstarbundle}).

In section~\ref{sec:linevsquantic} we address Fell bundles $\pi:E\to G$ and localizable completions $A$ for which the map $p:\Max A\to\topology(G)$ is a quantic bundle. In particular, we see that if $\pi$ is saturated and abelian and $A$ is localizable then $p$ is a quantic bundle if and only if $\pi$ is a line bundle (Theorem~\ref{thm:lineopen}). We also note that, if these equivalent conditions hold, $p$ is weakly open (Theorem~\ref{thm:weaklyopenp}). If, in addition, $G$ is a principal Hausdorff groupoid with discrete orbits then $p$ is a stable quantic bundle (Theorem~\ref{thm:principal}), and we obtain a partial converse of this for trivial Fell line bundles: if $A$ is a localizable completion of the convolution algebra $C_c(G)$ and $p:\Max A\to\topology(G)$ is a stable quantic bundle then $G$ is necessarily a principal Hausdorff groupoid (Theorem~\ref{thm2:principal}).

Finally, in section~\ref{sec:subalgebras} we take advantage of the fact that the quantales $\Max A$ are stably Gelfand in order to study the pseudogroups $\ipi_B$ associated to sub-C*-algebras $B\subset A$. Based on $\ipi_B$ one obtains a more general groupoid $\GB$ than the Weyl groupoid $G(B)$ of~\cite{Renault}, since by construction $G(B)$ is necessarily effective, and we do not need the existence of an approximate unit of $A$ in $B$. The comparison between $\GB$ and $G(B)$ is based on relating $\ipi_B$ to the normalizer semigroup $N(B)$ in the sense of~\cite{Kumjian}. In particular we show that if $B$ is abelian there is a homomorphism of involutive semigroups $\signor:N(B)\to\ipi_B$ (Theorem~\ref{thm:sigma}). The definition of $\ipi_B$ is also more general than the inverse semigroups of slices of~\cite{Exel}, because again the existence of an approximate unit of $A$ in $B$ is not required. We link these results to Fell bundles by comparing, for a Fell line bundle $\pi:E\to G$ with a localizable completion $A$ and corresponding quantic bundle $p:\Max A\to\topology(G)$, the pseudogroups $\ipi(G)$ and $\ipi_{B}$ for $B=p^*(G_0)$. This is expressed in full generality in terms of a \emph{comparison map} $k:\topology(\GB)\to\topology(G)$ (Theorem~\ref{thm:comparisonmap}).
We prove that $k$ is an isomorphism if and only if $p$ is an $\ipi$-stable quantic bundle (Theorem~\ref{recoveringG}), so in such situations we have $\GB\cong G$; that is, the groupoid $G$ can be recovered, up to isomorphisms, from the subalgebra $B$. Despite the generality provided by the construction of $\ipi_B$, it is interesting to note that the most general class of examples we have found, of $\ipi$-stable quantic bundles associated to Fell line bundles, arises from Fell bundles on topologically principal groupoids (Theorem~\ref{thm:topprinc}). These are the groupoids considered in~\cite{Renault}, and they are effective.
This suggests that the appearance of topologically principal groupoids in this context is more fundamental than the construction of Weyl groupoids from Cartan subalgebras would lead us to believe.

Several interesting questions that deserve being further looked at are left open, notably concerning localizable completions, and we shall highlight some of them along the text.

\section{Preliminaries}\label{sec:maps}

In this section we recall facts and terminology concerning involutive quantales, pseudogroups, and \'etale groupoids, as well as the relation of the latter to stably Gelfand quantales, and prove that the quantale $\Max A$ of a C*-algebra $A$ is stably Gelfand. We also introduce definitions that will be used throughout the paper, namely those of semiopen map and quantic bundle, as well as stable quantic bundles and $\ipi$-stable quantic bundles.

\subsection{Involutive quantales}\label{sec:invqus}

Quantales are semigroup objects in the monoidal category of sup-lattices, just as rings are semigroups in the category of abelian groups. Let us provide a brief introduction to these concepts.
For general references on sup-lattices, locales, quantales, and quantale modules, see for instance \cites{stonespaces,JT,Rosenthal1,gamap2006,picadopultr}. A brief introduction to locales is provided in appendix~\ref{appLoc}.

\paragraph{Sup-lattices.}

Following common usage~\cite{JT}, we shall refer to complete lattices as \emph{sup-lattices}. These are the partially ordered sets $L$ in which every subset $S\subset L$ has a join (supremum) $\V S$, and therefore also a meet (infimum) $\bigwedge S$. Binary joins and meets are denoted by $a\vee b$ and $a\wedge b$, as usual in lattice theory. Other notations may be used according to convenience. The greatest element of $L$ is denoted by $1_L$ or simply $1$, and the least element by $0_L$ or $0$. These are, respectively, $\V L=\bigwedge\emptyset$ and $\bigwedge T=\V\emptyset$.

If $L$ and $M$ are sup-lattices, a mapping $f:L\to M$ is a \emph{homomorphism} if it preserves joins; that is, for all $S\subset L$ we have
\[
f\bigl(\V S\bigr)=\V_{a\in S} f(a)\;.
\]
A mapping $f:L\to M$ if a homomorphism if and only if it has a right adjoint $g:M\to L$ (\cf\ \cite{maclane}*{IV.5}), by which is meant a mapping that satisfies
\[
f(x)\le y\iff x\le g(y)
\]
for all $x\in L$ and $y\in M$. We write $f\dashv g$ in order to state that $g$ is right adjoint to $f$ (equivalently, $f$ is left adjoint to $g$). Similarly, a mapping has a left adjoint if and only if it preserves arbitrary meets.

If $f:L\to M$ and $g:M\to L$ are monotone maps, the condition $f\dashv g$ is also equivalent to the conjunction of the following two conditions, which are respectively called the \emph{unit} and the \emph{co-unit} of the adjunction:
\[
\begin{tabular}{rcll}
$x$&$\le$& $g(f(x))$ & for all $x\in L$\;,\\
$f(g(y))$&$\le$& $y$ & for all $y\in M$\;.
\end{tabular}
\]

\paragraph{Quantales.}

By a \emph{quantale} is meant a sup-lattice $Q$ equipped with an associative \emph{multiplication} $(a,b)\mapsto ab$ satisfying
\[
a\bigl(\V S\bigr) = \V_{b\in S} ab\quad\quad\textrm{and}\quad\quad\bigl(\V S\bigr) a = \V_{b\in S} ba
\]
for all $a\in Q$ and $S\subset Q$. By an \emph{involutive quantale} is meant a quantale $Q$ equipped with a semigroup \emph{involution} $a\mapsto a^*$ such that
\[
\bigl(\V S\bigr)^*=\V_{a\in S} a^*
\]
for all $S\subset Q$. If a quantale $Q$ has a multiplication unit $e_Q$ (or just $e$) that turns it into a monoid we say that $Q$ is a \emph{unital quantale}. 

A \emph{homomorphism} $f:X\to Q$ of involutive quantales is a homomorphism of involutive semigroups that is also a homomorphism of sup-lattices; that is, for all $S\subset X$ and $a,b\in X$ we have
\[
f\bigl(\V S\bigr)=\V_{a\in S} f(a)\;,\quad f(ab)=f(a)f(b)\;,\quad f(a^*)=f(a)^*\;.
\]
A homomorphism of unital involutive quantales is \emph{unital} if it is a monoid homomorphism.

Let $Q$ and $X$ be involutive quantales. In analogy with the terminology for locales (\cf\ appendix \ref{appLoc}), by a \emph{(continuous) map} $p:Q\to X$ will be meant a homomorphism $p^*:X\to Q$. The latter is called the \emph{inverse image  homomorphism} of $p$. If $p^*$ is injective we say that $p$ is a \emph{surjection}. If both $Q$ and $X$ are unital involutive quantales and $p^*$ is a unital homomorphism we say that $p$ is \emph{unital}.

\paragraph{Examples.}

Any locale $L$ is a unital involutive quantale with $ab=a\wedge b$ and $a^*=a$ for all $a,b\in L$, and $e=1$. Conversely, if $Q$ is a quantale whose multiplication is idempotent and for which $1$ is a multiplication unit then $Q$ is necessarily a locale and the multiplication coincides with $\wedge$ \cite{JT}.

Let $S$ be an involutive semigroup, such as an inverse semigroup. The powerset $\pwset S$ is an involutive quantale under pointwise multiplication and involution. If $S$ is a monoid, $\pwset S$ is a unital quantale with $e_Q=\{1_S\}$. The join of a subset $T\subset \pwset S$ is its union $\bigcup T$.

If $A$ is a complex $*$-algebra we denote by $\Sub A$ the involutive quantale that consists of the linear subspaces of $A$ under pointwise involution and with multiplication defined for all $V,W\in\Sub A$ by
\[
V\odot W = \spanmap(V\cdot W)\;,
\]
where $V\cdot W$ is the pointwise product of $V$ and $W$.
The joins in $\Sub A$ are sums of subspaces, which we denote by
\[
\sum_\alpha V_\alpha = \spanmap\bigl(\bigcup_\alpha V_\alpha\bigr)
\]
for all families $(V_\alpha)$ in $\Sub A$.
Then $\spanmap:\pwset A\to\Sub A$ is a surjective homomorphism of involutive quantales.

Given a $*$-homomorphism $f:A\to B$ between $*$-algebras we shall also write $\Sub f$ for the homomorphism $\Sub A\to\Sub B$ defined by taking direct images: $\Sub f(V) = f(V)$. This defines a functor $\Sub$.

\subsection{The spectrum of a C*-algebra}\label{subsec:MaxA}

Let $A$ be a C*-algebra. By the \emph{spectrum} of $A$ \cites{Curacao,MP1,MP2} is meant the involutive quantale $\Max A$ that consists of all the closed linear subspaces of $A$ ordered by inclusion, with pointwise involution and with multiplication, here denoted by concatenation, given by the closure of the linear span of pointwise multiplication:
\[
VW := \overline{V\odot W} = \overline{\spanmap(V\cdot W)}=\overline{\spanmap\{ab\st a\in V,\ b\in W\}}\;.
\]
The topological closure map on $\Sub A$ defines a surjective homomorphism of involutive quantales
\[
\overline{(-)}:\Sub A\to\Max A\;.
\]
If $f:A\to B$ is a $*$-homomorphism of C*-algebras we define, for all $V\in\Max A$,
\[
\Max f(V) = \overline{f(V)}\;.
\]
Then $\Max f$ is a homomorphism, and in this way we obtain a functor $\Max$ from the category of C*-algebras and $*$-homomorphisms to the category of involutive quantales and involutive homomorphisms.

The following is a useful property of the quantales $\Max A$, from which it follows that these quantales are stably Gelfand (\cf\ section~\ref{subsec:egs}):

\begin{theorem}\label{thm:cstaralgebrasasspaces}
Let $A$ be a C*-algebra, and let $V\in\Max A$ be such that \[V^* V V^* V\subset V^* V\;.\] Then $V\subset VV^* V$.
\end{theorem}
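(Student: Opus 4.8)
The plan is to show that every element $v\in V$ belongs to $VV^*V$; since $VV^*V$ is norm-closed (being an element of $\Max A$) it will suffice to exhibit $v$ as a norm-limit of elements that manifestly lie in $VV^*V$. The observation that makes this work is that the hypothesis $V^*VV^*V\subset V^*V$ says exactly that the closed subspace $W:=V^*V$ is closed under multiplication; since $W$ is automatically self-adjoint, because $(V^*V)^*=V^*V$, and norm-closed, it is a C*-subalgebra of $A$. This is what will let us apply continuous functional calculus and stay inside $W$.

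So I would fix $v\in V$ and set $a:=v^*v$. Then $a\in V^*\cdot V\subset V^*V=W$, and $a$ is positive. For each $n$ the function $t\mapsto t^{1/n}$ is continuous on $\sigma(a)\cup\{0\}\subset[0,\norm a]$ and vanishes at $0$, hence is a uniform limit of polynomials without constant term; therefore $a^{1/n}\in W=V^*V$. Consequently, using associativity of the quantale multiplication to write $VV^*V=V(V^*V)$, the product $va^{1/n}$ lies in $\{bc\st b\in V,\ c\in V^*V\}\subset VV^*V$ for every $n$.

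It then remains to invoke the standard approximation $va^{1/n}\to v$. Concretely, using $(a^{1/n})^*=a^{1/n}$ and $v^*v=a$ one computes
\[
(v-va^{1/n})^*(v-va^{1/n}) = a-2a^{1+1/n}+a^{1+2/n} = g_n(a),\qquad g_n(t)=t(1-t^{1/n})^2 ,
\]
so that $\norm{v-va^{1/n}}^2=\sup_{t\in\sigma(a)}g_n(t)\le\sup_{t\in[0,\norm a]}t(1-t^{1/n})^2$, and the latter tends to $0$ as $n\to\infty$ (split $[0,\norm a]$ at a small $\varepsilon>0$: on $[0,\varepsilon]$ the integrand is $\le\varepsilon$, while on $[\varepsilon,\norm a]$ it is $\le\norm a\,(1-\varepsilon^{1/n})^2\to0$). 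Hence $v=\lim_n va^{1/n}\in\overline{VV^*V}=VV^*V$, and since $v\in V$ was arbitrary, $V\subset VV^*V$. The only genuine subtlety, and the step one must spot, is the first one: recognizing that the hypothesis is precisely what guarantees that $V^*V$ is a C*-algebra, so that the elements $a^{1/n}$ remain inside $V^*V$ rather than merely inside the (possibly larger) hereditary subalgebra generated by $a$; once that is in hand, the rest is the routine ``$v(v^*v)^{1/n}\to v$'' argument.
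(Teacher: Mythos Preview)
Your proof is correct and shares the paper's key observation: the hypothesis makes $V^*V$ a sub-C*-algebra of $A$, so one can approximate each $v\in V$ by elements of $V\cdot(V^*V)\subset VV^*V$. The difference is only in the approximating device: the paper picks an approximate unit $(u_\lambda)$ of $V^*V$ and shows $vu_\lambda\to v$ via $\norm{v-vu_\lambda}^2\le\norm{v^*v(1-u_\lambda)}$, whereas you use functional calculus on the single element $a=v^*v$ to get $va^{1/n}\to v$. Your route is slightly more self-contained (no appeal to existence of approximate units), while the paper's estimate is marginally cleaner; but both are standard and essentially equivalent, since $(a^{1/n})_n$ is itself a local approximate unit for $v$. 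One cosmetic remark: your bound on $[\varepsilon,\norm a]$ tacitly assumes $\norm a\le 1$; if $\norm a>1$ you should take $\max\bigl((1-\varepsilon^{1/n})^2,(\norm a^{1/n}-1)^2\bigr)$ instead, which still tends to $0$.
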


\begin{proof}
$V^* V$ is self-adjoint, so the condition $V^* VV^* V\subset V^* V$ makes it a sub-C*-algebra of $A$. Let $(u_\lambda)$ be an approximate unit of $V^* V$, consisting of positive elements in the unit ball of $V^* V$, and let $a\in V$.
Then $a^* a\in V^* V$, and thus (in the unitization of $V^* V$) we have
\[\lim_\lambda a^* a(1-u_\lambda)=0\;.\]
So we also have
$\lim_\lambda au_\lambda = a$
because
\begin{eqnarray*}
\norm{a-au_\lambda}^2 &=&\norm{a(1-u_\lambda)}^2\\
&=&\norm{(1-u_\lambda)a^* a(1-u_\lambda)}\\
&\le&\norm{1-u_\lambda}\norm{a^* a(1-u_\lambda)}\\
&\le&\norm{a^*a(1-u_\lambda)}\;,
\end{eqnarray*}
and thus we obtain $V\subset VV^* V$.
\end{proof}

\subsection{\'Etale groupoids}\label{subsec:egs}

The relation between \'etale groupoids and inverse semigroups goes back a long way, but the connection of either of these to quantales is more recent \cite{Re07} and we need to recall it here along with some related facts concerning stably Gelfand quantales as in~\cite{SGQ}. The general correspondence involves localic groupoids, but in this paper we shall have no use for such generality, so we shall restrict to topological groupoids, to which we refer simply as groupoids, since no other kind will be considered.

\paragraph{Groupoids.}

A \emph{(topological) groupoid} $G$ consists of topological spaces $G_0$ (the space of \emph{objects}, or \emph{units}) and $G_1$ (the space of \emph{arrows}), together with continuous structure maps
\[G\ \ \ \ =\ \ \ \ \xymatrix{
G_2\ar[rr]^-m&&G_1\ar@(ru,lu)[]_i\ar@<1.2ex>[rr]^r\ar@<-1.2ex>[rr]_d&&G_0\ar[ll]|u
}\;,\]
where $G_2$ is the pullback of the \emph{domain} and \emph{range} maps:
\[
G_2=\{(g,h)\in G_1\times G_1\st d(g)=r(h)\}\;.
\]
The map $m$ is \emph{multiplication} and $i$ is the \emph{inversion}.
The map $u$ is required to be a section of both $d$ and $r$, so
from here on we adopt the common convention of identifying $G_0$ with a subspace of $G_1$ (with $u$ being the inclusion map), and we write $G$ both for the groupoid and its arrow space $G_1$. With this convention, and writing $gh$ instead of $m(g,h)$, as well as $g^{-1}$ instead of $i(g)$, the axioms satisfied by the structure maps are as follows:
\begin{itemize}
\item $d(x)=x$ and $r(x)=x$ for each $x\in G_0$;
\item $g(hk)=(gh)k$ for all $(g,h)\in G_2$ and $(h,k)\in G_2$;
\item $g d(g)=g$ and $r(g) g=g$ for all $g\in G$;
\item $d(g)=g^{-1}g$ and $r(g)= gg^{-1}$ for all $g\in G$.
\end{itemize}
It follows that the image of the pairing map
$
\langle d,r\rangle:G\to G_0\times G_0
$
is an equivalence relation on $G_0$. The equivalence classes are the \emph{orbits of $G$} (these are the connected components of $G$ in the categorical sense), and the quotient space is denoted by $G_0/G$. For each $x\in G_0$ we write $[x]$ for the orbit that contains $x$,
 $G_x$ for the subspace $d^{-1}(\{x\})\subset G$, $G^x$ for $r^{-1}(\{x\})$, and $I_x:=G_x\cap G^x$ for the \emph{isotropy group at $x$}; in other words $I_x$ is the monoid of endomorphisms $\hom_G(x,x)$, which in this case is a group. Note that
$
[x]=r(G_x)=d(G^x)
$.

A groupoid $G$ is said to be \emph{\'etale} if $d$ is a local homeomorphism, in which case the subspaces $G_x$, $G^x$, and $I_x$ are discrete.
If $G$ is \'etale all the structure maps are local homeomorphisms and, hence, $G_0$ is an open subspace of $G$. Conversely, any groupoid $G$ such that $d$ is an open map is \'etale if $G_0$ is open in $G$ \cite{Re07}*{Theorem 5.18}.

If $G$ is an \'etale groupoid, its topology $\topology(G)$ is a unital involutive quantale under pointwise multiplication:
\[
UV = \{gh\st (g,h)\in G_2\cap(U\times V)\}\quad\textrm{for all }U,V\in\topology(G)\;;
\]
The involution is given by taking pointwise inverses,
\[
U^{-1} = \{g^{-1}\st g\in U\}\quad\textrm{for all }U\in\topology(G)\;;
\]
and the multiplication unit is $G_0$.

For \'etale groupoids $G$ such that $G_0$ is a sober (\eg, Hausdorff) space, the space $G$ is also sober and the unital involutive quantale structure of $\topology(G)$ completely determines $G$ up to structure preserving homeomorphisms.

\begin{example}
Let $G$ and $H$ be \'etale groupoids with $G_0$ and $H_0$ sober. A unital map $\topology(G)\to\topology(H)$ can be identified with an \emph{algebraic morphism} from $H$ to $G$ in the sense of \cites{BS05, Bun08}, in other words an action of $H$ on $G$ commuting with the right multiplication of $G$. Such actions can be composed in order to form a category of \'etale groupoids, and thus $\topology$ becomes a contravariant functor from this category of groupoids to the category of unital involutive quantales with the unital maps as morphisms. See \cite{Re15}.
\end{example}

\paragraph{Pseudogroups.}

Pseudogroups are usually defined concretely to be semigroups of local symmetries on topological spaces. The corresponding algebraic abstraction is provided by the notion of inverse semigroup, in the same way that groups describe global symmetries. However, this algebraic abstraction may carry little or no topological information about the underlying spaces, and a more balanced definition that caters both for the topology and the symmetries is that of \emph{complete and infinitely distributive inverse semigroup}. These are referred to as \emph{abstract complete pseudogroups} in \cite{Re07}, but in this paper we call them simply \emph{pseudogroups}, following \cite{LL}.

Let us recall the main definitions concerning pseudogroups. In a semigroup $S$ an element $s\in S$ is said to have an \emph{inverse} $t$ if we have both $sts=s$ and $tst=t$. By an \emph{inverse semigroup} is meant a semigroup $S$ such that every element $s\in S$ has a unique inverse, which we denote by $s^{-1}$. Let $S$ be an inverse semigroup. The set of idempotents of $S$ is denoted by $E(S)$, and the \emph{natural order} of $S$ is the partial order defined, for all $s,t\in S$, as follows: $s\le t$ if there exists $f\in E(S)$ such that $s=ft$. Two element $s,t\in S$ are \emph{compatible} if both $st^{-1}\in E(S)$ and $s^{-1}t\in E(S)$, and a subset $X\subset S$ is \emph{compatible} if any two elements $s,t\in X$ are compatible. Then $S$ is said to be \emph{complete} if every compatible subset $X\subset S$ has a join $\V X\in S$ in the natural order. We note that $E(S)$ is a compatible subset, and thus a complete inverse semigroup is necessarily a monoid with unit $e=\V E(S)$. Finally, a complete inverse semigroup $S$ is \emph{infinitely distributive} if for all $s\in S$ and all compatible subsets $X\subset S$ we have the distributivity law $s\V X=\V_{x\in X} sx$. In fact, a complete inverse semigroup $S$ is infinitely distributive if and only if $E(S)$ is a locale. If $E(S)$ is isomorphic to the topology $\topology(X)$ of a topological space $X$ the pseudogroup is \emph{spatial}. For instance, given a topological space $X$, the \emph{symmetric pseudogroup} on $X$ is the monoid $\ipi(X)$ of all the \emph{partial homeomorphisms} on $X$ (\ie, homeomorphisms whose domain and codomain are open subsets of $X$). This is a spatial pseudogroup, and we have $E(\ipi(X))\cong\topology(X)$.

Let $G$ be an \'etale groupoid.
The elements $U\in \topology(G)$ satisfying $U^{-1}U\subset G_0$ and $UU^{-1}\subset G_0$ are called \emph{partial units} of the quantale $\topology(G)$ and they form a pseudogroup $\ipi(G)$ with the same multiplication of $\topology(G)$ and inverses given by the involution. This pseudogroup is spatial and its locale of idempotents is isomorphic to $\topology(G_0)$. Every spatial pseudogroup is of this form, up to isomorphism.
The partial units of $\topology(G)$ are the same as the \emph{local bisections} of $G$, which are usually defined to be the open subsets $U\subset G$ for which the restrictions $d\vert_U$ and $r\vert_U$ are injective. Such open subsets can also be described as being the images of the \emph{local bisection maps} of $G$, which are the local sections $\alpha:\dom(\alpha)\stackrel\cong\longrightarrow U\subset G$ of $d$ such that $r$ is injective on $U$, and thus whose composition with $r$ yields a homeomorphism $r\circ \alpha:\dom(\alpha)\to r(U)$ between open subsets of $G_0$. Alternatively, one could instead consider local sections of $r$ on whose image $d$ is injective. Where needed, we shall distinguish these two notions of local bisection map by referring to them as \emph{domain local bisection maps} and \emph{range local bisection maps}, respectively.

If $S$ is a pseudogroup there is a unital involutive quantale $\lcc(S)$ associated to it, which can be described concretely as consisting of the set of all the (necessarily nonempty) downsets of $S$ which are closed under the formation of joins of compatible sets. The mapping $S\to\lcc(S)$ defined by $s\mapsto\downsegment s=\{t\in S\st t\le s\}$ is a homomorphism of monoids that preserves joins of compatible sets and has the following universal property: given any unital involutive quantale $Q$ and any homomorphism of monoids $f:S\to Q$ that preserves joins of compatible sets, there is a unique homomorphism of unital involutive quantales $f^\sharp:\lcc(S)\to Q$ that makes the diagram commute:
\[
\xymatrix{
S\ar[rr]^{\downsegment(-)}\ar[rrd]_f&&\lcc(S)\ar[d]^{f^\sharp}\\
&&Q
}
\]
We note that $\lcc(S)$ is also a locale, in fact an inverse quantal frame, and that $\lcc$ is a functor that defines an equivalence of categories between the category of pseudogroups and the category of inverse quantal frames~\cite{Re07}. In particular, if $G$ is an \'etale groupoid we have an isomorphism of unital involutive quantales $\topology(G)\cong\lcc(\ipi(G))$. Moreover, due to the universal property of $\lcc(S)$, the locale points of $\lcc(S)$ can be identified with the \emph{compatibly prime filters} of $S$, by which we mean the filters $F\subset S$ (in the usual sense of filters of a meet-semilattice) such that, for all compatible subsets $X\subset S$, if $\V X\in F$ then $x\in F$ for some $x\in X$. These filters coincide with the sets that are usually called the \emph{germs} of $S$~\cite{MaRe10}*{Corollary 5.7} (see also~\cite{LL}). They can be given the structure of an \'etale groupoid $\germs(S)$, and we have an isomorphism of pseudogroups $S\cong\ipi(\germs(S))$.
Conversely, if $G$ is an \'etale groupoid, there is an isomorphism of topological groupoids $G\cong\germs(\ipi(G))$.

\paragraph{Stably Gelfand quantales.}

A \emph{stably Gelfand quantale} is an involutive quantale such that for all $a\in Q$ we have
\[
aa^*a\le a\Longrightarrow aa^*a=a\;.
\]
Such quantales are useful in the study of sheaf theory for involutive quantales~\cite{GSQS}.

\begin{example}
Let $G$ be an \'etale groupoid. The quantale $\topology(G)$ is stably Gelfand because it satisfies $U\subset UU^{-1}U$ for all $U\in\topology(G)$.
\end{example}

\begin{example}\label{exm:MaxASG}
Let $A$ be a C*-algebra. The spectrum $\Max A$ (see section~\ref{subsec:MaxA}) is stably Gelfand, for if $V\in\Max A$ and $VV^*V\subset V$ then $V^*VV^*V\subset V^*V$, and thus $VV^*V=V$, by Theorem~\ref{thm:cstaralgebrasasspaces}.
\end{example}

Let $Q$ be a stably Gelfand quantale, and let $b\in Q$ be a projection (that is, an element such that $b^2=b=b^*$). The set of \emph{partial units relative to $b$} is
\[
\ipi_b(Q):=\{a\in Q\st a^*a\le b,\ aa^*\le b,\ ab\le a,\ ba\le a\}\;.
\]
Under the multiplication of $Q$ this is a pseudogroup~\cite{SGQ}. Its unit is $b$, and its idempotents are the two-sided elements of the involutive subquantale $\downsegment(b)\subset Q$:
\begin{equation}\label{EipibQ}
E(\ipi_b(Q)) = \ts(\downsegment(b)) = \{a\le b\st ab\le a,\ ba\le a\}\;.
\end{equation}
The inclusion $\ipi_b(Q)\to Q$ extends to a homomorphism of involutive quantales $\varphi_b:\lcc(\ipi_b(Q))\to Q$, hence definining a map of involutive quantales $p_b:Q\to\lcc(\ipi_b(Q))$ by the condition $p_b^*=\varphi_b$. If the pseudogroup $\ipi_b(Q)$ is spatial we say that $b$ is a \emph{spatial projection}. In this case we can identify $p_b$ with a map $p_b:Q\to\topology(G)$, where $G=\germs(\ipi_b(Q))$.

\begin{example}\label{exm:EipibQ}
Let $A$ be a C*-algebra. A projection $B\in\Max A$ is precisely a sub-C*-algebra $B\subset A$. By \eqref{EipibQ} the locale of idempotents of the pseudogroup $\ipi_B(\Max A)$ coincides with $I(B)$, the locale of closed two-sided ideals of $B$. This is isomorphic to the Jacobson topology of the primitive spectrum of $B$, so $\ipi_B(\Max A)$ is a spatial pseudogroup.
\end{example}

\subsection{Quantic bundles}

\paragraph{Semiopen maps.}

Let $p:Q\to X$ be a map of involutive quantales. If $p^*$ has a left adjoint $p_!$ we say that $p$ is \emph{semiopen}, as in~\cite{RV} for locales.

\begin{lemma}\label{lem:semiopen}
Let $p:Q\to X$ be a semiopen map. Then, for all $a,b\in Q$,
\begin{eqnarray}
p_!(ab)&\le& p_!(a)p_!(b)\;,\label{eqsemiopen1}\\
p_!(a^*)&=&p_!(a)^*\;.\label{eqsemiopen2}
\end{eqnarray}
Moreover, $p$ is surjective if and only if $p_!(p^*(x))=x$ for all $x\in X$.
\end{lemma}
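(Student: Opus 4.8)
The plan is to establish the three assertions of Lemma~\ref{lem:semiopen} separately, using only the adjunction $p_!\dashv p^*$ together with the fact that $p^*$ is a homomorphism of involutive quantales.

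\textbf{Inequality \eqref{eqsemiopen1}.} The natural approach is to exploit the co-unit of the adjunction: for any $c\in Q$ we have $p_!(c)\le x$ iff $c\le p^*(x)$, equivalently $p^*(p_!(c))\ge c$ (co-unit) and $p_!(p^*(x))\le x$ (unit). To prove $p_!(ab)\le p_!(a)p_!(b)$ it suffices, by the adjunction, to show $ab\le p^*\bigl(p_!(a)p_!(b)\bigr)$. Since $p^*$ is a multiplicative homomorphism, $p^*\bigl(p_!(a)p_!(b)\bigr)=p^*(p_!(a))\,p^*(p_!(b))$, and by the co-unit $a\le p^*(p_!(a))$ and $b\le p^*(p_!(b))$; multiplying these (using that multiplication in $Q$ is monotone, which follows from the join-distributivity axiom) gives $ab\le p^*(p_!(a))\,p^*(p_!(b))$, as desired.

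\textbf{Equality \eqref{eqsemiopen2}.} Here I would use the uniqueness of adjoints. The map $a\mapsto p_!(a^*)^*$ is a candidate left adjoint for $p^*$: for $x\in X$ and $a\in Q$ one computes $p_!(a^*)^*\le x \iff p_!(a^*)\le x^* \iff a^*\le p^*(x^*)=p^*(x)^* \iff a\le p^*(x)$, using that involution is an order-isomorphism on both $Q$ and $X$ and that $p^*$ commutes with involution. Hence $a\mapsto p_!(a^*)^*$ is left adjoint to $p^*$, and since left adjoints are unique it coincides with $p_!$, giving $p_!(a)=p_!(a^*)^*$, equivalently \eqref{eqsemiopen2}.

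\textbf{The surjectivity characterization.} Recall $p$ surjective means $p^*$ injective. The unit of the adjunction already gives $p_!(p^*(x))\le x$ for all $x$, so the content is the reverse inequality. If $p^*$ is injective: apply $p^*$ to the unit inequality $p_!(p^*(x))\le x$ to get $p^*(p_!(p^*(x)))\le p^*(x)$; but the co-unit applied to $p^*(x)$ gives $p^*(x)\le p^*(p_!(p^*(x)))$, so $p^*(p_!(p^*(x)))=p^*(x)$, and injectivity of $p^*$ forces $p_!(p^*(x))=x$. Conversely, if $p_!(p^*(x))=x$ for all $x$, then $p^*(x)=p^*(y)$ implies $x=p_!(p^*(x))=p_!(p^*(y))=y$, so $p^*$ is injective. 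I expect no real obstacle here; the only subtlety worth stating explicitly is that monotonicity of the quantale multiplication (needed in \eqref{eqsemiopen1}) is a consequence of the distributivity axioms, and that $p^*$ being a sup-lattice homomorphism makes it monotone, which is what licenses applying $p^*$ to inequalities throughout.
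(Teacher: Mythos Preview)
Your proof is correct and follows essentially the same approach as the paper: the paper proves \eqref{eqsemiopen1} by the same unit/counit manipulation (phrased as $p_!(ab)\le p_!(p^*p_!(a)\,p^*p_!(b))=p_!(p^*(p_!(a)p_!(b)))\le p_!(a)p_!(b)$), proves \eqref{eqsemiopen2} by the identical chain of equivalences, and dismisses the surjectivity statement as ``an elementary property of adjunctions between sup-lattices'' where you spell it out. One terminological caveat: you have the names \emph{unit} and \emph{co-unit} swapped relative to the paper's convention (and standard usage), where for $p_!\dashv p^*$ the unit is $a\le p^*(p_!(a))$ and the co-unit is $p_!(p^*(x))\le x$; the inequalities themselves are right.
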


\begin{proof}
Let $a,b\in Q$. Then, using both the unit and the counit of the adjunction $p_!\dashv p^*$, we obtain \eqref{eqsemiopen1}:
\[
p_!(ab)\le p_!\bigl(p^*p_!(a)p^*p_!(b)\bigr)=p_!\bigl(p^*\bigl(p_!(a)p_!(b)\bigr)\bigr)\le p_!(a)p_!(b)\;.
\]
Let $a\in Q$. For all $x\in X$ we have
\begin{eqnarray*}
p_!(a^*)\le x&\iff& a^*\le p^*(x)\iff a\le p^*(x)^*\iff a\le p^*(x^*)\\
&\iff& p_!(a)\le x^*\iff p_!(a)^*\le x\;,
\end{eqnarray*}
and thus $p_!(a^*)=p_!(a)^*$.
The condition concerning surjectivity is an elementary property of adjunctions between sup-lattices.
\end{proof}

\paragraph{Open maps and quantic bundles.}

Let $Q$ and $X$ be involutive quantales. A map $p:Q\to X$ is said to be \emph{weakly open} if it is semiopen and $p_!:Q\to X$ is a homomorphism of left $X$-modules with respect to the $X$-module structure on $Q$ induced by ``change of ring'' along $p^*$; that is, for all $a\in Q$ and $x\in X$ we have
\begin{equation}\label{frobwo}
p_!(p^*(x)a)=xp_!(a)\;.
\end{equation}
Evidently, if $Q$ and $X$ are locales then $p$ is a weakly open unital map of unital involutive quantales if and only if it is open as a map of locales.
We note that due to the involution we may equivalently consider right $X$-modules:

\begin{lemma}\label{lem:rlXequiv}
A semiopen map $p:Q\to X$ of involutive quantales is weakly open if and only if $p_!$ is a homomorphism of right $X$-modules.
\end{lemma}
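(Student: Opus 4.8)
The plan is to derive each implication from the other by simply conjugating the relevant identity with the involution, exploiting that $p^*$ is a $*$-homomorphism (being the inverse image homomorphism of the map $p$) and that $p_!$ commutes with $*$ by equation~\eqref{eqsemiopen2} of Lemma~\ref{lem:semiopen}. So the argument is purely formal and no hard analysis is involved.

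First recall the two $X$-module structures on $Q$ obtained by change of ring along $p^*$: the left action is $x\cdot a=p^*(x)a$ and the right action is $a\cdot x=ap^*(x)$. Thus weak openness of $p$ is exactly the assertion that $p_!(p^*(x)a)=xp_!(a)$ for all $a\in Q$ and $x\in X$ (equation~\eqref{frobwo}), whereas the statement ``$p_!$ is a homomorphism of right $X$-modules'' is the assertion that $p_!(ap^*(x))=p_!(a)x$ for all $a\in Q$ and $x\in X$ (it is automatically additive and join-preserving).

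Next I would assume $p$ is weakly open, fix $a\in Q$ and $x\in X$, and apply the involution to $p_!(p^*(x)a)=xp_!(a)$. Since $*$ is an anti-automorphism of $Q$ and $p^*$ preserves $*$, the left-hand side becomes $p_!\bigl((p^*(x)a)^*\bigr)=p_!\bigl(a^*p^*(x^*)\bigr)$, where \eqref{eqsemiopen2} is used to pull $p_!$ through $*$; the right-hand side becomes $\bigl(xp_!(a)\bigr)^*=p_!(a)^*x^*=p_!(a^*)x^*$, again by \eqref{eqsemiopen2}. Hence $p_!\bigl(a^*p^*(x^*)\bigr)=p_!(a^*)x^*$. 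Because $a\mapsto a^*$ is a bijection of $Q$ and $x\mapsto x^*$ is a bijection of $X$, substituting an arbitrary $b\in Q$ for $a^*$ and an arbitrary $y\in X$ for $x^*$ yields $p_!(bp^*(y))=p_!(b)y$, i.e.\ $p_!$ is a homomorphism of right $X$-modules. The converse implication is the same computation read backwards (equivalently, apply $*$ to the right-module identity), so the two conditions are equivalent.

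I expect no genuine obstacle: the only thing to be careful about is the bookkeeping with the involution — that $*$ reverses products, that $p^*$ is $*$-preserving because it is a homomorphism of involutive quantales, and that $p_!$ is $*$-preserving, the last being not one of the hypotheses but a consequence already recorded in Lemma~\ref{lem:semiopen}.
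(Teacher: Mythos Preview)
Your proof is correct and takes essentially the same approach as the paper: both arguments conjugate the left-equivariance identity by the involution, using that $p^*$ is $*$-preserving and that $p_!$ is $*$-preserving by Lemma~\ref{lem:semiopen}\eqref{eqsemiopen2}, to obtain right-equivariance (and vice versa). The paper presents the computation as a direct chain of equalities starting from $p_!(ap^*(x))$, while you apply $*$ to both sides of \eqref{frobwo} and then substitute, but this is only a cosmetic difference.
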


\begin{proof}
Let $p:Q\to X$ be weakly open. Then for all $a\in Q$ and $x\in X$
\begin{eqnarray*}
p_!\big(ap^*(x)\bigr)&=&p_!\bigl(p^*(x)^*a^*\bigr)^*=p_!\bigl(p^*(x^*)a^*\bigr)^*\\
&=& \bigl(x^* p_!(a^*)\bigr)^*=\bigl(x^* p_!(a)^*\bigr)^*\\
&=&p_!(a) x\;,
\end{eqnarray*}
so $p_!$ is also right $X$-equivariant. Similarly, right $X$-equivariance implies \eqref{frobwo}.
\end{proof}

Let $G$ be an \'etale groupoid. A map $p:Q\to \topology(G)$ is said to be a \emph{quantic bundle over $G$} if it is both semiopen and surjective and moreover it satisfies the equation
\begin{equation}\label{newopenness}
p_!(a p^*(U) b) = p_!(a) U p_!(b)
\end{equation}
for all $U\in \topology(G)$ and all $a,b\in Q$ such that $p_!(a)\in\ipi(G)$ and $p_!(b)\in\ipi(G)$.

If, in addition, \eqref{newopenness} holds for all $a,b\in Q$ we say that $p$ is a \emph{stable quantic bundle}. This can be regarded as an actual open map of involutive quantales~\cite{OMIQ}.

\begin{lemma}\label{lem:weaklyquantic}
Let $p:Q\to \topology(G)$ be a quantic bundle over $G$. For all $U\in \topology(G)$ and all $a\in Q$ such that $p_!(a)\in\ipi(G)$ we have
\begin{eqnarray}
p_!(a p^*(U)) &=& p_!(a) U\;, \label{newopen1}\\
p_!(p^*(U) a) &=& U p_!(a)\;. \label{newopen2}
\end{eqnarray}
Moreover, if $p$ is a stable quantic bundle then it is a weakly open map.
\end{lemma}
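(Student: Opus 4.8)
The plan is to derive \eqref{newopen1} and \eqref{newopen2} from the defining equation \eqref{newopenness} of a quantic bundle by making judicious choices of the auxiliary elements $a,b\in Q$, and then to obtain weak openness from the stable case by a similar specialization. For \eqref{newopen1}, the natural move is to take $b=e_Q$, the multiplication unit of $Q$ (recall $Q=\Max A$ in the applications is unital, but in any case a quantic bundle is a map into the unital quantale $\topology(G)$, and we only need $p^*$ to preserve the unit — $p$ being unital — or, failing that, to work with a suitable top-like element). The key point is that $p_!(e_Q)=G_0=e_{\topology(G)}$, which follows from surjectivity via Lemma~\ref{lem:semiopen}: since $p$ is a unital map we have $p^*(G_0)=e_Q$, hence $p_!(e_Q)=p_!(p^*(G_0))=G_0$, and $G_0\in\ipi(G)$. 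Then \eqref{newopenness} with $b=e_Q$ gives
\[
p_!(ap^*(U)) = p_!(ap^*(U)e_Q) = p_!(a)\,U\,p_!(e_Q) = p_!(a)\,U\,G_0 = p_!(a)\,U\;,
\]
using that $G_0$ is the multiplication unit of $\topology(G)$. This is \eqref{newopen1}. Symmetrically, taking $a=e_Q$ in \eqref{newopenness} yields $p_!(p^*(U)b)=p_!(e_Q)\,U\,p_!(b)=G_0\,U\,p_!(b)=U\,p_!(b)$, which is \eqref{newopen2}; here I would note that the hypothesis $p_!(e_Q)=G_0\in\ipi(G)$ is exactly what licenses applying \eqref{newopenness} in this instance, so only the single hypothesis $p_!(b)\in\ipi(G)$ (resp.\ $p_!(a)\in\ipi(G)$) remains.

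For the final sentence, suppose $p$ is a \emph{stable} quantic bundle, so that \eqref{newopenness} holds for \emph{all} $a,b\in Q$. Taking $b=e_Q$ (now with no constraint on $p_!(b)$, and still using $p_!(e_Q)=G_0$) gives $p_!(ap^*(U))=p_!(a)\,U$ for all $a\in Q$ and all $U\in\topology(G)$. By Lemma~\ref{lem:rlXequiv} this right $X$-equivariance of $p_!$ is equivalent to weak openness, so $p$ is weakly open. Alternatively one could take $a=e_Q$ to get the left-module identity \eqref{frobwo} directly; either route closes the argument.

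The one genuine subtlety — and the step I expect to need the most care — is the justification that $p_!(e_Q)=G_0$, i.e.\ that a quantic bundle's inverse image homomorphism is unital so that $p^*(G_0)=e_Q$. If the definition of quantic bundle in this paper does not build in unitality of $p^*$, I would instead argue that surjectivity plus \eqref{newopenness} forces $p_!$ to send the relevant top element to $G_0$: apply Lemma~\ref{lem:semiopen} to get $p_!p^* = \ident_{\topology(G)}$, then feed $x=G_0$ through, obtaining $p_!(p^*(G_0))=G_0$, and observe that $p^*(G_0)$ is a projection of $Q$ with $p_!(p^*(G_0))=G_0\in\ipi(G)$, which is all that the instances of \eqref{newopenness} above actually require — one can replace every occurrence of $e_Q$ by $p^*(G_0)$ and the computations go through verbatim, since $ap^*(G_0)p^*(U)p^*(G_0)$ absorbs correctly under $p^*$ being a homomorphism and $G_0 U G_0 = U$ in $\topology(G)$. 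So the robust version of the proof uses $c:=p^*(G_0)$ in place of the unit throughout, and the only facts invoked are Lemma~\ref{lem:semiopen}, Lemma~\ref{lem:rlXequiv}, the definition \eqref{newopenness}, and $G_0$ being the unit of $\topology(G)$ with $G_0\in\ipi(G)$.
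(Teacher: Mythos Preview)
Your ``robust version'' at the end is exactly the paper's proof: write $p_!(ap^*(U))=p_!(ap^*(UG_0))=p_!(ap^*(U)p^*(G_0))$, apply \eqref{newopenness} with the second slot filled by $p^*(G_0)$ (legitimate since $p_!(p^*(G_0))=G_0\in\ipi(G)$ by surjectivity), and simplify using that $G_0$ is the unit of $\topology(G)$; then \eqref{newopen2} follows from \eqref{newopen1} by involution (Lemma~\ref{lem:rlXequiv}), and the stable case is the same computation with no hypothesis on $a$. Your initial detour through a unit $e_Q$ is unnecessary and in fact unjustified --- $Q$ is not assumed unital in the definition of quantic bundle --- but you correctly diagnose and repair this yourself, so the final argument is sound and matches the paper's.
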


\begin{proof}
\eqref{newopen1} and \eqref{newopen2} are equivalent due to the quantale involutions (\cf\ Lemma~\ref{lem:rlXequiv}), so we prove only \eqref{newopen1}, which follows from \eqref{newopenness} and the surjectivity of $p$:
\begin{eqnarray*}
p_!(a p^*(U)) &=& p_!(ap^*(UG_0))=p_!(ap^*(U)p^*(G_0))=p_!(a) U p_!(p^*(G_0))\\
&=&p_!(a)UG_0=p_!(a)U\;.
\end{eqnarray*}
The same reasoning applies to all $a\in Q$ if $p$ is a stable quantic bundle, so in this case $p$ is weakly open.
\end{proof}

\begin{example}
Let $L$ be a locale and $Y$ a topological space, regarded as a groupoid $G=G_0=Y$. Any quantic bundle $p:L\to \topology(Y)$ (equivalently, stable quantic bundle) is an open surjection of locales. The converse is not true. For instance, if $L=\topology(X)$ for a Hausdorff space $X$, and $p=\topology(\varphi)$ for an open continuous map $\varphi:X\to Y$, then $p$ is a quantic bundle if and only if $\varphi$ is injective. See~\cite{OMIQ}.
\end{example}

\paragraph{$\ipi$-stable quantic bundles.}

Taking into account the pseudogroups associated to stably Gelfand quantales, a weakening of the notion of stable quantic bundle can be defined, as we now see.

\begin{lemma}\label{lem:ipiopen}
Let $G$ be an \'etale groupoid, $Q$ a stably Gelfand quantale, $p:Q\to\topology(G)$ a quantic bundle, and let $b=p^*(G_0)$. The following conditions are equivalent:
\begin{enumerate}
\item\label{qb1} $p_!(s)\in\ipi(G)$ for all $s\in\ipi_b(Q)$;
\item\label{qb2} $p_!(st)=p_!(s)p_!(t)$ for all $s,t\in\ipi_b(Q)$;
\item\label{qb3} $p_!(s\,p^*(U)\, t)=p_!(s)Up_!(t)$ for all $s,t\in\ipi_b(Q)$ and $U\in\topology(G)$.
\end{enumerate}
\end{lemma}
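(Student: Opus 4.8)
The plan is to prove the cycle $(\ref{qb1})\Rightarrow(\ref{qb3})\Rightarrow(\ref{qb2})\Rightarrow(\ref{qb1})$. The implication $(\ref{qb1})\Rightarrow(\ref{qb3})$ is immediate from the definition of a quantic bundle: if $s,t\in\ipi_b(Q)$ then $(\ref{qb1})$ gives $p_!(s),p_!(t)\in\ipi(G)$, so equation~\eqref{newopenness} applies with $a=s$ and $b=t$ to yield $p_!(s\,p^*(U)\,t)=p_!(s)Up_!(t)$ for all $U\in\topology(G)$. For $(\ref{qb3})\Rightarrow(\ref{qb2})$ I would simply put $U=G_0$ in $(\ref{qb3})$: since $p^*(G_0)=b$ by the choice of $b$, and $b$ is the multiplication unit of the pseudogroup $\ipi_b(Q)$ (so $s\,b\,t=st$ in $Q$ for $s,t\in\ipi_b(Q)$), the left-hand side equals $p_!(st)$; and since $G_0$ is the unit of $\topology(G)$, the right-hand side $p_!(s)G_0p_!(t)$ equals $p_!(s)p_!(t)$.

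The only implication with genuine content is $(\ref{qb2})\Rightarrow(\ref{qb1})$. Here I would first observe that $\ipi_b(Q)$ is closed under the involution of $Q$, since the four inequalities $a^*a\le b$, $aa^*\le b$, $ab\le a$, $ba\le a$ cutting out $\ipi_b(Q)$ are interchanged in pairs under applying the order-preserving involution; so $s^*\in\ipi_b(Q)$ whenever $s\in\ipi_b(Q)$. Now fix $s\in\ipi_b(Q)$. Recalling that the involution of $\topology(G)$ sends $U$ to $U^{-1}$, that $p_!$ commutes with involution by \eqref{eqsemiopen2} of Lemma~\ref{lem:semiopen}, and applying $(\ref{qb2})$ to the pair $(s^*,s)\in\ipi_b(Q)\times\ipi_b(Q)$, one gets
\[
p_!(s)^{-1}p_!(s)=p_!(s^*)p_!(s)=p_!(s^*s)\le p_!(b)=p_!(p^*(G_0))=G_0\;,
\]
using monotonicity of $p_!$ (it is a left adjoint) together with $s^*s\le b$, and finally surjectivity of $p$ via Lemma~\ref{lem:semiopen}. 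Symmetrically $p_!(s)p_!(s)^{-1}=p_!(ss^*)\le G_0$, so $p_!(s)$ is a partial unit of $\topology(G)$, i.e.\ $p_!(s)\in\ipi(G)$, which is $(\ref{qb1})$.

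I do not anticipate a serious obstacle: the argument rests entirely on already-available facts — the structure of $\ipi_b(Q)$ from~\cite{SGQ} (notably that its unit is $b$), the behaviour of $p_!$ under involution and the surjectivity criterion of Lemma~\ref{lem:semiopen}, and the defining identity~\eqref{newopenness} of a quantic bundle. The points that require a moment's care are noticing that $(\ref{qb3})$ is needed only in its instance $U=G_0$ to recover $(\ref{qb2})$, and that the one place where semiopenness and surjectivity are used in an essential (rather than purely formal) way is the step $(\ref{qb2})\Rightarrow(\ref{qb1})$.
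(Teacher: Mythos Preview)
Your proof is correct and follows essentially the same cycle $(\ref{qb1})\Rightarrow(\ref{qb3})\Rightarrow(\ref{qb2})\Rightarrow(\ref{qb1})$ as the paper. The only minor variation is in $(\ref{qb3})\Rightarrow(\ref{qb2})$: you invoke the equality $sbt=st$ (using that $b$ is the unit of $\ipi_b(Q)$ under the multiplication of $Q$), whereas the paper uses only the inequality $sbt\le st$ together with the lax inequality $p_!(st)\le p_!(s)p_!(t)$ from Lemma~\ref{lem:semiopen}; both arguments are valid and amount to the same thing.
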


\begin{proof}
$\eqref{qb1}\Rightarrow \eqref{qb3}$. Assume that \eqref{qb1} holds and let $s,t\in\ipi_b(Q)$ and $U\in\topology(G)$. Then, since $p$ is a quantic bundle, \eqref{qb3} immediately follows.

$\eqref{qb3}\Rightarrow \eqref{qb2}$. Assume that \eqref{qb3} holds and let $s,t\in\ipi_b(Q)$. Then $sbt\le st$, and the following derivation together with Lemma~\ref{lem:semiopen} proves \eqref{qb2}:
\[
p_!(st)\ge p_!(sbt)=p_!(s\,p^*(G_0)\, t)=p_!(s)\,G_0\, p_!(t) = p_!(s)p_!(t)\;.
\]

$\eqref{qb2}\Rightarrow \eqref{qb1}$. Assume that \eqref{qb2} holds and let $s\in\ipi_b(Q)$. Then
\[
p_!(s)^{-1}p_!(s)=p_!(s^*s)\le p_!(b)=p_!(p^*(G_0))\subset G_0\;,
\]
and, similarly, $p_!(s)p_!(s)^{-1}\subset G_0$. Hence, $p_!(s)\in\ipi(G)$.
\end{proof}

A quantic bundle $p:Q\to\topology(G)$ is said to be \emph{$\ipi$-stable} if it satisfies the equivalent conditions of Lemma~\ref{lem:ipiopen}. Clearly, every stable quantic bundle is $\ipi$-stable.

\section{Fell bundles versus maps}\label{sec:bundlesasmaps}

Throughout this section every topological groupoid $G$ will be assumed to be \'etale with locally compact Hausdorff object space $G_0$. We note that $G_0$, which is an open subspace of $G$, is also closed if $G$ is Hausdorff. In addition, each local bisection in $\ipi(G)$ is homeomorphic to an open subset of $G_0$, and thus it is locally compact Hausdorff, too. Hence, $G$ is a locally Hausdorff and locally compact space (\cf\ \cites{KhSk02,Paterson}).

\subsection{Fell bundles on groupoids}\label{sec:fellbundlesongroupoids}

The earlier works on Fell bundles over groupoids (see \cite{Kumjian98} and references therein) address Hausdorff groupoids and bundles with continuous norm. Although these will also play a role in this paper, we shall begin by working with a more general definition that applies to locally Hausdorff groupoids and which can be found in \cite{BE12}. We recall it next.

\paragraph{Basic definitions and facts.}
By a \emph{Fell bundle} on $G$ will be meant an (upper semicontinuous) Banach bundle $\pi:E\to G$ (\cf\ appendix~\ref{app:bundles}) equipped with a  \emph{multiplication map}
\[
m:E_2\to E\;,
\]
where $E_2=\bigl\{(e,f)\in E\times E\st \bigl(\pi(f),\pi(e)\bigr)\in G_2\bigr\}$,
and an \emph{involution map}
\[
(-)^*:E\to E\;,
\]
such that the following conditions hold, where we write $ef$ instead of $m(e,f)$, and $E_g$ instead of $\pi^{-1}\bigl(\{g\}\bigr)$:
\begin{enumerate}
\item The multiplication is associative: for all $e,f,g\in E$ such that $(e,f)$ and $(f,g)$ belong to $E_2$ we have $(ef)g=e(fg)$.
\item The involution map satisfies, for all $(e,f)\in E_2$, the conditions $e^{**}=e$ and $(ef)^*=f^*e^*$.
\item The projection of the bundle is functorial: for all $(e,f)\in E_2$ we have $\pi(ef)=\pi(e)\pi(f)$ and $\pi(e^*)=\pi(e)^{-1}$. (Equivalently, in terms of fibers, we have $E_g E_h\subset E_{gh}$ and $(E_g)^*=E_{g^{-1}}$ for all $(g,h)\in G_2$.)
\item For each pair $(g,h)\in G_2$ the multiplication restricts to a bilinear map $E_g\times E_h\to E_{gh}$.
\item For each $g\in G$ the involution restricts to a skew-linear map $E_g\to E_{g^{-1}}$.
\item For all $(e,f)\in E_2$ we have $\norm {ef}\le\norm e\norm f$.
\item For all $e\in E$ we have $\norm e=\norm{e^*}$.
\item For all $e\in E$ we have $\norm{e^*e}=\norm e^2$.
\item For all $e\in E$ we have $e^* e\ge 0$.
\end{enumerate}
If the bundle is continuous as a Banach bundle we say that it is a \emph{continuous Fell bundle}. And it is a \emph{second countable Fell bundle} if $E$, and hence also $G$, is second countable.
Note that for each unit arrow $x\in G_0$ the fiber $E_{x}$ is a $C^*$-algebra (not necessarily unital). Hence, the norm on $E_x$ is unique, and from this it follows, due to the condition $\norm{e}^2=\norm{e^*e}$, that there is a unique norm on $E$ that makes it a Fell bundle. By a \emph{C*-bundle} will be meant a Fell bundle on a locally compact Hausdorff space (\ie, on a groupoid $G$ such that $G=G_0$).
If $E_x$ is abelian for all $x\in G_0$ the bundle is said to be \emph{abelian}. The zero of each $E_g$ is denoted by $0_g$ or simply $0$ if there is no ambiguity. The Fell bundle is \emph{nondegenerate} if it does not have 0-dimensional fibers, and it is \emph{saturated} if for all $(g,h)\in G_2$ we have $E_g E_h=E_{gh}$.

\paragraph{Trivial Fell bundles.}
Let $F$ be a C*-algebra. The projection $\pi_2:F\times G\to G$ is a trivial Banach bundle whose norm is given by $\norm{(a,g)}=\norm a$, and it is a saturated Fell bundle, called a \emph{product Fell bundle}, if we define the multiplication and involution as follows, for all $(g,h)\in G_2$ and all $a,b\in F$:
\begin{itemize}
\item $(a,g)(b,h)=(ab,gh)$;
\item $(a,g)^*=(a^*,g^{-1})$.
\end{itemize}
Any Fell bundle which is isomorphic to a product Fell bundle $\pi_2:F\times G\to G$ via a fiberwise linear isometry that also preserves multiplication and the involution will be referred to as a \emph{trivial Fell bundle with fiber} $F$. If $\pi:E\to G$ is such a bundle, for all $x\in G_0$ the C*-algebra $E_x$ is $*$-isomorphic to $F$.

\paragraph{Sections.}
Let $\pi:E\to G$ be a Fell bundle.
By a \emph{section} of $\pi$ is meant a function (continuous or not) $s:G\to E$ such that $\pi\circ s=\ident_G$. The mapping $g\mapsto 0_g$ is the \emph{zero section} of the bundle, and its image in $E$ is denoted by $\boldsymbol 0$.
Under pointwise operations the sections form a complex vector space which we denote by $\sections(G,E)$. If $U\subset G$ is any subset we write $\sections(U,E)$ for the linear subspace of $\sections(G,E)$ consisting of those sections $s$ that vanish outside $U$, and $L^\infty(U,E)$ for the subset of $\sections(U,E)$ consisting of those sections whose pointwise norm is bounded in $U$. Also, if $U$ is open, we write $C(U,E)$ for the subspace of $\sections(U,E)$ whose sections are continuous on $U$, and, if $U$ is also Hausdorff, $C_0(U,E)$ is the subspace of $C(U,E)$ whose sections go to zero at infinity in $U$: $s\in C_0(U,E)$ if and only if for all $\epsilon>0$ there is a compact subspace $K\subset U$ such that $\norm{s(g)}<\epsilon$ for all $g\notin K$.

\begin{lemma}\label{bisconvolution0}
Let $\pi:E\to G$ be a Fell bundle. There is an associative bilinear multiplication
\[
(s,t)\mapsto st: C(U,E)\times C(V,E)\to C(UV,E)
\]
which is defined, for every pair $U,V\in\ipi(G)$ and every $g\in UV$, by
$st(g) = s(h)t(k)$
where $h$ and $k$ are, respectively, the unique elements of $U$ and $V$ such that $g=hk$.
\end{lemma}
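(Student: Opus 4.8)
The plan is to construct the product section $st$ fibrewise and then verify it lies in $C(UV,E)$, i.e.\ that it is continuous, using the local homeomorphism properties that $U,V\in\ipi(G)$ provide. Recall that a partial unit $U\in\ipi(G)$ is a local bisection, so the range map restricts to a homeomorphism $r|_U:U\xrightarrow{\cong}r(U)$ and the domain map restricts to a homeomorphism $d|_U:U\xrightarrow{\cong}d(U)$, both onto open subsets of $G_0$; similarly for $V$. For $g\in UV$ there exist $h\in U$ and $k\in V$ with $g=hk$; these are unique, since $h$ is forced to be the unique element of $U$ with $r(h)=r(g)$ (equivalently $d(h)=r(k)$) and $k$ the unique element of $V$ with $d(k)=d(g)$. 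So the two ``factoring'' maps $g\mapsto h$ and $g\mapsto k$ are well defined on $UV$, and I claim each is continuous: $g\mapsto h$ equals $(r|_U)^{-1}\circ r|_{UV}$, and $g\mapsto k$ equals $(d|_V)^{-1}\circ d|_{UV}$, both compositions of continuous maps (using that $UV\in\topology(G)$ and that $d,r$ are continuous on all of $G$). Then I define $st(g):=s(h)t(k)$ and note $\pi(st(g))=\pi(s(h))\pi(t(k))=hk=g$, so $st$ is a section over $UV$.

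For continuity of $st$, I would argue that $st$ factors as $UV\to E_2\xrightarrow{m}E$. The first map sends $g\mapsto (s(h),t(k))$; writing $\varphi_U:g\mapsto h$ and $\varphi_V:g\mapsto k$ for the continuous factoring maps above, this is $g\mapsto(s(\varphi_U(g)),t(\varphi_V(g)))$, which is continuous into $E\times E$ because $s,t$ are continuous and $\varphi_U,\varphi_V$ are continuous, and its image lands in $E_2$ by the fibre compatibility just checked. Since $m:E_2\to E$ is continuous by the definition of a Fell bundle, the composite $st=m\circ(\text{factoring})$ is continuous on $UV$; and $st$ vanishes off $UV$ (if $g\notin UV$ it is not of the form $hk$ with $h\in U$, $k\in V$), so $st\in C(UV,E)$. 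Bilinearity is immediate from the fibrewise bilinearity of multiplication in the Fell bundle (condition (4) in the definition of a Fell bundle), since for fixed $g$ the factors $h,k$ do not depend on $s$ or $t$.

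It remains to check associativity, i.e.\ that for $U,V,W\in\ipi(G)$ and $s\in C(U,E)$, $t\in C(V,E)$, $u\in C(W,E)$ we have $(st)u=s(tu)$ as elements of $C(UVW,E)$. Fix $g\in UVW$. Using the uniqueness of factorizations through local bisections, write $g=hkl$ with $h\in U$, $k\in V$, $l\in W$; then $hk\in UV$ is the unique $UV$-element factoring $g$ on the left against $l$, and the $U$- and $V$-parts of $hk$ are exactly $h$ and $k$, so $((st)u)(g)=(st)(hk)\,u(l)=(s(h)t(k))u(l)$. Symmetrically $(s(tu))(g)=s(h)(t(k)u(l))$, and these agree by associativity of the Fell bundle multiplication (condition (1)).

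The main obstacle, and the only place requiring care, is the continuity argument: one must be sure that the pointwise factoring maps $g\mapsto h$ and $g\mapsto k$ are genuinely continuous on $UV$ (not merely well defined), which is exactly where the hypothesis $U,V\in\ipi(G)$ — rather than arbitrary open sets — is essential, since it makes $r|_U$ and $d|_V$ homeomorphisms onto open sets; without the bisection property the factors need not be unique and no such continuous selection exists. Everything else (the section property, bilinearity, associativity) is a routine fibrewise computation inherited directly from the Fell bundle axioms.
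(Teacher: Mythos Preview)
Your proof is correct and follows essentially the same approach as the paper's: both establish that the factoring maps $g\mapsto h$ and $g\mapsto k$ are continuous compositions built from the local bisection homeomorphisms, deduce continuity of $st$ from continuity of the Fell bundle multiplication, and check associativity fibrewise via the unique triple factorization. The only cosmetic difference is that the paper recovers $h$ as $\alpha(r(k))$ (via the domain bisection map of $U$ applied after first computing $k$), whereas you recover $h$ directly as $(r|_U)^{-1}(r(g))$; both yield the same element and are equally valid continuous expressions.
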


\begin{proof}
The uniqueness of $h$ and $k$ is clear from the fact that $d$ and $r$ restrict to injective mappings on $U$ and $V$.
The bilinearity of the multiplication is obvious, and $st$ is a continuous section because $h$ and $k$ are obtained by composition of continuous maps, since $k=\beta(d(g))$ and $h=\alpha(r(k))$, where $\alpha$ and $\beta$ are the local bisection maps whose images are $U$ and $V$, respectively. Finally, associativity follows from the associativity of multiplication in the Fell bundle, since if $s_i\in C(U_i,E)$ we have $(s_1 s_2)s_3(g) = s_1(g_1)s_2(g_2)s_3(g_3) = s_1(s_2s_3)(g)$, where $g=g_1g_2g_3$ is the unique decomposition of $g$ as a product of elements of $U_1$, $U_2$, and $U_3$.
\end{proof}

Since $G$ is locally compact, the Fell bundle \emph{has enough sections} in the following sense: \emph{for each $e\in E$ there is an open set $U\subset G$ containing $\pi(e)$ and a section $s\in C(U,E)$ such that $s(\pi(e))=e$}. This follows from the general result of Douady and dal Soglio-Herault \cite{FD1}*{Appendix C} because $U$ can be taken to be Hausdorff, in which case the restricted Banach bundle $\pi\vert_{E_U}:E_U\to U$ (where $E_U:=\pi^{-1}(U)$) has a locally compact Hausdorff base space and thus it has enough continuous sections (see appendix~\ref{app:bundles}). A consequence of this is that the image $\{0_g\st g\in G\}$ of the zero section is a closed set of $E$ (Theorem~\ref{appthm:closedzerosection}).

\paragraph{Line bundles and twists.}

By a \emph{Fell line bundle} will be meant any Fell bundle whose fibers $E_g$ have complex dimension $1$. In the case that $\pi$ is a trivial line bundle we identify the sections with complex valued functions on $G$ and write $\sections(G)$ instead of $\sections(G,E)$, $C(U)$ instead of $C(U,E)$, etc.

The \emph{unit section} of a Fell line bundle $\pi:E\to G$ is defined to be the map $\un\in\sections(G_0,E)$ given by
\[
\un(x) = 1_x
\]
where $1_x$ is the C*-algebra unit of $E_x\cong\CC$.

For a Fell line bundle we have the following simple fact:

\begin{lemma}\label{lem:zeroproplinebun}
Any Fell line bundle $\pi:E\to G$ is saturated. Hence, 
if $(e,f)\in E_2$ we have $ef=0$ if and only if $e=0$ or $f=0$.
\end{lemma}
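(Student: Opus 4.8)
The plan is to prove saturation first and then derive the zero-divisor statement from it. For saturation, let $(g,h)\in G_2$; I must show $E_g E_h = E_{gh}$. The inclusion $E_g E_h\subset E_{gh}$ is part of the axioms of a Fell bundle, so the content is the reverse inclusion. Since every fiber has complex dimension $1$, it suffices to produce a single nonzero element of $E_g E_h$: then $E_g E_h$ is a nonzero linear subspace of the one-dimensional space $E_{gh}$, hence equals $E_{gh}$. So the task reduces to showing that there exist $e\in E_g$ and $f\in E_h$ with $ef\neq 0$.

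To find such $e,f$, I would use the C*-algebra structure on the unit fibers. Pick any nonzero $e\in E_g$; then $e^*e\in E_{g^{-1}g}=E_{d(g)}$, and $e^*e\ge 0$ with $\norm{e^*e}=\norm e^2\neq 0$, so $e^*e$ is a nonzero (in fact positive) element of the C*-algebra $E_{d(g)}=E_{r(h)}$. Since $E_{r(h)}\cong\CC$, the element $e^*e$ is a nonzero scalar multiple of the unit $1_{r(h)}$. Now for any nonzero $f\in E_h$ we have $r(h)h=h$, so multiplication by $1_{r(h)}$ acts as the identity on $E_h$ (this uses that the bundle multiplication restricts to the module action of the unit C*-algebras on the fibers), giving $1_{r(h)} f = f\neq 0$, hence $(e^*e)f\neq 0$, hence $e^*(ef)\neq 0$, which forces $ef\neq 0$. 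Thus $E_g E_h\neq\boldsymbol 0$ and saturation follows. The one subtlety to check carefully is that $1_{r(h)}$ really does act as a left unit on $E_h$; this is where I'd invoke the standard fact that in a Fell bundle the fiber $E_h$ is an $E_{r(h)}$--$E_{d(h)}$ bimodule compatible with the groupoid structure, so $1_{r(h)}\cdot f = f$. If one prefers to avoid citing this, one can argue directly: $\norm{1_{r(h)}f - f}$ can be estimated using $\norm{1_{r(h)}f}=\norm{f}$ together with positivity, or one simply notes that $(1_{r(h)}f)^*(1_{r(h)}f)=f^*1_{r(h)}^2 f = f^*1_{r(h)}f$ and compares norms. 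I expect this small bimodule lemma to be the only real obstacle; everything else is forced by dimension counting.

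For the second assertion, assume $(e,f)\in E_2$ and $ef=0$; I must show $e=0$ or $f=0$. Suppose instead $e\neq0$ and $f\neq0$. Then $e$ spans $E_g$ and $f$ spans $E_h$ where $g=\pi(e)$, $h=\pi(f)$ and $(g,h)\in G_2$. By saturation just proved, $E_g E_h = E_{gh}$, which is one-dimensional and hence nonzero; but by bilinearity every element of $E_g E_h$ is a scalar multiple of $ef=0$, so $E_g E_h=\boldsymbol 0$, a contradiction. Therefore $e=0$ or $f=0$. The converse — if $e=0$ or $f=0$ then $ef=0$ — is immediate from bilinearity of the multiplication on each pair of fibers.
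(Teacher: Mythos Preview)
Your proof is correct and follows essentially the same route as the paper: reduce saturation to showing $E_gE_h\neq\{0\}$, use that $e^*e$ is a nonzero element of the one-dimensional C*-algebra $E_x$ (with $x=r(h)=d(g)$), and then argue that multiplication by this element does not kill a nonzero $f\in E_h$, so $e^*(ef)\neq 0$ forces $ef\neq 0$.

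The one place where the paper is slightly cleaner is precisely your flagged subtlety about whether $1_{r(h)}f=f$. The paper sidesteps the need for this equality and proves only the weaker fact $1_xf\neq 0$, via associativity: since $ff^*\in E_x$ and $1_x$ is the unit of the C*-algebra $E_x$, one has $(1_xf)f^*=1_x(ff^*)=ff^*$, and $\norm{ff^*}=\norm f^2\neq 0$ gives $1_xf\neq 0$. This is enough to conclude $E_xE_h=E_h$ by dimension, and hence $E_{g^{-1}}(E_gE_h)=E_{g^{-1}}E_gE_h=E_xE_h=E_h\neq\{0\}$, so $E_gE_h\neq\{0\}$. Your argument via $e^*(ef)=(e^*e)f$ is really the same trick at the element level; you just don't need the full statement $1_xf=f$ to make it go through.
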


\begin{proof}
Let $(g,h)\in G_2$, and let $x=g^{-1}g$. For a Fell line bundle we have either $E_g E_h=E_{gh}$ or $E_g E_h=\{0_{gh}\}$. Hence, since $\norm{e^*e}=\norm e^2$ for all $e\in E_g$, we must have $E_{g^{-1}}E_g=E_{x}$. In addition, for all $f\in E_h\setminus\{0_h\}$ we have $1_x f\neq 0_h$ because $ff^*\in E_x$ and $\norm{1_x ff^*}=\norm{ff^*}=\norm f^2$. It follows that
\[
E_{g^{-1}}E_g E_h=E_{x}E_h=E_h\;,
\]
so $E_g E_h\neq \{0_{gh}\}$ and we conclude that $\pi$ is saturated.
\end{proof}

If $G$ is Hausdorff and $\pi:E\to G$ is a continuous Fell line bundle then $\pi$ is locally trivial (\cf\ appendix~\ref{app:bundles}), and thus it has a trivialization $\psi:U\times\CC\stackrel\cong\longrightarrow E_U$ over any open set $U\subset G$ for which there exists a nowhere zero section $s\in C(U,E)$, given by
\[
\psi(g,z)=zs(g)\;.
\]
In this case we say that $\pi$ is a \emph{twist} over $G$, and the pair $(G,\pi)$ is called a \emph{twisted groupoid}. This terminology follows loosely that of \cite{Renault}, where the name twist refers instead to the associated principal bundle of $\pi$.

It follows from \cite{DKR08}*{Example 5.5} that the restriction $\pi\vert_{E_{G_0}}:E_{G_0}\to G_0$ of a twist $\pi:E\to G$ is necessarily a trivial bundle, and thus the unit section $\un:G_0\to E$ is continuous. Hence, for a twist we have $C(G_0)\cong C(G_0,E)$, $C_c(G_0)\cong C_c(G_0,E)$, and $C_0(G_0)\cong C_0(G_0,E)$. The isomorphisms are, for each function $f:G_0\to\CC$, given by $f\mapsto f\un$.

\paragraph{Supports.}
Let $\pi:E\to G$ be a Fell bundle, and
let $U\subset G$ be an open set. The \emph{open support} $\osupp(s)$ of a section $s\in C(U,E)$ is defined by
\[
\osupp(s)=\{g\in G\st s(g)\neq 0_g\}\;.
\]
Since $U$ is an open set, the set $\osupp(s)$ is itself open because the restricton $s\vert_U:U\to E$ is continuous and $\osupp(s)=(s\vert_U)^{-1}(E\setminus\boldsymbol 0)$, where $E\setminus\boldsymbol 0$ is open due to Theorem~\ref{appthm:closedzerosection}.

\begin{lemma}\label{laxprodosupp}
Let $\pi:E\to G$ be a Fell bundle, and
let $U,V\in\ipi(G)$, $s\in C(U,E)$ and $t\in C(V,E)$. Then
\[
\osupp(st)\subset \osupp(s)\osupp(t)\;.
\]
If $\pi$ is a line bundle we have
\[
\osupp(st)= \osupp(s)\osupp(t)\;.
\]
\end{lemma}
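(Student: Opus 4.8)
The plan is to unwind the pointwise formula for the product section given in Lemma~\ref{bisconvolution0} and combine it with two elementary facts: in a Banach bundle the multiplication $E_h\times E_k\to E_{hk}$ is bilinear, so a product with a zero factor is zero; and, for a line bundle, the converse holds by Lemma~\ref{lem:zeroproplinebun}, so a product of two nonzero fiber elements is nonzero.

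First I would prove the inclusion $\osupp(st)\subset\osupp(s)\osupp(t)$. Take $g\in\osupp(st)$, i.e.\ $st(g)\neq 0_g$. Since $st\in C(UV,E)$ vanishes outside $UV$, necessarily $g\in UV$, so there are unique $h\in U$ and $k\in V$ with $g=hk$, and $st(g)=s(h)t(k)$ by Lemma~\ref{bisconvolution0}. If $s(h)=0_h$, then $s(h)t(k)=0_{hk}$ by bilinearity of $E_h\times E_k\to E_{hk}$, and likewise if $t(k)=0_k$; as $st(g)\neq 0_g=0_{hk}$ we must have $s(h)\neq 0_h$ and $t(k)\neq 0_k$, that is $h\in\osupp(s)$ and $k\in\osupp(t)$. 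Since $(h,k)\in G_2$, this gives $g=hk\in\osupp(s)\osupp(t)$.

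For the line bundle case I would prove the reverse inclusion. Let $g\in\osupp(s)\osupp(t)$, so $g=hk$ with $h\in\osupp(s)\subset U$, $k\in\osupp(t)\subset V$ and $(h,k)\in G_2$. Because $U$ and $V$ are local bisections, $h$ and $k$ are the unique elements of $U$ and $V$ whose product is $g$, so again $st(g)=s(h)t(k)$. Here $s(h)\neq 0_h$ and $t(k)\neq 0_k$, and since $\pi$ is a line bundle, Lemma~\ref{lem:zeroproplinebun} yields $s(h)t(k)\neq 0_{hk}=0_g$. Hence $g\in\osupp(st)$, and together with the first inclusion this gives $\osupp(st)=\osupp(s)\osupp(t)$.

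Neither step presents a genuine obstacle; the only point needing care is bookkeeping of domains, namely that $st$ is really a section over $UV$ vanishing off $UV$ and that $\osupp(s)\osupp(t)\subset UV$, so that the pointwise product formula of Lemma~\ref{bisconvolution0} applies to every relevant $g$. This is immediate from $\osupp(s)\subset U$, $\osupp(t)\subset V$, and the statement of that lemma.
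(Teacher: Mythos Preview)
Your proof is correct and follows essentially the same approach as the paper: both arguments identify $\osupp(st)$ and $\osupp(s)\osupp(t)$ via the pointwise product formula and then invoke Lemma~\ref{lem:zeroproplinebun} for the line bundle equality. Your version is simply more explicit about the bookkeeping (uniqueness of the decomposition $g=hk$ in local bisections, bilinearity forcing a zero factor to give a zero product), whereas the paper compresses this into a one-line description of the two sets.
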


\begin{proof}
$\osupp(s)\osupp(t) = \{gh\st s(g)\neq 0_g,\ t(h)\neq 0_h\}$ and $\osupp(st) = \{gh\st s(g)t(h)\neq 0_{gh}\}$, so the inclusion is immediate. In the case of a Fell line bundle the equality follows from Lemma~\ref{lem:zeroproplinebun}.
\end{proof}

The \emph{support} $\supp(s)$ is the closure $\overline{\osupp(s)}$ in $G$.
We shall also use the notation $\supp_U(s)$ to denote the support of the restriction $s\vert_U:U\to E$; that is,
\[
\supp_U(s) = \supp(s)\cap U\;.
\]

\subsection{Convolution algebras}

The usual definition of convolution algebras based on compactly supported functions needs to be adjusted in order to apply to non-Hausdorff groupoids. In this section we present an adaptation, for topological \'etale groupoids and Fell bundles, of the construction of the convolution algebra $C_c^{\infty}(G)$ of the holonomy groupoid of a foliation, as introduced in \cite{Connes82}. See also \cites{Paterson,KhSk02}. The adaptation is straightforward, but nevertheless we provide a detailed presentation that will be useful for the specific purposes of the following sections, and we also fix terminology and notation.

\paragraph{Compactly supported sections.}

Let $\pi:E\to G$ be a Fell bundle. If $V$ is any open and Hausdorff subspace of $G$ we denote by $C_c(V,E)$ the set of all the sections $s\in C(V,E)$ whose restriction $s\vert_V$ is compactly supported:
\[
C_c(V,E) := \{s\in C(V,E)\st \supp_V(s)\textrm{ is compact} \}\;.
\]
Equivalently,
\[
C_c(V,E) = \{ s\in C(V,E)\st \osupp(s)\subset K\subset V\textrm{ for some compact set }K\}\;.
\]
If $\pi$ is a trivial Fell bundle with fiber $A$ then $C_c(V,E)$ can be regarded as a space of $A$-valued functions and we write $C_c(V,A)$ instead of $C_c(V,E)$, or $C_c(V)$ (instead of $C_c(V,\CC)$) in the case of line bundles.

\begin{lemma}\label{prefellpg}
Let $\pi:E\to G$ be a Fell bundle, and $V\subset G$ an open Hausdorff subspace.
\begin{enumerate}
\item\label{Ccmon} For all open sets $W\subset V$ we have $C_c(W,E)\subset C_c(V,E)$.
\item\label{CcVpres} For all families $(V_\alpha)$ of open sets such that $V=\bigcup_\alpha V_\alpha$ we have
\[
C_c(V,E) = \sum_\alpha C_c(V_\alpha,E)\;.
\]
\item\label{CcVpresbis} And we also have
\begin{equation}\label{CcVpreseq}
C_c(V,E) = \sum_{\begin{minipage}{1.5cm}\begin{center}\scriptsize $U\in\ipi(G)$\\ $U\subset V$\end{center}\end{minipage}} C_c(U,E)\;.
\end{equation}
\end{enumerate}
\end{lemma}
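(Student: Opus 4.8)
The three parts are of increasing strength, and the plan is to prove them essentially in order, with part~\ref{CcVpres} doing the real work and part~\ref{CcVpresbis} following by combining \ref{CcVpres} with a covering argument. For part~\ref{Ccmon}, I would simply observe that a section $s\in C_c(W,E)$ extends by zero to a section of $C(V,E)$ (its restriction to $V$ is continuous because $\osupp(s)\subset W$ is contained in a compact set $K\subset W\subset V$, and $W$ is open, so $s$ agrees with a continuous section near each point of $W$ and is identically $0_g$ near each point of $V\setminus\overline{\osupp(s)}$); its support in $V$ is the compact set $\supp_V(s)=\supp_W(s)\subset K$, so $s\in C_c(V,E)$.

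For part~\ref{CcVpres}, the inclusion $\sum_\alpha C_c(V_\alpha,E)\subset C_c(V,E)$ is immediate from part~\ref{Ccmon} (each $V_\alpha\subset V$, and $C_c(V,E)$ is a linear subspace). The substantive inclusion is $C_c(V,E)\subset\sum_\alpha C_c(V_\alpha,E)$. Given $s\in C_c(V,E)$, let $K=\supp_V(s)$, a compact subset of $V$. Since $K\subset\bigcup_\alpha V_\alpha$ and $V$ is locally compact Hausdorff (being an open Hausdorff subspace of the locally Hausdorff, locally compact space $G$), I would pick a finite subcover $V_{\alpha_1},\dots,V_{\alpha_n}$ of $K$ and a partition of unity $\varphi_1,\dots,\varphi_n\in C_c(V,\CC)$ subordinate to it, with $\sum_i\varphi_i\equiv 1$ on $K$ and $\osupp(\varphi_i)\subset V_{\alpha_i}$ for each $i$. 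Then $\varphi_i s\in C_c(V_{\alpha_i},E)$: it is a continuous section (the scalar multiplication $C(V,\CC)\times C(V,E)\to C(V,E)$ is continuous in each bundle fibre, hence takes continuous sections to continuous sections), and $\osupp(\varphi_i s)\subset\osupp(\varphi_i)\subset V_{\alpha_i}$ is relatively compact there. Finally $s=\sum_{i=1}^n\varphi_i s$ as sections: at each $g\in K$ this is $\bigl(\sum_i\varphi_i(g)\bigr)s(g)=s(g)$, and at each $g\notin K$ both sides are $0_g$. Hence $s\in\sum_\alpha C_c(V_\alpha,E)$.

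For part~\ref{CcVpresbis}, one inclusion is again trivial from part~\ref{Ccmon} since every partial unit $U\subset V$ is an open subset of $V$. For the other, I would note that since $G$ is \'etale, $\ipi(G)$ is a basis for the topology of $G$; hence the partial units $U\subset V$ cover $V$, and applying part~\ref{CcVpres} to this particular open cover of $V$ yields $C_c(V,E)=\sum_{U\in\ipi(G),\,U\subset V}C_c(U,E)$, which is exactly \eqref{CcVpreseq}.

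The main obstacle is the one place where genuine topology enters: producing, for a section $s\in C_c(V,E)$, the decomposition $s=\sum_i\varphi_i s$ with each summand compactly supported inside a member of the cover. This rests on the local compactness and Hausdorffness of $V$ (inherited from the running hypotheses on $G$) so that a compactly supported partition of unity subordinate to a finite subcover of the compact set $\supp_V(s)$ exists, together with the (routine but necessary) fact that multiplying a continuous section by a continuous scalar function yields a continuous section. Everything else is bookkeeping with supports and the linearity of the spaces $C_c(-,E)$.
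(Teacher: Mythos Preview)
Your proposal is correct and follows essentially the same approach as the paper: part~\ref{Ccmon} via extension by zero (the paper phrases continuity through the closed cover $K\cup\bigl(V\setminus\osupp(s)\bigr)=V$, but your local check is equivalent), part~\ref{CcVpres} via a finite subcover of $\supp_V(s)$ and a subordinate partition of unity, and part~\ref{CcVpresbis} by applying \ref{CcVpres} to the cover of $V$ by local bisections. The only cosmetic difference is that the paper justifies the partition of unity by compactness and Hausdorffness of $\supp_V(s)$ rather than local compactness of $V$, but both work.
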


\begin{proof}
Let $W\subset V$ be an open set, let $s\in C_c(W,E)$, and let $K$ be a compact set such that $\osupp(s)\subset K\subset W$. Then $K$ is closed in $V$ because $V$ is Hausdorff. Another closed subset of $V$ is $Y:=V\setminus\osupp(s)$, and we have $K\cup Y=V$.
Write $t$ for the restriction $s\vert_V$. Then $t\vert_K$ is continuous because it is a restriction of $s\vert_W$, and $t\vert_Y$ is continuous because it is a restriction of the zero section. So $t$ is continuous or, equivalently, $s\in C(V,E)$. Hence, since $\osupp(s)\subset K\subset V$, we have $s\in C_c(V,E)$, showing that $C_c(W,E)\subset C_c(V,E)$. This proves \eqref{Ccmon}.

Let $(V_\alpha)$ be a family of open sets of $G$ such that $\bigcup_\alpha V_\alpha=V$. Due to \eqref{Ccmon}, for each $\alpha$ we have $C_c(V_\alpha,E)\subset C_c(V,E)$, and thus the following inclusion holds:
\[
\sum_\alpha C_c(V_\alpha,E)\subset C_c(V,E)\;.
\]
For the converse inclusion let $s\in C_c(V,E)$, and let $V_1,\ldots,V_n$ be a finite subfamily of $(V_\alpha)$ covering $\supp_V(s)$. Using a partition of unity (which exists because $\supp_V(s)$ is a compact Hausdorff space) we may write $s=s_1+\cdots+s_n$ with $s_i\in C_c(V_i,E)$ for all $i=1,\ldots,n$, which means that $s\in \sum_\alpha C_c(V_\alpha,E)$ and thus we have
\[
C_c(V,E) \subset \sum_\alpha C_c(V_\alpha,E)\;.
\]
This proves \eqref{CcVpres}.

Finally, since $G$ is \'etale, $V$ is a union of local bisections, so \eqref{CcVpresbis} follows from \eqref{CcVpres}.
\end{proof}

Given a Fell bundle $\pi:E\to G$, let us denote by $C_c^\ipi(G,E)$ the set of all the sections of $\pi$ which are compactly supported in local bisections:
\[
C_c^{\ipi}(G,E) :=\bigcup_{U\in\ipi(G)} C_c(U,E)\;.
\]

\begin{lemma}\label{bisconvolution}
Let $\pi:E\to G$ be a Fell bundle.
\begin{enumerate}
\item\label{bisconvolution1} There is an associative multiplication $(s,t)\mapsto st$ on $C_c^{\ipi}(G,E)$ defined by the formula
\[
st(g)=\left\{\begin{array}{ll}
0&\textrm{if }g\notin\osupp(s)\osupp(t)\\
s(h)t(k)&\textrm{if }g=hk\textrm{ with }h\in\osupp(s)\textrm{ and }k\in\osupp(t)\;.
\end{array}\right.
\]
\item\label{bisconvolution2} For all $s,t\in C_c^\ipi(G,E)$ and $\lambda\in\CC$ we have $\lambda s\in C_c^\ipi(G,E)$ and $(\lambda s)t=\lambda(st)=s(\lambda t)$.
\item\label{bisconvolution3} For all $s,t,u\in C_c^\ipi(G,E)$ if $t+u\in C_c^\ipi(G,E)$ then $st+su$ and $ts+tu$ belong to $C_c^\ipi(G,E)$ and $s(t+u)=st+su$ and $(t+u)s=ts+us$.
\end{enumerate}
\end{lemma}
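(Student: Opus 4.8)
The plan is to realise the product on $C_c^\ipi(G,E)$ as a restriction of the multiplication of Lemma~\ref{bisconvolution0}, and then to read off (2) and (3) from a single pointwise description of multiplication by a fixed section.

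For (1), I would fix $s\in C_c(U,E)$ and $t\in C_c(V,E)$ with $U,V\in\ipi(G)$ and first check that the displayed formula is unambiguous: if $g=hk$ with $h\in\osupp(s)$ and $k\in\osupp(t)$, then $r(h)=r(g)$ and injectivity of $r$ on $U\supseteq\osupp(s)$ force $h$ to be the unique element of $U$ over $r(g)$, so that $k=h^{-1}g$ as well. Hence the formula defines exactly the section $st\in C(UV,E)$ produced by Lemma~\ref{bisconvolution0} from $s$ and $t$: for $g\in UV$ the unique factorisation $g=hk$ with $h\in U$, $k\in V$ has $s(h)t(k)=0$ precisely when $g\notin\osupp(s)\osupp(t)$, while for $g\notin UV$ both recipes return $0$. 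Choosing compact sets $K_s\subset U$, $K_t\subset V$ with $\osupp(s)\subset K_s$ and $\osupp(t)\subset K_t$, the set $(K_s\times K_t)\cap G_2$ is compact (as $G_2$ is closed in $G\times G$, $G_0$ being Hausdorff), so its image $K_sK_t$ under $m$ is a compact subset of $UV$ containing $\osupp(st)\subset\osupp(s)\osupp(t)$ by Lemma~\ref{laxprodosupp}; thus $st\in C_c(UV,E)\subseteq C_c^\ipi(G,E)$. Associativity then comes for free from Lemma~\ref{bisconvolution0}: for $u\in C_c(W,E)$ both $(st)u$ and $s(tu)$ lie in $C_c(UVW,E)$ and at $g=g_1g_2g_3$ (with $g_1,g_2,g_3$ in the respective open supports) take the common value $s(g_1)t(g_2)u(g_3)$, vanishing at every other $g$.

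For (2) and (3) I would record the following uniform description. Fixing $s\in C_c^\ipi(G,E)$ and $U\in\ipi(G)$ with $\osupp(s)\subset U$, for each $g\in G$ there is at most one $h_g\in\osupp(s)$ with $r(h_g)=r(g)$, and when $h_g$ exists one has $g=h_g\,(h_g^{-1}g)$. A case check against the defining formula then shows that for \emph{any} section $w$ of $\pi$ whatsoever, $(sw)(g)=s(h_g)\,w(h_g^{-1}g)$ when $h_g$ exists and $(sw)(g)=0$ otherwise, the right-hand product being automatically $0$ as soon as $w(h_g^{-1}g)=0$. Because the fibrewise multiplications $E_{h_g}\times E_{h_g^{-1}g}\to E_g$ are bilinear, this exhibits $w\mapsto sw$ as additive and homogeneous in $w$: $s(\lambda w)=\lambda(sw)$ and $s(w+w')=sw+sw'$ hold pointwise for all sections $w,w'$ and all $\lambda\in\CC$, and the mirror statement for right multiplication by $s$ is proved identically, using injectivity of $d$ on $U$. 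Then (2) is immediate: $\lambda s\in C(U,E)$ has $\osupp(\lambda s)\subseteq\osupp(s)$, so $\lambda s\in C_c(U,E)\subseteq C_c^\ipi(G,E)$, and $(\lambda s)t=\lambda(st)=s(\lambda t)$ by the homogeneity just observed and its right-hand analogue. For (3), if $t+u\in C_c^\ipi(G,E)$ then $s(t+u)$ and $(t+u)s$ are defined and already lie in $C_c^\ipi(G,E)$ by (1); the additivity above gives $s(t+u)=st+su$ and $(t+u)s=ts+us$ as sections of $\pi$, and hence $st+su$ and $ts+us$ belong to $C_c^\ipi(G,E)$.

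The points that need genuine care are localized in (1) and (3). In (1) one must verify that the product is well defined independently of the chosen bisections and that its support fits inside a single compact subset of a single bisection. In (3) the subtlety is that $C_c^\ipi(G,E)$ is \emph{not} a linear subspace — this is why the distributive laws must be conditioned on $t+u\in C_c^\ipi(G,E)$ — and that the identity $s(t+u)=st+su$ has to be established pointwise, in particular covering the case of cancellation where $(t+u)(h_g^{-1}g)=0$ although $t(h_g^{-1}g)$ and $u(h_g^{-1}g)$ are both nonzero, rather than by any naive manipulation of unions of supports.
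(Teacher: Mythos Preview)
Your proposal is correct and follows essentially the same route as the paper. Both arguments realise the product as that of Lemma~\ref{bisconvolution0}, verify compact support via closedness of $G_2$ in $G\times G$, and establish (3) through the key observation that $\osupp(s)\cap G^{r(g)}$ has at most one element; your packaging of this observation into a single pointwise formula $(sw)(g)=s(h_g)w(h_g^{-1}g)$ is a mild streamlining of the paper's explicit two-case check (splitting on whether $s(t+u)(g)\neq 0$ or $(st+su)(g)\neq 0$), but the content is the same.
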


\begin{proof}
Let $U,V\in\ipi(G)$, and let $s\in C_c(U,E)$ and $t\in C_c(V,E)$. The product $st$ coincides with that of Lemma~\ref{bisconvolution0}, so in order to prove \eqref{bisconvolution1} we need only show that $st\in C_c(UV,E)$.
Let $K$ and $L$ be compact sets of $G$ such that $\osupp(s)\subset K\subset U$ and $\osupp(t)\subset L\subset V$.
The pointwise product of compact sets of $G$ is compact (this is because $G_2$ is closed in $G_1\times G_1$, which in turn follows from $G_0$ being Hausdorff), and thus $KL$ is compact. Hence, using Lemma~\ref{laxprodosupp}
we obtain $
\osupp(st) \subset\osupp(s)\osupp(t)\subset KL\subset UV
$,
so $st\in C_c(UV,E)$. This proves \eqref{bisconvolution1}.
Then \eqref{bisconvolution2} is obvious, and for \eqref{bisconvolution3} let
$s,t,u\in C_c^\ipi(G,E)$ be such that $t+u\in C_c^\ipi(G,E)$. We shall prove only the equation $s(t+u)=st+su$, as the other is similar.
The main thing to keep in mind is that for any $v,w\in C_c^\ipi(G)$ and $g\in G$ such that $G^{r(g)}\cap\osupp(v)\neq 0$ there is $h$ such that $G^{r(g)}\cap\osupp(v)=\{h\}$ and $vw(g)=v(h)w(h^{-1}g)$. So
let $g\in G$ be such that
\[
s(t+u)(g)\neq 0\;.
\]
Then $s(t+u)(g)=s(h)(t+u)(k)=s(h)t(k)+s(h)u(k)$, where $G^{r(g)}\cap\osupp(s)=\{h\}$ and $k=h^{-1}g$. Also, we obtain $s(h)t(k)=st(g)$ and $s(h)u(k)=su(g)$, and thus $s(t+u)(g)=(st+su)(g)$.
Now let $g\in G$ be such that
\[
(st+su)(g)\neq 0\;.
\]
Then either $st(g)\neq 0$ or $su(g)\neq 0$, and either way we have $G^{r(g)}\cap\osupp(s)\neq\emptyset$. Then $st(g)=s(h)t(k)$ and $su(g)=s(h)u(k)$ where $G^{r(g)}\cap\osupp(s)=\{h\}$ and $k=h^{-1}g$, and thus $(st+su)(g)=s(h)(t(k)+u(k))=s(t+u)(g)$. Hence, $s(t+u)=st+su$.
\end{proof}

Let $\pi:E\to G$ be a Fell bundle. If $V\subset G$ is open but not necessarily Hausdorff, the linear combination
on the right hand side of \eqref{CcVpreseq} still makes sense (because every local bisection $U\in\ipi(G)$ is Hausdorff) but not all sections in it are necessarily continuous in $V$, so we use the following different notation:
\[
\sections_c(V,E):=\sum_{\begin{minipage}{1.5cm}\begin{center}\scriptsize $U\in\ipi(G)$\\ $U\subset V$\end{center}\end{minipage}} C_c(U,E)\;.
\]
Making $V=G$ we obtain the \emph{space of compactly supported sections} of the Fell bundle,
\[
\sections_c(G,E)=\sum_{U\in\ipi(G)} C_c(U,E)=\spanmap\bigl(C_c^\ipi(G,E)\bigr)\;,
\]
which by Lemma~\ref{prefellpg} coincides with $C_c(G,E)$ if $G$ is a Hausdorff groupoid.

Note that, again by Lemma~\ref{prefellpg}, we could have defined $\sections_c(G,E)$ equivalently without any reference to local bisections (\cf\ \cites{Paterson,KhSk02}):
\[
\sections_c(G,E)=\sum\bigl\{ C_c(V,E)\st V\subset G\textrm{ is Hausdorff and open}\bigr\}\;.
\]

Similar conventions as before apply to trivial Fell bundles, namely
for a trivial bundle $\pi:E\to G$ with fiber $A$ we write $\sections_c(V,A)$ instead of $\sections_c(V,E)$, or $C_c(V)$ in the case of a line bundle.

\paragraph{Convolution and involution.}

Let $\pi:E\to G$ be a Fell bundle.
The \emph{convolution product} of $\sections_c(G,E)$ is obtained from the product of Lemma~\ref{bisconvolution}\eqref{bisconvolution1} by bilinear extension. This is well defined due to Lemma~\ref{bisconvolution}\eqref{bisconvolution2}--\eqref{bisconvolution3}. Equivalently, we obtain familiar formulas in terms of finite sums: for all $g\in G$ and $s,t\in \sections_c(G,E)$
\[
st(g)=\sum_{g=hk} s(h)t(k) = \sum_{k\in G_{d(g)}} s(gk^{-1})t(k) = \sum_{h\in G^{r(g)}} s(h)t(h^{-1}g)\;.
\]
Also, given $U\in\ipi(G)$ we have a skew-linear map
\begin{equation}\label{bisinvolution}
s\mapsto s^*: C_c(U,E)\to C_c(U^{-1},E)
\end{equation}
defined by $s^*(x) = s(x^{-1})^*$ for all $x\in U$, and this extends in an obvious way to $\sections_c(G,E)$. These operations make $\sections_c(G,E)$ an involutive algebra that we refer to as the \emph{convolution algebra} of the Fell bundle.

So for each Fell bundle $\pi:E\to G$ we have an associated involutive quantale $\Sub \sections_c(G,E)$ and a mapping
\[
C_c(-,E):\ipi(G)\to\Sub\sections_c(G,E)
\]
which to each $U\in\ipi(G)$ assigns $C_c(U,E)$.
From the previous lemmas it follows that this mapping is a lax homomorphism of involutive semigroups that also preserves joins of compatible sets:

\begin{lemma}\label{fellpg}
Let $\pi:E\to G$ be a Fell bundle. The following properties hold:
\begin{enumerate}
\item\label{fellpgjoins} For every compatible family $(U_\alpha)$ in $\ipi(G)$ we have $C_c\bigl(\bigcup_\alpha U_\alpha,E\bigr) = \sum_\alpha C_c(U_\alpha,E)$.
\item\label{fellpginv} For all $U\in \ipi(G)$ we have $C_c(U^{-1},E)=C_c(U,E)^*$.
\item\label{fellpglaxmult} For all $U,V\in\ipi(G)$ we have $C_c(U,E)\odot C_c(V,E)\subset C_c(UV,E)$.
\end{enumerate}
\end{lemma}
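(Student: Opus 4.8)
The plan is to obtain all three parts by assembling results already established, so that essentially no new work is required. For part~\eqref{fellpgjoins}, I would first recall that the pseudogroup $\ipi(G)$, in its concrete description from Section~\ref{subsec:egs}, has joins of compatible families computed as set-theoretic unions: if $(U_\alpha)$ is a compatible family of local bisections then $U:=\bigcup_\alpha U_\alpha$ is again a local bisection and it is the join $\V_\alpha U_\alpha$ in $\ipi(G)$. Hence $C_c\bigl(\bigcup_\alpha U_\alpha,E\bigr)$ is a legitimate instance of the notation $C_c(U,E)$ with $U\in\ipi(G)$, and $U$ is open and Hausdorff (being homeomorphic to an open subset of $G_0$). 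The claimed equality is then exactly Lemma~\ref{prefellpg}\eqref{CcVpres} applied to the open cover $(U_\alpha)$ of $U$, which yields $C_c(U,E)=\sum_\alpha C_c(U_\alpha,E)$.

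Part~\eqref{fellpginv} I would derive directly from the skew-linear map $s\mapsto s^*$ of \eqref{bisinvolution}: it is a bijection $C_c(U,E)\to C_c(U^{-1},E)$, since the analogous map for $U^{-1}$ is its inverse (using $s^{**}=s$ and $(U^{-1})^{-1}=U$), and therefore its image $C_c(U,E)^*$ is all of $C_c(U^{-1},E)$. For part~\eqref{fellpglaxmult}, I would invoke Lemma~\ref{bisconvolution}\eqref{bisconvolution1}, which shows that for $s\in C_c(U,E)$ and $t\in C_c(V,E)$ the convolution $st$ lies in $C_c(UV,E)$; hence the pointwise product satisfies $C_c(U,E)\cdot C_c(V,E)\subset C_c(UV,E)$. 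Since $C_c(UV,E)$ is a linear subspace of $\sections_c(G,E)$ and the multiplication $\odot$ in $\Sub\sections_c(G,E)$ is by definition the linear span of the pointwise product, it follows that $C_c(U,E)\odot C_c(V,E)=\spanmap\bigl(C_c(U,E)\cdot C_c(V,E)\bigr)\subset C_c(UV,E)$.

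No genuine difficulty is expected; the only step requiring a moment's care is the observation, used in part~\eqref{fellpgjoins}, that compatible joins in $\ipi(G)$ coincide with unions and remain local bisections — part of the standard description of spatial pseudogroups recalled in Section~\ref{subsec:egs} — together with the fact that every local bisection is locally compact Hausdorff, as noted at the start of Section~\ref{sec:bundlesasmaps}, so that Lemma~\ref{prefellpg} is applicable.
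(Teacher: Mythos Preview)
Your proposal is correct and follows essentially the same approach as the paper: part~\eqref{fellpgjoins} is reduced to Lemma~\ref{prefellpg} (you cite \eqref{CcVpres}, the paper cites the equivalent \eqref{CcVpresbis}), part~\eqref{fellpginv} comes from the bijection in \eqref{bisinvolution}, and part~\eqref{fellpglaxmult} from the multiplication landing in $C_c(UV,E)$ as in the proof of Lemma~\ref{bisconvolution}\eqref{bisconvolution1}. The only point the paper makes slightly more explicit is the equivalence ``$(U_\alpha)$ compatible $\iff \bigcup_\alpha U_\alpha\in\ipi(G)$'', which you correctly identify as the one step needing care.
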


\begin{proof}
Since $\ipi(G)$ is a complete inverse semigroup, a family $(U_\alpha)$ in $\ipi(G)$ is compatible if and only if $\bigcup_\alpha U_\alpha\in\ipi(G)$. So
\eqref{fellpgjoins} follows from Lemma~\ref{prefellpg}\eqref{CcVpresbis}.
Then \eqref{fellpginv} follows directly from the definition of the involution in \eqref{bisinvolution}, and \eqref{fellpglaxmult} follows from the fact that for each $U,V\in\ipi(G)$ the multiplication in $C_c^\ipi(G,E)$ restricts to a mapping
$(s,t)\mapsto st: C_c(U,E)\times C_c(V,E)\to C_c(UV,E)$ (\cf\ proof of Lemma~\ref{bisconvolution}\ref{bisconvolution1}).
\end{proof}

\subsection{Compatible norms}\label{sec:compatible}

Let us now define a very general notion of C*-completion of the convolution algebra of a Fell bundle, which for a large class of bundles includes the reduced C*-algebra and the full C*-algebra as examples. For continuous second countable Fell bundles on Hausdorff groupoids this will lead to a description of reduced C*-algebras that we may regard as an alternative abstract definition of them.

\paragraph{Basic definitions and facts.}
By a \emph{compatible C*-norm} (or simply a \emph{compatible norm}) for a Fell bundle $\pi:E \to G$ is meant any 
C*-norm $\norm~$ on $\sections_c(G,E)$ which satisfies, for all $s\in\sections_c(G,E)$,
\begin{eqnarray}
\norm s_\infty &\le & \norm s\;, \label{eq:cn1}\\
\norm s_\infty &=& \norm s \quad\textrm{if }s\in C_c(U,E)\textrm{ for some }U\in\ipi(G)\;. \label{eq:cn2}
\end{eqnarray}
The completion of $\sections_c(G,E)$ in a compatible norm is called a \emph{compatible C*-completion} (or simply a \emph{compatible completion}).

We note that, for a local bisection $U\in\ipi(G)$, the closure of $C_c(U,E)$ in a compatible completion can be identified with $C_0(U,E)$.

Examples of compatible norms are, for a large class of Fell bundles and groupoids, the full norm and the reduced norm, and also any C*-norm lying between them such as the norms in \cites{BS05,BEW15} and references therein (\cf\ Theorem~\ref{appthm:compcompletions} and Theorem~\ref{appthm:compcompletions2}).

\paragraph{Evaluation maps.}
Let $\norm~$ be a compatible norm for a Fell bundle $\pi:E\to G$ and let us denote by $A$ the corresponding compatible completion. The condition $\norm~_\infty\le\norm~$ implies that for each $g\in G$ the mapping
\begin{eqnarray*}
\sections_c(G,E)&\to& E_g\\
s&\mapsto& s(g)
\end{eqnarray*}
is linear and uniformly continuous, and thus it extends to a uniformly continuous linear \emph{evaluation map}
\[
\eval_g:A\to E_g\;.
\]
We note that each element of $A$ is of the form $a=\lim_k\sum_{i=1}^{n_k} s_{i,k}$ where each $s_{i,k}$ is in $C_c(U_{i,k},E)$ for some $U_{i,k}\in\ipi(G)$, and so $\eval_g(a)$ is a similar limit in $E_g$:
\begin{equation}\label{eq:eval}
\eval_g(a) = \lim_k\sum_{i=1}^{n_k} s_{i,k}(g)\;.
\end{equation}

If $U\in\ipi(G)$ then any element $a\in \overline{C_c(U,E)}\cong C_0(U,E)$ can be regarded as a section of $\pi$ and thus we shall write $a(g)$ instead of $\eval_g(a)$ for each $g\in G$. 

The following is a useful formula for the evaluation of a product where one of the factors can be approximated by sections which are compactly supported in a given local bisection.

\begin{lemma}\label{lem:prodeval}
Let $\pi:E\to G$ be a Fell bundle and $A$ a compatible completion. Let $a\in A$ and $b\in \overline{C_c(U,E)}$ with $U\in\ipi(G)$, and let $g\in G$. Then
$U\cap G_{d(g)}$ is either empty or a singleton, and we have
\[
\eval_g(ab) = \left\{\begin{array}{ll}
\eval_{gk^{-1}}(a) b(k)&\textrm{if $U\cap G_{d(g)}=\{k\}$}\;,\\
0_g&\textrm{if $U\cap G_{d(g)}=\emptyset$}\;.
\end{array}\right.
\]
\end{lemma}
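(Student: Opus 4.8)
\textbf{Proof proposal for Lemma~\ref{lem:prodeval}.}

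The plan is to reduce everything to the case where $a$ itself lies in $\sections_c(G,E)$, and within that to the case where $a\in C_c(W,E)$ for a single local bisection $W\in\ipi(G)$, using the fact that $\eval_g$ is continuous and linear and that the multiplication of $A$ is continuous. First I would establish the elementary topological claim: since $U\in\ipi(G)$, the domain map $d$ restricts to an injection $d\vert_U$, so $U\cap G_{d(g)}=U\cap d^{-1}(\{d(g)\})$ has at most one element; call it $k$ when it is nonempty, and note $d(k)=d(g)$, hence $gk^{-1}$ is defined and $g=(gk^{-1})k$.

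Next, for $a=s\in C_c(W,E)$ and $b=t\in C_c(U,E)$, the convolution formula from Lemma~\ref{bisconvolution} gives $\eval_g(st)=st(g)$, which is $s(h)t(\ell)$ for the unique decomposition $g=h\ell$ with $h\in\osupp(s)\subset W$ and $\ell\in\osupp(t)\subset U$ if such a decomposition exists, and $0_g$ otherwise. I would observe that $\ell\in U$ with $d(\ell)=d(g)$ forces $\ell=k$ (so in particular the product is $0_g$ unless $U\cap G_{d(g)}\ne\emptyset$), whence $h=gk^{-1}$ and $st(g)=s(gk^{-1})t(k)=\eval_{gk^{-1}}(s)\,b(k)$; here I use that $b=t$ so $b(k)=t(k)=\eval_k(t)$, and that $s(gk^{-1})=\eval_{gk^{-1}}(s)$ since $s\in\sections_c(G,E)$. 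This handles the generators $s\in C_c^\ipi(G,E)$; linearity of $\eval_g$, of $\eval_{gk^{-1}}$, of the maps $x\mapsto xb$ and $x\mapsto x b(k)$, together with Lemma~\ref{bisconvolution}\eqref{bisconvolution2}--\eqref{bisconvolution3}, then extends the formula to all $a\in\sections_c(G,E)=\spanmap(C_c^\ipi(G,E))$.

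Finally I would pass to the general $a\in A$ by density: write $a=\lim_n a_n$ with $a_n\in\sections_c(G,E)$ in the compatible norm. Then $ab=\lim_n a_n b$ since multiplication in $A$ is norm-continuous, so by continuity of $\eval_g$ we get $\eval_g(ab)=\lim_n\eval_g(a_n b)=\lim_n \eval_{gk^{-1}}(a_n)\,b(k)$ in the case $U\cap G_{d(g)}=\{k\}$; and $\eval_{gk^{-1}}$ is continuous and the map $E_{gk^{-1}}\to E_g$, $e\mapsto e\,b(k)$ is continuous (it is the bilinear fibre multiplication restricted to one argument, bounded by $\norm{b(k)}$), so the right-hand side converges to $\eval_{gk^{-1}}(a)\,b(k)$. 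In the case $U\cap G_{d(g)}=\emptyset$ every $\eval_g(a_n b)=0_g$ and the limit is $0_g$ by closedness of the zero section (or simply by continuity of $\eval_g$ landing in the fixed value $0_g$). I do not expect a serious obstacle here; the only point requiring a little care is making sure the two ``continuity of fibrewise multiplication'' facts are invoked correctly — namely that $e\mapsto e\,b(k)$ is a bounded linear map $E_{gk^{-1}}\to E_g$ — which is immediate from axiom~(6) of a Fell bundle, $\norm{ef}\le\norm e\norm f$.
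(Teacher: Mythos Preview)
Your argument is essentially the paper's, and it is sound modulo one small omission. The paper proceeds by simultaneously approximating $a=\lim_n s_n$ with $s_n\in\sections_c(G,E)$ and $b=\lim_n t_n$ with $t_n\in C_c(U,E)$, then computing $(s_n t_n)(g)$ directly from the convolution sum $\sum_{g=hk}s_n(h)t_n(k)$; since $t_n$ vanishes off $U$, either every term vanishes (when $U\cap G_{d(g)}=\emptyset$) or the only surviving term is $s_n(gk^{-1})t_n(k)$, and one passes to the limit. Your version unpacks $a$ more finely (down to single-bisection pieces via Lemma~\ref{bisconvolution}) but only approximates in the $a$-variable: in your final step you use $\eval_g(a_n b)=\eval_{gk^{-1}}(a_n)\,b(k)$ for $a_n\in\sections_c(G,E)$ and \emph{general} $b\in\overline{C_c(U,E)}$, whereas your middle paragraph only established this for $b=t\in C_c(U,E)$. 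You need one more continuity pass, writing $b=\lim_m t_m$ with $t_m\in C_c(U,E)$ and using continuity of $\eval_k$ (equivalently, that on $\overline{C_c(U,E)}$ the norm dominates $\norm{\cdot(k)}$) together with boundedness of $e\mapsto e\,c$ to get $\eval_g(a_n b)=\lim_m \eval_{gk^{-1}}(a_n)\,t_m(k)=\eval_{gk^{-1}}(a_n)\,b(k)$. With that inserted --- or by approximating both variables at once as the paper does --- your proof is complete.
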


\begin{proof}
Let $a=\lim_n s_n$ and $b=\lim_n t_n$ for sequences $(s_n)$ and $(t_n)$ in $\sections_c(G,E)$ and $C_c(U,E)$, respectively. Then
$\eval_g(ab)=\lim_n (s_n t_n)(g)$. For each $n$ we have
\begin{equation}\label{minisum}
(s_n t_n)(g)=\sum_{g=hk} s_n(h)t_n(k)\;.
\end{equation}
If $U\cap G_{d(g)}=\emptyset$ we have $t_n(k)=0_k$ for all $k$ such that $g=hk$, so the sum \eqref{minisum} is zero for all $n$, and thus $\eval_g(ab)=0_g$. Otherwise, since $U$ is a local bisection (so $d$ is injective when restricted to $U$), there is a unique arrow $k\in U\cap G_{d(g)}$, and thus the sum  \eqref{minisum} equals $s_n(h)t_n(k)$ with $h=gk^{-1}$. Then $\eval_g(ab)=\lim_n(s_n(h)t_n(k)) = \eval_h(a)b(k)$\;.
\end{proof}

Let $U,V\in\ipi(G)$. A consequence of Lemma~\ref{lem:prodeval} is that if $a\in \overline{C_c(U,E)}$ and $b\in\overline{C_c(V,E)}$ then for all $g\in UV$ we have
\[
\eval_g(ab)=a(h)b(k)
\]
where $h$ and $k$ are the unique arrows of $G$ in $U$ and $V$, respectively, such that $g=hk$. So $\eval_g(ab)$ coincides with $ab(g)$ as given by Lemma~\ref{bisconvolution0}. In other words, the operation
\[
\overline{C_c(U,E)}\times \overline{C_c(V,E)}\to \overline{C_c(UV,E)}
\]
obtained by restricting the multiplication of $A$ coincides with the general product of continuous sections of Lemma~\ref{bisconvolution0}, and it follows that for each $a\in\overline{C_c(U,E)}$ and $b\in\overline{C_c(V,E)}$ we have the following formulas for the open supports (\cf\ Lemma~\ref{laxprodosupp}): $\osupp(ab)\subset\osupp(a)\osupp(b)$ and, if $\pi$ is a line bundle, $\osupp(ab)=\osupp(a)\osupp(b)$.

\paragraph{Reduction of compatible norms.} Let $\pi:E\to G$ be a Fell bundle and $A$ a compatible completion. For each $a\in A$ we define a section $\hat a\in\sections(G,E)$ by, for all $g\in G$,
\[
\hat a(g) = \eval_g(a)\;.
\]
The set of all such sections is denoted by $\hat A$ and it is referred to as the \emph{reduction} of $A$. The linearity of $\eval_g$ implies that $\hat A$ is a linear subspace of $\sections(G,E)$ and that the mapping
\begin{eqnarray*}
A&\to& \hat A\\
a&\mapsto& \hat a
\end{eqnarray*}
is linear. We shall refer to the latter as the \emph{reduction map}. Its kernel is
\[
J=\bigcap_{g\in G}\ker\eval_g
\]
and thus it is a closed linear subspace, so $\hat A$ is a Banach space with the norm
\[
\rnorm{\hat a} = \inf_{b\in J}\norm{a+b}\;,
\]
which we refer to as the \emph{reduction} of $\norm~$. We also note that $J$ is an ideal of $A$, since if $a\in J$ and $t\in C_c(U,E)$ for some $U\in\ipi(G)$ we have, by Lemma~\ref{lem:prodeval}, $\eval_g(at)=0$ because $\eval_h(a)=0$ for all $h\in G$, and thus $at\in J$. Similarly, $ta\in J$. Hence, $\rnorm~$ is a C*-norm and  $\hat A$ is a C*-algebra.

\paragraph{Faithful compatible norms.}

Let $\pi:E\to G$ be a Fell bundle, and let $A$ be a compatible completion. Note that for $s\in \sections_c(G,E)$ we have $\hat s=s$ but $\rnorm s <\norm s$ in general, for the condition $\rnorm ~=\norm ~$ on all $\sections_c(G,E)$ holds if and only if the reduction map $A\to\hat A$ is an isomorphism. If the latter conditions hold we say that the norm (and the completion) is \emph{faithful}. In such situations we shall always identify $A$ with $\hat A$, thus viewing $A$ concretely as a subspace of $\sections(G,E)$ and writing, \eg, $a(g)$ instead of $\eval_g(a)$ or $\hat a(g)$.

\begin{lemma}\label{AinC0}
Let $\pi:E\to G$ be a Fell bundle and $A$ a compatible completion.
If $\norm~$ is faithful and $s\in A$ is a limit of continuous compactly supported sections (\ie, $s\in\overline{C_c(G,E)}$) then $s$ is continuous. Moreover, if $G$ is a Hausdorff groupoid we have $A\subset C_0(G,E)$.
\end{lemma}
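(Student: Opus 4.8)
The plan is to reduce everything to one soft fact about Banach bundles. Since $\norm~$ is faithful I would identify $A$ with its reduction $\hat A\subset\sections(G,E)$, so that $s(g)$ means $\eval_g(s)$. The first observation is that each $\eval_g\colon A\to E_g$ is a contraction: for $c\in\sections_c(G,E)$ one has $\norm{\eval_g(c)}=\norm{c(g)}\le\norm c_\infty\le\norm c$ by \eqref{eq:cn1}, and since $\eval_g$ is the continuous linear extension of $c\mapsto c(g)$ this gives $\norm{\eval_g(a)}\le\norm a$ for all $a\in A$. Consequently, if $s=\lim_n s_n$ in $A$ with each $s_n\in C_c(G,E)$ (so $\eval_g(s_n)=s_n(g)$), then for every $g\in G$ we get $\norm{s_n(g)-s(g)}=\norm{\eval_g(s_n-s)}\le\norm{s_n-s}$, hence $\delta_n:=\sup_{g\in G}\norm{s_n(g)-s(g)}\to 0$; that is, $s_n\to s$ \emph{uniformly} as sections of $\pi$.

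The substance of the first assertion is then that a uniform limit of continuous sections of an upper semicontinuous Banach bundle is continuous, which I would prove directly. Fix $g_0\in G$, put $e_0=s(g_0)\in E_{g_0}$, and — using that the bundle has enough sections — pick a continuous section $\sigma$ on an open neighbourhood $U_0$ of $g_0$ with $\sigma(g_0)=e_0$. Recall (appendix~\ref{app:bundles}) that the sets
\[
W(U,\epsilon)=\{e\in E\st \pi(e)\in U,\ \norm{e-\sigma(\pi(e))}<\epsilon\}
\]
($g_0\in U\subset U_0$ open, $\epsilon>0$) form a neighbourhood base of $e_0$ in $E$. Given such $U$ and $\epsilon$, choose $n$ with $\delta_n<\epsilon/2$. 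Since $s_n$ and $\sigma$ are continuous sections on $U_0$ and the norm is upper semicontinuous, $g\mapsto\norm{s_n(g)-\sigma(g)}$ is upper semicontinuous on $U_0$; at $g_0$ it equals $\norm{s_n(g_0)-e_0}\le\delta_n<\epsilon/2$, so it is $<\epsilon/2$ on some open $V$ with $g_0\in V\subset U_0$. For $g\in V\cap U$ then $\norm{s(g)-\sigma(g)}\le\norm{s(g)-s_n(g)}+\norm{s_n(g)-\sigma(g)}<\delta_n+\epsilon/2<\epsilon$, i.e.\ $s(g)\in W(U,\epsilon)$. Thus $V\cap U\subset s^{-1}(W(U,\epsilon))$, and since the $W(U,\epsilon)$ form a base at $e_0=s(g_0)$, $s$ is continuous at $g_0$; as $g_0$ was arbitrary, $s\in C(G,E)$.

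For the ``moreover'', suppose $G$ is Hausdorff. Then $\sections_c(G,E)=C_c(G,E)$ (the remark following Lemma~\ref{prefellpg}), so $A=\overline{C_c(G,E)}$ and the first part already yields $A\subset C(G,E)$. It remains to see that each $s\in A$ vanishes at infinity: given $\epsilon>0$, choose $s_n\in C_c(G,E)$ with $\norm{s_n-s}<\epsilon$; then $\sup_g\norm{s_n(g)-s(g)}<\epsilon$ and $K:=\supp(s_n)$ is compact, and for $g\notin K$ we have $s_n(g)=0_g$, so $\norm{s(g)}=\norm{s(g)-s_n(g)}<\epsilon$. Hence $s\in C_0(G,E)$, i.e.\ $A\subset C_0(G,E)$.

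The one step requiring genuine care is the middle paragraph: because the Banach bundle is only \emph{upper} semicontinuous, one cannot simply quote the classical statement about uniform limits of sections of continuous Banach bundles, and the argument must be arranged — as above — so that only upper semicontinuity of the norm (applied to the continuous section $s_n-\sigma$) is invoked. The remaining ingredients (contractivity of $\eval_g$, the identity $\sections_c(G,E)=C_c(G,E)$ for Hausdorff $G$, and the vanishing-at-infinity estimate) are routine.
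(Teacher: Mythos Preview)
Your proof is correct and follows the same line as the paper's: convergence in $A$ dominates the supremum norm, hence is uniform; a uniform limit of continuous sections is continuous; and for Hausdorff $G$ one has $\sections_c(G,E)=C_c(G,E)$, whence $A\subset C_0(G,E)$. The paper simply asserts the uniform-limit step and appeals abstractly to the sup-norm completion being $C_0(G,E)$, whereas you supply the explicit neighbourhood-base argument (carefully using only upper semicontinuity of the norm) and a direct $\epsilon$-estimate for vanishing at infinity --- useful detail, but not a different strategy.
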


\begin{proof}
Since $\norm~$ dominates the supremum norm, if $(s_n)$ is a sequence in $\sections_c(G,E)$ that converges to $s\in A$ then it converges uniformly. So if $s_n$ is continuous for all $n$ the section $s$ is continuous. If $G$ is Hausdorff then $\sections_c(G,E)=C_c(G,E)$, and the completion of $C_c(G,E)$ in the supremum norm is $C_0(G,E)$. Therefore the condition $\norm~_\infty\le\norm~$ implies that the embedding $C_c(G,E)\to C_0(G,E)$ extends to a mapping $A\to C_0(G,E)$, so if $A$ is faithful it can be regarded as a subset of $C_0(G,E)$ as stated.
\end{proof}

Let $\pi:E\to G$ be a Fell bundle and $A$ a faithful compatible completion. If $G$ is Hausdorff we have, for all $s\in A$, a restriction $s\vert_{G_0}$, which is continuous on $G_0$ and, moreover, since $G_0$ is both closed and open, is in $C_0(G_0,E)$. So we have a well defined restriction map
\[
P:A\to C_0(G_0,E)\;.
\]

\begin{lemma}\label{lem:faithcondexp}
Let $\pi:E\to G$ be a nondegenerate Fell bundle with $G$ Hausdorff, and $A$ a faithful compatible completion. The restriction map $P:A\to C_0(G_0,E)$ is a faithful conditional expectation.
\end{lemma}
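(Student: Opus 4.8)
The plan is to check, one ingredient at a time, that $P:A\to C_0(G_0,E)$ is a faithful conditional expectation onto the sub-C*-algebra $C_0(G_0,E)=\overline{C_c(G_0,E)}\subset A$ (the inclusion $C_c(G_0,E)\hookrightarrow A$ being isometric by~\eqref{eq:cn2}). That $P$ is well defined with values in $C_0(G_0,E)$ has already been recorded, and is where Hausdorffness of $G$ enters (so that $G_0$ is clopen in $G$). Linearity of $P$ is immediate from linearity of the maps $\eval_x$, and $P$ is contractive because each $\eval_g$ is contractive, being the continuous extension of $s\mapsto s(g)$ which is bounded by $\norm s_\infty\le\norm s$; thus $\norm{P(a)}=\sup_{x\in G_0}\norm{\eval_x(a)}\le\norm a$. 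Moreover $P$ restricts to the identity on $C_0(G_0,E)$, since an element of $\overline{C_c(G_0,E)}$, regarded concretely as a section vanishing off $G_0$, coincides with its own restriction to $G_0$; hence $P$ is idempotent with range precisely $C_0(G_0,E)$.

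For the bimodule property the only tool needed is Lemma~\ref{lem:prodeval} with $U=G_0$. For $x\in G_0$ one has $d(x)=x$, $x^{-1}=x$ and $G_0\cap G_x=\{x\}$, so the lemma gives $\eval_x(ab)=\eval_x(a)\,b(x)$ for all $a\in A$ and $b\in C_0(G_0,E)$; reading this off pointwise, and recalling that the multiplication of $C_0(G_0,E)$ is fibrewise, yields $P(ab)=P(a)\,b$. Since $\eval_x(s^*)=\eval_x(s)^*$ on $G_0$ (again using $x^{-1}=x$), we get $P(s^*)=P(s)^*$, and taking adjoints in the previous identity gives $P(ba)=b\,P(a)$. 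Positivity of $P$ may now be quoted from Tomiyama's theorem --- a contractive idempotent onto a C*-subalgebra is automatically positive and bimodular --- or obtained directly, as part of the computation that will also yield faithfulness. Writing $a=\lim_n s_n$ with $s_n\in\sections_c(G,E)$, for each $x\in G_0$ the convolution formula gives a finite sum
\[
(s_n^*s_n)(x)=\sum_{k\in G_x}s_n(k)^*s_n(k)\;,
\]
finite because each $s_n$ is compactly supported in finitely many local bisections and each local bisection meets $G_x$ in at most one arrow; every summand lies in the positive cone of the C*-algebra $E_x$ by the Fell-bundle axiom $e^*e\ge 0$, so $\eval_x(a^*a)=\lim_n(s_n^*s_n)(x)\ge 0$, and since positivity in the section algebra $C_0(G_0,E)$ of an upper semicontinuous C*-bundle is detected fibrewise, $P(a^*a)\ge 0$.

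Faithfulness follows from the same display, and this is the one step requiring a little care. Fix $g\in G$ and put $x=d(g)\in G_0$; since $g$ occurs among the indices $k\in G_x$, we have $(s_n^*s_n)(x)\ge s_n(g)^*s_n(g)$ in $E_x$ for every $n$, and passing to the limit in the closed positive cone gives
\[
\eval_{d(g)}(a^*a)\ \ge\ \eval_g(a)^*\eval_g(a)\ \ge\ 0\;.
\]
Hence if $P(a^*a)=0$, i.e.\ $\eval_x(a^*a)=0$ for all $x\in G_0$, then $\eval_g(a)^*\eval_g(a)=0$, so $\eval_g(a)=0$, for every $g\in G$; since $A$ is faithful, the reduction map $a\mapsto\hat a$ is injective, whence $a=0$. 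The main obstacle is really the groupoid bookkeeping behind the display: that the only factorizations $hk=x$ of a unit $x$ have $k=h^{-1}$, that $h\in G^x$ then forces $h^{-1}\in G_x$, that these sums are finite, and that the order on $C_0(G_0,E)$ used to pass to limits is fibrewise --- all of which is routine. Nondegeneracy of $\pi$ is not needed for the arguments above; it serves to guarantee that $C_0(G_0,E)$ is nonzero and acts nondegenerately on $A$ (so that, in particular, it contains an approximate unit of $A$), which is the sense in which $P$ is to be regarded as a conditional expectation in the sequel.
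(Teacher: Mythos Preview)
Your proof is correct and, for the faithfulness part, essentially identical to the paper's: both establish the fibrewise inequality $(s_n^*s_n)(x)\ge s_n(g)^*s_n(g)$ in $E_x$, pass to the limit, and use injectivity of the reduction map. For the conditional expectation part, the paper takes a slightly shorter route---it computes $\norm{P}=1$ (using nondegeneracy to get $\norm P\neq 0$, idempotence to get $\norm P\ge 1$, and the norm comparison $\norm{P(a)}=\norm{P(a)}_\infty\le\norm a_\infty\le\norm a$ to get $\norm P\le 1$) and then invokes Tomiyama's theorem, whereas you verify the bimodule property directly via Lemma~\ref{lem:prodeval} and positivity via the fibrewise sum. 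Your approach is a bit more hands-on but has the advantage of making it visible that nondegeneracy is only used to ensure $C_0(G_0,E)\neq 0$, not in the verification itself; the paper's use of Tomiyama obscures this.
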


\begin{proof}
$\norm P\neq 0$ because the bundle is nondegenerate, and thus $\norm P\ge 1$ because $P$ is an idempotent map. For all $a\in A$ we have $\norm{P(a)}=\norm{P(a)}_\infty$ because $A$ is compatible. Hence,
\[
\norm{P(a)}=\norm{P(a)}_\infty\le\norm a_\infty\le\norm a\;,
\]
so $\norm P= 1$. Hence, $P$ is a conditional expectation~\cite{Tom57} (see \cite{Blackadar}*{Theorem II.6.10.2}).
In order to prove that $P$ is faithful let $g\in G$ and write $x:=g^{-1}g$. For all $s\in C_c(G,E)$ we have
\[
s^*s(x)=\sum_{h^{-1}h=x} s^*(h^{-1})s(h) = \sum_{h^{-1}h=x} s(h)^*s(h)\ \ge\  s(g)^*s(g)\;,
\]
where the inequality on the right follows from the fact that $s(g)^*s(g)$ is either zero or one of the at most finitely many non-zero summands in $\sum_h s(h)^*s(h)$, all of which are positive elements of the C*-algebra $E_x$. Now let $(s_n)$ be a sequence in $C_c(G,E)$ converging to $a$. Then
\[
a^*a(x) = \lim_n s_n^* s_n(x)\ge\lim_n s_n(g)^*s_n(g)=a(g)^*a(g)\;,
\]
and thus
\[
\norm{a^*a(x)}\ge \norm{a(g)^*a(g)}=\norm{a(g)}^2\;.
\]
So if $P(a^*a)=0$ we have $a^*a(x)=0$ for all $x\in G_0$, and thus $a(g)=0$ for all $g\in G$.
\end{proof}

\begin{theorem}\label{thm:localizableimpliesreduced}
Let $\pi:E\to G$ be a continuous second countable saturated Fell bundle with $G$ Hausdorff, and let $A$ be a compatible completion. The following conditions are equivalent:
\begin{enumerate}
\item\label{thm:lir2} $A$ is faithful;
\item\label{thm:lir3} $A$ is $*$-isomorphic to $C_r^*(G,E)$.
\end{enumerate}
\end{theorem}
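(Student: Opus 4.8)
The plan is to prove the two implications separately. The implication $\eqref{thm:lir3}\Rightarrow\eqref{thm:lir2}$ amounts to verifying that the reduced C*-algebra is itself a \emph{faithful} compatible completion, while $\eqref{thm:lir2}\Rightarrow\eqref{thm:lir3}$ carries the analytic content and will be obtained through a Hilbert module argument; since saying that a compatible completion $A$ is $*$-isomorphic to $C_r^*(G,E)$ means here that the completion norm on $C_c(G,E)$ coincides with the reduced norm, both directions come down to comparing norms on $C_c(G,E)$.

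For $\eqref{thm:lir2}\Rightarrow\eqref{thm:lir3}$ I would argue as follows. Assume $A$ is faithful. If $\pi$ is the zero bundle there is nothing to prove; otherwise some fibre $E_x$ with $x\in G_0$ is nonzero (for any nonzero $e\in E_g$ we have $\norm{e^*e}=\norm e^2\neq 0$, so $E_{d(g)}\neq 0$), hence $\norm P\neq 0$, which is the only feature the proof of Lemma~\ref{lem:faithcondexp} uses beyond faithfulness of $A$; so $P\colon A\to C_0(G_0,E)$ is a faithful conditional expectation. Now form the right Hilbert $C_0(G_0,E)$-module $L$ obtained by completing $C_c(G,E)$ in the norm attached to the $C_0(G_0,E)$-valued inner product $\langle s,t\rangle=P(s^*t)$; on $C_c(G,E)$ this is the section $x\mapsto\sum_{g\in G_x}s(g)^*t(g)$, so $L$ is precisely the module underlying the regular representation of $\pi$. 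Since $P$ is positive, $\langle a\xi,a\xi\rangle\le\norm a^2\langle\xi,\xi\rangle$ for all $a\in A$, so left multiplication by $a$ extends to an adjointable operator $\lambda(a)\in\mathcal L(L)$ with $\lambda(a)^*=\lambda(a^*)$; thus $\lambda\colon A\to\mathcal L(L)$ is a $*$-homomorphism which on $C_c(G,E)$ is convolution, i.e. the regular representation, so $\norm{\lambda(s)}=\norm s_r$. The key point is that $\lambda$ is injective: if $\lambda(a)=0$ then $P(\xi^*a^*a\xi)=\langle a\xi,a\xi\rangle=0$ for all $\xi$ in the dense subalgebra $C_c(G,E)$, hence for all $\xi\in A$ by continuity of $P$; running $\xi$ through an approximate unit of $A$ gives $P(a^*a)=0$, and faithfulness of $P$ forces $a^*a=0$, so $a=0$. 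An injective $*$-homomorphism of C*-algebras is isometric, whence $\norm s=\norm s_r$ for every $s\in C_c(G,E)$; as $A$ and $C_r^*(G,E)$ are the completions of $C_c(G,E)$ in these two norms, the identity on $C_c(G,E)$ extends to a $*$-isomorphism $A\cong C_r^*(G,E)$.

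For $\eqref{thm:lir3}\Rightarrow\eqref{thm:lir2}$ it is enough to show that $C_r^*(G,E)$ is a faithful compatible completion. That the reduced norm is compatible — it dominates $\norm{\cdot}_\infty$ and agrees with it on sections supported in a local bisection — is Theorem~\ref{appthm:compcompletions}. Faithfulness, i.e. $\bigcap_{g\in G}\ker\eval_g=0$ in $C_r^*(G,E)$, is where continuity and second countability over the Hausdorff groupoid $G$ enter: by \cite{Kumjian98} the regular representation of $C_r^*(G,E)$ is the direct sum, over $x\in G_0$, of its representations on the fibred $\ell^2$-spaces over $G_x$, whose matrix coefficients are the evaluation maps $\eval_g$; an element of $C_r^*(G,E)$ annihilated by every $\eval_g$ therefore acts as $0$ in each summand, hence is $0$, the direct sum being a faithful representation of $C_r^*(G,E)$. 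So $C_r^*(G,E)$ is faithful, and hence so is any compatible completion $*$-isomorphic to it.

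The hard part is exactly this last step. The implication $\eqref{thm:lir2}\Rightarrow\eqref{thm:lir3}$ is soft, using only Lemma~\ref{lem:faithcondexp} and standard facts about Hilbert modules and C*-algebras, but the faithfulness of $C_r^*(G,E)$ required for $\eqref{thm:lir3}\Rightarrow\eqref{thm:lir2}$ genuinely depends on the structure theory of \cite{Kumjian98}, and this is what forces continuity and second countability into the hypotheses; a secondary, minor subtlety handled above is that for a nonzero (not necessarily nondegenerate) Fell bundle one still has $\norm P\neq 0$, so that Lemma~\ref{lem:faithcondexp} applies.
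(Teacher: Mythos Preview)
Your argument is correct and matches the paper's: for $\eqref{thm:lir2}\Rightarrow\eqref{thm:lir3}$ you have written out in full the Hilbert-module argument that the paper packages as Lemma~\ref{Kfact3.11} (\cite{Kumjian98}*{Fact 3.11}), and for the converse both proofs appeal to the injectivity of the reduction map on $C_r^*(G,E)$ (Theorem~\ref{thm:appalgofsections}). One small correction: for compatibility of the reduced norm in the general (not line-bundle) case the relevant reference is Theorem~\ref{appthm:compcompletions2} rather than Theorem~\ref{appthm:compcompletions}.
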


\begin{proof}
Lemma~\ref{lem:faithcondexp} tells us that if \eqref{thm:lir2} holds the restriction map $P:A\to p^*(G_0)$ is a faithful conditional expectation. This map extends the restriction map $C_c(G,E)\to C_c(G_0,E)$, and, by definition, the compatible norm coincides with the supremum norm on $C_c(G_0,E)$, so by \cite{Kumjian98}*{Fact 3.11} (\cf\ Lemma~\ref{Kfact3.11}) we conclude \eqref{thm:lir3}. Conversely, under the stated conditions any reduced C*-algebra is faithful (\cf\ Theorem~\ref{thm:appalgofsections}), so the implication $\eqref{thm:lir3}\Rightarrow\eqref{thm:lir2}$ holds.
\end{proof}

\subsection{Saturated Fell bundles as quantale maps}

Let $\pi:E\to G$ be a Fell bundle. For each $U\in\ipi(G)$ we shall refer to the closure $\overline{C_c(U,E)}\cong C_0(U,E)$ as the \emph{restriction of $A$ to $U$} and we also denote it by $A\vert_U$. The mapping $A\vert_{(-)}:\ipi(G)\to\Max A$ will be called the \emph{restriction map}. We note that in the following lemma only the axiom \eqref{eq:cn2} of compatible norms is needed.

\begin{lemma}\label{fellpg2}
Let $\pi:E\to G$ be a Fell bundle and $A$ a compatible completion. The following properties hold:
\begin{enumerate}
\item\label{fellpgjoins2} For every compatible family $(U_\alpha)$ in $\ipi(G)$ we have $A\vert_{(\bigcup_\alpha\! U_\alpha)} = \V_\alpha A\vert_{U_\alpha}$.
\item\label{fellpginv2} For all $U\in \ipi(G)$ we have $A\vert_{U^{-1}}=(A\vert_U)^*$.
\item\label{fellpglaxmult2} For all $U,V\in\ipi(G)$ we have $A\vert_U\,A\vert_V\subset A\vert_{UV}$.
\item\label{fellpginj2} The bundle is nondegenerate if and only if the restriction map $A\vert_{(-)}$ is injective.
\item\label{fellpglaxmult2sat} The bundle is saturated if and only if we have $A\vert_U \,A\vert_V= A\vert_{UV}$ for all $U,V\in\ipi(G)$.
\end{enumerate}
\end{lemma}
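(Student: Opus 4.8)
The plan is to obtain the first three items almost immediately from the lax-homomorphism properties of $C_c(-,E)$ recorded in Lemma~\ref{fellpg}, by transporting them along the surjective involutive-quantale homomorphism $\overline{(-)}:\Sub A\to\Max A$. Since $\overline{(-)}$ preserves joins and the joins in $\Sub A$ are sums of subspaces, Lemma~\ref{fellpg}\eqref{fellpgjoins} gives $A\vert_{\bigcup_\alpha U_\alpha}=\overline{\sum_\alpha C_c(U_\alpha,E)}=\V_\alpha\overline{C_c(U_\alpha,E)}=\V_\alpha A\vert_{U_\alpha}$, which is \eqref{fellpgjoins2}; preservation of the involution together with Lemma~\ref{fellpg}\eqref{fellpginv} yields \eqref{fellpginv2}; and preservation of multiplication combined with monotonicity of $\overline{(-)}$ turns Lemma~\ref{fellpg}\eqref{fellpglaxmult} into $A\vert_U A\vert_V=\overline{C_c(U,E)\odot C_c(V,E)}\subset\overline{C_c(UV,E)}=A\vert_{UV}$, which is \eqref{fellpglaxmult2}.

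For \eqref{fellpginj2} the key observation is to determine for which $g\in G$ the evaluation $\eval_g$ is nonzero on $A\vert_U\cong C_0(U,E)$: it vanishes on $C_c(V,E)$ whenever $g\notin V$, hence on $\overline{C_c(V,E)}$, so it is certainly zero when $g\notin U$, while if $g\in U$ and $E_g\neq\{0_g\}$ then, since the bundle has enough sections, some section compactly supported in a local bisection inside $U$ is nonzero at $g$, so $\eval_g$ is nonzero on $A\vert_U$. Thus if $\pi$ is nondegenerate and $U\neq V$, picking (after possibly swapping $U,V$) some $g\in U\setminus V$ and such a section produces an element of $A\vert_U\setminus A\vert_V$, so $A\vert_{(-)}$ is injective. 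Conversely, writing $G^\circ:=\{g\in G\st E_g\neq\{0_g\}\}$, which is open because the bundle has enough sections, I would show $A\vert_U=A\vert_{U\cap G^\circ}$ for every $U\in\ipi(G)$: given $s\in C_c(U,E)$ and $\epsilon>0$, the set $K_\epsilon=\{g\in U\st\norm{s(g)}\ge\epsilon\}$ is a compact subset of the locally compact Hausdorff space $U\cap G^\circ$, so a scalar bump function $\phi\in C_c(U\cap G^\circ)$ with $0\le\phi\le 1$ and $\phi\equiv 1$ on $K_\epsilon$ yields $\phi s\in C_c(U\cap G^\circ,E)$ with $\norm{s-\phi s}=\norm{s-\phi s}_\infty\le\epsilon$, the first equality holding by axiom~\eqref{eq:cn2} since both sections lie in $C_c(U,E)$. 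Hence if $\pi$ is degenerate, any local bisection $U$ meeting $G\setminus G^\circ$ gives $A\vert_U=A\vert_{U\cap G^\circ}$ with $U\neq U\cap G^\circ$, so $A\vert_{(-)}$ is not injective.

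For \eqref{fellpglaxmult2sat}, assuming saturation I would prove $C_c(UV,E)\subset\overline{C_c(U,E)\odot C_c(V,E)}$, whence $A\vert_{UV}\subset A\vert_U A\vert_V$ which together with \eqref{fellpglaxmult2} gives equality. For the inclusion, given $s\in C_c(UV,E)$ and $\epsilon>0$, at each point $g_0=h_0k_0\in\osupp(s)$ (with $h_0\in U$, $k_0\in V$) saturation makes $s(g_0)$ approximable within $\epsilon$ by a finite sum $\sum_i e_if_i$ with $e_i\in E_{h_0}$, $f_i\in E_{k_0}$; extending the $e_i,f_i$ to sections compactly supported in small local bisections inside $U$ and $V$ and patching over the compact set $\supp(s)\cap UV$ by a partition of unity produces $\sigma\in C_c(U,E)\odot C_c(V,E)$ with $\norm{s-\sigma}_\infty<\epsilon$, and since $s$ and $\sigma$ both lie in $C_c(UV,E)$, a single local bisection, axiom~\eqref{eq:cn2} upgrades this to $\norm{s-\sigma}<\epsilon$. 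Conversely, if $A\vert_U A\vert_V=A\vert_{UV}$ always, then for $(g,h)\in G_2$ and local bisections $U\ni g$, $V\ni h$, any $e\in E_{gh}$ equals $s(gh)$ for some $s\in A\vert_{UV}=\overline{C_c(U,E)\odot C_c(V,E)}$; evaluating a sequence $\sum_i\sigma_{i,n}\tau_{i,n}\to s$ at $gh$ exhibits $e$ as a limit of elements of the linear span of $E_gE_h$, so $E_gE_h$ is dense in $E_{gh}$, i.e.\ $\pi$ is saturated.

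The main obstacle is \eqref{fellpglaxmult2sat}, specifically the density of $C_c(U,E)\odot C_c(V,E)$ in $C_c(UV,E)$ under saturation: it needs the local-to-global partition-of-unity construction together with the repeated trick of using axiom~\eqref{eq:cn2} to turn sup-norm estimates into $\norm{~}$-estimates, plus a minor point about reading saturation as density of $E_gE_h$ in $E_{gh}$ versus literal equality (these coincide by standard Hilbert-bimodule arguments for Fell bundles). Item~\eqref{fellpginj2} carries a milder version of the same difficulty, in the bump-function argument establishing $A\vert_U=A\vert_{U\cap G^\circ}$; items \eqref{fellpgjoins2}--\eqref{fellpglaxmult2} are essentially formal once Lemma~\ref{fellpg} is in hand.
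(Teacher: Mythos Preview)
Your proposal is correct and follows essentially the same route as the paper. Items \eqref{fellpgjoins2}--\eqref{fellpglaxmult2} are obtained exactly as you describe, by composing $C_c(-,E)$ with the involutive-quantale surjection $\overline{(-)}$ and invoking Lemma~\ref{fellpg}; the paper phrases this as a factorization $\ipi(G)\to\Sub\sections_c(G,E)\to\Sub A\to\Max A$. For \eqref{fellpglaxmult2sat} both directions of your argument match the paper's partition-of-unity construction and the evaluation-at-$gh$ converse, and you correctly flag the density-versus-equality reading of saturation that the paper leaves implicit. The only visible difference is in the converse of \eqref{fellpginj2}: you prove the more general identity $A\vert_U=A\vert_{U\cap G^\circ}$ via a bump-function approximation, whereas the paper simply removes a single degenerate point $g$ and asserts $C_0(U,E)=C_0(U\setminus\{g\},E)$; both are valid, with the paper's version slightly quicker and yours slightly more informative.
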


\begin{proof}
The restriction map is obtained as a composition
\[
\ipi(G)\to\Sub\sections_c(G,E)\to\Sub A\to\Max A
\]
whose two rightmost arrows are homomorphisms of involutive quantales (respectively $\Sub i$ for the inclusion $i:\sections_c(G,E)\to A$, and the closure quotient $\overline{(-)}:\Sub A\to\Max A$), and the leftmost arrow is the assignment $U\mapsto C_c(U,E)$. Hence, \eqref{fellpgjoins2}, \eqref{fellpginv2} and \eqref{fellpglaxmult2} follow immediately from Lemma~\ref{fellpg}.

For proving \eqref{fellpginj2}, let us first assume that the bundle is nondegenerate and prove that the restriction map is injective. Let $U,V\in\ipi(G)$ with $U\not\subset V$, and let $g\in U\setminus V$. By nondegeneracy we may choose some element $e\in E_g\setminus\{0_g\}$, and the existence of enough sections over $U$ implies that there is $s\in C(U,E)$ such that $s(g)=e$. Since $U$ is locally compact we may take $s$ to be compactly supported, and thus we obtain
\[
s\in C_c(U,E)\setminus C(V,E)\subset A\vert_U\setminus A\vert_V\;.
\]
Hence, $A\vert_U\not\subset A\vert_V$, and this proves the injectivity of the restriction map $A\vert_{(-)}:\ipi(G)\to\Max A$. For the converse, assume that the bundle is degenerate. Let $g\in G$ be such that $E_g=\{0_g\}$, and let $U\in\ipi(G)$ be a neighborhood of $g$. Then $C_0(U,E)=C_0(U\setminus\{g\},E)$, so the restriction map is not injective.

Finally let us prove \eqref{fellpglaxmult2sat}. Assuming that the bundle is saturated, let $U,V\in\ipi(G)$ and $s\in C_c(UV,E)$. Due to saturation, for each $g\in\osupp(s)$ we can write $s(g)$ as a limit of finite sums of products
\[
s(g)=\lim_i\sum_{j=1}^{n_i} e_{ij}f_{ij}
\]
with $\pi(e_{ij})\in U$ and $\pi(f_{ij})\in V$.
Since $G$ is locally compact, for each $i$ and $j$ there are sections $u^{g}_{ij}\in C_c(U,E)$ and $v^{g}_{ij}\in C_c(V,E)$ such that $u^{g}_{ij}(\pi(e_{ij}))=e_{ij}$ and $v^{g}_{ij}(\pi(f_{ij}))=f_{ij}$, so we have
\[
s(g)=\lim_i\sum_{j=1}^{n_i} (u^g_{ij}v^g_{ij})(g)\;.
\]
For each $\epsilon>0$ let $i$ be such that
\[
\left\|\sum_{j=1}^{n_i} (u^g_{ij}v^g_{ij})(g)-s(g)\right\|<\epsilon/2\;.
\]
The upper semicontinuity of the bundle norm ensures that there is $W_g\in\ipi(G)$ with $g\in W_g\subset UV$ such that for all $h\in W_g$ we have
\[
\left\|\sum_{j=1}^{n_i} (u^g_{ij}v^g_{ij})(h)-s(h)\right\|<\epsilon\;.
\]
The sets $W_g$ form an open cover of $\supp_{UV}(s)$, so select $g_1,\ldots,g_m$ such that $W_{g_1},\ldots,W_{g_m}$ is a finite subcover. Let $\phi_1,\ldots,\phi_m$ be a partition of unity subordinate to the cover (this exists because $\supp_{UV}(s)$ is a compact Hausdorff space), and let
\[
t = \sum_{k=1}^m \phi_k \sum_{j=1}^{n_i} u^{g_k}_{ij}v^{g_k}_{ij}\;.
\]
Then $t\in C_c(U,E)\odot C_c(V,E)$
and
we have, for all $h\in\supp_{UV}(s)$,
\begin{eqnarray*}
&&\norm{t(h)-s(h)}\\
&=&\left\|\sum_{k=1}^m\phi_1(h) \sum_{j=1}^{n_i} (u^{g_k}_{ij}v^{g_k}_{ij})(h)-s(h)\right\|\\
&=&\left\|\sum_{k=1}^m\phi_1(h) \left(\sum_{j=1}^{n_i} (u^{g_k}_{ij}v^{g_k}_{ij})(h)-s(h)\right)\right\|\\
&\le&\sum_{k=1}^m\phi_1(h) \left\|\sum_{j=1}^{n_i} (u^{g_k}_{ij}v^{g_k}_{ij})(h)-s(h)\right\|<\epsilon\;.
\end{eqnarray*}
Since on $C_c(UV,E)$ the norm is the supremum norm and $t-s$ is compactly supported, we obtain $\norm{t-s}<\epsilon$, and thus we have proved that
\[
s\in \overline{C_c(U,E)\odot C_c(V,E)}=A\vert_U \, A\vert_V\;.
\]
Therefore $C_c(UV,E)\subset A\vert_U\, A\vert_V$, and so $A\vert_{UV}\subset A\vert_U\, A\vert_V$. Then \eqref{fellpglaxmult2sat} follows from \eqref{fellpglaxmult2}. For the converse, assume that $A\vert_U\, A\vert_V=A\vert_{UV}$ for all $U,V\in\ipi(G)$. Let $(g,h)\in G_2$ and $e\in E_{gh}$. Let $U,V\in\ipi(G)$ be neighborhoods of $g$ and $h$, respectively. Choose $s\in C_0(UV,E)$ such that $s(gh) = e$. We have $C_0(UV,E)=C_0(U,E)\, C_0(V,E)$, so $s$ can be obtained as a limit
$\lim_i\sum_{j=1}^{n_i} t_{ij}u_{ij}$ with $t_{ij}\in C_c(U,E)$ and $u_{ij}\in C_c(V,E)$. Hence,
\[e=s(gh) = \lim_i\sum_{j=1}^{n_i} t_{ij}(g)u_{ij}(h)\in E_g E_h\;,\]
showing that the bundle is saturated.
\end{proof}

Now both axioms of compatible norms will be needed, namely in the second part of the following theorem when proving that $p$ is a surjection if $\pi$ is nondegenerate.

\begin{theorem}\label{fellquant1}
Let $\pi:E\to G$ be a saturated Fell bundle and $A$ a compatible completion. There is a unique map of quantales $p:\Max A\to\topology(G)$ such that for all $U\subset \ipi(G)$ we have
\[
p^*(U) = A\vert_U\;.
\]
The map $p$ is, for all $V\in\topology(G)$, defined by the condition $p^*(V)=\V_{\alpha} A\vert_{U_\alpha}$ for any family $(U_\alpha)$ in $\ipi(G)$ such that $V=\bigcup_\alpha U_\alpha$. Moreover, $p$ is a surjection if and only if the bundle is nondegenerate.
\end{theorem}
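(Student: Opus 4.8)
The plan is to use the isomorphism $\topology(G)\cong\lcc(\ipi(G))$, which in particular means that every open set $V\in\topology(G)$ is the join (union) $V=\bigcup\{U\in\ipi(G)\st U\subset V\}$ in the sup-lattice $\topology(G)$. Since the inverse image homomorphism of a map of quantales preserves arbitrary joins, any $p$ with $p^*(U)=A\vert_U$ for all $U\in\ipi(G)$ is forced to satisfy
\[
p^*(V)=\V\bigl\{A\vert_U\st U\in\ipi(G),\ U\subset V\bigr\}\;;
\]
this yields uniqueness at once, as well as the stated formula $p^*(V)=\V_\alpha A\vert_{U_\alpha}$ for any cover $V=\bigcup_\alpha U_\alpha$ by local bisections. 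So I would \emph{define} $\varphi:=p^*$ by the displayed formula and check that it is a homomorphism of involutive quantales.

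That $\varphi$ extends $A\vert_{(-)}$ is immediate from monotonicity of the restriction map. For the homomorphism axioms the recurring device is: if $T\in\ipi(G)$ and $T=\bigcup_\alpha T_\alpha$ with each $T_\alpha$ open, then each $T\cap T_\alpha$ is again a local bisection (open subsets of local bisections are local bisections) and $\{T\cap T_\alpha\}_\alpha$ is a compatible family, so $A\vert_T=\V_\alpha A\vert_{T\cap T_\alpha}$ by Lemma~\ref{fellpg2}\eqref{fellpgjoins2}. Refining local bisections contained in a union $\bigcup_\alpha V_\alpha$ in this way shows $\varphi$ preserves joins; the bijection $U\mapsto U^{-1}$ between the local bisections contained in $V^{-1}$ and those contained in $V$, together with Lemma~\ref{fellpg2}\eqref{fellpginv2}, gives $\varphi(V^{-1})=\varphi(V)^*$; and for multiplicativity one writes $VW=\bigcup\{UU'\st U,U'\in\ipi(G),\ U\subset V,\ U'\subset W\}$ by frame distributivity, refines as above, and invokes Lemma~\ref{fellpg2}\eqref{fellpglaxmult2sat} to replace $A\vert_{UU'}$ by $A\vert_U\,A\vert_{U'}$. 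This is the only place saturation of $\pi$ is used; the lax inclusion Lemma~\ref{fellpg2}\eqref{fellpglaxmult2} suffices only for the inequality $\varphi(V)\varphi(W)\le\varphi(VW)$.

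For the surjectivity clause, $p$ is a surjection precisely when $p^*$ is injective. If $p^*$ is injective, so is its restriction $A\vert_{(-)}$ to $\ipi(G)$, and then $\pi$ is nondegenerate by Lemma~\ref{fellpg2}\eqref{fellpginj2}. Conversely, assuming $\pi$ nondegenerate, I would prove that for $U\in\ipi(G)$ and $V\in\topology(G)$
\[
U\subset V\iff A\vert_U\subset p^*(V)\;,
\]
so that $V=\bigcup\{U\in\ipi(G)\st A\vert_U\subset p^*(V)\}$ is recovered from $p^*(V)$ and $p^*$ is injective. The implication $\Rightarrow$ is trivial. For $\Leftarrow$: by axiom~\eqref{eq:cn1} the evaluation maps $\eval_g:A\to E_g$ are well defined and continuous, and every element of $\sum\{C_c(W,E)\st W\in\ipi(G),\ W\subset V\}$ vanishes off $V$, so $\eval_g$ vanishes on the closure $p^*(V)$ whenever $g\notin V$. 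Hence, if $g\in U\setminus V$ existed, nondegeneracy of $\pi$ would give $e\in E_g\setminus\{0_g\}$ and, since $U$ is locally compact Hausdorff and the bundle has enough sections, a section $s\in C_c(U,E)\subset A\vert_U\subset p^*(V)$ with $\eval_g(s)=s(g)=e\neq 0$ --- a contradiction.

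The main obstacle is the bookkeeping in the homomorphism verification, namely checking that every local bisection occurring inside one of these joins can be refined into pieces each sitting inside a single member of the relevant cover (or a single product $UU'$), so that Lemma~\ref{fellpg2} applies; the substantive content beyond Lemma~\ref{fellpg2} is concentrated in the evaluation-map argument for injectivity of $p^*$, which is precisely where axiom~\eqref{eq:cn1} of compatible norms is needed.
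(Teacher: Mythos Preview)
Your proposal is correct and follows essentially the same approach as the paper. The only difference is presentational: the paper packages the existence and uniqueness of $p^*$ into a single appeal to the universal property of $\lcc(\ipi(G))$ (any involution- and compatible-join-preserving semigroup homomorphism $\ipi(G)\to Q$ extends uniquely to a quantale homomorphism $\topology(G)\to Q$), whereas you unpack this by defining $\varphi$ explicitly and verifying the homomorphism axioms by hand via refinements and Lemma~\ref{fellpg2}; your surjectivity argument via evaluation maps is identical to the paper's.
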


\begin{proof}
The quantale $\topology(G)$ is isomorphic to $\lcc(\ipi(G))$, which is the join-completion of $\ipi(G)$ (\cf\ section~\ref{subsec:egs}). Hence, the universal property of this completion says, given any involutive quantale $Q$, that any homomorphism of involutive semigroups $\ipi(G)\to Q$ that preserves joins of compatible sets extends uniquely to a homomorphism of involutive quantales $\topology(G)\to Q$. So the restriction map $A\vert_{(-)}$ extends uniquely to a homomorphism of involutive quantales $p^*$, as stated. If $p$ is a surjection the restriction map is injective and thus the bundle is nondegenerate. For the converse, assume that $\pi$ is nondegenerate and let $U,V\in\topology(G)$ with $U\not\subset V$. Without loss of generality take $U\in\ipi(G)$ and $V=\bigcup_{\alpha}V_\alpha$ where $V_\alpha\in\ipi(G)$ for each $\alpha$, and let $g\in U\setminus V$. For all $\alpha$ and all $t\in C_c(V_\alpha,E)$ we have $t(g)=0$, and each $a\in p^*(V)=\V_\alpha C_c(V_\alpha,E)$ can be obtained as a limit
\[
a=\lim_k\sum_{i=1}^{n_k} t_{ik}
\]
where for all $i,k$ we have $t_{ik}\in C_c(V_\alpha,E)$ for some $\alpha$. Hence, $\eval_g(a)=\lim_k 0=0$ (this limit is unique because $E_g$ is Hausdorff). However, due to nondegeneracy there is $s\in C_c(U,E)\subset p^*(U)$ such that $s(g)\neq 0$, so $p^*(U)\not\subset p^*(V)$ and thus $p^*$ is injective.
\end{proof}

In order to obtain a converse to the above theorem, namely yielding Fell bundles on $G$ out of maps $p:\Max A \to\topology(G)$, one may start by noticing that the Banach spaces $p^*(U)$ for $U\in\ipi(G)$ form an obvious saturated Fell bundle on the inverse semigroup $\ipi(G)$ (in the sense of \cite{Sieben} --- see \cite{Exel}), and thus also an action by $B$-$B$-bimodules on the C*-algebra $B:= p^*(G_0)$ \cite{BM16}*{Theorem 4.8}, which yields a Fell bundle (essentially unique) on $G$ because the restricted map $p^*:\topology(G_0)\to A$ has its image in $I(B)$ and, being a fortiori a homomorphism of quantales, it commutes with joins \cite{BM16}*{Theorem 6.1}. The ensuing correspondence between Fell bundles on $G$ equipped with compatible completions, on one hand, and maps of quantales $p:\Max A\to\topology(G)$, on the other, along with its functorial properties, deserves being studied in more detail but will not be addressed here.

\section{Fell bundles versus semiopen maps}\label{sec:prelinloc}

Throughout this section we adopt the same conventions concerning groupoids that were adopted in the beginning of section~\ref{sec:bundlesasmaps}. Our purpose now is to investigate conditions under which the map $p:\Max A\to \topology(G)$ of Theorem~\ref{fellquant1} is a semiopen map of quantales, so we shall begin by studying general adjunctions $\sigma\dashv\gamma$, where $\sigma$ and $\gamma$ approximate the notions of open support map and restriction map, respectively. This is analogous to the adjunctions of \cite{RS2}, but now we address sections that are not continuous. With the exception of Lemma~\ref{lemma4.1}, in this section we shall deal with saturated Fell bundles. Apart from this no additional conditions will be required, except in Theorem~\ref{thm:localizableimpliesreducedcgrps} where it will also be assumed that the Fell bundles are continuous and second countable in order to freely use results from~\cite{Kumjian98}.

\subsection{Open support as a left adjoint}

Let $\pi:E\to G$ be a Fell bundle and $A$ a compatible completion. For each $V\in\Max A$ let us define
\[
\sigma(V) = \bigcup_{a\in V}\operatorname{int}\{g\in G\st \eval_g(a)\neq 0_g\}\;.
\]
This defines a mapping $\sigma:\Max A\to \topology(G)$. Note that the interior operator is needed in the above expression because, even though the zero section has closed image in $E$, the sections $s\in\sections_c(G,E)$ are not continuous and thus the sets $\{g\st s(g)\neq 0\}$ need not be open, so the same applies more generally to the sets $\{g\st\eval_g(a)\neq 0\}$ with $a\in A$.

Now define $\gamma:\topology(G)\to\Max A$ by
\[
\gamma(U) = \overline{\spanmap\{a\in A\st \sigma(\linspan a)\subset U\}}\;.
\]

\begin{lemma}\label{lemma4.1}
Let $\pi:E\to G$ be a Fell bundle and $A$ a compatible completion. Then $\sigma$ is left adjoint to $\gamma$. Therefore $\sigma$ is join preserving and $\gamma$ is meet preserving: for all families $(V_\alpha)$ in $\Max A$ and
$(U_\alpha)$ in $\topology(G)$ we have 
\begin{eqnarray*}
\sigma\bigl(\V_\alpha V_\alpha\bigr) &=& \bigcup_\alpha \sigma(V_\alpha)\;,\\
\gamma\bigl(\bigcap_\alpha U_\alpha\bigr) &=& \bigcap_\alpha\gamma(U_\alpha)\;.
\end{eqnarray*}
\end{lemma}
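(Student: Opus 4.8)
The plan is to verify directly that $\sigma\dashv\gamma$, i.e.\ that for all $V\in\Max A$ and $U\in\topology(G)$ we have
\[
\sigma(V)\subset U\iff V\subset\gamma(U)\;.
\]
Once this adjunction is established, the stated preservation of joins by $\sigma$ and of meets by $\gamma$ is automatic from the general theory of adjunctions between sup-lattices recalled in section~\ref{sec:invqus}, so no separate argument is needed for the two displayed formulas. The first thing I would do is record an elementary but crucial monotonicity-and-additivity fact about $\sigma$: for $a,b\in A$ one has $\sigma(\linspan{a+b})\subset\sigma(\linspan a)\cup\sigma(\linspan b)$, because if $\eval_g(a+b)\neq 0_g$ then $\eval_g(a)\neq 0_g$ or $\eval_g(b)\neq 0_g$ by linearity of $\eval_g$, and taking interiors preserves this inclusion; more generally $\sigma(\sum_i\linspan{a_i})=\bigcup_i\sigma(\linspan{a_i})$, and scalars do not change $\sigma(\linspan a)$. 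I would also note that $\sigma$ is monotone (clear from its definition as a union over $a\in V$) and continuous under closures in the sense that $\sigma(\overline W)=\sigma(W)$ for a linear subspace $W$ — this last point needs the observation that if $a=\lim_n a_n$ and $\eval_g(a)\neq 0_g$ then, since $\eval_g$ is continuous and $E_g$ is Hausdorff, $\eval_g(a_n)\neq 0_g$ for $n$ large, so $\{g\st\eval_g(a)\neq 0_g\}\subset\bigcup_n\{g\st\eval_g(a_n)\neq 0_g\}$, whence after taking interiors $\sigma(\linspan a)\subset\bigcup_n\sigma(\linspan{a_n})$.

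For the forward implication of the adjunction, suppose $\sigma(V)\subset U$. For each single $a\in V$ we then have $\sigma(\linspan a)\subset\sigma(V)\subset U$, so $a$ belongs to the set spanned in the definition of $\gamma(U)$, hence $a\in\gamma(U)$; since $\gamma(U)$ is a closed linear subspace and $V$ is an arbitrary member of it, $V\subset\gamma(U)$. For the reverse implication, suppose $V\subset\gamma(U)$. I want $\sigma(V)\subset U$, and since $\sigma(V)=\bigcup_{a\in V}\operatorname{int}\{g\st\eval_g(a)\neq 0_g\}$ it suffices to show $\sigma(\linspan a)\subset U$ for each $a\in V$. Now $a\in\gamma(U)=\overline{\spanmap\{b\in A\st\sigma(\linspan b)\subset U\}}$, so by the additivity and closure-stability of $\sigma$ recorded above, $\sigma(\linspan a)\subset\sigma\bigl(\overline{\spanmap\{b\st\sigma(\linspan b)\subset U\}}\bigr)=\bigcup\{\sigma(\linspan b)\st\sigma(\linspan b)\subset U\}\subset U$, as desired.

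The main obstacle, and the step that needs the most care, is precisely this last interchange: showing that $\sigma$ of the closed span of a family of ``good'' elements is still contained in $U$. It is tempting to treat $\sigma$ as if it were join-preserving from the outset, but that is what we are trying to prove; instead one argues by the two inequalities above, which only use monotonicity of $\sigma$, its behaviour on finite sums and scalar multiples, and the continuity-of-$\eval_g$ argument for closures. The subtle point is that $\sigma$ is \emph{not} in general additive in the naive way on the nose for infinite families without passing through interiors, but the inclusion direction $\sigma(\sum_i W_i)\subset\bigcup_i\sigma(W_i)$ — which is all we need here — does survive, because a point in the interior of $\{g\st\eval_g(a)\neq 0\}$ for $a$ a (limit of) finite combination lies, by the Hausdorff/continuity argument, in the interior of the corresponding set for one of the summands. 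Once this is pinned down the adjunction is immediate, and the join- and meet-preservation statements follow formally.
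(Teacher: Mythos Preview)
Your argument has a genuine gap precisely at the step you flag as the main obstacle. The ``additivity'' inclusion $\sigma(\linspan{a+b})\subset\sigma(\linspan a)\cup\sigma(\linspan b)$ does not follow from the set inclusion $\{g\st\eval_g(a+b)\neq 0_g\}\subset\{g\st\eval_g(a)\neq 0_g\}\cup\{g\st\eval_g(b)\neq 0_g\}$. Taking interiors is monotone, so you obtain
\[
\operatorname{int}\{g\st\eval_g(a+b)\neq 0_g\}\subset\operatorname{int}\bigl(\{g\st\eval_g(a)\neq 0_g\}\cup\{g\st\eval_g(b)\neq 0_g\}\bigr)\;,
\]
but the interior of a union can strictly contain the union of the interiors, so the right hand side need not be contained in $\sigma(\linspan a)\cup\sigma(\linspan b)$. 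The example of section~\ref{sec:nonlocnonhaus} already exhibits this failure: with $f=f_-+f_+$ and $f_-$ as there, one has $\sigma(\linspan f)=\emptyset$ and $\sigma(\linspan{f_-})=\osupp(f_-)$, which does not contain $0_+$; yet $f+f_-=2f_-+f_+$ takes the value $f_-(\bar x)$ at $\bar x$, the value $-2$ at $0_-$ and the value $1$ at $0_+$, so it is nonzero on an entire open neighbourhood of $0_+$ in $U_+$, whence $0_+\in\sigma(\linspan{f+f_-})$. Exactly the same obstruction kills your closure-stability step: from $\{g\st\eval_g(a)\neq 0\}\subset\bigcup_n\{g\st\eval_g(a_n)\neq 0\}$ you cannot pass to $\operatorname{int}\{g\st\eval_g(a)\neq 0\}\subset\bigcup_n\operatorname{int}\{g\st\eval_g(a_n)\neq 0\}$, because the $n$ witnessing nonvanishing may vary from point to point within a putative open neighbourhood. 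Your final paragraph asserts that this inclusion ``does survive'', but it does not.

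The paper's proof never attempts these estimates. Instead it works first in $\pwset A$, where joins are unions rather than closed linear spans: defining $\sigma'(S)=\bigcup_{a\in S}\operatorname{int}\{g\st\eval_g(a)\neq 0\}$ and $\gamma'(U)=\{a\in A\st\sigma'(\{a\})\subset U\}$, the adjunction $\sigma'\dashv\gamma'$ is a tautology, since $\sigma'(S)\subset U$ literally says that every element of $S$ lies in $\gamma'(U)$. The paper then obtains $\sigma$ and $\gamma$ as $\sigma=\sigma'\circ\mathrm{incl}$ and $\gamma=\overline{\spanmap(-)}\circ\gamma'$ and appeals to the reflection $\overline{\spanmap(-)}\dashv\mathrm{incl}$ between $\pwset A$ and $\Max A$ to assemble the adjunction $\sigma\dashv\gamma$. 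This route entirely sidesteps the finite-additivity and closure-stability claims that your direct verification requires and that, as the example above shows, are not available.
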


\begin{proof}
First define $\sigma':\pwset A\to\topology(G)$ formally in the same way as $\sigma$: for all $S\in\pwset A$
\[
\sigma'(S)=\bigcup_{a\in S} \operatorname{int}\{g\in G\st \eval_g(a)\neq 0_g\}\;.
\]
Then define $\gamma':\topology(G)\to\pwset A$ by the formula
\[
\gamma'(U) = \{a\in A\st \sigma'(\{a\})\subset U\}\;.
\]
It is clear that $\sigma'$ is left adjoint to $\gamma'$, for if $S\in\pwset A$ and $U\in\topology(G)$ we have the following equivalences:
\begin{eqnarray*}
\sigma'(S)\subset U&\iff& \bigl(\forall_{a\in S}\ \operatorname{int}\{g\in G\st \eval_g(a)\neq 0_g\}\subset U\bigr)\\
&\iff&\bigl(\forall_{a\in S}\ \sigma'(\{a\})\subset U\bigr)\\
 &\iff& \bigl(\forall_{a\in S}\ a\in\gamma'(U)\bigr) \\
 &\iff& S\subset\gamma'(U)\;.
\end{eqnarray*}
The adjunction $\sigma\dashv\gamma$ results from composing the adjunction $\sigma'\dashv\gamma'$ with the adjunction between $\pwset A$ and $\Max A$ whose right adjoint is the inclusion of the latter into the former and whose left adjoint is the $\overline{\spanmap(-)}$ operator:
\[
\xymatrix{
\topology(G)\ar@/^8ex/[rrrr]^{\gamma}\ar@{}[rr]|\top\ar@/^/[rr]^{\gamma'}&&\pwset A\ar@{}[rr]|\top\ar@/^/[ll]^{\sigma'}\ar@/^/[rr]^{\overline{\spanmap{(-)}}}&&\Max A\ar@/^8ex/[llll]^{\sigma}\ar@/^/[ll]^{\textrm{inclusion}}\\&&&&&
}
\]
\end{proof}

\subsection{Localizable completions}

Let $\pi:E\to G$ be a saturated Fell bundle and $A$ a compatible completion. Let also $p:\Max A\to\topology(G)$ be the associated quantale map. The compatible norm $\norm~$ (and the compatible completion $A$) is said to be \emph{localizable} if
\[
p^*=\gamma\;.
\]
Localizability means that if $a\in\gamma(U)$ then $a$ can be approximated by compactly supported sections locally within $U$; that is, $a=\lim_n s_n$ with $s_n\in\sections_c(U,E)$ rather than just $s_n\in\sections_c(G,E)$, hence the terminology.

We note that only the inequality $\gamma\le p^*$ is non-trivial:

\begin{lemma}\label{pvsgamma}
Let $\pi:E\to G$ be a saturated Fell bundle and $A$ a compatible completion with associated quantale map $p:\Max A\to\topology(G)$. For all $U\in\topology(G)$ we have $p^*(U)\subset\gamma(U)$.
\end{lemma}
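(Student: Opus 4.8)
The plan is to reduce the inequality to the case of local bisections, where $p^*$ is given directly by the restriction map, and then verify it by an elementary computation of open supports. First I would observe that, since $p^*$ is a homomorphism of quantales (Theorem~\ref{fellquant1}), it preserves joins: for $U\in\topology(G)$ and any family $(U_\alpha)$ in $\ipi(G)$ with $U=\bigcup_\alpha U_\alpha$ we have $p^*(U)=\V_\alpha p^*(U_\alpha)=\V_\alpha A\vert_{U_\alpha}$. The map $\gamma$ is a right adjoint (Lemma~\ref{lemma4.1}), hence monotone, and $\gamma(U)$ is by definition a closed linear subspace of $A$. So if $A\vert_{U_\alpha}\subset\gamma(U_\alpha)$ for every $\alpha$, then $A\vert_{U_\alpha}\subset\gamma(U_\alpha)\subset\gamma(U)$ for all $\alpha$, and taking the join in $\Max A$ (which is the closure of the linear span of the union) yields $p^*(U)\subset\gamma(U)$. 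Thus it suffices to prove $A\vert_V\subset\gamma(V)$ for every $V\in\ipi(G)$.

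Fixing $V\in\ipi(G)$, since $A\vert_V=\overline{C_c(V,E)}$ and $\gamma(V)$ is closed, it is enough to show $C_c(V,E)\subset\gamma(V)$; and since $\gamma(V)=\overline{\spanmap\{a\in A\st\sigma(\linspan a)\subset V\}}$, it is enough to check, for each $s\in C_c(V,E)$, that $\sigma(\linspan s)\subset V$. Here $\linspan s=\CC s$, so $\sigma(\linspan s)=\bigcup_{\lambda\in\CC}\interior\{g\in G\st\eval_g(\lambda s)\neq 0_g\}$. For $s\in\sections_c(G,E)$ we have $\hat s=s$, i.e.\ $\eval_g(s)=s(g)$ for all $g$, and by linearity of $\eval_g$ we get $\eval_g(\lambda s)=\lambda s(g)$. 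Hence for $\lambda=0$ the set $\{g\st\eval_g(\lambda s)\neq 0_g\}$ is empty, while for $\lambda\neq 0$ it equals $\osupp(s)$, which is open and, since $s\in C_c(V,E)$, is contained in $V$. In all cases $\interior\{g\st\eval_g(\lambda s)\neq 0_g\}\subset V$, so $\sigma(\linspan s)\subset V$ and therefore $s\in\gamma(V)$.

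Putting the two steps together gives $C_c(V,E)\subset\gamma(V)$ for every $V\in\ipi(G)$, hence $p^*(V)=A\vert_V\subset\gamma(V)$, and then $p^*(U)\subset\gamma(U)$ for all $U\in\topology(G)$ by the join argument above. I do not expect any real obstacle here: the only points requiring care are the identification $\eval_g(s)=s(g)$ for compactly supported sections, which is built into the definition of the reduction $\hat s$, and the openness of $\osupp(s)$ for a continuous section over an open set, already established in the discussion of supports. Everything else is routine bookkeeping with adjunctions, joins, and closures, and notably none of this uses the axiom $\norm s_\infty\le\norm s$ of compatible norms, only that $A\vert_V\cong C_0(V,E)$.
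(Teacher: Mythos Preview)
Your proof is correct and follows essentially the same route as the paper: reduce to local bisections via join-preservation of $p^*$ and monotonicity of $\gamma$, then check that sections supported in $V\in\ipi(G)$ have $\sigma(\linspan s)\subset V$. The only minor difference is that the paper works directly with $s\in C_0(U,E)=p^*(U)$ (using the convention that $\eval_g(s)=s(g)$ for such $s$), whereas you first restrict to $s\in C_c(V,E)$ and then pass to the closure using that $\gamma(V)$ is closed; this extra step is harmless but unnecessary.
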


\begin{proof}
Let $U\in\ipi(G)$. Then $p^*(U)=C_0(U,E)$. If $s\in C_0(U,E)$ then $\sigma(\linspan s)\subset U$, so $s\in\gamma(U)$. Now let $U=\bigcup_\alpha U_\alpha$ be an arbitrary open set of $G$, with $U_\alpha\in\ipi(G)$ for all $\alpha$. Then, since $p^*$ preserves joins, we have
\[
p^*(U)=\V_\alpha p^*(U_\alpha)\subset \V_\alpha\gamma(U_\alpha)\subset\gamma(U)\;,
\]
where the last step is a consequence of the monotonicity of $\gamma$.
\end{proof}

Let $\pi:E\to G$ be a saturated Fell bundle. Note that localizability of a compatible completion $A$ implies that the map $p:\Max A\to\topology(G)$ is semiopen with direct image homomorphism $p_!=\sigma$ (it is not known whether semiopenness of $p$ implies localizability of $A$). In particular, if $A$ is localizable the conditions \eqref{eqsemiopen1}--\eqref{eqsemiopen2} of Lemma~\ref{lem:semiopen} apply to $\sigma$: for all $V,W\in\Max A$

\begin{eqnarray}
\sigma(VW)&\subset& \sigma(V)\sigma(W)\;,\label{multadjoint}\\
\sigma(V^*)&=&\sigma(V)^{-1}\;.\label{multadjoint2}
\end{eqnarray}

An important property of a localizable completion $A$ is that it is necessarily an algebra of sections:

\begin{theorem}\label{lem:locimpfaith}
Every localizable completion is faithful.
\end{theorem}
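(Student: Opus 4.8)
The plan is to reduce faithfulness to the statement that the kernel $J=\bigcap_{g\in G}\ker\eval_g$ of the reduction map $A\to\hat A$ is trivial, and then to show that localizability forces this. Recall that $a\mapsto\hat a$ is surjective onto $\hat A$ by construction, has kernel exactly $J$, and that the reduction norm is $\rnorm{\hat a}=\inf_{b\in J}\norm{a+b}$; so if $J=\{0\}$ the reduction map is an isometric $*$-isomorphism and $A$ is faithful. Thus it suffices to prove $J=\{0\}$.

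Next I would take $a\in J$, so $\eval_g(a)=0_g$ for every $g\in G$. Since each $\eval_g$ is linear, $\eval_g(b)=0_g$ for all $g\in G$ and all $b\in\linspan a$, whence $\{g\in G\st\eval_g(b)\neq 0_g\}=\emptyset$ for every such $b$ and therefore $\sigma(\linspan a)=\emptyset$. In particular $\sigma(\linspan a)\subset\emptyset$, so $a$ belongs to the set $\{b\in A\st\sigma(\linspan b)\subset\emptyset\}$ that generates $\gamma(\emptyset)$; hence $a\in\gamma(\emptyset)$. This proves $J\subset\gamma(\emptyset)$.

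To finish, I would use that $A$ is localizable, i.e.\ $\gamma=p^*$. Because $p^*$ is a homomorphism of involutive quantales it preserves joins, in particular the empty join, so $\gamma(\emptyset)=p^*(\emptyset)=\V\emptyset=0_{\Max A}=\{0\}$. Together with the previous step this gives $J=\{0\}$, so $A$ is faithful.

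I do not expect a genuine obstacle here: the whole argument is an unwinding of the definitions of $\sigma$, $\gamma$, localizability and faithfulness, and it uses only the instance $U=\emptyset$ of $\gamma=p^*$. The one point worth stating explicitly is that the bottom element of $\Max A$ is the zero subspace and coincides with $p^*(\emptyset)$, which is immediate since $p^*$ preserves joins; everything else is formal.
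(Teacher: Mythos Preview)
Your proof is correct and essentially identical to the paper's: both show that if $a$ lies in the kernel of the reduction map then $\sigma(\linspan a)=\emptyset$, and then use localizability $\gamma=p^*$ together with $p^*(\emptyset)=\{0\}$ to conclude $a=0$. The only cosmetic difference is that the paper phrases the step $a\in\gamma(\emptyset)$ as the unit $\linspan a\subset\gamma(\sigma(\linspan a))$ of the adjunction $\sigma\dashv\gamma$, whereas you invoke the definition of $\gamma$ directly.
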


\begin{proof}
Let $A$ be a localizable completion for a saturated Fell bundle $\pi:E\to G$, and let $a\in A$ be such that $\hat a=0$. The latter condition is equivalent to the statement that $\eval_g(a)=0$ for all $g\in G$, which in turn implies that
\[
\sigma(\langle a\rangle) = \operatorname{int}\{g\in G\st \eval_g(a)\neq 0\} = \emptyset\;.
\]
The unit of the adjunction $\sigma\dashv\gamma$ gives us the inclusion
$\langle a\rangle\subset\gamma(\sigma(\langle a\rangle))$, and thus, using the equality $p^*=\gamma$ and the fact that $p^*$ preserves joins and, in particular, it preserves the least element, we obtain
\[
a\in p^*(\emptyset)=\{0\}\;.
\]
Hence, $a=0$, showing that the kernel of the reduction homomorphism $A\to\hat A$ is $\{0\}$.
\end{proof}

\subsection{Non-Hausdorff groupoids}\label{sec:nonlocnonhaus}

Our purpose now is to provide evidence suggesting that localizability of a compatible completion places strong constraints on the base groupoid $G$, to the extent that, at least in the case of trivial Fell bundles, $G$ must be Hausdorff. Let us motivate this by first looking at an example.

\begin{example}
Let us see an example of a non-Hausdorff groupoid $G$ such that no compatible completion of the convolution algebra $\sections_c(G)$ is localizable.
We shall take $G$ to be the ``real line with two zeros'', for which we have $G_0=\RR$, and $G_1$ is the quotient of $\RR\times\ZZ_2$ by the equivalence relation that identifies $(x,\boldsymbol 0)$ and $(x,\boldsymbol 1)$ for all $x\neq 0$. If $x\neq 0$ we shall write $\bar x$ for the equivalence class $\{(x,\boldsymbol 0),(x,\boldsymbol 1)\}$, and the two singleton classes $\{(0,\boldsymbol 0)\}$ and $\{(0,\boldsymbol 1)\}$ are written $0_-$ and $0_+$, respectively. The structure maps $d$, $r$, and $u$ are defined as follows:
\begin{itemize}
\item $d(\bar x)=r(\bar x)=x$ for all $x\neq 0$;
\item $d(0_-)=d(0_+)=r(0_-)=r(0_+)=0$;
\item $u(x)=\bar x$ for all $x\neq 0$;
\item $u(0)=0_-$.
\end{itemize}
Hence, the multiplication is such that the isotropy is trivial over any $x\neq 0$, and it is isomorphic to $\ZZ_2$ over $0$. This is an \'etale groupoid, and the following subsets of $G_1$ are local bisections, both homeomorphic to $\RR$:
\[
U_+:=G_1\setminus\{0_-\}\;,\quad U_-:= G_1\setminus\{0_+\}\;.
\]

Let $f_-:U_-\to\CC$ be any continuous compactly supported function on $U_-$ such that $f_-(0_-)=-1$. Let $f_+:U_+\to\CC$ coincide with $-f_-$ on every $\bar x$ with $x\neq 0$, and let $f_+(0_+)=1$, so $f_+$ is continuous on $U_+$. Extending $f_-$ and $f_+$ as zero on the remaining points of $G_1$ we have
\[
f_-\in C_c(U_-)\;,\quad f_+\in C_c(U_+)\;,
\]
and
\[
f:=f_-+f_+\in\sections_c(G_1)\;.
\]
Now $f$ is zero everywhere except at the points $0_-$ and $0_+$, where we have $f(0_-)=-1$ and $f(0_+)=1$. Let $A$ be any compatible completion of $\sections_c(G)$. Then
\[
\sigma(\langle f\rangle) = \interior\{g\in G_1\st f(g)\neq 0\} = \interior\{0_-,0_+\}=\emptyset\;.
\]
This means that
$f\in\gamma(\emptyset)$, and thus $\gamma\neq p^*$ because $p^*$ preserves joins and this implies $p^*(\emptyset)=\{0\}$. Hence, $A$ is not localizable.
\end{example}

In this example localizability fails due to the condition $\gamma(\emptyset)\neq\{0\}$, which is only a particular case. The following proposition illustrates the general situation.

\begin{theorem}\label{thm:locimplieshaus}
Let $\pi:E\to G$ be a trivial Fell bundle with fiber $F$, and let $A$ be a compatible completion of $\sections_c(G,F)$. 
If $A$ is localizable then $G$ is Hausdorff.
\end{theorem}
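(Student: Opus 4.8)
The plan is to argue by contraposition: assuming that $G$ is not Hausdorff, show that no compatible completion $A$ of $\sections_c(G,F)$ is localizable, by producing an element of $\gamma(\emptyset)$ that does not lie in $p^*(\emptyset)$. Since $p^*$ preserves joins it sends the least element to the least element, so $p^*(\emptyset)=\{0\}$; hence it suffices to exhibit a \emph{nonzero} $a\in A$ with $\sigma(\linspan a)=\emptyset$. As $\sigma(\linspan a)=\interior\{g\in G\st\eval_g(a)\neq 0\}$, this means an $a$ whose reduction $\hat a$ is not identically zero but vanishes off a set with empty interior.

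\emph{Extracting non-Hausdorff data.} For an \'etale groupoid with $G_0$ Hausdorff, $G$ is Hausdorff if and only if $G_0$ is closed in $G$. The forward implication is immediate, $G_0=\{g\st d(g)=g\}$ being the equalizer of two continuous maps into a Hausdorff space; the converse rests on the elementary fact that $BD\cap D=\emptyset$ whenever $B,D\in\ipi(G)$ and $B\cap G_0=\emptyset$ (if $bd=k\in D$ with $b\in B$ and $d\in D$, then $d=k$ since $D$ is a bisection, so $b=r(k)\in G_0$, absurd). So if $G$ is not Hausdorff there is $g_0\in\overline{G_0}\setminus G_0$; choosing a net $x_\lambda\in G_0$ with $x_\lambda\to g_0$ and using continuity of $d$ and $r$ together with Hausdorffness of $G_0$ forces $d(g_0)=r(g_0)=:x_0$, so $g_0$ is a nontrivial element of the isotropy group $I_{x_0}$. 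Fixing any $U\in\ipi(G)$ with $g_0\in U$ and putting $W:=U\cap G_0$, one has $g_0\in\overline W\setminus W$ (the net is eventually in $W$) and $x_0\notin U$ (else $d\vert_U$ would fail to be injective).

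\emph{The construction.} Since $\pi$ is trivial, identify $E$ with $F\times G$, so sections are $F$-valued functions and $\sections_c(G,F)=\sum_{V\in\ipi(G)}C_c(V,F)$; fix $0\neq e\in F$. In imitation of the ``real line with two zeros'' example, choose $\phi\in C_c(G_0)$ with $\phi(x_0)=1$ and small support, form $\phi e\in C_c(G_0,F)$ and $t\in C_c(U,F)$ defined by $t(g)=-\phi(d(g))e$, and set $a:=\phi e+t\in\sections_c(G,F)\subseteq A$. On $W$ the two summands cancel (there $d(g)=g$), so $\hat a$ vanishes on $W$; on the other hand $\hat a(x_0)=e$ and $\hat a(g_0)=-e$ (using $g_0\notin G_0$ and $x_0\notin U$), so $a\neq 0$; and $\hat a$ can be nonzero only on $(G_0\cup U)\setminus W$. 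Choosing $\phi$ with $\{g\st\phi(g)\neq 0\}\subseteq\overline W$ then confines the open support of $a$ to $\overline W\setminus W$, which, being the boundary of the open set $W$, has empty interior. Hence $\sigma(\linspan a)=\emptyset$, so $a\in\gamma(\emptyset)\setminus\{0\}$, and $\gamma\neq p^*$.

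The step I expect to be the main obstacle is the last one: shrinking $\supp\phi$ helps only if $x_0$ really lies in the interior of $\overline W$, i.e.\ if $W$ is topologically dense near $x_0$ inside $d(U)$, which need not hold for an arbitrary bisection $U\ni g_0$. One must therefore choose $U$ — equivalently, tune the cancelling pair of sections — according to the precise way $G$ fails to be Hausdorff at $g_0$, using that $d$ is an open local homeomorphism and that $G_0$ is open in $G$; in less favourable cases one may instead be forced to compare $\gamma$ and $p^*$ at a nonempty open set rather than at $\emptyset$. Everything else is routine once a suitable cancelling pair has been produced.
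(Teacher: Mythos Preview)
Your construction is correct and essentially the same as the paper's: the paper also builds a cancelling pair $s+t$ supported on two bisections with a common domain, agreeing with opposite signs on their overlap. The paper takes an arbitrary pair of non-separated points $g,g'$ and two bisections $U\ni g$, $U'\ni g'$, whereas you use the specific pair $(g_0,x_0)$ with $x_0\in G_0$ coming from the characterisation ``$G$ Hausdorff $\iff$ $G_0$ closed''; this is cosmetic, and your $t(g)=-\phi(d(g))e$ is exactly the paper's $t=-s\circ i^{-1}$ for the homeomorphism $i=(d\vert_U)^{-1}$.

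The obstacle you flag is real, and the paper does \emph{not} attempt to force $\sigma(\langle a\rangle)=\emptyset$. Instead it carries out precisely your fallback, comparing $\gamma$ and $p^*$ at the nonempty open set $V:=\sigma(\langle a\rangle)$. The missing argument is short and needs no special tuning of $U$. First, $g_0\notin V$ (and $x_0\notin V$): every neighbourhood of $g_0$ meets $W=U\cap G_0$, where $a$ vanishes, so $g_0$ is not in the interior of $\{g:a(g)\neq 0\}$. Second, since $a$ is supported in $G_0\cup U$ one has $V=V_0\cup V_1$ with $V_0:=V\cap G_0$ and $V_1:=V\cap U$, both local bisections avoiding $g_0$ and $x_0$; hence every element of $C_c(V_0,F)+C_c(V_1,F)$ vanishes at $g_0$, and by continuity of $\eval_{g_0}$ so does every element of $p^*(V)=\overline{C_c(V_0,F)+C_c(V_1,F)}$. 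But $a(g_0)=-e\neq 0$, so $a\notin p^*(V)$, while $a\in\gamma(V)$ by the unit of the adjunction $\sigma\dashv\gamma$. Thus $\gamma\neq p^*$ and $A$ is not localizable.
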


\begin{proof}
We shall prove that if $G$ is not Hausdorff then $A$ is not localizable. Assume that $G$ is not Hausdorff, and let $(g_\alpha)$ be a net in $G$ that converges to two distinct limits $g$ and $g'$. Let $U,U'\in\ipi(G)$ be such that $g\in U$ and $g'\in U'$, and fix $\alpha_0$ such that $g_\alpha\in U\cap U'$ for all $\alpha\ge\alpha_0$. Due to continuity of the domain map $d$ we have $d(g_\alpha)\to d(g)$ and $d(g_\alpha)\to d(g')$ in $G_0$, so $d(g)=d(g')$ because $G_0$ is Hausdorff. This implies that $g$ and $g'$ cannot belong to the same local bisection, so $g\notin U'$ and $g'\notin U$, and therefore $g_\alpha\notin\{g,g'\}$ for all $\alpha\ge\alpha_0$. Moreover, we have
\begin{eqnarray*}
d\bigl(U\cap d^{-1}\bigl(d(U)\cap d(U')\bigr)\bigr)&=&d\bigl(U'\cap d^{-1}\bigl(d(U)\cap d(U')\bigr)\bigr)\;,\\
g\in U\cap d^{-1}\bigl(d(U)\cap d(U')\bigr)&\textrm{and}&g'\in U'\cap d^{-1}\bigl(d(U)\cap d(U')\bigr)\;,
\end{eqnarray*}
so without loss of generality we shall assume that $d(U)=d(U')$.
Then there is a homeomorphism $U\cong d(U)=d(U')\cong U'$ which restricts to the identity on $U\cap U'$: take for instance this to be $i=\beta\circ d:U\to U'$ where $\beta:d(U')\to U'$ is the local bisection with image $U'$. Let $s\in C_c(U,F)$ be such that $s(g)\neq 0$. Defining $t\in C_c(U',F)$ by
\[
t=-s\circ i^{-1}
\]
we have $t(g')\neq 0$ and
we obtain a function $s+t\in C_c(U,F)+C_c(U',F)=\sections_c(G,F)$ such that $(s+t)(g)=s(g)$ and $(s+t)(g')=t(g')$. Moreover, $s+t$ is zero on $U\cap U'$. In particular, for all $\alpha\ge\alpha_0$ we have $(s+t)(g_\alpha)=0$. Since any open neighborhood $V$ of either $g$ or $g'$ must contain such a $g_\alpha$, for any such $V$ we have
\[
V\not\subset\{h\in G\st (s+t)(h)\neq 0\}\;,
\]
and therefore $g\notin\sigma(\langle s+t\rangle)$ and $g'\notin\sigma(\langle s+t\rangle)$. Let $W=U\cap\sigma(\langle s+t\rangle)$ and $W'=U'\cap\sigma(\langle s+t\rangle)$. We have $W\cup W'=\sigma(\langle s+t\rangle)$, and thus $s+t\in\gamma(W\cup W')$. However,
\[
p^*(W\cup W')=p^*(W)\vee p^*(W')= \overline{C_c(W,F)+ C_c(W',F)}\;,
\]
and any function $u$ in $C_c(W,F)+ C_c(W',F)$ must satisfy $u(g)= 0_g$ and $u(g')=0_{g'}$; and, since $\eval_g$ and $\eval_{g'}$ are continuous maps and both $E_g$ and $E_{g'}$ are Hausdorff, so does any function $u\in p^*(W\cup W')$. Hence, $s+t\notin p^*(W\cup W')$, and therefore $\gamma\neq p^*$, \ie, $A$ is not localizable.
\end{proof}

\subsection{Hausdorff groupoids}

We have seen that localizability of a compatible completion implies faithfulness, and also that, at least for trivial Fell bundles, the underlying groupoid must be Hausdorff. From now on in this section we shall restrict to Hausdorff groupoids. As we shall see, for compact groupoids faithfulness and localizability are equivalent conditions. This shows, for compact groupoids and under the mild restrictions assumed in Theorem~\ref{thm:localizableimpliesreduced}, that reduced C*-algebras are examples of localizable completions. Another class of examples will be obtained from C*-bundles, without any compactness restriction.

Let $G$ be Hausdorff and $A$ be a faithful compatible completion of a Fell bundle $\pi:E\to G$. Then, by Lemma~\ref{AinC0}, $A$ necessarily consists of continuous sections that vanish at infinity. 
This allows us to remove the interior operator in the definition of $\sigma$,
\begin{equation}\label{removeint}
\sigma(\linspan a) = \osupp(a)=\{g\in G\st a(g)\neq 0\}\;,
\end{equation}
and thus also the $\overline{\spanmap(-)}$ operator can be removed from the definition of $\gamma$:

\begin{lemma}
Let $\pi:E\to G$ be a Fell bundle on a Hausdorff groupoid $G$, and let $A$ be a faithful compatible completion. For all $U\in\topology(G)$ we have
\[
\gamma(U) = \{a\in A\st \osupp(a)\subset U\}\;.
\]
\end{lemma}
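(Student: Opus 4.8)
The plan is to show that, under the standing hypotheses, both the $\spanmap$ operator and the norm-closure operator appearing in the definition of $\gamma$ act trivially, so that $\gamma(U)$ collapses to $\{a\in A\st\osupp(a)\subset U\}$. First I would record the already-noted fact \eqref{removeint}: since $G$ is Hausdorff and $A$ is faithful, Lemma~\ref{AinC0} gives $A\subset C_0(G,E)$, so each $a\in A$ is a continuous section, $\osupp(a)$ is open, and therefore $\sigma(\linspan a)=\osupp(a)$ for every $a\in A$. Consequently the set $\{a\in A\st\sigma(\linspan a)\subset U\}$ used to define $\gamma(U)$ is precisely $S_U:=\{a\in A\st\osupp(a)\subset U\}$, and it suffices to prove that $S_U$ is a norm-closed linear subspace of $A$.

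Next I would verify that $S_U$ is a linear subspace. If $a,b\in S_U$ and $\lambda\in\CC$, then for each $g\notin U$ we have $\eval_g(a)=0_g=\eval_g(b)$, and linearity of $\eval_g$ yields $\eval_g(\lambda a+b)=0_g$; hence $\osupp(\lambda a+b)\subset\osupp(a)\cup\osupp(b)\subset U$, so $\lambda a+b\in S_U$. Thus $\spanmap(S_U)=S_U$. Then I would check closedness: given a sequence $(a_n)$ in $S_U$ with $a_n\to a$ in $A$, the inequality \eqref{eq:cn1} shows the compatible norm dominates the supremum norm, so $a_n\to a$ uniformly and in particular $\eval_g(a_n)\to\eval_g(a)$ in $E_g$ for every $g$. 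If $g\notin U$ then $\eval_g(a_n)=0_g$ for all $n$, and since $E_g$ is Hausdorff the limit is unique, so $\eval_g(a)=0_g$; hence $\osupp(a)\subset U$, \ie\ $a\in S_U$. Therefore $\overline{S_U}=S_U$, and combining the three observations gives $\gamma(U)=\overline{\spanmap(S_U)}=S_U$, which is the asserted formula.

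The argument is routine; I do not expect a genuine obstacle. The hypotheses enter only through Lemma~\ref{AinC0}/\eqref{removeint} (needing $G$ Hausdorff and $A$ faithful) and through the domination of the supremum norm in the closedness step. Indeed, the real content of the lemma is exactly that faithfulness together with Hausdorffness permit one to discard the ``formal'' span and closure operators from the definition of $\gamma$, in the same spirit in which \eqref{removeint} discards the interior operator from the definition of $\sigma$.
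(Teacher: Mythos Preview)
Your proof is correct and follows essentially the same approach as the paper: identify $\sigma(\linspan a)$ with $\osupp(a)$ via \eqref{removeint}, then check directly that $S_U=\{a\in A\st\osupp(a)\subset U\}$ is a norm-closed linear subspace so that the $\overline{\spanmap(-)}$ operator acts trivially. The paper's proof is a one-sentence sketch of exactly this; your version simply fills in the details (linearity via $\eval_g$, closedness via the domination $\norm~_\infty\le\norm~$ and continuity/uniqueness of limits in each fiber).
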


\begin{proof}
It is straightforward to verify, using \eqref{removeint}, that $\gamma(U)$ is closed under scalar multiplication and sums, and that due to continuity of the sections $a\in A$ it is also topologically closed.
\end{proof}

\begin{lemma}\label{lem:locifffaith}
Let $\pi:E\to G$ be a saturated Fell bundle on a Hausdorff groupoid $G$, and let $A$ be a faithful compatible completion with associated quantale map $p:\Max A\to\topology(G)$.
\begin{enumerate}
\item\label{lf1} For all $U\in\ipi(G)$ we have $p^*(U)=\gamma(U)$.
\item\label{lf2} $A$ is localizable if and only if $\gamma$ is a join preserving map.
\item\label{lf3} If $U\in\topology(G)$ and $\overline U$ is compact then $p^*(U)=\gamma(U)$.
\item\label{lf4} If $G$ is a compact groupoid then $A$ is localizable.
\end{enumerate}
\end{lemma}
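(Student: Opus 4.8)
The plan is to prove the four clauses in sequence, each feeding the next; essentially all the work sits in clause~\eqref{lf1} and in the reduction of clause~\eqref{lf3} to it, whereas~\eqref{lf2} and~\eqref{lf4} become formal afterwards. In clause~\eqref{lf1} the inclusion $p^*(U)\subset\gamma(U)$ is Lemma~\ref{pvsgamma}, so for $U\in\ipi(G)$ I must show $\gamma(U)\subset p^*(U)$. Given $a\in\gamma(U)$, recall that, $G$ being Hausdorff and $A$ faithful, $\gamma(U)=\{a\in A\st\osupp(a)\subset U\}$ and $A\subset C_0(G,E)$ by Lemma~\ref{AinC0}, so $a$ is a continuous section vanishing at infinity and vanishing off $U$. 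The point is that $a\vert_U\in C_0(U,E)$: for each $\epsilon>0$ the set $\{g\in G\st\norm{a(g)}\ge\epsilon\}$ is closed (upper semicontinuity of the norm of a continuous section of an upper semicontinuous bundle), is contained in a compact set (because $a$ vanishes at infinity), and is contained in $\osupp(a)\subset U$, hence is a compact subset of $U$ outside of which $\norm{a}<\epsilon$. Since the compatible norm coincides with the supremum norm on $C_c(U,E)$ for $U\in\ipi(G)$, a uniform approximation of $a\vert_U$ by elements of $C_c(U,E)$ is automatically Cauchy, hence convergent, in $A$ with limit $a$; therefore $a\in\overline{C_c(U,E)}=A\vert_U=p^*(U)$.

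For clause~\eqref{lf2}: if $A$ is localizable then $\gamma=p^*$ is a homomorphism of involutive quantales (Theorem~\ref{fellquant1}), hence preserves joins. Conversely, suppose $\gamma$ preserves joins; writing any $U\in\topology(G)$ as a union $\bigcup_\alpha U_\alpha$ of local bisections ($G$ is \'etale) and using clause~\eqref{lf1} together with the fact that $p^*$ preserves joins,
\[
\gamma(U)=\V_\alpha\gamma(U_\alpha)=\V_\alpha p^*(U_\alpha)=p^*(U)\;,
\]
so $p^*=\gamma$, \ie\ $A$ is localizable.

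For clause~\eqref{lf3} I again only need $\gamma(U)\subset p^*(U)$. Let $a\in\gamma(U)$; then $\osupp(a)\subset U\subset\overline U$ with $\overline U$ compact, so $\supp(a)=\overline{\osupp(a)}$ is a closed, hence compact, subset of $\overline U$, and it is Hausdorff since $G$ is. Cover $\supp(a)$ by finitely many local bisections $V_1,\ldots,V_k$ and choose a subordinate partition of unity $\psi_1,\ldots,\psi_k$ (available since $\supp(a)$ is compact Hausdorff and $G$ is locally compact, as used in the proof of Lemma~\ref{fellpg2}). Then $a=\sum_i\psi_i a$, the two sides agreeing on $\supp(a)$ since $\sum_i\psi_i=1$ there, and off $\supp(a)$ since $a=0$. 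Each $\psi_i a$ is a continuous section with $\supp(\psi_i a)\subset\supp(\psi_i)$ compact and contained in $V_i$, so $\psi_i a\in C_c(V_i,E)\subset A$, while $\osupp(\psi_i a)\subset\osupp(a)\cap V_i\subset U\cap V_i=:W_i\in\ipi(G)$ with $W_i\subset U$. Hence $\psi_i a\in\gamma(W_i)=p^*(W_i)\subset p^*(U)$ by clause~\eqref{lf1} and monotonicity of $p^*$, and since $p^*(U)$ is a linear subspace, $a=\sum_i\psi_i a\in p^*(U)$. Clause~\eqref{lf4} is then immediate: if $G$ is compact, every $U\in\topology(G)$ has $\overline U$ compact, so by~\eqref{lf3} we get $p^*(U)=\gamma(U)$ for all $U$, \ie\ $p^*=\gamma$ and $A$ is localizable.

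The step I expect to be the main obstacle is the inclusion $\gamma(U)\subset p^*(U)$ in the guise of clause~\eqref{lf3}: after the partition of unity one has $\osupp(\psi_i a)$ inside a local bisection $W_i\subset U$, but the \emph{support} $\supp(\psi_i a)$ need not be contained in $U$ at all, since it may meet the boundary of $U$ inside $V_i$. This is exactly why clause~\eqref{lf1} must be proved first and in the sharper form ``$\osupp(a)\subset U\Rightarrow a\in p^*(U)$'' rather than ``$\supp(a)\subset U\Rightarrow a\in p^*(U)$'': membership in $\gamma$ of a local bisection constrains only the open support, and~\eqref{lf1} is precisely the tool that converts such a constraint into membership in $p^*$.
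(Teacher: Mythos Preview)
Your proof is correct and follows essentially the same route as the paper's: clause~\eqref{lf1} via $A\subset C_0(G,E)$ and the identification $p^*(U)=C_0(U,E)$, clause~\eqref{lf3} via a finite cover by local bisections and a partition of unity that reduces to~\eqref{lf1}, with~\eqref{lf2} and~\eqref{lf4} formal. The only cosmetic difference is that in~\eqref{lf3} you cover $\supp(a)$ while the paper covers $\overline U$; both yield the same finitely many $\phi_i a\in\gamma(U\cap V_i)$, and your closing remark about why control of $\osupp$ (not $\supp$) is what~\eqref{lf1} provides is exactly the point.
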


\begin{proof}
Proof of \eqref{lf1}.
Let $U\in\ipi(G)$. For all $s\in\gamma(U)$ we have both $\osupp(s)\subset U$ and, by Lemma~\ref{AinC0}, $s\in C_0(G,E)$. Hence, $s\in C_0(U,E)=p^*(U)$, so $p^*(U)=\gamma(U)$.

Proof of \eqref{lf2}. $A$ is localizable if and only if $\gamma=p^*$, so localizability implies that $\gamma$ preserves joins. For the converse assume that $\gamma$ preserves joins, and let $U\in\topology(G)$. Let $U=\bigcup_\alpha U_\alpha$ where $U_\alpha\in\ipi(G)$. Then, using \eqref{lf1}, we obtain
\[
\gamma(U)=\V_\alpha\gamma(U_\alpha) =\V_\alpha p^*(U_\alpha)=p^*(U)\;.
\]

Proof of \eqref{lf3}. Let $U\in\topology(G)$ with $\overline U$ compact. Then there is a finite open cover of $\overline U$ by local bisections $U_i\in\ipi(G)$,
\[
\overline U\subset U_1\cup\ldots\cup U_k\;,
\]
and since $\overline U$ is a normal space there is a partition of unity $(\phi_i)$ of $\overline U$ such that $\phi_i\in C_c(U_i)$ for all $i\in\{1,\ldots,k\}$. Then for all $a\in\gamma(U)$ we have $\osupp(\phi_i a)\subset U_i\cap U$ for each $i$; that is, $\phi_i a\in\gamma(U_i\cap U)$. Since $U_i\cap U\in\ipi(G)$ we conclude, by \eqref{lf1}, that $\gamma(U_i\cap U)=p^*(U_i\cap U)$, and thus
\[
a=\phi_1 a+\cdots+\phi_k a\in \V_{i=1}^k p^*(U_i\cap U)=p^*(U)\;,\]
so $\gamma(U)=p^*(U)$.

Proof of \eqref{lf4}. If $G$ is compact then $\overline U$ is compact for all $U\in\topology(G)$, so by \eqref{lf3} we have $\gamma=p^*$.
\end{proof}

These results provide us with a class of examples of localizable completions:

\begin{theorem}\label{thm:localizableimpliesreducedcgrps}
Let $\pi:E\to G$ be a continuous second countable saturated Fell bundle on a compact Hausdorff groupoid $G$, and let $A$ be a compatible completion. The following conditions are equivalent:
\begin{enumerate}
\item\label{thm:lir2cg} $A$ is faithful;
\item\label{thm:lir1cg} $A$ is localizable;
\item\label{thm:lir3cg} $A$ is $*$-isomorphic to $C_r^*(G,E)$.
\end{enumerate}
\end{theorem}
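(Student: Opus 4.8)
The plan is to obtain all three equivalences by chaining together results already proved, with no new computation required. The equivalence $\eqref{thm:lir2cg}\Leftrightarrow\eqref{thm:lir3cg}$ is an immediate instance of Theorem~\ref{thm:localizableimpliesreduced}: the standing hypotheses here --- continuous, second countable, saturated, and $G$ Hausdorff --- are precisely those of that theorem, so nothing further is needed for it.

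It remains to link faithfulness and localizability, and for this I would close a short cycle. The implication $\eqref{thm:lir1cg}\Rightarrow\eqref{thm:lir2cg}$ is exactly Theorem~\ref{lem:locimpfaith}, which states that every localizable completion is faithful; this holds for any saturated Fell bundle and uses no compactness. For the converse, $\eqref{thm:lir2cg}\Rightarrow\eqref{thm:lir1cg}$, suppose $A$ is faithful. Since $G$ is a compact (Hausdorff, \'etale) groupoid, every open $U\subseteq G$ has $\overline U=G$ compact, so Lemma~\ref{lem:locifffaith}\eqref{lf4} applies verbatim and yields that $A$ is localizable. Combining this with $\eqref{thm:lir2cg}\Leftrightarrow\eqref{thm:lir3cg}$ gives all three equivalences.

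The only point worth checking carefully is that the hypotheses of the cited results really do line up with the present assumptions: a compact Hausdorff \'etale groupoid has $G_0$ closed in $G$, hence $G_0$ compact Hausdorff, so the standing conventions of the section are satisfied, and the faithfulness required by Lemma~\ref{lem:locifffaith} is precisely what is available at the step $\eqref{thm:lir2cg}\Rightarrow\eqref{thm:lir1cg}$. There is no genuine obstacle; the substance of the theorem lies in Theorem~\ref{thm:localizableimpliesreduced} and Lemma~\ref{lem:locifffaith}, which have already been established.
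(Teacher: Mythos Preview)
Your proof is correct and essentially identical to the paper's own argument: both invoke Lemma~\ref{lem:locifffaith}\eqref{lf4} for $\eqref{thm:lir2cg}\Rightarrow\eqref{thm:lir1cg}$, Theorem~\ref{lem:locimpfaith} for $\eqref{thm:lir1cg}\Rightarrow\eqref{thm:lir2cg}$, and Theorem~\ref{thm:localizableimpliesreduced} for $\eqref{thm:lir2cg}\Leftrightarrow\eqref{thm:lir3cg}$. One small slip: you wrote ``$\overline U=G$ compact'' where you meant ``$\overline U\subset G$ is compact'' (not every open set is dense), though this aside is not actually needed since Lemma~\ref{lem:locifffaith}\eqref{lf4} applies directly from compactness of $G$.
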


\begin{proof}
The implication $\eqref{thm:lir2cg}\Rightarrow\eqref{thm:lir1cg}$ follows from Lemma~\ref{lem:locifffaith}\eqref{lf4}. The implication $\eqref{thm:lir1cg}\Rightarrow\eqref{thm:lir2cg}$ follows from Theorem~\ref{lem:locimpfaith}. And the equivalence $\eqref{thm:lir2cg}\iff\eqref{thm:lir3cg}$ follows from Theorem~\ref{thm:localizableimpliesreduced}. 
\end{proof}

We conjecture that reduced C*-algebras should be localizable completions in a much larger class of examples involving non-compact groupoids, but we shall not pursue this in this paper. In any case, certainly the restriction to compact groupoids is not necessary, as the following proposition shows:

\begin{theorem}\label{thm:Cstarbundle}
Let $\pi:E\to X$ be a C*-bundle. Any compatible completion for $\pi$ is $*$-isomorphic to $C_0(X,E)$, and it is localizable.
\end{theorem}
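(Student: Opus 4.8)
The plan is to first identify any compatible completion of $\pi$ with $C_0(X,E)$, and then read off localizability directly from the explicit descriptions of $p^*$ and $\gamma$.

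Regard $X$ as the groupoid $G$ with $G=G_0=X$. Then every arrow is a unit, so $d=r=\ident_X$, the only way to write $g\in X$ as a product in $G$ is $g=gg$, and the inverse of $g$ is $g$ itself; consequently the convolution product and the involution on $\sections_c(G,E)$ are both pointwise. Moreover every open subset of $X$ is a local bisection, so $\ipi(G)=\topology(X)$; in particular $X\in\ipi(G)$, and since $C_c(X,E)$, viewed as a space of sections over the local bisection $X$, is exactly $\sections_c(G,E)$ (by Lemma~\ref{prefellpg}\eqref{Ccmon}), axiom \eqref{eq:cn2} of a compatible norm forces $\norm s=\norm s_\infty$ for \emph{every} $s\in\sections_c(G,E)$. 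Hence the compatible norm is unique: it is the supremum norm, in which the completion of $C_c(X,E)$ is $C_0(X,E)$, and since the pointwise multiplication and involution extend continuously, any compatible completion $A$ is $*$-isomorphic to $C_0(X,E)$ via the continuous extension of the identity on $C_c(X,E)$. I also note that $\pi$ is saturated, because each fibre $E_x$ is a C*-algebra and, using an approximate unit, $\overline{\spanmap(E_x\cdot E_x)}=E_x$; this is what makes the quantale map $p:\Max A\to\topology(G)$ of Theorem~\ref{fellquant1} available, so that localizability is meaningful.

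For localizability one must show $p^*=\gamma$. Identify $A$ with $C_0(X,E)$; then $A$ is faithful (the reduction map is the identity) and $G=X$ is Hausdorff, so $\gamma(U)=\{a\in A\st\osupp(a)\subset U\}$ for every $U\in\topology(G)$, whereas $p^*(U)=\overline{C_c(U,E)}=C_0(U,E)$ since $U\in\ipi(G)$. The inclusion $p^*(U)\subset\gamma(U)$ is Lemma~\ref{pvsgamma}. For the reverse inclusion fix $a\in A$ with $\osupp(a)\subset U$ and $\epsilon>0$, pick a compact $K\subset X$ with $\norm{a(g)}<\epsilon$ for $g\notin K$, and put $L=\{g\in X\st\norm{a(g)}\ge\epsilon\}$. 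Since the pointwise norm of a section of an upper semicontinuous bundle is upper semicontinuous, $L$ is closed; it is contained in $K$, hence compact; and $L\subset\osupp(a)\subset U$. As $\norm{a(g)}<\epsilon$ for every $g\in U\setminus L$, this shows that $a\vert_U\in C_0(U,E)=p^*(U)$. Therefore $\gamma(U)=p^*(U)$ for all open $U$, i.e.\ $A$ is localizable.

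I expect the only step needing genuine care to be the first one: observing that $X$ itself is a local bisection, which makes axiom \eqref{eq:cn2} already pin the compatible norm down to the supremum norm on the whole convolution algebra, so that there is in fact nothing to choose. After that both assertions are essentially bookkeeping, the one mild technical ingredient being the upper semicontinuity of $g\mapsto\norm{a(g)}$, used above to see that $\{g\st\norm{a(g)}\ge\epsilon\}$ is closed.
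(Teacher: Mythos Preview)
Your proof is correct and follows essentially the same route as the paper's: observe that every open set of $X$ is a local bisection, so axiom \eqref{eq:cn2} pins the compatible norm to $\norm~_\infty$ on all of $C_c(X,E)$, giving $A\cong C_0(X,E)$; then verify $p^*(U)=\gamma(U)$ for each open $U$. The paper abbreviates the second step by invoking Lemma~\ref{lem:locifffaith}\eqref{lf1} (whose hypothesis of faithfulness you verify explicitly), whereas you spell out the argument that $a\in C_0(X,E)$ with $\osupp(a)\subset U$ lies in $C_0(U,E)$, using upper semicontinuity of $g\mapsto\norm{a(g)}$ to see that $\{g:\norm{a(g)}\ge\epsilon\}$ is closed and hence compact inside $U$ --- a detail the paper's lemma glosses over with a bare ``Hence''.
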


\begin{proof}
Let $A$ be a C*-completion. By definition of compatible completion the norm $\norm~$ on $C_c(X,E)$ equals $\norm~_\infty$, so $A\cong C_0(G_0,E)$. And $p^*=\gamma$, by Lemma~\ref{lem:locifffaith}\eqref{lf1}, because all the open sets of $G$ are local bisections.
\end{proof}

\section{Fell bundles versus quantic bundles}\label{sec:linevsquantic}

Our aim now is to focus on Fell line bundles, which as we shall see are closely related to openness-like properties of the maps $p:\Max A\to\topology(G)$. In particular we shall see that, in the context of saturated Fell bundles and localizable completions, the Fell line bundles are precisely the abelian Fell bundles for which $p$ is a quantic bundle. And a strong form of openness of such quantic bundles in the sense of~\cite{OMIQ} is closely related to the principality of $G$.
In this section we assume again that all the groupoids are \'etale with object space $G_0$ locally compact Hausdorff.

\subsection{Fell line bundles}

We begin by proving the following lemma, which will relate to weak openness of quantale maps (\cf\ Theorem~\ref{thm:weaklyopenp}).

\begin{lemma}\label{froblemma}
Assuming that $\pi:E\to G$ is a Fell line bundle and that $A$ is a compatible completion, we have, for all $U\in\topology(G)$ and $V\in\Max A$:
\begin{eqnarray}
\sigma(V)U&\subset&\sigma(V\,\gamma(U))\;,\label{frob1}\\
U\sigma(V) &\subset& \sigma(\gamma(U) \,V)\;.\label{frob2}
\end{eqnarray}
\end{lemma}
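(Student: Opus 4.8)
The plan is to prove \eqref{frob1} by a direct, pointwise argument and then to deduce \eqref{frob2} from it by applying the quantale involutions. For \eqref{frob1}, I would first reduce to elementary pieces: since $\sigma$ preserves joins (Lemma~\ref{lemma4.1}) we have $\sigma(V)=\bigcup_{a\in V} W_a$ with $W_a:=\interior\{g\in G\mid\eval_g(a)\neq 0_g\}$, and multiplication in $\topology(G)$ distributes over unions, so $\sigma(V)U=\bigcup_{a\in V} W_a U$. Hence it suffices to fix $a\in V$ and an arbitrary point $y=gh\in W_aU$ (with $g\in W_a$, $h\in U$, $(g,h)\in G_2$) and to produce an element $c\in V\gamma(U)$ with $y\in\sigma(\linspan{c})$.

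The element I would use is $c=ab$, where $b$ is chosen compactly supported inside a local bisection lying in $U$ and with $b(h)\neq 0_h$. Concretely: $G$ being étale, choose $U_1\in\ipi(G)$ with $h\in U_1\subseteq U$; since the bundle has enough sections and $G$ is locally compact, choose $b\in C_c(U_1,E)\subseteq A$ with $b(h)\neq 0_h$ (possible because $E_h\cong\CC\neq 0$). Then $\osupp(b)$ is open and contained in $U$, so $\sigma(\linspan{b})=\osupp(b)\subseteq U$, i.e.\ $b\in\gamma(U)$; consequently $ab\in V\odot\gamma(U)$, so $\linspan{ab}\subseteq V\gamma(U)$ and $\sigma(\linspan{ab})\subseteq\sigma(V\gamma(U))$ by monotonicity of $\sigma$. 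It then remains to show $y\in\sigma(\linspan{ab})$, for which I would prove that the open set $W_a\,\osupp(b)$ is contained in $\{x\in G\mid\eval_x(ab)\neq 0_x\}$: given $x'=xk$ with $x\in W_a$, $k\in\osupp(b)$, $(x,k)\in G_2$, we have $U_1\cap G_{d(x')}=\{k\}$ because $k\in U_1$ and $U_1$ is a local bisection, so Lemma~\ref{lem:prodeval} gives $\eval_{x'}(ab)=\eval_x(a)\,b(k)$, which is nonzero since $\eval_x(a)\neq 0$, $b(k)\neq 0$, and in a Fell line bundle a product of two nonzero elements is nonzero (Lemma~\ref{lem:zeroproplinebun}). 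As $W_a\,\osupp(b)$ is open, it therefore lies in $\interior\{x\mid\eval_x(ab)\neq 0_x\}=\sigma(\linspan{ab})$; and $y=gh$ lies in $W_a\,\osupp(b)$ because $g\in W_a$, $h\in\osupp(b)$ and $(g,h)\in G_2$. This establishes \eqref{frob1}.

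For \eqref{frob2} I would pass through the involutions. One checks readily from $\eval_g(a^*)=\eval_{g^{-1}}(a)^*$ that $\sigma(V^*)=\sigma(V)^{-1}$, and hence, from the definition of $\gamma$, that $\gamma(U^{-1})=\gamma(U)^*$ (since $\sigma(\linspan{a})\subseteq U^{-1}$ iff $\sigma(\linspan{a^*})\subseteq U$). Applying \eqref{frob1} to $V^*$ and $U^{-1}$ in place of $V$ and $U$ gives $\sigma(V^*)U^{-1}\subseteq\sigma\bigl(V^*\gamma(U)^*\bigr)=\sigma\bigl((\gamma(U)V)^*\bigr)=\sigma(\gamma(U)V)^{-1}$; taking pointwise inverses of both sides yields \eqref{frob2}.

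The routine verifications (the choice of $b$, and the elementary behaviour of $\sigma$ and $\gamma$ under the involution) are all short; the genuine content — and the step I expect to be the main obstacle — is arranging that $ab$ is nonzero not merely at the single point $y$ but on an entire open neighbourhood of it. This is precisely what dictates taking $b$ supported in one local bisection $U_1$: it makes the factorisation $x'=xk$ with $k\in U_1$ unique, with $k$ a continuous function of $x'$, so that Lemma~\ref{lem:prodeval} converts $W_a\,\osupp(b)$ uniformly into an open subset of the support of $ab$; and it is here, through the absence of zero-divisors in the fibres, that the line-bundle hypothesis is used.
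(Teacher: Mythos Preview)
Your argument is correct and follows essentially the same route as the paper's proof: reduce to a single $a\in V$, pick a compactly supported section $b$ in a local bisection with $b(h)\neq 0$, invoke Lemma~\ref{lem:prodeval} to compute $\eval_{x'}(ab)$ on the open set $W_a\,\osupp(b)$, and use the line-bundle property (Lemma~\ref{lem:zeroproplinebun}) to conclude nonvanishing there; then deduce \eqref{frob2} from \eqref{frob1} via the involution. The only organizational difference is that the paper first reduces $U$ itself to a local bisection and works with $p^*(U)=C_0(U,E)\subset\gamma(U)$, whereas you keep $U$ arbitrary and pass to a sub-bisection $U_1\subset U$ locally around $h$; and for \eqref{frob2} you verify $\sigma(V^*)=\sigma(V)^{-1}$ and $\gamma(U^{-1})=\gamma(U)^*$ explicitly, which is in fact more self-contained than the paper's terse reference to Lemma~\ref{lem:rlXequiv}.
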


\begin{proof}
In order to prove the inclusion
$
\sigma(V)U\subset \sigma(V\,\gamma(U))
$
it suffices to take $U\in\ipi(G)$, for if $(U_\alpha)$ is a family in $\ipi(G)$ such that for all $\alpha$
\[
\sigma(V)U_\alpha\subset \sigma(V\,\gamma(U_\alpha))
\]
and $U=\bigcup_\alpha U_\alpha$, then
\[
\sigma(V)U=\bigcup_\alpha\sigma(V)U_\alpha
\subset\bigcup_\alpha\sigma(V\,\gamma(U_\alpha))\subset\sigma(V\,\gamma(U))\;.
\]
Moreover, since $V=\V_{a\in V}\linspan a$ in $\Max A$, it also suffices to consider $V$ to be of the form $\linspan a$, due to join preservation of $\sigma$ and of the quantale multiplications. Hence, the condition to be proved is
\begin{equation}\label{condtobeproved}
\sigma(\linspan a)U\subset \sigma(\linspan a\, \gamma(U))\;,
\end{equation}
with $a\in A$ and $U\in\ipi(G)$. Moreover, by Lemma~\ref{pvsgamma} we have $p^*(U)\subset\gamma(U)$, so it suffices to prove
\[
\sigma(\linspan a)U\subset \sigma\bigl(\linspan a\, C_0(U,E)\bigr)\;.
\]
Note also that in $\Max A$ we have $C_0(U,E)=\V_{s\in C_c(U,E)}\linspan s$, and thus
\begin{eqnarray*}
\sigma(\linspan a\, C_0(U,E))&=&\bigcup_{s\in C_c(U,E)}\sigma(\linspan a\,\linspan s)
=\bigcup_{s\in C_c(U,E)}\sigma(\linspan{as})\\
&=&\bigcup_{s\in C_c(U,E)}\operatorname{int}\{k\in G\st \eval_k(as)\neq 0\}\\
&=&\bigcup_{s\in C_c(U,E)}\operatorname{int}\{k_1 k_2\st \eval_{k_1}(a)s(k_2)\neq 0\}\;.
\end{eqnarray*}
The last of the above steps is justified because $U$ is a local bisection, and thus, by Lemma~\ref{lem:prodeval},
$\eval_k(as)$ can be replaced by $\eval_{k_1}(a) s(k_2)$ where $k_2$ is the unique arrow in $U$ such that $d(k_2)=d(k)$, with $k_1=k k_2^{-1}$.

Let then $g\in\sigma(\linspan a)$ and $h\in U$. By nondegeneracy there is $s\in C_c(U,E)$ such that $s(h)\neq 0$. For all $g'\in\sigma(\linspan a)$ and $h'\in\osupp(s)$ we have $\eval_{g'}(a)\neq 0$ and $s(h')\neq 0$, and thus, since $\pi$ is a line bundle (\cf\ Lemma~\ref{lem:zeroproplinebun}) we have $\eval_{g'}(a)s(h')\neq 0$ for all
$(g',k')\in G_2\cap(\sigma(\linspan a)\times\osupp(s))$. Hence,
\[
gh\in\sigma(\linspan a)\osupp(s)\subset\operatorname{int}\{k_1 k_2\st \eval_{k_1}(a)s(k_2)\neq 0\}\subset \sigma(\linspan a\, C_0(U,E))\;.
\]
This proves \eqref{condtobeproved}, so \eqref{frob1} holds. Then \eqref{frob2} follows by applying the involution to \eqref{frob1} (\cf\ Lemma~\ref{lem:rlXequiv}).
\end{proof}

\subsection{Fell line bundles with localizable completions}

Let us begin with a result whose proof is analogous to, but simpler than, that of Lemma~\ref{froblemma}.

\begin{lemma}\label{froblemma2}
Let $\pi:E\to G$ be a Fell line bundle, and let $A$ be a localizable completion. Then the associated map $p:\Max A\to\topology(G)$ is a quantic bundle.
\end{lemma}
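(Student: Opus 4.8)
The plan is to check the three requirements in the definition of quantic bundle: surjectivity, semiopenness, and the two-sided equation \eqref{newopenness}. The first two come for free from what precedes. A Fell line bundle is saturated (Lemma~\ref{lem:zeroproplinebun}) and nondegenerate (all fibres are $1$-dimensional), so Theorem~\ref{fellquant1} provides the map $p:\Max A\to\topology(G)$ and tells us it is a surjection; and since $A$ is localizable we have $p^*=\gamma$, so $p$ is semiopen with $p_!=\sigma$ (Lemma~\ref{lemma4.1}). Everything therefore reduces to proving
\[
\sigma(V\,p^*(U)\,W)=\sigma(V)\,U\,\sigma(W)
\]
for all $U\in\topology(G)$ and all $V,W\in\Max A$ with $\sigma(V),\sigma(W)\in\ipi(G)$.

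For this I would perform a chain of reductions. First, writing $U=\bigcup_\alpha U_\alpha$ with $U_\alpha\in\ipi(G)$ and using that $p^*$ and $\sigma$ preserve joins while the multiplication of $\topology(G)$ distributes over joins, it suffices to treat $U\in\ipi(G)$. Secondly, and this is the one place where the localizability hypothesis together with $\sigma(V)\in\ipi(G)$ really enters: the unit of the adjunction $\sigma\dashv\gamma$ gives $V\subset\gamma(\sigma(V))=p^*(\sigma(V))$, and since $\sigma(V)\in\ipi(G)$ the right-hand side is $A\vert_{\sigma(V)}=C_0(\sigma(V),E)$; hence every element of $V$ is a continuous section of $\pi$ supported in the local bisection $\sigma(V)$, and likewise for $W$. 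Thirdly, writing $V=\V_{a\in V}\langle a\rangle$ and $W=\V_{b\in W}\langle b\rangle$ in $\Max A$ and again invoking join-preservation and distributivity, the claim reduces to the identity $\sigma(\langle a\rangle\,p^*(U)\,\langle b\rangle)=\sigma(\langle a\rangle)\,U\,\sigma(\langle b\rangle)$ for a single continuous $a\in C_0(\sigma(V),E)$, $b\in C_0(\sigma(W),E)$, and $U\in\ipi(G)$.

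The last identity I would establish by a direct computation of both sides, much as in the proof of Lemma~\ref{froblemma} but now obtaining equality outright rather than a one-sided inclusion. Since $p^*(U)=C_0(U,E)=\V_{s\in C_c(U,E)}\langle s\rangle$ and products of rank-one subspaces are rank-one, $\langle a\rangle\,p^*(U)\,\langle b\rangle=\V_{s\in C_c(U,E)}\langle asb\rangle$, so $\sigma(\langle a\rangle\,p^*(U)\,\langle b\rangle)=\bigcup_s\sigma(\langle asb\rangle)$. As $a$, $s$, $b$ are continuous sections supported in local bisections and $\ipi(G)$ is closed under multiplication, $asb$ is a continuous section supported in $\sigma(V)\,U\,\sigma(W)$, so $\sigma(\langle asb\rangle)=\osupp(asb)$, and iterating the line-bundle open-support formula (Lemma~\ref{laxprodosupp}, together with the product-of-sections description following Lemma~\ref{lem:prodeval}) gives $\osupp(asb)=\osupp(a)\,\osupp(s)\,\osupp(b)$. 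Distributing the union through the quantale multiplication and using $\bigcup_{s\in C_c(U,E)}\osupp(s)=U$ (valid because $\pi$ is nondegenerate and has enough sections), this equals $\osupp(a)\,U\,\osupp(b)$; finally $\osupp(a)=\sigma(\langle a\rangle)$ and $\osupp(b)=\sigma(\langle b\rangle)$ because $a$ and $b$ are continuous, which is exactly the right-hand side.

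I expect the only real subtlety to be the second reduction — recognizing that the hypothesis $\sigma(V)\in\ipi(G)$ (rather than some weaker condition) is precisely what forces $V$, and hence each $\langle a\rangle$, to sit inside $C_0(\sigma(V),E)$, so that the elements one manipulates are honest continuous sections concentrated in a local bisection and the clean open-support arithmetic of line bundles applies. A minor point to be careful about is that $a$ and $b$ are in general only limits of compactly supported sections, so the continuity of $asb$ and the equality $\osupp(asb)=\osupp(a)\osupp(s)\osupp(b)$ should be justified by iterating the two-factor statements recorded after Lemma~\ref{lem:prodeval} rather than by a naive pointwise argument.
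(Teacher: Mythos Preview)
Your proposal is correct and follows essentially the same approach as the paper's own proof. The paper carries out exactly the reductions you describe: it reduces to $U\in\ipi(G)$, uses the unit of the adjunction (written as $a\in p^*(p_!(V))$) together with $p_!(V)\in\ipi(G)$ to place each $a\in V$ inside $C_0(p_!(V),E)$, and then invokes the line-bundle open-support identity $\osupp(asb)=\osupp(a)\osupp(s)\osupp(b)$ (via the remarks following Lemma~\ref{lem:prodeval}) to compute $p_!(V\,p^*(U)\,W)=\bigcup_{a,s,b}\osupp(a)\osupp(s)\osupp(b)=p_!(V)\,U\,p_!(W)$ directly, without splitting into the two inclusions.
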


\begin{proof}
We need to prove that, for all $U\in\topology(G)$ and $V,W\in\Max A$ such that both $p_!(V)$ and $p_!(W)$ are in $\ipi(G)$,
\begin{equation}
p_!(V \,p^*(U)\, W) = p_!(V) U p_!(W)  \label{frob3}\;.
\end{equation}
It suffices to consider $U\in\ipi(G)$, in which case $p^*(U)=C_0(U,E)$. For each $a\in V$ we have $a\in p^*(p_!(V))$ and so $a\in C_0(p_!(V),E)$ because $p_!(V)\in\ipi(G)$. Similarly, for each $b\in W$ we have $b\in C_0(p_!(W),E)$. So for all $a\in V$, $s\in C_0(U,E)$ and $b\in W$ we have (\cf\ comments after Lemma~\ref{lem:prodeval}) 
\[\osupp(asb)=\osupp(a)\osupp(s)\osupp(b)\;.\]
Hence,
\begin{eqnarray*}
p_!(V \,p^*(U)\, W) &=& p_!(V \,C_0(U,E)\, W)\\
&=& p_!\left(\V_{a\in V,\ s\in C_0(U,E),\ b\in W} \linspan{asb}\right)\\
&=&\bigcup_{a\in V,\ s\in C_0(U,E),\ b\in W} \osupp(asb)\\
&=&\bigcup_{a\in V,\ s\in C_0(U,E),\ b\in W} \osupp(a)\osupp(s)\osupp(b)\\
&=&p_!(V) U p_!(W)\;,
\end{eqnarray*}
thus proving \eqref{frob3}.
\end{proof}

Let us now see a converse to the above result, which in fact gives us a characterization of Fell line bundles in terms of quantic bundles:

\begin{theorem}\label{thm:lineopen}
Let $\pi:E\to G$ be a saturated Fell bundle and $A$ a localizable completion with associated quantale map $p:\Max A\to\topology(G)$. The following are equivalent:
\begin{enumerate}
\item $\pi$ is a Fell line bundle;
\item $\pi$ is abelian and $p$ is a quantic bundle.
\end{enumerate}
\end{theorem}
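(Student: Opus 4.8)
The implication $(1)\Rightarrow(2)$ is immediate: if $\pi$ is a Fell line bundle then each fiber $E_x$ with $x\in G_0$ is $*$-isomorphic to $\CC$ and in particular abelian, so $\pi$ is abelian, and $p$ is a quantic bundle by Lemma~\ref{froblemma2}. For $(2)\Rightarrow(1)$ I would argue as follows. Since a quantic bundle is by definition a surjection, Theorem~\ref{fellquant1} shows that $\pi$ is nondegenerate, so every fiber is nonzero; and since $A$ is localizable, $p$ is semiopen with $p_!=\sigma$. The plan is to prove first that each unit fiber $E_x$ ($x\in G_0$) is $*$-isomorphic to $\CC$, and then to deduce, using saturation, that every fiber $E_g$ is one-dimensional.

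For the first step, fix $x\in G_0$ and nonzero $\alpha,\beta\in E_x$; using that $\pi$ has enough sections together with a bump function on $G_0$, pick $a,b\in C_c(G_0,E)$ with $a(x)=\alpha$ and $b(x)=\beta$. Then $\sigma(\linspan a)=\osupp(a)$ and $\sigma(\linspan b)=\osupp(b)$ are open subsets of $G_0$, hence lie in $\ipi(G)$, so the quantic bundle equation \eqref{newopenness} applies with $V=\linspan a$, $W=\linspan b$ and $U=G_0$, giving
\[
\sigma\bigl(\linspan a\, p^*(G_0)\,\linspan b\bigr)=\osupp(a)\,G_0\,\osupp(b)=\osupp(a)\cap\osupp(b)\;.
\]
On the left, $p^*(G_0)=A\vert_{G_0}=\V_{s\in C_c(G_0,E)}\linspan s$, and since both the multiplication of $\Max A$ and $\sigma$ preserve joins (Lemma~\ref{lemma4.1}), the left-hand side equals $\bigcup_{s\in C_c(G_0,E)}\osupp(asb)$, where $asb$ is the pointwise product of continuous sections over $G_0$; as $\{s(y)\st s\in C_c(G_0,E)\}=E_y$ for each $y\in G_0$, this union is exactly $\{y\in G_0\st a(y)\,e\,b(y)\neq 0\text{ for some }e\in E_y\}$. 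Comparing the two sides at $y=x$ (where $a(x),b(x)\neq0$) yields some $e\in E_x$ with $\alpha e\beta\neq 0$; since $E_x$ is abelian, $\alpha e\beta=\alpha\beta e$, so $\alpha\beta\neq 0$. Thus $E_x$ is a nonzero commutative C*-algebra without zero divisors, hence $E_x\cong C_0(Y)$ with $Y$ locally compact Hausdorff and $\lvert Y\rvert=1$, i.e.\ $E_x\cong\CC$.

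For the second step, let $g\in G$, put $x=d(g)$, and pick $e\in E_g$ with $\norm e=1$. Saturation gives $E_{g^{-1}}E_g=E_x$, so $e^*e$ is the positive norm-one element of $E_x\cong\CC$, i.e.\ $e^*e=1_{E_x}$; likewise $ee^*=1_{E_{r(g)}}$, and $E_{r(g)}\cong\CC$ acts on $E_g$ unitally because $E_{r(g)}E_g=E_g$. Hence for every $f\in E_g$ we have $f^*f=f^*(ee^*)f=(e^*f)^*(e^*f)$, so $\norm f=\norm{e^*f}$; therefore $f\mapsto e^*f$ is an injective linear map $E_g\to E_x\cong\CC$, whence $\dim_\CC E_g\le 1$, with equality by nondegeneracy. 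So $\pi$ is a Fell line bundle.

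The main obstacle is the middle step: the equation \eqref{newopenness} only controls $\osupp(asb)$ through the possibly strict inclusion $\osupp(asb)\subset\osupp(a)\osupp(s)\osupp(b)$, so one must feed in exactly the right test data — sections supported in the single local bisection $G_0$, with the whole of $p^*(G_0)$ sandwiched between them, so that every $e\in E_x$ is realized as some $s(x)$ — and then use abelianness of the fibers in an essential way to pass from ``$\alpha\,E_x\,\beta\neq 0$'' to ``$\alpha\beta\neq 0$''. Everything else is routine bookkeeping with the adjunction $\sigma\dashv\gamma$ and the join/multiplicativity properties of $p$ already established.
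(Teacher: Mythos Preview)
Your proof is correct and follows essentially the same route as the paper's: both directions match, and in $(2)\Rightarrow(1)$ you test the quantic bundle identity \eqref{newopenness} on sections supported in $G_0$ to force the unit fibers to have no zero divisors (hence $E_x\cong\CC$), then use saturation to propagate this to all fibers. The only cosmetic difference is the last step: the paper dispatches it with the remark that each $E_g$ is a Morita equivalence $E_{r(g)}$--$E_{d(g)}$ bimodule, whereas you give the explicit isometry $f\mapsto e^*f$ into $E_x\cong\CC$; both are fine.
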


\begin{proof}
($1\Rightarrow 2$)
Any Fell line bundle is abelian, so this implication is just the statement of Lemma~\ref{froblemma2}.

($2\Rightarrow 1$) Assume that $\pi$ is abelian and that $p$ is a quantic bundle. By Theorem~\ref{fellquant1} $\pi$ is nondegenerate, so let $e,f\in E_x\setminus\{0_x\}$ where $x$ is an arbitrary element of $G_0$. Let also $s,t\in C_c(G_0,E)$ be such that $s(x)=e$ and $t(x)=f$. Localizability implies $p_!=\sigma$ and, since $s$ and $t$ are continuous and supported in an open subspace of $G$, we have
$p_!(\langle s\rangle)=\sigma(\langle s\rangle)=\osupp(s)$ and, similarly, $p_!(\langle t\rangle)=\osupp(t)$ (\cf\ comments preceding Lemma~\ref{laxprodosupp}). Hence, using \eqref{newopenness} and the fact that $G_0$ is the multiplication unit of the quantale $\topology(G)$, we obtain
\begin{eqnarray*}
x\in p_!(\langle s\rangle)\cap p_!(\langle t\rangle)
&=& p_!(\langle s\rangle)\,G_0\, p_!(\langle t\rangle)
= p_!(s\, p^*(G_0)\,t)\\
&=&\bigcup_{a\in p^*(G_0)} \osupp( sat)\;.
\end{eqnarray*}
So there is $a\in p^*(G_0)=C_0(G_0,E)$ such that $x\in \osupp(sat)$, which means that $sat(x)=s(x)a(x)t(x)=e a(x) f\neq 0$. Since $\pi$ is abelian we have $ea(x)f=efa(x)$, so also $ef\neq 0$. Therefore, we have proved that for all $e,f\in E_x$ we have $ef\neq 0$ if $e\neq 0$ and $f\neq 0$, and thus $E_x\cong \CC$. This shows, since $x\in G_0$ is arbitrary, that the restricted bundle over $G_0$ has rank $1$. Due to saturation of $\pi$, the fiber $E_g$ over any $g\in G$ is a Morita equivalence $E_{gg^{-1}}$-$E_{g^{-1}g}$-bimodule, so $\pi$ is a Fell line bundle.
\end{proof}

We also note the following result, which is essentially a corollary of Lemma~\ref{froblemma}.

\begin{theorem}\label{thm:weaklyopenp}
Let $\pi:E\to G$ be a Fell line bundle, and $A$ a localizable completion. The associated semiopen map $p:\Max A\to\topology(G)$ is weakly open.
\end{theorem}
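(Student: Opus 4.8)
The plan is to reduce the assertion, using localizability, to an identity between $\sigma$ and $\gamma$, and then to split it into two one-sided inclusions: the easy one coming from the counit of the adjunction $p_!\dashv p^*$ together with the submultiplicativity of $p_!$, and the substantial one being an immediate rephrasing of the Frobenius-type inclusion \eqref{frob2} of Lemma~\ref{froblemma}, which is precisely where the line-bundle hypothesis does its work.

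First I would record that, since $A$ is localizable, $p$ is semiopen with direct image homomorphism $p_!=\sigma$ and with $p^*=\gamma$. Hence proving that $p$ is weakly open amounts to verifying the left $\topology(G)$-module identity \eqref{frobwo}, i.e.\ $p_!\bigl(p^*(U)V\bigr)=U\,p_!(V)$ for all $U\in\topology(G)$ and $V\in\Max A$. For the inclusion $p_!\bigl(p^*(U)V\bigr)\subset U\,p_!(V)$ nothing special about line bundles is needed: the counit of $p_!\dashv p^*$ gives $p_!\bigl(p^*(U)\bigr)\subset U$, and combining this with \eqref{eqsemiopen1} of Lemma~\ref{lem:semiopen} and monotonicity of the multiplication of $\topology(G)$ yields
\[
p_!\bigl(p^*(U)V\bigr)\ \subset\ p_!\bigl(p^*(U)\bigr)\,p_!(V)\ \subset\ U\,p_!(V)\;.
\]

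For the reverse inclusion $U\,p_!(V)\subset p_!\bigl(p^*(U)V\bigr)$, I would simply invoke \eqref{frob2} of Lemma~\ref{froblemma}: rewriting $\sigma=p_!$ and $\gamma=p^*$, that inclusion reads $U\,p_!(V)\subset p_!\bigl(p^*(U)V\bigr)$. Put together with the previous display this gives the required equality, so $p_!$ is a homomorphism of left $\topology(G)$-modules and $p$ is weakly open.

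I do not anticipate a real obstacle, since the heavy lifting has already been carried out in Lemma~\ref{froblemma}; the only points requiring a moment's care are that the hypotheses of that lemma are in force (a Fell line bundle is in particular nondegenerate, which is what its proof uses) and that localizability is exactly what licenses replacing $\sigma$ by $p_!$ and $\gamma$ by $p^*$ throughout. If one prefers, the same argument phrased with right modules and \eqref{frob1}, via Lemma~\ref{lem:rlXequiv}, provides an equivalent second route to the conclusion.
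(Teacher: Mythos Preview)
Your proof is correct and essentially mirrors the paper's: the paper combines \eqref{multadjoint} (the ``easy'' inclusion, which is your counit-plus-\eqref{eqsemiopen1} step) with Lemma~\ref{froblemma} to turn \eqref{frob1} into an equality, i.e.\ it verifies right $\topology(G)$-equivariance, whereas you verify left equivariance via \eqref{frob2}. As you yourself note at the end, the two routes are interchangeable through Lemma~\ref{lem:rlXequiv}.
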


\begin{proof}
This follows from \eqref{multadjoint} and Lemma~\ref{froblemma}, which imply that \eqref{frob1} becomes an equality, thus yielding the required equivariance of $p_!$.
\end{proof}

Note that, contrary to the situation in Theorem~\ref{thm:lineopen}, there is no converse to Theorem~\ref{thm:weaklyopenp}, since $p$ can be weakly open without $\pi$ being a line bundle. As an example, take any C*-bundle over a one point space $\pi:A\to\{*\}$, with $\sigma(V)=\{*\}$ for all $V\neq\{0\}$, $\sigma(\{0\})=\emptyset$, $\gamma(\{*\})=A$, and $\gamma(\emptyset)=\{0\}$. Then the C*-algebra $A$ is a localizable completion, and the map $p=\gamma$ is weakly open.

\subsection{Principal groupoids and stable quantic bundles}\label{sec:principalgroupoids}

\begin{lemma}\label{lem:principal}
Let $\pi:E\to G$ be a Fell line bundle with $G$ Hausdorff, and let $A$ be a localizable completion with associated quantale map $p:\Max A\to\topology(G)$. For all $V,W\in\Max A$ and all $U\in\topology(G)$ we have
\begin{equation}\label{eq:principal}
p_!(Vp^*(U)W)\subset p_!(V)Up_!(W)\;.
\end{equation}
\end{lemma}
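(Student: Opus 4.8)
The plan is to derive \eqref{eq:principal} as a purely formal consequence of $p$ being semiopen, using localizability only to identify the direct image $p_!$ with $\sigma$ and $p^*$ with $\gamma$. First, since $A$ is localizable we have $p^*=\gamma$, so $p$ is semiopen with direct image homomorphism $p_!=\sigma$; in particular \eqref{multadjoint} gives the submultiplicativity
\[
\sigma(XY)\subseteq\sigma(X)\,\sigma(Y)\qquad\textrm{for all }X,Y\in\Max A\;.
\]
(Note also that a Fell line bundle is nondegenerate, so by Theorem~\ref{fellquant1} $p$ is a surjection, although we shall not strictly need this.)

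Applying the submultiplicativity twice to the product $V\,p^*(U)\,W$ of elements of $\Max A$ yields, using associativity in $\Max A$ and $\topology(G)$,
\[
p_!\bigl(V\,p^*(U)\,W\bigr)=\sigma\bigl(V\,p^*(U)\,W\bigr)\subseteq\sigma(V)\,\sigma\bigl(p^*(U)\bigr)\,\sigma(W)\;.
\]
Now I invoke the adjunction $\sigma\dashv\gamma$ of Lemma~\ref{lemma4.1}: since $p^*=\gamma$, the counit of this adjunction gives $\sigma\bigl(p^*(U)\bigr)=\sigma(\gamma(U))\subseteq U$ (equivalently $p_!(p^*(U))\le U$; in fact this is an equality by surjectivity of $p$, but the inequality is all that is needed). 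Combining this with monotonicity of the multiplication of the quantale $\topology(G)$, we obtain
\[
\sigma(V)\,\sigma\bigl(p^*(U)\bigr)\,\sigma(W)\subseteq\sigma(V)\,U\,\sigma(W)=p_!(V)\,U\,p_!(W)\;,
\]
which together with the previous display establishes \eqref{eq:principal}.

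I do not expect a genuine obstacle here: the present lemma is precisely the lax, or "easy'', half of the stable quantic bundle identity \eqref{newopenness}, and it holds with no assumptions on $G$ beyond the standing ones (the Hausdorff and line-bundle hypotheses merely fix the running context in which it will be used). The substantial content lies in the reverse inclusion $p_!(V)\,U\,p_!(W)\subseteq p_!\bigl(V\,p^*(U)\,W\bigr)$, which is not claimed here and whose proof will require further hypotheses on the groupoid — such as principality of $G$ together with discreteness of its orbits — in the results that follow.
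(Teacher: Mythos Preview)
Your argument is correct, and it is a genuinely different route from the paper's. The paper reduces \eqref{eq:principal} to the concrete inclusion $\osupp(asb)\subset\osupp(a)\,U\,\osupp(b)$ for $a,b\in A$, $U\in\ipi(G)$, $s\in C_c(U,E)$, and verifies this pointwise using the convolution formula: if $asb(g)\neq 0$ then some summand $a(h)s(k)b(l)$ in the decomposition $g=hkl$ is nonzero, so $g\in\osupp(a)\,U\,\osupp(b)$. This uses the Hausdorff hypothesis (via Lemma~\ref{AinC0}) to regard $a$ and $b$ as honest continuous sections with well-defined open supports. Your proof, by contrast, is purely formal: it uses only that $p$ is semiopen, the lax multiplicativity \eqref{eqsemiopen1} of $p_!$, and the counit $p_!p^*\le\ident$. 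As you observe, neither the Hausdorff nor the line-bundle hypothesis is actually needed for this direction; your argument establishes \eqref{eq:principal} for any semiopen map of involutive quantales, which is a cleaner and more general statement. The paper's approach, while less economical here, has the advantage of setting up the concrete section-level picture that is genuinely required for the converse inclusion in Theorem~\ref{thm:principal}.
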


\begin{proof}
Since we are assuming that $G$ is Hausdorff, \eqref{eq:principal} is equivalent to the statement that for all $a,b\in A$, $U\in\ipi(G)$ and $s\in C_c(U,E)$ we have
\begin{equation}\label{pf:principal}
\osupp(a s b)\subset \osupp(a)U\osupp(b)\;.
\end{equation}
Let $g\in\osupp(asb)$. This means that $asb(g)\neq 0$ and implies that for some decomposition $g=hkl$ we have $a(h)\neq 0$, $s(k)\neq 0$, and $b(l)\neq 0$, so $g\in\osupp(a)U\osupp(b)$, thus proving \eqref{pf:principal}.
\end{proof}

Recall that a groupoid $G$ is \emph{principal} if $I_x=\{x\}$ for all $x\in G_0$. Equivalently, $G$ is principal if its pairing map $\langle d,r\rangle:G\to G_0\times G_0$ is injective, so $G$ can be identified with an equivalence relation and the pairing map defines
a continuous bijection
\begin{equation}\label{pairingmapdr}
\phi:G\to G_0\times_{G_0/G}G_0\;.
\end{equation}
However, $\phi$ is not necessarily a homeomorphism (so the action of $G$ on $G_0$ does not necessarily define a principal $G$-bundle over $G_0/G$), and  the orbits of $G$ are not necessarily discrete subspaces of $G_0$.

For the following two examples recall that the \emph{translation groupoid} associated to a left action $(\alpha,x)\mapsto \alpha\cdot x$ of a discrete group $\mathit\Gamma$ on a topological space $X$ is the \'etale groupoid $G=\mathit\Gamma\ltimes X$ defined by
\begin{eqnarray*}
G_1 &=& \mathit\Gamma\times X\quad\textrm{with the product topology}\;,\\
G_0 &=& \{1\}\times X\;,\\
d(\alpha,x) &=& (1,x)\;,\\
r(\alpha,x)&=&(1,\alpha\cdot x)\;,\\
(\alpha,\beta\cdot x)(\beta,x) &=& (\alpha\beta,x)\;,\\
(\alpha,x)^{-1}&=&(\alpha^{-1},\alpha\cdot x)\;.
\end{eqnarray*}
We note, in particular, that the orbits of $\mathit\Gamma\ltimes X$ coincide (modulo the isomorphism $X\cong\{1\}\times X$) with the orbits of the group action.

\begin{example}
Let $a\in\RR\setminus\QQ$ and consider the irrational rotation action of $\ZZ$ on the circle $S^1$ given by:
\[
(n,z)\mapsto n\cdot z=e^{2\pi i na}z\;.
\]
This is a free action by diffeomorphisms on $S^1$ whose orbits are dense, so the translation groupoid $\ZZ\ltimes S^1$ is a principal \'etale Lie groupoid whose orbits are not discrete.
\end{example}

\begin{example}
Let $\mathit\Gamma$ be a discrete group acting on a topological space $X$ such that the quotient map $X\to X/\mathit\Gamma$ is a principal $\mathit\Gamma$-bundle. Then the translation groupoid $G={\mathit\Gamma}\ltimes X$ is \'etale and $\phi$ is a homeomorphism, so the orbits of $G$ are discrete.
\end{example}

\begin{theorem}\label{thm:principal}
Let $G$ be a Hausdorff principal groupoid whose orbits are discrete, let $\pi:E\to G$ be a Fell line bundle, and let $A$ be a localizable completion. Then the quantic bundle $p:\Max A\to\topology(G)$ is stable.
\end{theorem}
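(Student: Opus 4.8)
The plan is to build on Lemma~\ref{froblemma2}, which already tells us that $p$ is a quantic bundle; stability is then exactly the statement that the defining identity \eqref{newopenness}, $p_!(Vp^*(U)W)=p_!(V)\,U\,p_!(W)$, holds for \emph{all} $V,W\in\Max A$, not only for those with $p_!(V),p_!(W)\in\ipi(G)$. Since $G$ is Hausdorff, Lemma~\ref{lem:principal} already supplies the inclusion $p_!(Vp^*(U)W)\subseteq p_!(V)\,U\,p_!(W)$, so the whole content of the theorem is the reverse inclusion, and the natural first move is to reduce it to generators. Because $\pi$ is a Fell line bundle it is saturated and nondegenerate (Lemma~\ref{lem:zeroproplinebun}), so $p$ exists by Theorem~\ref{fellquant1}; localizability gives faithfulness (Theorem~\ref{lem:locimpfaith}), hence, $G$ being Hausdorff, $A\subset C_0(G,E)$ (Lemma~\ref{AinC0}) and $p_!=\sigma$ with $\sigma(\linspan c)=\osupp(c)$ by \eqref{removeint}. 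Writing $V=\V_{a\in V}\linspan a$, $W=\V_{b\in W}\linspan b$, $U=\bigcup_\alpha U_\alpha$ with each $U_\alpha\in\ipi(G)$, and $p^*(U_\alpha)=\V_{s\in C_c(U_\alpha,E)}\linspan s$, and using that $p_!$ and the multiplication of $\topology(G)$ preserve joins, the desired inclusion collapses to the following claim: for fixed $a,b\in A$, fixed $U\in\ipi(G)$, and any $g\in\osupp(a)\,U\,\osupp(b)$, there is $s\in C_c(U,E)$ with $\eval_g(asb)\neq 0$, so that $g\in\osupp(asb)\subseteq p_!(\linspan a\,p^*(U)\,\linspan b)$.

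To prove the claim I would proceed as follows. Write $g=hkl$ with $\eval_h(a)\neq 0$, $k\in U$ and $\eval_l(b)\neq 0$, and set $z_0:=d(k)=r(l)$, a point of the orbit $[d(g)]$. Since this orbit is a discrete subspace of $G_0$, there is an open set $N\subseteq d(U)$ with $N\cap[d(g)]=\{z_0\}$; let $s\in C_c(U,E)$ be supported in the local bisection $(d\vert_U)^{-1}(N)$ with $s(k)\neq 0$, which exists because $\pi$ is nondegenerate and $G$ is locally compact. Choose $a_n,b_n\in C_c(G,E)=\sections_c(G,E)$ with $a_n\to a$ and $b_n\to b$ in $A$; then $a_nsb_n\to asb$, and by continuity of $\eval_g$ we get $\eval_g(asb)=\lim_n(a_nsb_n)(g)=\lim_n\sum_{g=pqr}a_n(p)s(q)b_n(r)$, a limit of finite convolution sums. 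The crucial point is that for this particular $s$ the sum has a single term: if $q\in\osupp(s)$ then $d(q)\in N$, while $d(q)=r(r)\in[d(g)]$, so $d(q)=z_0$, whence $q=k$ because $U$ is a bisection; and then, $G$ being principal (so $\langle d,r\rangle$ is injective), the factorisation $g=pqr$ with $q=k$ is unique, forcing $r=l$ and $p=h$. Therefore $\eval_g(asb)=\eval_h(a)\,s(k)\,\eval_l(b)$, which is nonzero since all three factors are nonzero and $\pi$ is a Fell line bundle (Lemma~\ref{lem:zeroproplinebun}). This establishes the claim, hence the reverse inclusion; combining it with Lemma~\ref{lem:principal} gives equality in \eqref{newopenness} first for $V,W$ of the form $\linspan a,\linspan b$ and $U\in\ipi(G)$, and then, via the join reductions above, for all $V,W\in\Max A$ and $U\in\topology(G)$, so that $p$ is a stable quantic bundle.

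The hard part is precisely controlling the convolution $\eval_g(asb)$: a priori the sum $\sum_{g=pqr}a_n(p)s(q)b_n(r)$ ranges over all factorisations of $g$ through $\osupp(s)$, and without extra structure distinct terms could interfere and cancel, leaving $\eval_g(asb)=0$ even though $g\in\osupp(a)\,U\,\osupp(b)$. Principality is what makes a factorisation $g=pqr$ with $q$ running through the bisection $U$ depend only on the single parameter $d(q)\in[d(g)]\cap d(U)$; discreteness of the orbit is what allows one to shrink the support of $s$ so that this parameter is pinned to the prescribed value $z_0$; and the line-bundle hypothesis then rules out cancellation inside the one surviving term. (Note also that the Hausdorffness of $G$ enters twice here: through Lemma~\ref{lem:principal} and through the identification $A\subset C_0(G,E)$ that lets us replace $\sigma$ by $\osupp$ and pass to the pointwise convolution formula; one would not expect the statement to survive removal of that hypothesis without modification.)
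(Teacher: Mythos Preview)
Your proof is correct and follows essentially the same approach as the paper: reduce via Lemma~\ref{lem:principal} to the reverse inclusion, reduce to generators $\linspan a$, $\linspan b$ and $U\in\ipi(G)$, then for a given $g=hkl$ shrink the support of $s$ inside $U$ using discreteness of the orbit so that the convolution $asb(g)$ collapses to the single nonzero term $a(h)s(k)b(l)$. The only differences are cosmetic: the paper restricts $\osupp(s)$ on both the $d$- and $r$-sides (using open sets $U',U''\subset G_0$ with $U'\cap[d(g)]=\{r(k)\}$ and $U''\cap[d(g)]=\{d(k)\}$, and taking $s\in C_c(U'UU'',E)$), whereas you observe that restricting only on the $d$-side already pins down $q=k$ via the bisection property, after which principality handles the rest; and you spell out the limiting argument $\eval_g(asb)=\lim_n(a_nsb_n)(g)$ that the paper leaves implicit in writing $asb(g)=a(h)s(k)b(l)$.
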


\begin{proof}
In order to show that $p$ is stable it suffices, due to Lemma~\ref{lem:principal}, to prove the inequality
\begin{equation}\label{eqthm:principal}
p_!(V)Up_!(W)\subset p_!(Vp^*(U)W)
\end{equation}
for all $V,W\in\Max A$ and all $U\in\topology(G)$,
which in turn is equivalent to the statement that for all $a,b\in A$ and $U\in\ipi(G)$ we have
\begin{equation}\label{eqthm:principal2}
\osupp(a)U\osupp(b)\subset\bigcup_{s\in C_c(U,E)} \osupp(asb)\;.
\end{equation}
Let then $g\in\osupp(a)U\osupp(b)$, and consider a decomposition $g=hkl$ such that $a(h)\neq 0$, $k\in U$, and $b(l)\neq 0$.
By hypothesis the orbit
\[
O:=[d(g)]\quad \bigl(=[r(g)]=d(G^{r(g)})=r(G_{d(g)})\bigr)
\]
is discrete, and both $r(k)$ and $d(k)$ belong to it, so there are open sets $U',U''\subset G_0$ such that
\begin{eqnarray*}
U'\cap O&=&\{r(k)\}\\
U''\cap O&=&\{d(k)\}\;.
\end{eqnarray*}
Let $s\in C_c(U'UU'',E)$ be such that $s(k)\neq 0$. Due to principality of $G$ and the fact that $\pi$ is a line bundle we have
\[
asb(g)=a(h)s(k)b(l)\neq 0\;,
\]
and thus $g\in\osupp(asb)$, so \eqref{eqthm:principal2} holds.
\end{proof}

\begin{corollary}\label{cor:algfunctions}
Let $X$ be a locally compact Hausdorff space. The map of quantales $p:\Max C_0(X)\to\topology(X)$ defined by $p^*(U)=C_0(U)$ for all $U\in\topology(X)$ is a stable quantic bundle.
\end{corollary}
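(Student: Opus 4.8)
The plan is to realise $C_0(X)$ as a localizable completion of the convolution algebra of a trivial Fell line bundle over a suitable groupoid and then to apply Theorem~\ref{thm:principal}.

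First I would view $X$ as an \'etale groupoid $G$ with $G=G_0=X$, as permitted by the convention in section~\ref{sec:fellbundlesongroupoids}. All arrows are units, so the structure maps are identities, $d$ is trivially a local homeomorphism, $G$ is Hausdorff, and every isotropy group $I_x$ is trivial; thus $G$ is principal, and since its orbits are singletons they are discrete. Note also that $\topology(G)=\topology(X)$ and that every open set is a partial unit, so $\ipi(G)=\topology(X)$.

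Next I would take $\pi\colon\CC\times X\to X$ to be the product Fell line bundle with fibre $\CC$, which is trivial and hence saturated, and which is a C*-bundle in the sense of section~\ref{sec:fellbundlesongroupoids}. Its convolution algebra $\sections_c(G,E)$ is $C_c(X)$ with pointwise operations, and by Theorem~\ref{thm:Cstarbundle} its unique compatible completion is $A\cong C_0(X)$, which is localizable. By Theorem~\ref{fellquant1} the associated map of quantales $p\colon\Max C_0(X)\to\topology(X)$ satisfies $p^*(U)=A\vert_U=\overline{C_c(U,E)}\cong C_0(U)$ for all $U\in\ipi(G)=\topology(X)$, so it coincides with the map in the statement; moreover it is a quantic bundle by Lemma~\ref{froblemma2}.

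Finally, since $G$ is a Hausdorff principal groupoid with discrete orbits, $\pi$ is a Fell line bundle, and $A$ is a localizable completion, Theorem~\ref{thm:principal} gives at once that $p$ is a stable quantic bundle. There is no real obstacle here: the corollary is a direct specialisation of Theorems~\ref{thm:Cstarbundle} and~\ref{thm:principal}, and the only thing that needs a moment's attention is to check that in this degenerate ``groupoid $=$ space'' situation all their hypotheses hold and that the canonically produced map agrees with the one named in the statement.
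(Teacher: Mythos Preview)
Your proof is correct and follows exactly the approach the paper intends: the corollary is stated without proof immediately after Theorem~\ref{thm:principal}, and is meant to be an immediate specialisation of that theorem together with Theorem~\ref{thm:Cstarbundle}. Your explicit verification that, for the groupoid $G=G_0=X$, the hypotheses of Theorem~\ref{thm:principal} (Hausdorff, principal, discrete orbits, Fell line bundle, localizable completion) are satisfied, and that the resulting map agrees with the one in the statement, is precisely what is being left implicit.
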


Now we obtain a partial converse to Theorem~\ref{thm:principal}, namely showing, for trivial Fell line bundles, that stability of quantic bundles implies principality of the base groupoids.

\begin{theorem}\label{thm2:principal}
Let $A$ be a localizable completion of $C_c(G)$, for some groupoid $G$. If the map $p:\Max A\to\topology(G)$ is a stable quantic bundle then $G$ is a  Hausdorff principal groupoid.
\end{theorem}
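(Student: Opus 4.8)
The plan is to reduce at once to the principality assertion and then manufacture a convolution cancellation that violates stability. Since $A$ is a localizable completion of $C_c(G)=\sections_c(G,\CC)$, the underlying bundle is the trivial Fell line bundle $\CC\times G\to G$, so Theorem~\ref{thm:locimplieshaus} already gives that $G$ is Hausdorff, and it remains only to prove that $G$ is principal. Because $A$ is localizable it is faithful (Theorem~\ref{lem:locimpfaith}), hence $A\subset C_0(G)$ by Lemma~\ref{AinC0}; consequently the interior operator in the definition of $\sigma$ disappears, so $\sigma(\linspan a)=\osupp(a)$ for every $a\in A$ (\cf\ \eqref{removeint}), and $p_!=\sigma$. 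Under these identifications, and exactly as in the proof of Theorem~\ref{thm:principal}, stability of $p$ is equivalent to the inclusion
\[
\osupp(a)\,U\,\osupp(b)\ \subset\ \bigcup_{s\in C_c(U)}\osupp(asb)
\]
for all $a,b\in A$ and all $U\in\ipi(G)$, the reverse inclusion being automatic by Lemma~\ref{lem:principal}.

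I would then argue by contradiction: suppose $G$ is not principal and fix $x\in G_0$ together with $g\in I_x$, $g\neq x$. Then $g\notin G_0$ (else $g=d(g)=x$), and likewise $g^{-1}\notin G_0$; since $G_0$ is closed in the Hausdorff space $G$, I may choose local bisections $B\ni g$ and $B'\ni g^{-1}$ with $B,B'\subset G\setminus G_0$, and I fix an open neighbourhood $U\subset G_0$ of $x$ (itself a local bisection). Using local compactness of these bisections I pick $a=a_1+a_2$ and $b=b_1+b_2$ in $C_c(G)$ with $a_1,b_1\in C_c(U)$, $a_2\in C_c(B)$, $b_2\in C_c(B')$, and prescribed values $a_1(x)=b_1(x)=1$, $a_2(g)=1$, $b_2(g^{-1})=-1$. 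Since $\osupp(a_1)\subset G_0$ and $\osupp(a_2)\subset B$ are disjoint (and similarly for $b$), this yields $a(x)=b(x)=1$, $a(g)=1$ and $b(g^{-1})=-1$; moreover, because $r$ is injective on the local bisection $B$ we have $B\cap G^x=\{g\}$, while $\osupp(a_1)\cap G^x\subset G_0\cap G^x=\{x\}$, so the key fact
\[
\osupp(a)\cap G^x=\{x,g\}
\]
holds. Since $x\in\osupp(a)\cap U\cap\osupp(b)$ we get $x=x\cdot x\cdot x\in\osupp(a)\,U\,\osupp(b)=p_!(\linspan a)\,U\,p_!(\linspan b)$.

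The computational heart of the proof is to show $x\notin\osupp(asb)$ for every $s\in C_c(U)$. For $s$ supported in $G_0$ the convolution evaluated at the unit $x$ collapses to a sum over $G^x$, and using $\osupp(a)\cap G^x=\{x,g\}$ and $d(x)=d(g)=x$ one obtains
\[
asb(x)=\sum_{q\in G^x}a(q)\,s(d(q))\,b(q^{-1})
=s(x)\bigl(a(x)b(x)+a(g)b(g^{-1})\bigr)=s(x)\,(1-1)=0\;.
\]
Hence $x\notin\bigcup_{s\in C_c(U)}\osupp(asb)=p_!\bigl(\linspan a\,p^*(U)\,\linspan b\bigr)$, whereas $x\in p_!(\linspan a)\,U\,p_!(\linspan b)$; this contradicts stability of $p$ (equation \eqref{newopenness} with $a,b$ arbitrary), so $G$ must be principal.

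The step I expect to be the main obstacle is the displayed convolution identity: one must verify carefully that the only arrows of $G^x$ contributing to $asb(x)$ are $x$ and $g$ — which is precisely where non-principality of $G$ and the choice of the bisections $B,B'$ disjoint from $G_0$ are used — and that the two surviving contributions cancel exactly. Beyond this, a modest amount of care is needed to confirm the existence of the cut-off functions $a_i,b_i$ with the prescribed values and disjoint supports, which rests on $G_0$ being clopen in $G$ and on each local bisection being locally compact Hausdorff.
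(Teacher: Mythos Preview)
Your proof is correct and follows essentially the same route as the paper's: reduce Hausdorffness to Theorem~\ref{thm:locimplieshaus}, then exhibit at a point of nontrivial isotropy two elements $a,b$ built from bump functions on two disjoint local bisections so that the two contributions to the convolution cancel. The only cosmetic differences are that the paper evaluates the convolution at $g$ rather than at $x$, takes $U=G_0$ rather than a small $U\subset G_0$, and uses a single bisection $U_g$ for both $a$ and $b$ (with $a=s+t$, $b=s-t$) instead of your separate bisections $B\ni g$ and $B'\ni g^{-1}$; the cancellation mechanism is identical.
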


\begin{proof}
$G$ is Hausdorff, by Theorem~\ref{thm:locimplieshaus}, since $C_c(G)$ is the convolution algebra of a trivial Fell bundle. Now assume that $p$ is a stable quantic bundle. In order to prove the remainder of the theorem we shall prove that if $G$ is not principal then $p$ is not stable. Assume then that $G$ is not principal, let $x\in G_0$ be such that the isotropy group $I_x$ is not trivial, and let $g\in I_x\setminus\{x\}$. Let $U_g,U_x\in\ipi(G)$ be such that $x\in U_x$ and $g\in U_g$ (then $x\notin U_g$ and $g\notin U_x$), and let $s\in C_c(U_g)$ and $t\in C_c(U_x)$ be such that $s(g)=t(x)=1$, so defining $a:=s+t\in C_c(U_g\cup U_x)$ we have $a(g)=a(x)=1$. Similarly, defining $b:=s-t\in C_c(U_g\cup U_x)$ we obtain $b(g)=1$ and $b(x)=-1$. Consider now an arbitrary function $v\in C_c(G_0)$, and note that $avb(g)=a(g)v(x)b(x)+a(x)v(x)b(g)=0$, so $g\notin\osupp(a\, C_0(G_0)\, b)$. However, $g\in\osupp(a)\osupp(b)$, so we conclude
\[
\osupp(a)\,G_0\,\osupp(b)\not\subset\osupp(a\, p^*(G_0)\, b)\;,
\]
which shows that $p$ is not stable.
\end{proof}

\section{Sub-C*-algebras}\label{sec:subalgebras}

Let us study projections of stably Gelfand quantales in the case of the quantale $\Max A$ of a C*-algebra $A$. This provides a quantale theoretic context in which to place a general correspondence between ``diagonals'' of C*-algebras and Fell bundles. We shall not pursue this idea to its fullest extent here, but we shall examine the construction of pseudogroups and groupoids from sub-C*-algebras and compare it with the construction based on normalizers used in \cites{Kumjian, Renault}, and also with the slices of \cite{Exel}, in addition giving simple examples based on matrix algebras. The pseudogroups of sub-C*-algebras enable us to study $\ipi$-stable quantic bundles. We conclude that the Fell line bundles $\pi:E\to G$ and localizable completions $A$ for which the associated map $p:\Max A\to\topology(G)$ is an $\ipi$-stable quantic bundle are precisely those for which $G$ can be reconstructed from the subalgebra $p^*(G_0)\subset A$. In addition, we find that a sufficient condition for $\ipi$-stability is that $\pi$ be a second countable continuous Fell line bundle with $G$ topologically principal and Hausdorff.

\subsection{The pseudogroup of a sub-C*-algebra}

Let $A$ be a C*-algebra and $B\subset A$ a norm-closed self-adjoint subalgebra. Then $B$ is a projection of the stably Gelfand quantale $\Max A$, and we shall denote the associated pseudogroup $\ipi_B(\Max A)$ (see section~\ref{subsec:egs}) simply by $\ipi_B$. Recall that $\ipi_B$ consists of all the norm-closed linear subspaces $V\subset A$ satisfying the following four conditions:
\begin{eqnarray*}
V^*V&\subset&B\\
VV^*&\subset&B\\
VB&\subset&V\\
BV&\subset&V\;.
\end{eqnarray*}
By \eqref{EipibQ} the locale of idempotents is $E(\ipi_B)=I(B)$. It is order-isomorphic to the Jacobson topology of the primitive spectrum of $B$, so $\ipi_B$ is a spatial pseudogroup (\cf\ Example~\ref{exm:EipibQ}).

The construction of inverse semigroups from sub-C*-algebras in~\cites{Kumjian,Renault} is based on normalizers. Let us study the way in which these relate to $\ipi_B$. Recall that the \emph{normalizer semigroup} of $B$ \cite{Kumjian} is
\[
N(B) = \{n\in A\st nBn^*\subset B\textrm{ and }n^*Bn\subset B\}\;.
\]
(We refer to the elements of $N(B)$ as \emph{normalizers}.)
This is an involutive subsemigroup of $A$, and it is closed for the topology of $A$. It is also closed under scalar multiplication, but it is not in general closed under sums except in cases such as the following:

\begin{lemma}\label{normalizersum}
Let $m,n\in B$ be such that $mB=nB$. Then $m+n\in N(B)$.
\end{lemma}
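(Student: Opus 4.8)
The plan is to verify the definition of $N(B)$ directly. To show $m+n\in N(B)$ one must check the two inclusions $(m+n)B(m+n)^*\subset B$ and $(m+n)^*B(m+n)\subset B$. Fixing an arbitrary $b\in B$ and expanding by bilinearity,
\[
(m+n)b(m+n)^* = mbm^* + mbn^* + nbm^* + nbn^*\;,
\]
so it is enough to see that each of these four summands lies in $B$; the second inclusion then follows by the same reasoning applied to $m^*$ and $n^*$, using that $B$ is self-adjoint.

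First I would dispose of the ``pure'' terms $mbm^*$ and $nbn^*$: since $m,n\in B$ and $B$ is a norm-closed self-adjoint subalgebra, each of these is a product of three elements of $B$ and hence lies in $B$. For the ``mixed'' terms $mbn^*$ and $nbm^*$ exactly the same is true — they are the products of the $B$-elements $m,b,n^*$ and of $n,b,m^*$ — so already $(m+n)b(m+n)^*\in B$; as $b\in B$ was arbitrary this gives $(m+n)B(m+n)^*\subset B$, and likewise $(m+n)^*B(m+n)\subset B$, whence $m+n\in N(B)$.

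For the statement exactly as phrased there is really no obstacle: the content amounts to the fact that a norm-closed self-adjoint subalgebra sits inside its own normalizer, $B\subset N(B)$, together with $B$ being closed under sums. The hypothesis $mB=nB$ does not enter at this level; it is the ingredient that keeps the mixed terms inside $B$ in the sharper form of the statement, where $m$ and $n$ are assumed only to normalize $B$ rather than to lie in it. There one rewrites $bn^*\in Bn^*=Bm^*$ — using the involution to pass from the right-ideal identity $mB=nB$ to the left-ideal identity $Bm^*=Bn^*$, which is legitimate because $B^*=B$ — as $bn^*=cm^*$ with $c\in B$, so that $mbn^*=mcm^*\in mBm^*\subset B$, and symmetrically $bm^*=dn^*$ with $d\in B$ gives $nbm^*=ndn^*\in nBn^*\subset B$; the only point calling for a moment's care is this passage between the two one-sided identities through the involution.
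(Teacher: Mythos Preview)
Your reading is correct: as literally stated, with $m,n\in B$, the lemma is immediate since $B\subset N(B)$ and $B$ is closed under sums, and the hypothesis $mB=nB$ is idle. The statement evidently intends $m,n\in N(B)$ --- this is what the paper's own proof is actually arguing --- and your treatment of that sharper version follows the paper's line: you handle the mixed term $mbn^*$ by passing to $Bn^*=Bm^*$ (the adjoint of the hypothesis, as you note) so that it lands in $mBm^*\subset B$, while the paper rewrites $mBn^*$ directly via $mB=nB$ as $nBn^*\subset B$, and similarly $nBm^*=mBm^*$. These are equivalent moves on the same expansion.

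One point worth flagging, which you skirt and the paper does too: for the second inclusion $(m+n)^*B(m+n)\subset B$ in the sharper form, ``the same reasoning applied to $m^*$ and $n^*$'' would require the hypothesis $m^*B=n^*B$, equivalently $Bm=Bn$, which is not the same as $mB=nB$ when $B$ is noncommutative. The paper writes $m^*B=(Bm)^*=(Bn)^*=n^*B$, silently invoking $Bm=Bn$. So either the intended hypothesis is two-sided, or $B$ is tacitly commutative (as it is in the subsequent applications). Your argument inherits exactly the same looseness, but no more than the paper's own proof.
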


\begin{proof}
We have
\begin{eqnarray*}
(m+n)B(m+n)^* &=& mBm^*+mBn^*+nBm^*+nBn^*\\
&=&mBm^*+nBn^*+mBm^*+nBn^*\subset B\;,
\end{eqnarray*}
and, similarly, noting that $m^*B=(Bm)^*=(Bn)^*=n^*B$, we obtain
$(m+n)^*B(m+n)\subset B$, so $m+n\in N(B)$.
\end{proof}

\paragraph{Remark.}
Note that each $V\in\ipi_B$ consists entirely of normalizers, for if $n\in V\in\ipi_B$ we have
\[
nBn^*\subset VBV^*\subset VV^*\subset B\;,
\]
and similarly for $n^*Bn\subset B$. So $V$ is a \emph{slice} in the sense of \cite{Exel}; that is, it is a closed linear subspace of $A$ contained in $N(B)$ such that $VB\subset V$ and $BV\subset V$. Conversely, if $V$ is a slice and if $B$ contains an approximate unit of $A$ one has $V^*V\subset B$ and $VV^*\subset B$ \cite{Exel}*{Prop.\ 10.2}, and therefore $V\in\ipi_B$. Without the approximate unit requirement only the weaker conditions $V^*BV\subset B$ and $VBV^*\subset B$ are obtained. The definition of $\ipi_B$ is therefore more general than the definition of an inverse semigroup based on slices, since it does not depend on approximate units.

\begin{theorem}\label{thm:sigma}
Let $A$ be a C*-algebra, and let $B$ be a norm-closed self-adjoint subalgebra. The assignment $n\mapsto BnB$ defines a lax homomorphism $\signor:N(B)\to\ipi_B$ of ordered semigroups, in the sense that $\signor(mn)\subset\signor(m)\signor(n)$ for all $m,n\in N(B)$, which moreover preserves the involution strictly. Furthermore, if $B$ contains an approximate unit of $A$ the image of $\signor$ in $\ipi_B$ is join-dense. If $B$ is commutative (not necessarily containing an approximate unit of $A$) then $\signor$ is a (strict) homomorphism of semigroups and we have $\signor(n)=nB=Bn$ for all $n\in N(B)$.
\end{theorem}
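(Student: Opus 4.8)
The plan is to verify the five assertions in turn. \textbf{Well-definedness and the involution.} That $\signor(n)=\overline{BnB}$ lies in $\ipi_B$ follows from $n\in N(B)$ together with the fact that $B$, being a C*-algebra, satisfies $\overline{BB}=B$: since $\overline{(-)}$ is a homomorphism of involutive quantales and $B^*=B$ we have $\signor(n)^*=\overline{Bn^*B}$, hence $\signor(n)^*\signor(n)=\overline{B(n^*Bn)B}\subset\overline{BBB}=B$ and, symmetrically, $\signor(n)\signor(n)^*\subset B$, while $\signor(n)B\subset\overline{BnBB}\subset\signor(n)$ and $B\signor(n)\subset\signor(n)$; and $\signor(n^*)=\overline{Bn^*B}=\signor(n)^*$ gives strict preservation of the involution.

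\textbf{The lax homomorphism property and join-density.} For $m,n\in N(B)$ I would use an approximate unit $(u_\lambda)$ of the C*-algebra $B$: for $b_1,b_2\in B$ the elements $(b_1m)^*(b_1m)=m^*b_1^*b_1m$ and $(nb_2)(nb_2)^*=nb_2b_2^*n^*$ lie in $B$, so $\norm{b_1m(1-u_\lambda)}^2\le\norm{(b_1m)^*(b_1m)(1-u_\lambda)}\to 0$ and likewise $u_\lambda nb_2\to nb_2$, whence $b_1mnb_2=\lim_\lambda(b_1mu_\lambda)(u_\lambda nb_2)$; since $b_1mu_\lambda\in\overline{BmB}$ and $u_\lambda nb_2\in\overline{BnB}$, the product lies in $\signor(m)\signor(n)$, and taking linear spans and closures yields $\signor(mn)\subset\signor(m)\signor(n)$. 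For join-density, if $B$ contains an approximate unit $(e_i)$ of $A$ then, by the Remark, every $V\in\ipi_B$ lies in $N(B)$; for $n\in V$ one has $\signor(n)=\overline{BnB}\subset\overline{BVB}\subset V$ (using $BV\subset V$, $VB\subset V$) and also $n=\lim_i e_ine_i\in\overline{BnB}=\signor(n)$, so that $V=\overline{\spanmap(V)}=\V_{n\in V}\linspan n\subset\V_{n\in V}\signor(n)\subset V$; thus $V$ is a join of elements in the image of $\signor$.

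\textbf{The commutative case.} Here the real work is to prove $nB=Bn$ for every $n\in N(B)$, from which everything else falls out: indeed then $\signor(n)=\overline{BnB}=\overline{Bn}=\overline{nB}$ (the outer inclusions hold in general, the missing one because e.g. $nb_2\in\overline{Bn}$ lets one rewrite $b_1nb_2$), and then $\signor(m)\signor(n)=\overline{Bm}\cdot\overline{nB}=\overline{\spanmap\{b_1(mn)b_2\}}=\signor(mn)$, giving the strict homomorphism. To get $nB=Bn$ I would first show that $n^*n$ and $nn^*$ commute with $B$: passing to $A^{**}$ with $p$ the unit of $B^{**}$, the element $n^*pn$ is a $\sigma$-weak limit of the elements $n^*u_\lambda n\in B$, each commuting with $B$, so $n^*pn$ does too; and $n^*(1-p)n$ commutes with $B$ as well, since $(nb)(nb)^*\in B$ forces the left support of $nb$ to be dominated by $p$, hence $(1-p)nb=0$, and symmetrically $(1-p)nb^*=0$. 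Commutativity then yields $(n^*n)^2bb^*=n^*(nbb^*n^*)n\in B$ for all $b\in B$, so $(n^*n)^2B\subset B$, and the continuous square root gives $n^*nB\subset B$ and $Bn^*n\subset B$, and similarly for $nn^*$. Finally, using the polar decomposition $n=w|n|$ in $A^{**}$ (with $|n|=(n^*n)^{1/2}$ commuting with $B^{**}$), these multiplier properties let one descend from the relation $nb=wb|n|=(wbw^*)n$ in $A^{**}$ to a norm approximation of $wbw^*$ by elements of $B$, hence $nb\in\overline{Bn}$, and dually $bn\in\overline{nB}$.

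The hard part is precisely this last step, i.e.\ the identity $nB=Bn$; it is the familiar fact that a normalizer of an abelian C*-subalgebra implements a partial $*$-isomorphism between ideals (\cf\ \cites{Kumjian,Renault}), and the only delicate point is the passage from the von Neumann algebra $B^{**}$ back to the norm-closed subspace $\overline{Bn}\subset A$, which is what the multiplier properties of $n^*n$ and $nn^*$ are there to make possible. Everything preceding the commutative case is routine computation with approximate units.
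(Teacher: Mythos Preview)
Your proposal is correct, but you are working much harder than necessary, and the route you take in the commutative case is genuinely different from (and considerably heavier than) the paper's.

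The paper's unifying device---which you never use---is the inverse semigroup identity $V=VV^*V$ for every $V\in\ipi_B$, available precisely because $\Max A$ is stably Gelfand (Theorem~\ref{thm:cstaralgebrasasspaces} and Example~\ref{exm:MaxASG}). For the lax property the paper simply writes
\[
\signor(mn)=BmnB=(BmnB)(BmnB)^*(BmnB)=Bm(nBn^*)(m^*Bm)nB\subset BmB\cdot BnB,
\]
so no approximate-unit computation is needed. In the commutative case the same trick, combined once with the commutativity of $B$ (to swap $B(nBn^*)$ with $(nBn^*)B$), yields $\signor(m)\signor(n)\subset\signor(mn)$ in two lines; and the identity $\overline{nB}=BnB$ is obtained by first checking that $V:=\overline{nB}$ lies in $\ipi_B$ (the nontrivial inclusion $BV\subset V$ again comes from $BnB=(BnB)(BnB)^*(BnB)$ plus one commutation), and then the reverse inclusion from $V=VV^*V=nB(n^*n)B\subset BnB$ via the already-established homomorphism property applied to $n^*n$. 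No passage to $A^{**}$, no polar decomposition, no multiplier properties of $n^*n$---the argument is entirely algebraic within $\Max A$.

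Your analytic route (polar decomposition in $A^{**}$, descending $wbw^*$ back to $B$) is the Kumjian--Renault one and does ultimately work, but as you yourself flag, the descent step is delicate and you have only sketched it. What the comparison shows is that the quantale-theoretic framework is doing real work here: the single identity $V=VV^*V$ replaces all of the operator-algebraic machinery you invoke.
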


\begin{proof}
We have $\signor(n)\in\ipi_B$ for all $n\in N(B)$, for
$(BnB)(BnB)^*=BnBn^*B\subset BBB=B$, and the other three conditions of the definition of $\ipi_B$ are equally obvious. Hence, the mapping $\signor:N(B)\to\ipi_B$, which clearly preserves involution, is well defined. And it is a lax homomorphism because, since $\ipi_B$ is an inverse semigroup, we have
\begin{eqnarray*}
\signor(mn)&=&BmnB=(BmnB)(BmnB)^*(BmnB)\\
&=&Bm(nBn^*)(m^*Bm)tB
\subset BmBnB=\signor(m)\signor(n)\;.
\end{eqnarray*}
Let $V\in\ipi_B$ and $n\in V$. Then $n\in N(B)$ and $BnB\subset BVB\subset V$, so $\V_{n\in V}\signor(n)\subset V$. If in addition $B$ contains an approximate unit of $A$ then $n\in BnB$ for all $n\in V$, and thus
$\V_{n\in V}\signor(n)= V$, showing that $\signor(N(B))$ is join-dense in $\ipi_B$. Now assume that $B$ is commutative (not necessarily containing an approximate unit of $A$). Then we obtain
\begin{eqnarray*}
\signor(m)\signor(n)&=&BmBnB= BmB(BnB)(BnB)^*(BnB)\\
&=&Bm[B(nBn^*)]BnB=Bm[(nBn^*)B]BnB\\
&=&BmnBn^*BnB=Bmn[\signor(n)^*\signor(n)]\\
&\subset& BmnB=\signor(mn)\;,
\end{eqnarray*}
so $\signor$ is a homomorphism.
Finally, let $n\in N(B)$, and let $V=nB$. The following argument, where only the case $BV\subset V$ uses commutativity of $B$, shows that $V\in\ipi_B$:
\begin{eqnarray*}
VV^*&=&nBn^*\subset B\;;\\
V^*V&=& Bn^*nB=\signor(n^*n)\subset\signor(n)^*\signor(n)\subset B\;;\\
VB&=& nB=V\;;\\
BV&=& BnB = (BnB)(BnB)^*(BnB) = B(nBn^*)BnB\\
&=& (nBn^*)BBnB = nB(n^*Bn)B\subset nB=V\;.
\end{eqnarray*}
The last case shows that $BnB\subset nB$ for all $n\in N(B)$. Let us prove the converse inclusion, again with $B$ commutative:
\begin{eqnarray*}
nB &=& V = VV^*V = nBBn^*nB = n\signor(n^*n)=n\signor(n^*)\signor(n)\\
&=&(nBn^*)B(nB)
=B(nBn^*)nB\subset BnB\;.
\end{eqnarray*}
Hence, if $B$ is commutative we have $nB=BnB$, and thus also $Bn=BnB$, for all $n\in N(B)$.
\end{proof}

The hypothesis of commutativity of $B$ cannot be omitted in general, as a simple example with $A=B=M_2(\CC)$ shows. Then $N(B)=A$, $\ipi_B=I(A)=\bigl\{A,\{0\}\bigr\}$, and $\signor$ is the mapping $A(-)A:A\to I(A)$. Take for instance
\[
X=\left(\begin{array}{cc}
1&0\\0&0
\end{array}\right)\quad\textrm{ and }\quad Y=\left(\begin{array}{cc}
0&0\\0&1
\end{array}\right)\;.
\]
Then $AXYA=0$ but $AXAYA$ contains $\left(\begin{array}{cc}
0&1\\0&0
\end{array}\right)$, and thus $\signor(XY)\neq\signor(X)\signor(Y)$. Also, $\left(\begin{array}{cc}
0&1\\0&0
\end{array}\right)\in AXA$, so $AX\neq AXA$.

\subsection{Groupoids from abelian sub-C*-algebras}\label{sec:gfc}

Let $A$ be a C*-algebra, and $B\subset A$ a norm-closed self-adjoint subalgebra. Let us also assume that $B$ is abelian. The pseudogroup $\ipi_B$ has locale of idempotents $E(\ipi_B)=I(B)$, which is isomorphic to the topology of the spectrum of $B$.
Let us denote this spectrum by $X$. Any pseudogroup acts by conjugation on its locale of idempotents~\cite{MaRe10}. Since $X$ is Hausdorff, it is a sober space and thus the conjugation action translates to an action by partial homeomorphisms on $X$, in other words a homomorphism of pseudogroups valued in the symmetric pseudogroup of $X$:
\[
\rho:\ipi_B\to\ipi(X)\;.
\]
Such an action defines an \'etale groupoid of germs whose object space is $X$ and whose arrows $g:x\to y$ are the germs of partial homeomorphisms $s\in\ipi(X)$ such that $s(x)=y$~\cites{Exel08,MaRe10}. Let us denote this groupoid by $\GB$. Since $X$ is locally compact, $\GB$ is a locally compact and locally Hausdorff groupoid of the kind considered in section~\ref{sec:bundlesasmaps}.
Notice that if $B$ contains an approximate unit of $A$ then every arrow of $\GB$ can be obtained as the germ of an element $nB$ ($=Bn$) for some $n\in N(B)$ because $\signor(N(B))$ is join-dense in $\ipi_B$, as we saw in Theorem~\ref{thm:sigma}.

In addition to the groupoid we also obtain a map of involutive quantales (\cf\ section~\ref{subsec:egs})
\[
\mathfrak b:\Max A\to\topology(\GB)\;.
\]
We can see this as mimicking the construction of a Fell bundle from a Cartan subalgebra, now purely in terms of quantales. Conversely, from a groupoid $G$ and a map
\[
p:\Max A\to\topology(G)
\]
we obtain a sub-C*-algebra $p^*(G_0)\subset A$. There may be several such maps, and many groupoids, yielding the same subalgebra of $A$. These are related in terms of the general properties of stably Gelfand quantales proved in \cite{SGQ}*{Theorem 6.1}, which for convenience we restate here in the specific context of $\Max A$:

\begin{theorem}\cite{SGQ}*{Theorem 6.1}\label{thm:comparisonmap}
Let $A$ be a C*-algebra and $B\subset A$ a sub-C*-algebra. Then the corresponding map
\[
\mathfrak b:\Max A\to\topology(\GB)
\]
has the following universal property: for all groupoids $G$ and all maps
\[
p:\Max A\to\topology(G)
\]
such that $p^*(G_0)=B$ there is a unique map of unital involutive quantales
\[
k:\topology(\GB)\to\topology(G)\;,
\]
referred to as the \emph{comparison map},
such that the following diagram commutes:
\[
\xymatrix{
\Max A\ar[rr]^{\mathfrak b}\ar[drr]_p&&\topology(\GB)\ar[d]^k\\
&&\topology(G)
}
\]
Moreover, if $p$ is a surjection then $k$ is a surjection.
\end{theorem}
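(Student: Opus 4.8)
This theorem is the specialization to the stably Gelfand quantale $Q=\Max A$ of the general result \cite{SGQ}*{Theorem 6.1}: indeed $\Max A$ is stably Gelfand by Theorem~\ref{thm:cstaralgebrasasspaces}, and every sub-C*-algebra $B\subset A$ is a spatial projection of it (Example~\ref{exm:EipibQ}), so $\GB=\germs(\ipi_B)$ and the map $\mathfrak b=p_B$, whose inverse image homomorphism $\mathfrak b^*=\varphi_B\colon\topology(\GB)\cong\lcc(\ipi_B)\to\Max A$ is the extension of the inclusion $\ipi_B\hookrightarrow\Max A$, are both defined. The plan is to construct $k$ directly from the universal property of $\topology(G)\cong\lcc(\ipi(G))$ as the join-completion of the pseudogroup $\ipi(G)$.

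The key step is to show that $p^*$ restricts to a homomorphism of pseudogroups $p^*\vert_{\ipi(G)}\colon\ipi(G)\to\ipi_B$. For $U\in\ipi(G)$ one has $p^*(U)^*p^*(U)=p^*(U^{-1}U)\subset p^*(G_0)=B$, and symmetrically $p^*(U)p^*(U)^*\subset B$, while $p^*(U)B=p^*(UG_0)=p^*(U)$ and $Bp^*(U)=p^*(U)$; hence $p^*(U)\in\ipi_B$. The map $p^*$ is multiplicative and $*$-preserving and sends the unit $G_0$ of $\ipi(G)$ to the unit $B$ of $\ipi_B$. If $U,V\in\ipi(G)$ are compatible then $UV^{-1}\subset G_0$, so $p^*(U)p^*(V)^*=p^*(UV^{-1})\subset B$ lies below the unit of $\ipi_B$ and is therefore idempotent, and likewise for $p^*(U)^*p^*(V)$; thus $p^*$ sends compatible families to compatible families, and since the joins of such families are computed in $\Max A$ (resp.\ $\topology(G)$) and $p^*$ preserves all joins, $p^*\vert_{\ipi(G)}$ preserves joins of compatible sets. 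Composing with $\downsegment(-)\colon\ipi_B\to\lcc(\ipi_B)=\topology(\GB)$ therefore gives a homomorphism of monoids $\ipi(G)\to\topology(\GB)$ preserving joins of compatible sets and sending $G_0$ to the unit of $\topology(\GB)$, which by the universal property of $\lcc(\ipi(G))$ extends uniquely to a unital homomorphism of unital involutive quantales $k^*\colon\topology(G)\to\topology(\GB)$; let $k$ be the corresponding map. This first step is the only one requiring real care — in particular the fact that compatible joins in $\ipi_B$ coincide with joins in $\Max A$ — but it is immediate from the description of $\ipi_b(Q)$ recalled in Section~\ref{subsec:egs}.

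It remains to check commutativity, uniqueness, and the surjectivity clause. Since $\mathfrak b^*\circ k^*$ and $p^*$ are both homomorphisms of involutive quantales $\topology(G)\to\Max A$ and every open set of $G$ is a join of local bisections, the identity $p^*=\mathfrak b^*\circ k^*$ follows once it is verified on $U\in\ipi(G)$, where $\mathfrak b^*(k^*(U))=\varphi_B(\downsegment(p^*(U)))=p^*(U)$ because $\varphi_B$ restricts to the inclusion $\ipi_B\hookrightarrow\Max A$. For uniqueness, any $k'$ with $p=k'\circ\mathfrak b$ has $k'^*$ unital, hence carries the partial unit $U\in\ipi(G)$ to a partial unit of $\topology(\GB)$, which is necessarily of the form $\downsegment(V)$ with $V\in\ipi_B$; applying $\mathfrak b^*$ gives $V=\varphi_B(\downsegment(V))=p^*(U)$, so $k'^*(U)=\downsegment(p^*(U))=k^*(U)$, and since $\ipi(G)$ generates $\topology(G)$ under joins, $k'^*=k^*$. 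Finally, if $p$ is a surjection then $p^*$ is injective, and from $p^*=\mathfrak b^*\circ k^*$ we conclude that $k^*$ is injective, i.e.\ $k$ is a surjection.
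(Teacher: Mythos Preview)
Your proof is correct. Note that the paper itself does not prove this statement; it quotes it verbatim from \cite{SGQ}*{Theorem~6.1}, so there is no in-paper proof to compare against. However, the remark in the proof of Lemma~\ref{lem:splitcomparisonmap} --- that ``$k^*:\topology(G)\to\topology(\GB)$ is the unique homomorphism of unital involutive quantales whose restriction to $\ipi(G)$ coincides with $p^*$'' --- confirms that the argument in \cite{SGQ} proceeds exactly as you do: one checks that $p^*$ maps $\ipi(G)$ into $\ipi_B$ as a pseudogroup homomorphism and then invokes the universal property of $\lcc(\ipi(G))\cong\topology(G)$.

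One small point worth making explicit (you gloss it as ``immediate from the description of $\ipi_b(Q)$''): the fact that compatible joins in $\ipi_B$ agree with joins computed in $\Max A$ rests on the natural order of $\ipi_B$ coinciding with the order inherited from $\Max A$. This holds because for $s\le t$ in $\Max A$ with $s,t\in\ipi_B$ one has $ts^*\le tt^*\le B$ and hence $t(s^*s)\le Bs=s$, while $t(s^*s)\ge s(s^*s)=s$, so $s=t(s^*s)$ with $s^*s\in E(\ipi_B)$. This is routine, but it is the one place where the stably Gelfand property (via $ss^*s=s$) is actually used, so it deserves a sentence.
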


Another groupoid can be obtained from the sub-C*-algebra $B$ by noticing that the image $\rho(\ipi_B)$ is itself a pseudogroup, which we shall denote by $\ipi'_B$. The inclusion $\ipi'_B\to\ipi(X)$ defines an action of $\ipi'_B$ on $X$. This yields another groupoid, now an effective groupoid because it arises from a faithful action. We denote this groupoid by $\GB'$.

We note that the composition of $\rho$ with $\signor$ defines an action of the normalizer semigroup on $X$:
\[
\rho\circ\signor: N(B)\to\ipi(X)
\]
If $B$ contains an approximate unit of $A$ this action coincides with the action defined in~\cite{Renault}, so $\GB'$ coincides with the Weyl groupoid $G(B)$.

Let us conclude this section by seeing simple examples of groupoids $\GB$ and $\GB'$, as well as maps $\mathfrak b:\Max A\to\topology(\GB)$, based on matrix algebras.

\begin{example}\label{exm:Sn}
Let $A=M_n(\CC)$ and $B=\CC\boldsymbol I_n$. The normalizers are the scalar multiples of the $n\times n$ permutation matrices, and thus the groupoid $\GB$ is the symmetric group $\SG_n$, which we shall identify with the group of $n\times n$ permutation matrices. (Here $\GB'$ is the trivial groupoid with one object and one arrow.) Given $M\in \SG_n$, the map $\mathfrak b:\Max A\to\topology(\GB)=\pwset {\SG_n}$ is defined by $\mathfrak b^*(\{M\}) = \CC M$. For any $n\ge 2$ the map $\mathfrak b$ is not semiopen. In order to see this, consider first the case $n=2$. The $2\times 2$ permutation matrices do not span $A$, which means that $\mathfrak b^*(\GB)\neq A$ and thus $\mathfrak b^*$ does not preserve the 0-ary meet, so it does not have a left adjoint. If $n\ge 3$ the permutation matrices are linearly dependent, so let $M\in \SG_n$ be a linear combination of $X:=\SG_n\setminus\{M\}$. Then $\mathfrak b^*(\{M\})\subset \mathfrak b^*(X)$, and thus $\mathfrak b^*(\{M\})\cap \mathfrak b^*(X)\neq\{0\}$, but $\mathfrak b^*(\{M\}\cap X)=\mathfrak b^*(\emptyset)=\{0\}$, which means that $\mathfrak b^*$ is not meet preserving and thus, again, $\mathfrak b$ is not semiopen. It is also not a surjection, for $\mathfrak b^*(X)=\mathfrak b^*(\SG_n)$ and thus $\mathfrak b^*$ is not injective, but $\mathfrak b$ is a surjection if $n=2$. Note that this example does not contradict our earlier results about localizable completions, for the latter imply that the group algebra $\CC \SG_n$ (rather than $A$) is a localizable  completion, so there is a quantic bundle $p:\Max\CC\SG_n\to\pwset{\SG_n}$.
\end{example}

\begin{example}\label{exm:Mn}
Let $A=M_n(\CC)$ and let $B=D_n(\CC)$ be the set of $n\times n$ diagonal matrices. This is one of the motivating examples in \cite{Kumjian}. Now the normalizers are matrices $M$ whose signature is a partial permutation matrix, where by \emph{signature} of $M$ we mean the matrix $N$ defined for all $i,j$ by
\[
n_{ij} = \left\{\begin{array}{ll}
0&\textrm{if }m_{ij}=0\\
1&\textrm{otherwise.}
\end{array}\right.
\]
Hence, $\ipi_B$ is the symmetric pseudogroup on $n$ elements, and thus the groupoid $\GB$ is just the total binary relation (pair groupoid) on $n$ elements. The map $\mathfrak b:\Max A\to\pwset{\GB}$ is such that $\mathfrak b^*$ sends each partial permutation matrix $N$ to the linear subspace of $A$ consisting of all the matrices $M$ whose signature is less or equal to $N$ in the pointwise order. This example can be regarded an instance of Theorem~\ref{thm:principal}, so $p$ is a stable quantic bundle. Indeed $\GB$ has trivial isotropy, so $\GB=\GB'$.
\end{example}

\begin{example}
Let us see an example that lies between the two extremes illustrated in Example~\ref{exm:Sn} and Example~\ref{exm:Mn}. Let $A=M_{n+1}(\CC)$ and $B\cong\CC^2$ be the subspace of $A$ spanned by the two matrices
\[
\left(
\begin{array}{c|c}
\boldsymbol I_n&\boldsymbol 0\\
\hline \boldsymbol 0& 0
\end{array}
\right)\;,\quad\quad
\left(
\begin{array}{c|c}
\boldsymbol 0&\boldsymbol 0\\
\hline \boldsymbol 0&1
\end{array}
\right)\;.
\]
Then $\GB_0$ is a discrete two point space. The normalizers are of the form
\[
\left(
\begin{array}{c|c}
\lambda_1 M&\boldsymbol 0\\
\hline \boldsymbol 0&\lambda_2
\end{array}
\right)\;,
\]
where $\lambda_1,\lambda_2\in\CC$ and $M$ is an $n\times n$ permutation matrix, so
one of the points of $\GB_0$ has isotropy group $\SG_n$ and the other has trivial isotropy. (Here $\GB'$ is just a discrete two point space.) Due to reasons similar to those of Example~\ref{exm:Sn}, the map $\mathfrak b:\Max A\to\pwset\GB$ is not semiopen if $n\ge 2$, and it is not a surjection if $n\ge 3$.
\end{example}

\subsection{Fell bundles versus $\ipi$-stable quantic bundles}

Consider a saturated Fell bundle $\pi:E\to G$ equipped with a compatible completion $A$ with associated quantale map $p:\Max A\to\topology(G)$. Then $B:=p^*(G_0)$ is a sub-C*-algebra of $A$, and the question naturally arises of how the map $p:\Max A\to\topology(G)$ determined by the pair $(\pi,A)$ relates to the map $\mathfrak b:\Max A\to\topology(\GB)$ determined by the subalgebra. In particular, we want to know how the pseudogroup $\ipi_B$ compares with $\ipi(G)$.

We begin by noticing that, independently of Fell bundles, we can say more about the universal property of Theorem~\ref{thm:comparisonmap} in the context of $\ipi$-stable quantic bundles (a similar property has been noticed in \cite{SGQ}*{Theorem 6.3} for stable quantic bundles):

\begin{lemma}\label{lem:splitcomparisonmap}
Let $A$ be a C*-algebra, $B\subset A$ a sub-C*-algebra, and
$
\mathfrak b:\Max A\to\topology(\GB)
$
the corresponding map. If
$
p:\Max A\to\topology(G)
$
is an $\ipi$-stable quantic bundle
such that $p^*(G_0)=B$ then the comparison map
$
k:\topology(\GB)\to\topology(G)
$
is a split surjection.
\end{lemma}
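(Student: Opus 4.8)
The plan is to construct an explicit section of $k$. By Theorem~\ref{thm:comparisonmap} the comparison map satisfies $\mathfrak b^*\circ k^*=p^*$, and $k$ is a surjection because $p$, being a quantic bundle, is a surjection. Since $p$ is semiopen, $p^*$ has a left adjoint $p_!$, and I would take the candidate section to be the map $s:\topology(G)\to\topology(\GB)$ determined by $s^*:=p_!\circ\mathfrak b^*:\topology(\GB)\to\topology(G)$. The retraction identity is then immediate: $s^*\circ k^*=p_!\circ\mathfrak b^*\circ k^*=p_!\circ p^*=\ident_{\topology(G)}$ by surjectivity of $p$ (Lemma~\ref{lem:semiopen}), so that $k\circ s=\ident_{\topology(G)}$. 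Thus the whole content of the lemma reduces to the claim that $s^*$ is a homomorphism of unital involutive quantales.

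Of the four clauses, three are routine. Preservation of joins holds because $p_!$ is a left adjoint and $\mathfrak b^*$ is a homomorphism. Preservation of the involution follows from $p_!(a^*)=p_!(a)^*$ (Lemma~\ref{lem:semiopen}) together with the fact that $\mathfrak b^*$ preserves the involution. For the unit, under the identification $\topology(\GB)\cong\lcc(\ipi_B)$ the unit $\GB_0$ corresponds to $e$, and $\mathfrak b^*=\varphi_B$ sends $e$ to the unit $B$ of the pseudogroup $\ipi_B$; since $B=p^*(G_0)$ by hypothesis, $s^*(\GB_0)=p_!(p^*(G_0))=G_0$.

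The remaining, and essentially the only substantive, step is multiplicativity of $s^*$, and this is where $\ipi$-stability enters. Under the identification $\topology(\GB)\cong\lcc(\ipi_B)$ every element of $\topology(\GB)$ is a join of elements of the form $\downsegment s$ with $s\in\ipi_B$, and $\mathfrak b^*(\downsegment s)=s$; hence, since $\mathfrak b^*$ and $p_!$ preserve joins and the quantale multiplications distribute over joins, the identity $s^*(xy)=s^*(x)s^*(y)$ for $x,y\in\topology(\GB)$ reduces to $p_!(st)=p_!(s)p_!(t)$ for all $s,t\in\ipi_B$. Taking $b=p^*(G_0)=B$, this last equality is exactly condition~\eqref{qb2} of Lemma~\ref{lem:ipiopen}, which holds because $p$ is an $\ipi$-stable quantic bundle. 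I expect this reduction to the partial units relative to $B$, together with the appeal to Lemma~\ref{lem:ipiopen}, to be the main point; everything else is bookkeeping with the adjunction $p_!\dashv p^*$ and the universal property of Theorem~\ref{thm:comparisonmap}.
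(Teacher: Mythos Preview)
Your proof is correct and takes essentially the same approach as the paper: both construct the section $s$ by setting $s^*$ equal to the extension of $p_!\vert_{\ipi_B}$ to $\topology(\GB)\cong\lcc(\ipi_B)$ (the paper does this via the functor $\lcc$, while you write it directly as $p_!\circ\mathfrak b^*$, but these coincide on the generators $\downsegment s$ and hence everywhere), and in both cases the only nontrivial ingredient is condition~\eqref{qb2} of Lemma~\ref{lem:ipiopen}. Your verification of the retraction identity via $\mathfrak b^*\circ k^*=p^*$ and $p_!\circ p^*=\ident$ is in fact slightly more explicit than the paper's.
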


\begin{proof}
The inverse image homomorphism $k^*:\topology(G)\to\topology(\GB)$ is the unique homomorphism of unital involutive quantales whose restriction to $\ipi(G)$ coincides with $p^*$ (see the proof of~\cite{SGQ}*{Theorem 6.1}). Since $p$ is an $\ipi$-stable quantic bundle, by Lemma~\ref{lem:ipiopen} the direct image homomorphism $p_!$ restricts to a homomorphism of pseudogroups $\psi:\ipi_B\to\ipi(G)$, which in turn extends via the functor $\lcc$ to a homomorphism of unital involutive quantales $\lcc(\psi):\lcc(\ipi_B)\to\lcc(\ipi(G))$, and thus to a homomorphism $\psi^\sharp:\topology(\GB)\to\topology(G)$, since $\lcc(\ipi(G))\cong\topology(G)$ and $\lcc(\ipi_B)\cong\topology(\GB)$. The map $s$ defined by $s^*=\psi^\sharp$ is such that $k\circ s$ is the identity on $\topology(G)$.
\end{proof}

Now we return to the context of Fell bundles. It turns out that, under suitable assumptions about a Fell line bundle $\pi:E\to G$ and a localizable completion $A$, the pseudogroup $\ipi(G)$ can be recovered as a pseudogroup $\ipi_B$ precisely when the quantic bundle determined by the pair $(\pi,A)$ is $\ipi$-stable:

\begin{theorem}\label{recoveringG}
Let $\pi:E\to G$ be a Fell line bundle on a Hausdorff groupoid $G$. Let also $A$ be a localizable completion with associated quantic bundle $p:\Max A\to\topology(G)$, let $B=p^*(G_0)\ \bigl(\cong C_0(G_0,E)\cong C_0(G_0)\bigr)$, and let $\mathfrak b:\Max A\to\topology(\GB)$ be the map determined by $B$. The following conditions are equivalent:
\begin{enumerate}
\item\label{stabiso} $p$ is an $\ipi$-stable quantic bundle;
\item\label{isostab} The comparison map $k:\topology(\GB)\to\topology(G)$ is an isomorphism.
\end{enumerate}
\end{theorem}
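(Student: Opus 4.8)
The plan is to reduce both implications to a statement about the pseudogroups $\ipi(G)$ and $\ipi_B$. First I would record the consequences of the hypotheses used throughout: since $\pi$ is a Fell line bundle it is saturated and nondegenerate (Lemma~\ref{lem:zeroproplinebun}), hence $p$ is a surjection (Theorem~\ref{fellquant1}); since $A$ is localizable, $p$ is a quantic bundle with $p_!=\sigma$ (Lemma~\ref{froblemma2}), $A$ is faithful (Theorem~\ref{lem:locimpfaith}), and as $G$ is Hausdorff $A\subseteq C_0(G,E)$ (Lemma~\ref{AinC0}); moreover $B=p^*(G_0)\cong C_0(G_0,E)\cong C_0(G_0)$. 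Using $UG_0=U$ and $U^{-1}U,UU^{-1}\subseteq G_0$ one checks that $\phi:=p^*|_{\ipi(G)}$ takes values in $\ipi_B$ and is an injective homomorphism of pseudogroups (injective because $p$ is a surjection). Under the identifications $\topology(G)\cong\lcc(\ipi(G))$ and $\topology(\GB)\cong\lcc(\ipi_B)$, the comparison map $k$ of Theorem~\ref{thm:comparisonmap} has $k^*$ equal to the unique unital homomorphism of involutive quantales restricting to $\phi$ on $\ipi(G)$ (\cf\ the proof of Lemma~\ref{lem:splitcomparisonmap}), i.e.\ $k^*=\lcc(\phi)$; since $\lcc$ is an equivalence of categories, $k$ is an isomorphism if and only if $\phi$ is surjective. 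Finally, because $p$ is a surjection, $p_!\circ p^*=\ident$ on $\ipi(G)$ (Lemma~\ref{lem:semiopen}), so whenever $V=p^*(U)\in\ipi_B$ one must have $U=p_!(V)$; hence $\phi$ is surjective if and only if, for every $V\in\ipi_B$, $p_!(V)\in\ipi(G)$ and $p^*(p_!(V))=V$.

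Given this reformulation, $(2)\Rightarrow(1)$ is immediate: if $k$ is an isomorphism then $\phi$ is surjective, so in particular $p_!(V)\in\ipi(G)$ for every $V\in\ipi_B$, which by Lemma~\ref{lem:ipiopen}\eqref{qb1} is precisely the statement that $p$ is an $\ipi$-stable quantic bundle.

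For $(1)\Rightarrow(2)$ I would use $\ipi$-stability through Lemma~\ref{lem:ipiopen}: it gives $p_!(V)\in\ipi(G)$ for all $V\in\ipi_B$ and makes $p_!$ multiplicative on $\ipi_B$, so by the reformulation it remains only to prove $p^*(p_!(V))\subseteq V$ (the reverse inclusion being the unit of $p_!\dashv p^*$). Here I would exploit that $B\cong C_0(G_0)$ is abelian, so its closed ideals are exactly the subspaces $p^*(O)=C_0(O,E)$ with $O\in\topology(G_0)$; thus $p^*$ restricts to an order isomorphism $\topology(G_0)\to I(B)=E(\ipi_B)$ (\cf\ \eqref{EipibQ}, Example~\ref{exm:EipibQ}) whose inverse is $p_!$, i.e.\ $p^*\circ p_!=\ident$ on $I(B)$. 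Writing $U:=p_!(V)$ and $W:=p^*(U)$, and using $p_!(V^*)=p_!(V)^*$ (Lemma~\ref{lem:semiopen}) together with multiplicativity of $p_!$ on $\ipi_B$ (Lemma~\ref{lem:ipiopen}\eqref{qb2}), one obtains
\[
V^*V=p^*\bigl(p_!(V^*V)\bigr)=p^*(U^{-1}U)=W^*W\,,\qquad VV^*=p^*(UU^{-1})=WW^*\,.
\]
Since $V\subseteq W$, for $v,v'\in V$ and $w\in W$ one has $v'^*w\in V^*W\subseteq W^*W=V^*V\subseteq B$, hence $v\,(v'^*w)\in VB\subseteq V$; taking closed linear spans gives $VV^*W\subseteq V$, and as $VV^*V=V$ we conclude
\[
V=VV^*V\subseteq VV^*W\subseteq V\,,\qquad\text{whence}\qquad V=VV^*W=WW^*W=W=p^*\bigl(p_!(V)\bigr)\,.
\]
Therefore $\phi$ is surjective and $k$ is an isomorphism.

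The step I expect to be the main obstacle is exactly the inclusion $p^*(p_!(V))\subseteq V$: this is where the line bundle hypothesis is genuinely needed (for bundles of higher rank $p$ is not even a quantic bundle, by Theorem~\ref{thm:lineopen}), and one must be careful about the precise identification of $E(\ipi_B)$ with $\topology(G_0)$. If that identification feels too heavy, I would instead argue concretely: $p_!(V)=\sigma(V)=\bigcup_{a\in V}\osupp(a)$, call it $U$, so for each $g\in U$ there is $a_g\in V$ with $a_g(g)\neq0$, and on $O_g:=\osupp(a_g)$ every $s\in C_0(U,E)$ equals $(a_g^*s)(a_g^*a_g)^{-1}a_g$ with a continuous scalar coefficient (this is where one-dimensionality of the fibres is used); a partition of unity subordinate to a finite subcover of $\supp_U(s)$ then writes each $s\in C_c(U,E)$ as a finite sum $\sum_i a_{g_i}c_i$ with $c_i\in B$, so $s\in VB\subseteq V$, and $p^*(U)=C_0(U,E)=\overline{C_c(U,E)}\subseteq V$ by closedness of $V$. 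The remaining verifications (that $\phi$ is a well-defined pseudogroup homomorphism, that $k^*=\lcc(\phi)$, and the bookkeeping with $\ipi_B$ and $I(B)$) are routine.
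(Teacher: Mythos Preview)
Your proposal is correct, and for the direction $(1)\Rightarrow(2)$ your primary argument is genuinely different from the paper's. The paper proceeds concretely: it reduces $p^*(p_!(V))\subset V$ to showing $C_c(\osupp(a),E)\subset Ba$ for each $a\in V$, and does this by forming the pointwise quotient $\lambda=s/a$ on $\osupp(a)$ (using one-dimensionality of the fibres and local triviality of the line bundle), then recognizing $\lambda$ as a compactly supported scalar function so that $s=\lambda\un\,a\in Ba$. This is essentially your ``concrete alternative'' at the end.

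Your main argument instead stays at the level of the quantale: you observe that the line bundle hypothesis forces $B\cong C_0(G_0)$, so every closed ideal of $B$ is of the form $p^*(O)$ for some $O\in\topology(G_0)$, whence $p^*\!\circ p_!=\ident$ on $E(\ipi_B)=I(B)$. Combined with multiplicativity of $p_!$ on $\ipi_B$ (Lemma~\ref{lem:ipiopen}) this gives $VV^*=WW^*$ for $W:=p^*(p_!(V))$, and then the inverse-semigroup identities $V=VV^*V\subset VV^*W\subset V$ together with $VV^*W=WW^*W=W$ finish. This is neat and arguably more in the spirit of the paper's quantale framework; it uses the line bundle hypothesis only through the structure of $I(B)$, not through explicit section manipulation. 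The paper's argument, on the other hand, makes the role of one-dimensional fibres completely transparent (you literally divide one nowhere-zero section by another) and requires no appeal to the classification of closed ideals in $C_0(G_0)$. Both routes are short; yours abstracts away the analysis, the paper's keeps it visible.
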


\begin{proof}
The implication $\eqref{isostab}\Rightarrow \eqref{stabiso}$ is true essentially by definition of $\ipi$-stability (\cf\ Lemma~\ref{lem:ipiopen}).
Let us prove $\eqref{stabiso}\Rightarrow \eqref{isostab}$. Assume that $p$ is $\ipi$-stable.
The homomorphism $p^*$ restricts to an injective homomorphism of pseudogroups $\phi:\ipi(G)\to\ipi_B$. Since the functor $\lcc$ defines an equivalence of categories between pseudogroups and inverse quantal frames, $\phi$ extends to an injective homomorphism of unital involutive quantales $\lcc(\phi):\lcc(\ipi(G))\to\lcc(\ipi_B)$ or, equivalently, to an injective homomorphism $\phi^\sharp:\topology(G)\to\topology(\GB)$, and the comparison map $k$ is defined by $k^*=\phi^\sharp$. In order to show that $k$ is an isomorphism we shall prove that $\phi$ is also surjective with inverse given by the restriction of $p_!$.
Let $V\in\ipi_B$. The assumption that $p$ is $\ipi$-stable means that $p_!(V)\in\ipi(G)$, and the unit of the adjunction $p_!\dashv p^*$ gives us $V\subset p^*(p_!(V))$, so all we need to prove is the converse inclusion, which, since $V$ is topologically closed, is equivalent to the inclusion
\begin{equation}\label{recoveringGeq1}
C_c(p_!(V),E)\subset V\;.
\end{equation}
Notice that $p_!(V)=\bigcup_{a\in V}\osupp(a)$ and that for all $a\in V$ we have $Ba\subset V$, so
in order to prove \eqref{recoveringGeq1} it suffices to see that for all $a\in V$ we have
\begin{equation}\label{recoveringGeq2}
C_c(\osupp(a),E)\subset Ba\;.
\end{equation}
Let $s\in C_c(\osupp(a),E)$ and consider the quotient function
\[
\lambda=s/a:\osupp(a)\to\CC
\]
which is defined by letting $\lambda(g)$ be the unique complex number, for each $g\in\osupp(a)$, such that $s(g)=\lambda(g)a(g)$ in $E_g$. Then $\lambda$ is continuous because $\pi$ is locally trivial and, clearly, $\lambda\in C_c(\osupp(a))$, so $\lambda\un\in B$, where $\un:G_0\to E$ is the unit section, and thus
\[
s = \lambda a=\lambda\un a\in Ba\;.
\]
So we have proved \eqref{recoveringGeq2} and, consequently, \eqref{recoveringGeq1}, as intended.
\end{proof}

Not all Fell line bundles and localizable completions yield $\ipi$-stable quantic bundles, as the following example shows:

\begin{example}\label{exm:Z2}
Let $G$ be the finite discrete group $\ZZ_2=\{\boldsymbol 0,\boldsymbol 1\}$, and let $A$ be the group algebra $\CC G$, regarded as a faithful completion (hence localizable because $\ZZ_2$ is a compact groupoid) of the trivial Fell bundle $\pi_1:G\times\CC\to G$. Let also $p:\Max A\to\pwset G$ be the corresponding quantic bundle. The abelian subalgebra $p^*(\{ \boldsymbol 0\})$ is $B=\CC \boldsymbol 0$. Consider the element
\[
v=\lambda \boldsymbol 1+\kappa \boldsymbol 0
\]
with $\lambda,\kappa\in\CC$.
We have
\[
v^*v=vv^*=(\vert\lambda\vert^2+\vert\kappa\vert^2) \boldsymbol 0+(\overline\lambda\kappa+\overline\kappa\lambda) \boldsymbol 1\;,
\]
and thus the element $n=\frac 1{\sqrt 2} \boldsymbol 1+\frac i{\sqrt 2} \boldsymbol 0$ satisfies
\[
n^*n=nn^*= \boldsymbol 0\;.
\]
Hence,
\[
n^*Bn=B\quad\textrm{and}\quad nBn^*=B\;,
\]
so $n$ is a normalizer. But the support of $n$ is the whole of $G$, whereas $\ipi(G)=\bigl\{\{ \boldsymbol 0\},\{ \boldsymbol 1\}\bigr\}$, so $p_!(nB)=G\notin \ipi(G)$, and thus $p$ is not $\ipi$-stable.
\end{example}

Finally, we obtain a sufficient condition for $\ipi$-stability:

\begin{theorem}\label{thm:topprinc}
Let  $\pi:E\to G$ be a second countable twisted groupoid with $G$ topologically principal and Hausdorff. The associated map
\[
p:\Max C_r^*(G,E)\to\topology(G)
\]
is an $\ipi$-stable quantic bundle.
\end{theorem}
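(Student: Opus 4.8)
Write $A:=C_r^*(G,E)$ and $B:=p^*(G_0)$. The scheme is: (i) establish that $A$ is a localizable completion, so that $p$ is a quantic bundle with $p_!=\sigma$; (ii) verify the criterion of Lemma~\ref{lem:ipiopen} by showing that $\sigma(V)=p_!(V)$ is a local bisection for every $V\in\ipi_B$, the key input being that, because $G$ is topologically principal, every normalizer of $B$ in $A$ is supported in a local bisection. Concretely, since $\pi$ is a twist it is a continuous Fell line bundle, hence saturated by Lemma~\ref{lem:zeroproplinebun}, and by Theorem~\ref{thm:appalgofsections} together with the analysis of reduced bundle C*-algebras in appendix~\ref{appendixbundleCstaralgebras} the completion $A$ is faithful and in fact localizable. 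Then Theorem~\ref{thm:lineopen} gives that $p:\Max A\to\topology(G)$ is a quantic bundle with $p_!=\sigma$, and, since $G$ is Hausdorff and $A$ faithful, $\sigma(V)=\bigcup_{a\in V}\osupp(\hat a)$ for all $V\in\Max A$ by \eqref{removeint} and join preservation of $\sigma$. Moreover $B\cong C_0(G_0,E)\cong C_0(G_0)$, and $B$ contains an approximate unit of $A$ (the approximate unit of $C_0(G_0)$ acts as one on every $C_c(U,E)$, hence on all of $A$), so by Lemma~\ref{lem:ipiopen} it remains only to prove that $\sigma(V)\in\ipi(G)$ for every $V\in\ipi_B$.

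Fix $V\in\ipi_B$. By the Remark following Lemma~\ref{normalizersum} every $a\in V$ is a normalizer of $B$, and this is where topological principality is used: for a second countable twist over a topologically principal (locally compact Hausdorff, \'etale) groupoid, $C_0(G_0)$ is a Cartan subalgebra of $C_r^*(G,E)$ and each of its normalizers $n$ has $\osupp(\hat n)$ equal to an open bisection --- for a unit $x$ with trivial isotropy the regular representation $\lambda_x$ forces $G_x\cap\osupp(\hat n)$ to be at most a singleton, and density of such $x$ together with continuity of $\hat n$ propagates this to all of $\osupp(\hat n)$ (\cf\ \cite{Renault}, \cite{Kumjian}). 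Hence each $a\in V$ lies in $\overline{C_c(U_a,E)}$ with $U_a:=\osupp(\hat a)\in\ipi(G)$.

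Now suppose $g,g'\in\sigma(V)$ with $d(g)=d(g')$ but $g\neq g'$; choose $a,b\in V$ with $\eval_g(a)\neq0$ and $\eval_{g'}(b)\neq0$, so $g\in U_a$ and $g'\in U_b$. Then $b^*\in\overline{C_c(U_b^{-1},E)}$ with $U_b^{-1}\in\ipi(G)$, and $g'^{-1}$ is the unique element of $U_b^{-1}$ with domain $d(g')=d(gg'^{-1})$, so Lemma~\ref{lem:prodeval} (with $k=g'^{-1}$, whence $(gg'^{-1})k^{-1}=g$) yields
\[
\eval_{gg'^{-1}}(ab^*)=\eval_g(a)\,b(g')^*,
\]
which is nonzero because $\pi$ is a line bundle. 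But $ab^*\in VV^*\subset B$, so $\osupp(\widehat{ab^*})\subset G_0$, forcing $gg'^{-1}\in G_0$; together with $d(g)=d(g')$ this gives $g=g'$, a contradiction. Thus $d$ is injective on $\sigma(V)$; applying the same argument to $V^*\in\ipi_B$ and using $\sigma(V^*)=\sigma(V)^{-1}$ (see \eqref{multadjoint2}) shows $r$ is injective on $\sigma(V)$. Hence $\sigma(V)\in\ipi(G)$, and by Lemma~\ref{lem:ipiopen} the map $p$ is an $\ipi$-stable quantic bundle.

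\textbf{Expected main obstacle.} The quantale-side reduction above is routine once the inputs are secured; the genuine content is the statement that normalizers of $C_0(G_0)$ are supported in local bisections, which is exactly what topological principality provides and which fails in general (\cf\ Example~\ref{exm:Z2}, where $\ZZ_2$ is not topologically principal). A secondary technical point is the localizability of $C_r^*(G,E)$ over a non-compact groupoid: unlike the compact case (Theorem~\ref{thm:localizableimpliesreducedcgrps}), this is not formally implied by faithfulness via the arguments of Section~\ref{sec:prelinloc} and must be extracted from the concrete description of reduced bundle C*-algebras.
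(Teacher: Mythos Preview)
Your argument is correct and follows essentially the same route as the paper's own proof. Both proofs (i) invoke that $p$ is a quantic bundle, (ii) use \cite{Renault}*{Proposition 4.8(ii)} to conclude that each normalizer of $B$ has open support lying in $\ipi(G)$, and (iii) exploit $VV^*\subset B$ and $V^*V\subset B$ to show that $p_!(V)\in\ipi(G)$ for each $V\in\ipi_B$. The only cosmetic difference is in step (iii): the paper works at the level of open sets, using Lemma~\ref{laxprodosupp} to get $\osupp(m)^{-1}\osupp(n)=\osupp(m^*n)\subset p_!(B)\subset G_0$ for $m,n\in V$, thereby concluding that $\{\osupp(n)\st n\in V\}$ is a compatible family in the pseudogroup $\ipi(G)$ and hence has a join there; you instead argue pointwise, picking $g,g'\in\sigma(V)$ with $d(g)=d(g')$ and using Lemma~\ref{lem:prodeval} to evaluate $ab^*$ at $gg'^{-1}$. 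These are two phrasings of the same computation.

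Your flagging of the localizability of $C_r^*(G,E)$ for non-compact $G$ as a ``secondary technical point'' is apt: the paper's proof simply asserts ``the map $p$ is a quantic bundle'' without further comment, so you have been, if anything, more scrupulous here than the paper itself. The remark about $B$ containing an approximate unit of $A$ is true but not needed for the argument; neither proof uses it.
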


\begin{proof}
Write $B$ for $p^*(G_0)$. The map $p$ is a quantic bundle and, by \cite{Renault}*{Proposition 4.8(ii)}, for twisted groupoids satisfying the conditions of this theorem we have $\osupp(n)\in\ipi(G)$ for all $n\in N(B)$. Let $V$ be an arbitrary element of $\ipi_B$ and let $m,n\in V$. Then $m,n\in N(B)$ and, using Lemma~\ref{laxprodosupp},
we obtain
\begin{eqnarray*}
\osupp(m)^{-1}\osupp(n)&=&\osupp(m^*n)\subset p_!(V^*V)\subset p_!(B)\\
&=&p_!(p^*(G_0))\subset G_0\;,
\end{eqnarray*}
and, equally, $\osupp(m)\osupp(n)^{-1}\subset G_0$. This shows that the set
\[
\{\osupp(n)\st n\in V\}
\]
is a compatible set of $\ipi(G)$, and thus
$
p_!(V)
$, which coincides with the union of open supports $\bigcup_{n\in V}\osupp(n)$, is in $\ipi(G)$.
Therefore $p_!(\ipi_B)\subset\ipi(G)$, so $p$ is $\ipi$-stable.
\end{proof}

It is interesting to note that the construction of $\GB'$ forces the groupoid to be effective, whereas $\GB$ is in principle more general, but that insofar as recovering the base groupoid $G$ of a Fell bundle from a sub-C*-algebra $B$ is concerned, under the conditions of the above theorems we are able to do it precisely in situations where the additional generality of $\GB$ is lost, since topologically principal groupoids are effective~\cite{Renault}*{Proposition\ 3.6} and thus the conditions of Theorem~\ref{thm:topprinc} imply that we have $\GB\cong\GB'$.

\appendix

\section{Locales}\label{appLoc}

\subsection{The category of locales}

By a \emph{locale} is meant a sup-lattice $L$ satisfying the infinite distributive law
\[
a\wedge\V_i b_i=\V_i a\wedge b_i
\]
for all $a\in L$ and all families $(b_i)$ in $L$, and a \emph{map} of locales
\[
f:L\to M
\]
is defined to be a \emph{homomorphism} of locales
\[
f^*:M\to L\;,
\]
namely a function that preserves all the joins and the finite meets; that is,
\begin{eqnarray*}
f^*\bigl(\V_i a_i\bigr)&=& \V_i f^*(a_i)\;,\\
f^*(a\wedge b) &=& f^*(a)\wedge f^*(b)\;,\\
f^*(1_M) &=& 1_L\;,
\end{eqnarray*}
for all $a,b\in M$ and all families $(a_i)$ in $M$. The category of locales and their maps is denoted by $\Loc$. The motivating example of a locale is the topology $\topology(X)$ of a topological space $X$. The assignment $X\mapsto\topology(X)$ can be extended to a functor
\[
\topology: \Top\to\Loc
\]
by assigning to each continuous map $\varphi:X\to Y$ the map of locales
\[
\topology(\varphi):\topology(X)\to\topology(Y)
\]
which is defined by $\topology(\varphi)^*=\varphi^{-1}$. Accordingly, for a map of locales $f$ we usually refer to $f^*$ as its \emph{inverse image homomorphism}.

By a \emph{spatial} locale is meant any locale which is isomorphic to one of the form $\topology(X)$. And a topological space $X$ is \emph{sober} if the mapping $x\mapsto\overline{\{x\}}$ defines a bijection from $X$ to the set of irreducible closed sets of $X$. For instance any Hausdorff space is sober, since the irreducible closed sets are the singletons.
The functor $\topology$ restricts to an equivalence of categories between the full subcategory of $\Loc$ whose objects are the spatial locales and the full subcategory of $\Top$ whose objects are the sober spaces.
We note that if $\varphi:X\to B$ is a local homeomorphism and $B$ is sober then so is $X$.

\subsection{Open maps}

If $\varphi:X\to Y$ is an open map of topological spaces the direct image homomorphism
\[
\varphi_!:\topology(X)\to\topology(Y)\;,
\]
which is defined by $\varphi_!(U)=\varphi(U)$ for all $U\in\topology(X)$, is left adjoint to the inverse image homomorphism:
\[
\varphi_!\dashv \varphi^*\;.
\]
Moreover, it safisfies the \emph{Frobenius reciprocity condition}, which states that for all $U\in\topology(X)$ and $V\in\topology(Y)$ we have
\[
\varphi_!(U\cap \varphi^{-1}(V)) = \varphi_!(U)\cap V\;.
\]
Equivalently, taking into account that $\varphi^{-1}$ makes $\topology(X)$ a $\topology(Y)$-module by change of ``base ring'', the Frobenius condition is equivalent to the statement that $\varphi_!$ is a homomorphism of $\topology(Y)$-modules.

This leads to the definition of \emph{open map} $f:L\to M$ between locales $L$ and $M$ as consisting of a map of locales whose inverse image homomorphism $f^*$ has a left adjoint, usually denoted by $f_!$, which is a left $M$-module homomorphism: for all $a\in L$ and $b\in M$ we have
\[
f_!(a\wedge f^*(b))=f_!(a)\wedge b\;.
\]
If $\varphi:X\to Y$ is an open map of topological spaces then the locale map $\topology(\varphi):\topology(X)\to\topology(Y)$ is open. The converse is not true in general.

An important property of open maps of locales is that they are stable under pullback in $\Loc$ along arbitrary maps of quantales.

\section{Banach bundles}\label{app:bundles}

Let us say that a topological space $X$ is \emph{locally Hausdorff} if for each point $x\in X$ there is an open set $U\subset X$ containing $x$ which is Hausdorff in the subspace topology. From here on $X$ is an arbitrary but fixed locally Hausdorff space.

By a \emph{Banach bundle} over $X$ will be meant a topological space $E$ equipped with a continuous open surjection
\[\pi:E\to X\]
such that, writing $E_x$ instead of $\pi^{-1}(\{x\})$ and $E_U$ instead of $\pi^{-1}(U)$ for all $x\in X$ and $U\in\topology(X)$, the following conditions are satisfied:
\begin{enumerate}
\item for each $x\in X$ the fiber $E_x$ has the structure of a Banach space;
\item for each Hausdorff open set $U\subset X$, the set $E_U$ is a Hausdorff subspace of $E$;
\item\label{banachtopaddition} addition is continuous on $E\times_X E$ to $E$;
\item for each $\lambda\in\CC$, scalar multiplication $e\mapsto\lambda e$ is continuous on $E$ to $E$;
\item\label{banachtopnorm} $e\mapsto \norm e$ is upper semicontinuous on $E$ to $\RR$;
\item\label{banachtopbasis} for each $x\in X$ and each open set $V\subset E$ containing $0_x$, there is $\varepsilon>0$ and an open set $U\subset X$ containing $x$ such that $E_U\cap T_\varepsilon\subset V$, where $T_\varepsilon$ is the ``tube'' $\{e\in E\st \norm e<\varepsilon\}$.
\end{enumerate}

Condition \ref{banachtopbasis} is equivalent to stating that for each $x\in X$ the open ``rectangles''
\[E_U\cap T_\varepsilon\]
with $x\in U$ form a local basis of $0_x$. It is also equivalent to the statement that for every net $(e_\alpha)$ in $E$, if $\pi(e_\alpha)\to x$ and $\norm{e_\alpha}\to 0$ then $e_\alpha\to 0_x$ (the axiom of choice is needed for the converse implication).

Hence, in particular, the zero section of a Banach bundle is continuous. It can also be shown that scalar multiplication as an operation $\CC\times E\to E$ is continuous, as mentioned in \cite{BE12}, hence generalizing the same fact that holds for continuous Banach bundles.

We shall refer to the classical definition of Banach bundle, as in \cite{FD1}, \ie, such that $X$ and $E$ are Hausdorff spaces and the norm $\norm~:E\to\RR$ is continuous, simply as a \emph{continuous Banach bundle}. Banach bundles are not necessarily locally trivial, but, as stated in~\cite{FD1}, a continuous Banach bundle whose fibers are all of the same finite dimension is necessarily locally trivial. A proof of this can be found in \cite{RS}.

We recall that if $\pi:E\to X$ is a continuous Banach bundle with $X$ locally compact then $\pi$ has enough sections \cite{FD1}*{Appendix C}. This conclusion remains true if the norm is only upper semicontinuous, as noted in \cite{Hofmann77}, and thus Banach bundles on locally compact spaces have enough local sections:

\begin{theorem}
Let $\pi:E\to X$ be a Banach bundle with $X$ locally compact. For all $e\in E$ there exists a Hausdorff open set $U\subset X$ containing $\pi(e)$ and a section $s\in C(U,E)$ such that $s(\pi(e))=e$.
\end{theorem}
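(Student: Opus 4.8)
The plan is to reduce this to the classical existence theorem for continuous sections of Banach bundles over locally compact Hausdorff spaces (\cite{FD1}*{Appendix C}, in the upper semicontinuous formulation of \cite{Hofmann77}) by first passing to a suitable Hausdorff, locally compact open piece of the base.

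First I would fix $e\in E$ and write $x=\pi(e)$. Using local Hausdorffness of $X$, choose a Hausdorff open set $U_0\ni x$. Since $X$ is locally compact, $x$ has a compact neighbourhood $K$; replacing $K$ by $K\cap U_0$ if necessary (still a compact neighbourhood of $x$, and Hausdorff as a subspace of the Hausdorff space $U_0$), we may assume $K\subset U_0$. Put $U=\interior K$, which is open in $X$ (interior in $X$ agrees with interior in the open set $U_0$ for subsets of $U_0$), contains $x$, is Hausdorff, and, being an open subspace of the compact Hausdorff space $K$, is locally compact Hausdorff.

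Next I would restrict the bundle to $U$: consider $\pi\vert_{E_U}:E_U\to U$, where $E_U=\pi^{-1}(U)$. One checks that this is again a Banach bundle in the sense of appendix~\ref{app:bundles}, now over the locally compact Hausdorff space $U$: the space $E_U$ is Hausdorff by axiom~\eqref{banachtopaddition}'s preceding condition (the Hausdorffness axiom for $E_U$ with $U$ Hausdorff), $\pi\vert_{E_U}$ is a continuous open surjection onto $U$, and the remaining axioms (the Banach space structure on fibres, continuity of addition and of scalar multiplication, upper semicontinuity of the norm, and the tube condition) are all inherited by restriction to the open saturated set $E_U$. Hence $\pi\vert_{E_U}$ is an upper semicontinuous Banach bundle over a locally compact Hausdorff base, so it has enough continuous sections; in particular there is a continuous section $t:U\to E_U$ with $t(x)=e$. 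Since $E_U$ carries the subspace topology from $E$, the composite $s:U\xrightarrow{\,t\,}E_U\hookrightarrow E$ is a continuous section of $\pi$ over $U$, i.e.\ $s\in C(U,E)$, and $s(\pi(e))=s(x)=e$, as required.

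The only real obstacle is bookkeeping: pinning down the interpretation of ``$X$ locally compact'' in the locally Hausdorff setting so that one genuinely obtains a Hausdorff \emph{and} locally compact open neighbourhood of $x$, and then verifying carefully that the restricted bundle $\pi\vert_{E_U}$ satisfies all the axioms needed to invoke the classical section-existence theorem. Once this is in place the statement is immediate.
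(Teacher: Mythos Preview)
Your approach is the same as the paper's: restrict the bundle to a Hausdorff open neighbourhood and invoke the classical section-existence theorem for (upper semicontinuous) Banach bundles over locally compact Hausdorff bases. The paper's proof is in fact terser than yours --- it simply takes any Hausdorff open $U$ containing $\pi(e)$ and appeals to the classical result for the restricted bundle $\pi\vert_{E_U}$ --- so your additional care about ensuring local compactness of the chosen neighbourhood is well placed.

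There is one slip, however: the claim that $K\cap U_0$ is ``still a compact neighbourhood'' is not justified, since the intersection of a compact set with an open set need not be compact (e.g.\ $K=[0,1]$, $U_0=(0,1)$ in $\RR$). The clean fix is exactly the one you flag in your closing remarks: adopt the definition of ``locally compact'' appropriate to the locally Hausdorff setting --- every point has a neighbourhood base of compact sets --- and simply choose the compact neighbourhood $K$ inside $U_0$ from the outset. Then $U=\interior K$ is open in the compact Hausdorff space $K$ and hence locally compact Hausdorff, and the rest of your argument goes through unchanged.
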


\begin{proof}
Let $e\in E$. Since $X$ is locally Hausdorff, there is a Hausdorff open set $U\subset X$ containing $\pi(e)$. The restricted bundle
\[
\pi\vert_{E_U}:E_U\to U
\]
has both $E_U$ and $U$ Hausdorff, so it has enough continuous sections.
\end{proof}

Another property of continuous Banach bundles with locally compact base spaces which is retained (albeit not in a straightforward way) if the norm is only upper semicontinuous is the following:

\begin{theorem}\label{appthm:closedzerosection}
Let $\pi:E\to X$ be a Banach bundle on a locally compact locally Hausdorff space $X$. The image $\boldsymbol 0=\{0_x\st x\in X\}$ of the zero section is a closed set of $E$.
\end{theorem}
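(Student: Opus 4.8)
The plan is to prove the contrapositive form: every $e$ lying in the closure of $\boldsymbol 0$ satisfies $e=0_{\pi(e)}$, so that $E\setminus\boldsymbol 0$ is open. The key point to appreciate at the outset is that for a \emph{continuous} Banach bundle one would argue by exhibiting a tube $\{f\st \norm{f-s(\pi(f))}<\norm e\}$ around a section $s$ through $e$, using that $\norm{s(\cdot)}$ stays bounded away from $0$ near $\pi(e)$; this breaks down here because the norm is only upper semicontinuous, so $\norm{s(\cdot)}$ cannot be bounded below. Instead I would avoid the norm entirely for the openness statement: local compactness of $X$ is used only to guarantee enough continuous local sections (via the preceding theorem), and the rest is a net computation using continuity of the algebraic operations together with the Hausdorffness of the restricted total spaces $E_U$ over Hausdorff open $U$.

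First I would fix $e\in\overline{\boldsymbol 0}$, choose a net $(0_{x_\alpha})$ in $\boldsymbol 0$ with $0_{x_\alpha}\to e$, and put $x:=\pi(e)$, so that $x_\alpha\to x$ by continuity of $\pi$. Then I would apply the preceding theorem (enough local sections) to obtain a Hausdorff open set $W\ni x$ and a section $s\in C(W,E)$ with $s(x)=e$. Since $x_\alpha\to x\in W$, the net is eventually in $W$; restricting to that cofinal part, the points $s(x_\alpha)$ are defined, lie in $E_W=\pi^{-1}(W)$, and satisfy $s(x_\alpha)\to s(x)=e$ by continuity of $s$.

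Now comes the heart of the argument, a short computation in $E\times_X E$. Because $\pi(0_{x_\alpha})=x_\alpha=\pi(s(x_\alpha))$ and both coordinates tend to $e$, the pairs $(0_{x_\alpha},s(x_\alpha))$ converge to $(e,e)$ in $E\times_X E$. Subtraction $E\times_X E\to E$ is continuous, being the composite of addition (continuous by axiom~\ref{banachtopaddition}) with the fibrewise continuous map $(f,g)\mapsto(f,-g)$, so $0_{x_\alpha}-s(x_\alpha)\to e-e=0_x$. But $0_{x_\alpha}-s(x_\alpha)=-s(x_\alpha)$ in the Banach space $E_{x_\alpha}$, so $-s(x_\alpha)\to 0_x$, and applying the continuous map "multiplication by $-1$" gives $s(x_\alpha)\to 0_x$. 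Thus the net $(s(x_\alpha))$, which lies in $E_W$, converges in $E$ to both $0_x$ and $e$, and both of these points lie in $E_W$ (as $x\in W$). Since $W$ is Hausdorff, $E_W$ is a Hausdorff subspace of $E$, so limits of nets in $E_W$ are unique and hence $e=0_x=0_{\pi(e)}\in\boldsymbol 0$. This shows $\overline{\boldsymbol 0}\subseteq\boldsymbol 0$, i.e.\ $\boldsymbol 0$ is closed. The only subtlety I expect is the realization that no norm estimate can be used and that local compactness must be brought in solely through the existence of continuous sections; once that is seen, each step above is routine verification against the Banach bundle axioms.
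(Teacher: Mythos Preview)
Your argument is correct. The only mild addition worth making explicit is that convergence of the net $(s(x_\alpha))$ to $e$ and to $0_x$ in $E$ restricts to convergence in the subspace $E_W$ (since $E_W$ is open and eventually contains the net and both candidate limits), so Hausdorffness of $E_W$ does force uniqueness; you state this, but the reasoning could be one sentence longer.

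Your route is genuinely different from the paper's. The paper first reduces to the case where $X$ itself is Hausdorff by covering with Hausdorff open pieces, and then invokes external machinery: it cites results from \cite{RS} asserting that the evaluation map $C_0(X,E)\times X\to E$ is continuous and open, that $(\pi,C_0(X,E),\eval)$ is a ``quotient vector bundle'', and that for such objects $\boldsymbol 0$ is closed whenever $E$ is Hausdorff. Your proof, by contrast, is entirely self-contained modulo the preceding enough-sections theorem: it uses only the bundle axioms (continuity of addition, continuity of scalar multiplication by a fixed scalar, and Hausdorffness of $E_U$ over Hausdorff $U$) plus a single local section through $e$, and it works directly at the locally Hausdorff level without the preliminary reduction. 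What the paper's approach buys is a uniform framework (quotient vector bundles) in which several such facts fall out at once; what your approach buys is a short, transparent argument that makes visible exactly which axioms are doing the work and avoids the dependence on \cite{RS}.
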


\begin{proof}
Let $(U_\alpha)$ be a cover of $X$ by Hausdorff open sets. Then $(E_{U_\alpha})$ is an open cover of $E$, so it suffices to prove that for each $\alpha$ the set $E_{U_\alpha}\setminus\boldsymbol 0$ is open in $E_{U_\alpha}$. Hence, without loss of generality, we shall assume from now on that $X$ is Hausdorff. Then by definition so is $E$, which means that $\pi$ is a Banach bundle in the classical sense of~\cite{FD1}, except that its norm is only assumed to be upper semicontinuous. By~\cite{RS}*{Theorem 4.2} the evaluation mapping
\[
\xymatrix{
(s,x)\mapsto s(x): C_0(X,E)\times X\ar[rr]^-{\eval}&& E
}
\]
is both continuous and open (the statement of that theorem applies to continuous Banach bundles but the part of the proof that establishes the properties of $\eval$ does not use lower semicontinuity). The triple $(\pi,C_0(X,E),\eval)$ is therefore a quotient vector bundle~\cite{RS}, and $\boldsymbol 0$ is closed in $E$ because $E$ is Hausdorff~\cite{RS}*{Theorem 3.8}.
\end{proof}

\section{C*-algebras of Fell bundles}\label{appendixbundleCstaralgebras}

Throughout appendix~\ref{appendixbundleCstaralgebras} every groupoid $G$ will be assumed to be \'etale with locally compact Hausdorff space of objects $G_0\subset G$ (and $G$ is Hausdorff in section~\ref{app:hausdorffgroupoids}), and every Fell bundle $\pi:E\to G$ will be assumed to be second countable. It follows that the discrete subspaces $G_x\subset G$ must be countable.
We make no claims that the results presented in this appendix are as general as possible, or that they are original, and we include them mostly with the  purpose of providing explicit examples of compatible completions for a large class of Fell bundles.

\subsection{I-norm}

Let $\pi:E\to G$ be a Fell bundle.
For each $s\in\sections_c(G,E)$ define the \emph{I-norm} of $s$ to be
\[
\norm s_I = \sup_{x\in G_0}\left\{
\max\left(
\sum_{g\in G_x} \norm{s(g)}\ ,\ \sum_{g\in G_x} \norm{s(g^{-1})}
\right)
\right\}\;.
\]

\begin{lemma}\label{lem:Ieqinfty}
Let $\pi:E\to G$ be a Fell bundle. If $U\in\ipi(G)$ and $s\in L^\infty(U,E)$ then $\norm s_I=\norm s_\infty$.
\end{lemma}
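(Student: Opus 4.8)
The plan is to unwind the definitions of the I-norm and the supremum norm and observe that, for a section supported in a single local bisection, each of the sums $\sum_{g\in G_x}\norm{s(g)}$ appearing in the definition of $\norm s_I$ has at most one nonzero term. Recall that $U\in\ipi(G)$ means the domain map $d$ restricts to an injection on $U$; hence for each $x\in G_0$ the fiber $G_x=d^{-1}(\{x\})$ meets $U$ in at most one point, so $G_x\cap U$ is either empty or a singleton. Since $s\in L^\infty(U,E)$ vanishes outside $U$, the sum $\sum_{g\in G_x}\norm{s(g)}$ reduces to $\norm{s(g_x)}$ when $G_x\cap U=\{g_x\}$ and to $0$ otherwise. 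Taking the supremum over $x\in G_0$ of these values yields exactly $\sup_{g\in U}\norm{s(g)}=\norm s_\infty$.

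The second half of the maximum in the definition of $\norm s_I$ — the sum $\sum_{g\in G_x}\norm{s(g^{-1})}$ — requires the symmetric observation. Writing $G^x=r^{-1}(\{x\})$ for the range fiber, the substitution $h=g^{-1}$ shows $\sum_{g\in G_x}\norm{s(g^{-1})}=\sum_{h\in G^x}\norm{s(h)}$. Because $r$ restricts to an injection on $U$ as well (again by $U\in\ipi(G)$), the fiber $G^x$ meets $U$ in at most one point, so this sum is likewise either $\norm{s(h_x)}$ for the unique $h_x\in G^x\cap U$ or $0$. Its supremum over $x$ is again $\sup_{h\in U}\norm{s(h)}=\norm s_\infty$. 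Since both arguments of the maximum have supremum equal to $\norm s_\infty$, we conclude $\norm s_I=\norm s_\infty$.

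There is no serious obstacle here; the only point requiring a moment's care is that the supremum defining $\norm s_I$ ranges over $x\in G_0$ whereas the supremum defining $\norm s_\infty$ ranges over $g\in U$, and one must check that every $g\in U$ is ``seen'' by some $x$ — but this is immediate since $g\in G_{d(g)}\cap U$, so the value $\norm{s(g)}$ appears in the term indexed by $x=d(g)$. The boundedness hypothesis $s\in L^\infty(U,E)$ guarantees both suprema are finite, though the argument equally shows $\norm s_I=\norm s_\infty=+\infty$ if one drops it; it is included only so that the statement asserts an equality of genuine norms.
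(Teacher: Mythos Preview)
Your proof is correct and follows essentially the same approach as the paper's: both exploit that a local bisection meets each $G_x$ (respectively $G^x$) in at most one point, so the sums in the $I$-norm collapse to single terms whose supremum is $\norm s_\infty$. The only cosmetic difference is that the paper phrases this via the domain and range local bisection maps $\alpha,\beta$ with image $U$, writing $\sum_{g\in G_x}\norm{s(g)}=\norm{s(\alpha(x))}$ and $\sum_{g\in G_x}\norm{s(g^{-1})}=\norm{s(\beta(x))}$, whereas you work directly with the intersections $G_x\cap U$ and $G^x\cap U$.
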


\begin{proof}
Writing $\alpha$ and $\beta$, respectively, for the domain and range local bisection maps with image $U$, we have
\begin{eqnarray*}
\sum_{g\in G_x} \norm{s(g)}&=&\norm{s(\alpha(x))}\;,\\
\sum_{g\in G_x} \norm{s(g^{-1})}&=&\norm{s(\beta(x))}\;.
\end{eqnarray*}
Hence,
\begin{eqnarray*}
\sup_{x\in G_0}\sum_{g\in G_x} \norm{s(g)} &=& \sup_{x\in\dom(\alpha)}\norm{s(\alpha(x)} = \sup_{g\in U} \norm{s(g)}\\
 &=& \sup_{x\in\dom(\beta)}\norm{s(\beta(x)} = \sup_{x\in G_0}\sum_{g\in G_x} \norm{s(g^{-1})}\;,
\end{eqnarray*}
and thus
\[
\norm s_I =\sup_{g\in U}\norm{s(g)}=\norm s_\infty\;.
\]
\end{proof}

\begin{lemma}
Let $\pi:E\to G$ be a Fell bundle. Then
$\norm~_I$ is a $*$-algebra norm on $\sections_c(G,E)$.
\end{lemma}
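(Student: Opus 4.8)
The plan is to verify the three norm axioms, then $*$-invariance, and finally submultiplicativity, in that order; only the last step requires real work. \emph{Norm axioms.} Homogeneity $\norm{\lambda s}_I=|\lambda|\,\norm{s}_I$ is immediate, since multiplying every fibre norm by $|\lambda|$ commutes with the sums over the $G_x$ and with the supremum over $x$. For positive definiteness I would observe that $\norm{s}_I=0$ forces $\sum_{g\in G_x}\norm{s(g)}=0$ for every $x\in G_0$, and since every arrow $g$ lies in $G_{d(g)}$ this gives $s(g)=0$ for all $g$, so $s=0$. Subadditivity $\norm{s+t}_I\le\norm{s}_I+\norm{t}_I$ (as an inequality in $[0,\infty]$) would follow by applying the triangle inequality of the Banach space $E_g$ fibrewise, $\norm{s(g)+t(g)}\le\norm{s(g)}+\norm{t(g)}$ (and likewise at $g^{-1}$), and noting that this inequality is preserved by the sums over $G_x$ and the supremum over $x$. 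Finiteness of $\norm~_I$ on $\sections_c(G,E)$ then follows, since any $s\in\sections_c(G,E)$ is a finite sum $s=\sum_i s_i$ with $s_i\in C_c(U_i,E)$, $U_i\in\ipi(G)$, and $\norm{s_i}_I=\norm{s_i}_\infty<\infty$ by Lemma~\ref{lem:Ieqinfty}, so $\norm{s}_I\le\sum_i\norm{s_i}_\infty<\infty$.

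\emph{Involution.} Here I would use that $s^*(g)=s(g^{-1})^*$, so the Fell bundle axiom $\norm{e^*}=\norm{e}$ gives $\norm{s^*(g)}=\norm{s(g^{-1})}$ and $\norm{s^*(g^{-1})}=\norm{s(g)}$; hence the two sums inside the maximum defining $\norm{s^*}_I$ are precisely the two sums defining $\norm{s}_I$ with their roles exchanged, and therefore $\norm{s^*}_I=\norm{s}_I$.

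\emph{Submultiplicativity.} Fix $x\in G_0$ and $g\in G_x$, so $d(g)=x$. Writing the convolution as $st(g)=\sum_{k\in G_x}s(gk^{-1})t(k)$ (a finite sum, by compact support), the Fell bundle inequality $\norm{ef}\le\norm{e}\,\norm{f}$ gives
\[
\norm{st(g)}\le\sum_{k\in G_x}\norm{s(gk^{-1})}\,\norm{t(k)}\;.
\]
Summing over $g\in G_x$, interchanging the (finite) double sum, and, for each fixed $k\in G_x$, reindexing the inner sum over $g\in G_x$ via the bijection $g\mapsto h:=gk^{-1}$ from $G_x$ onto $G_{r(k)}$ (with inverse $h\mapsto hk$), one obtains
\[
\sum_{g\in G_x}\norm{st(g)}\le\sum_{k\in G_x}\norm{t(k)}\sum_{h\in G_{r(k)}}\norm{s(h)}\le\norm{s}_I\sum_{k\in G_x}\norm{t(k)}\le\norm{s}_I\,\norm{t}_I\;,
\]
using $\sum_{h\in G_{r(k)}}\norm{s(h)}\le\norm{s}_I$ and $\sum_{k\in G_x}\norm{t(k)}\le\norm{t}_I$. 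Taking the supremum over $x\in G_0$ bounds the first of the two quantities in the maximum defining $\norm{st}_I$ by $\norm{s}_I\,\norm{t}_I$. For the second quantity I would apply the same estimate to $(st)^*=t^*s^*$ together with the $*$-invariance already proved:
\[
\sup_{x\in G_0}\sum_{g\in G_x}\norm{st(g^{-1})}=\sup_{x\in G_0}\sum_{g\in G_x}\norm{(t^*s^*)(g)}\le\norm{t^*}_I\,\norm{s^*}_I=\norm{s}_I\,\norm{t}_I\;.
\]
Hence $\norm{st}_I\le\norm{s}_I\,\norm{t}_I$, completing the verification.

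The only real obstacle is the bookkeeping in this last step: checking that $g\mapsto gk^{-1}$ is genuinely a bijection $G_x\to G_{r(k)}$, and that all the convolution sums involved are the honestly finite sums guaranteed by compact support (so that interchanging and reindexing are legitimate). No analytic subtlety arises, and the countability of the $G_x$ is not even needed for this argument since only finitely many terms are nonzero.
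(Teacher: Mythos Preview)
Your proof is correct and follows essentially the same approach as the paper: the norm axioms, finiteness via Lemma~\ref{lem:Ieqinfty}, $*$-invariance, and the submultiplicativity estimate via the reindexing $g\mapsto gk^{-1}:G_x\to G_{r(k)}$ are all handled just as in the original. The only cosmetic difference is that you obtain the bound on the second sum by applying the first estimate to $(st)^*=t^*s^*$ together with $*$-invariance, whereas the paper simply writes ``similarly''; both are equally valid.
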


\begin{proof}
It is easy to see that $\norm s_I=0\Rightarrow s=0$, and that both $\norm{s+t}_I\le\norm s_I+\norm t_I$ and $\norm{zs}_I=|z|\norm s_I$ for all $z\in\CC$ and all sections $s$ and $t$ such that $\norm s_I<\infty$ and $\norm t_I<\infty$. Since any section $s\in\sections_c(G,E)$ can be written as $s=s_1+\cdots+s_n$ for sections $s_i\in C_c(U_i,E)$ with $U_i\in\ipi(G)$, we have, by Lemma~\ref{lem:Ieqinfty}, $\norm s_I\le\sum_i\norm{s_i}_\infty<\infty$, so $\norm~_I$ is a norm.
It is also immediate that $\norm s_I=\norm{s^*}_I$. Finally, for all $s\in\sections_c(G,E)$ and $x\in G_0$ let $N_x(s)=\sum_{g\in G_x}\norm{s(g)}$ and $M_x(s)=\sum_{g\in G_x}\norm{s(g^{-1})}$, so that
\[
\norm s_I = \sup_{x\in G_0}\left\{\max\bigl(N_x(s),M_x(s)\bigr)\right\} = \max\left(\sup_{x\in G_0} N_x(s),\sup_{x\in G_0} M_x(s)\right)\;.
\]
We have:
\begin{eqnarray*}
N_x(st)&=&\sum_{g\in G_x} \norm{st(g)}=\sum_{g\in G_x} \bigl\|\sum_{h\in G_x} s(gh^{-1})t(h)\bigr\|\le\sum_{g,h\in G_x}\norm{s(gh^{-1})}\norm{t(h)}\\
&\le& \sum_{h\in G_x} N_{r(h)}(s)\norm{t(h)}\le \sum_{h\in G_x}\norm s_I\norm{t(h)}\le\norm s_I\norm t_I\;.
\end{eqnarray*}
Similarly, $M_x(st)\le \norm s_I\norm t_I$, and thus $\norm{st}_I\le\norm s_I\norm t_I$.
\end{proof}

\subsection{Reduced and full C*-algebras}\label{app:compcompl}

The following description of full and reduced C*-algebras is a loose adaptation to Fell bundles of the presentation of C*-algebras of non-Hausdorff groupoids of~\cite{KhSk02}.

Let $\pi:E\to G$ be a Fell bundle.
For each $x\in G_0$ and each $g\in G_x$ the fiber $E_g$ is a Hilbert $E_x$-module with inner product given by
\[
\langle e,f\rangle = e^*f\;,
\]
and we define $\ell^2(G_x,E)$ to be the direct sum of Hilbert modules $\bigoplus_{g\in G_x} E_g$; that is, it is the set of sections $\xi\in\sections(G_x,E)$ such that the sum
\[
\sum_{g\in G_x}\bigl\langle\xi(g),\xi(g)\bigr\rangle
\]
converges in $E_x$.
So $\ell^2(G_x,E)$ is a Hilbert $E_x$-module with inner product defined by
\[
\langle \xi , \zeta \rangle = \sum_{g\in G_x} \bigl\langle\xi(g),\zeta(g)\bigr\rangle\;,
\]
and norm given by
\[
\norm\xi_2 = \bigl\|\langle\xi,\xi\rangle\bigr\|^{1/2} = \left\|\sum_{g\in G_x}\bigl\langle\xi(g),\xi(g)\bigr\rangle\right\|^{1/2}=\left\|\sum_{g\in G_x}\xi(g)^*\xi(g)\right\|^{1/2}\;.
\]
Note that for all $\xi\in\ell^2(G_x,E)$ and $g\in G_x$ we have
\[
\norm{\xi(g)}\le\norm{\xi}_2
\]
and that if $\pi:E\to G$ is a line bundle we obtain a formula for the norm which is analogous to that of the Hilbert space $\ell^2(G_x)$:

\begin{lemma}\label{hilbspnorm}
Let $\pi:E\to G$ be a Fell line bundle, and let $x\in G_0$. Then for all $\xi\in\ell^2(G_x,E)$ we have
\[
\norm\xi_2 = \left(\sum_{g\in G_x} \norm{\xi(g)}^2\right)^{1/2}\;.
\]
\end{lemma}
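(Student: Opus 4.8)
The plan is to reduce everything to the general formula
\[
\norm\xi_2 = \left\|\sum_{g\in G_x}\xi(g)^*\xi(g)\right\|^{1/2}
\]
recorded just above the statement, and then exploit the fact that for a line bundle the fiber $E_x$ over the unit $x$ is one‑dimensional. First I would observe that $E_x$ is a C*-algebra of complex dimension $1$, hence $*$-isomorphic to $\CC$; fix such an isomorphism, which is automatically isometric and order‑preserving. Under this identification each summand $\xi(g)^*\xi(g)$, which is a positive element of $E_x$ by axiom (9) of Fell bundles, corresponds to a nonnegative real number whose modulus, by axiom (8), is $\norm{\xi(g)^*\xi(g)} = \norm{\xi(g)}^2$; so $\xi(g)^*\xi(g)$ \emph{is} the real number $\norm{\xi(g)}^2$.

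Next I would note that, since $E_x\cong\CC$ and all the terms $\norm{\xi(g)}^2$ are nonnegative, the net of partial sums of $\sum_{g\in G_x}\langle\xi(g),\xi(g)\rangle$ converges in $E_x$ if and only if the series $\sum_{g\in G_x}\norm{\xi(g)}^2$ converges in $\RR$, and in that case the value of the sum in $E_x$ corresponds to $\sum_{g\in G_x}\norm{\xi(g)}^2\in\RR_{\ge 0}$. Because the norm on $E_x\cong\CC$ is the modulus, this gives
\[
\norm{\langle\xi,\xi\rangle} = \left\|\sum_{g\in G_x}\xi(g)^*\xi(g)\right\| = \sum_{g\in G_x}\norm{\xi(g)}^2\;,
\]
and taking square roots yields the asserted formula $\norm\xi_2 = \bigl(\sum_{g\in G_x}\norm{\xi(g)}^2\bigr)^{1/2}$.

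There is essentially no serious obstacle here: the only point that requires a word of care is making the identification $E_x\cong\CC$ precise and checking that it is isometric and carries positive elements to nonnegative reals (both automatic for a $*$-isomorphism of C*-algebras), together with the elementary remark that a series of nonnegative reals converges exactly when its partial sums are bounded, so that the membership condition defining $\ell^2(G_x,E)$ matches the finiteness of $\sum_{g}\norm{\xi(g)}^2$. Everything else is a direct substitution into the already-established formula for $\norm\xi_2$.
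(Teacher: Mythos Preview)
Your proposal is correct and follows essentially the same approach as the paper: fix a $*$-isomorphism $E_x\cong\CC$, use that it is isometric and carries each positive element $\xi(g)^*\xi(g)$ to the nonnegative real $\norm{\xi(g)}^2$, and substitute into the general formula for $\norm\xi_2$. Your additional remark about convergence of the series of nonnegative reals is a nice touch but not present in the paper's proof.
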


\begin{proof}
Let $\phi:E_x\to\CC$ be a $*$-isomorphism. Then
\begin{eqnarray*}
\norm\xi_2^2&=&\left\|\sum_{g\in G_x}\xi(g)^*\xi(g)\right\|
=\left\vert\phi\left(\sum_{g\in G_x}\xi(g)^*\xi(g)\right)\right\vert\\
&=&\left\vert\sum_{g\in G_x}\phi\bigl(\xi(g)^*\xi(g)\bigr)\right\vert
=\sum_{g\in G_x}\phi\bigl(\xi(g)^*\xi(g)\bigr)\;,
\end{eqnarray*}
where the last step is justified because $\xi(g)^*\xi(g)$ is a positive element of $E_x$ and thus $\phi\bigl(\xi(g)^*\xi(g)\bigr)$ is a non-negative real number. Moreover, for each $g\in G_x$ we have
$
\phi\bigl(\xi(g)^*\xi(g)\bigr)=\bigl\vert\phi\bigl(\xi(g)^*\xi(g)\bigr)\bigr\vert=\|\xi(g)^*\xi(g)\|=\norm{\xi(g)}^2
$.
\end{proof}

Now for each $x\in G_0$ define the bilinear map
\[
{\rho_x(-)-}:\sections_c(G_,E)\times\ell^2(G_x,E)\to\sections(G_x,E)
\]
as follows:
\[
\rho_x(s)\xi(g) = \sum_{h\in G_x} s(gh^{-1})\xi(h) = \sum_{g=kh} s(k)\xi(h)\;.
\]

\begin{lemma}\label{lem:opnormvsInorm}
Let $\pi:E\to G$ be a Fell line bundle. The assignment $\xi\mapsto\rho_x(s)\xi$ defines a bounded operator $\rho_x(s)$ on $\ell^2(G_x,E)$, whose norm satisfies
\[
\norm{\rho_x(s)}\le\norm s_I\;.
\]
\end{lemma}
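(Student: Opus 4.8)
The plan is to run the classical Schur test for the left regular representation, taking advantage of the fact that, since $\pi$ is a line bundle, Lemma~\ref{hilbspnorm} lets me compute the $\ell^2$-norm of any $\xi\in\ell^2(G_x,E)$ as the ordinary scalar sum $\bigl(\sum_{g\in G_x}\norm{\xi(g)}^2\bigr)^{1/2}$; this is precisely the simplification that makes the argument elementary, and without it one would have to deal with the cross terms in $\norm{\sum_h s(gh^{-1})\xi(h)}^2$ as in the general Fell bundle case. First I would record three preliminaries. (a) Submultiplicativity of the bundle multiplication (axiom (6)) gives $\norm{s(gh^{-1})\xi(h)}\le\norm{s(gh^{-1})}\,\norm{\xi(h)}$; the line-bundle hypothesis actually upgrades this to an equality, but only the inequality is needed. (b) The defining sum $\rho_x(s)\xi(g)=\sum_{h\in G_x}s(gh^{-1})\xi(h)$ has only finitely many nonzero terms: writing $s$ as a finite sum of sections compactly supported in local bisections $U_i$, each $U_i$ meets $G^{r(g)}$ in at most one arrow, so $\osupp(s)\cap G^{r(g)}$ is finite; hence $\rho_x(s)\xi$ is a well-defined section of $\pi$ over $G_x$. (c) The reindexing facts: for fixed $g\in G_x$ the map $h\mapsto gh^{-1}$ is a bijection of $G_x$ onto $G^{r(g)}$, while for fixed $h\in G_x$ the map $g\mapsto gh^{-1}$ is a bijection of $G_x$ onto $G_{r(h)}$; consequently $\sum_{h\in G_x}\norm{s(gh^{-1})}=\sum_{k\in G_{r(g)}}\norm{s(k^{-1})}\le\norm s_I$ and $\sum_{g\in G_x}\norm{s(gh^{-1})}=\sum_{k\in G_{r(h)}}\norm{s(k)}\le\norm s_I$.

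The main estimate then has three steps. First, pointwise at $g\in G_x$: by the triangle inequality, (a), and the Cauchy--Schwarz inequality in the form $\sum_h a_hb_h\le\bigl(\sum_h a_h\bigr)^{1/2}\bigl(\sum_h a_hb_h^2\bigr)^{1/2}$ applied with $a_h=\norm{s(gh^{-1})}$ and $b_h=\norm{\xi(h)}$, one gets $\norm{\rho_x(s)\xi(g)}^2\le\bigl(\sum_{h}\norm{s(gh^{-1})}\bigr)\bigl(\sum_{h}\norm{s(gh^{-1})}\,\norm{\xi(h)}^2\bigr)\le\norm s_I\sum_{h\in G_x}\norm{s(gh^{-1})}\,\norm{\xi(h)}^2$, using the first bound in (c). Second, sum over $g\in G_x$ and interchange the two nonnegative sums (Tonelli) to obtain $\norm{\rho_x(s)\xi}_2^2\le\norm s_I\sum_{h\in G_x}\norm{\xi(h)}^2\sum_{g\in G_x}\norm{s(gh^{-1})}$. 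Third, apply the second bound in (c) to the inner sum, giving $\norm{\rho_x(s)\xi}_2^2\le\norm s_I^2\sum_{h\in G_x}\norm{\xi(h)}^2=\norm s_I^2\norm\xi_2^2$ by Lemma~\ref{hilbspnorm}. This both shows $\rho_x(s)\xi\in\ell^2(G_x,E)$ and yields $\norm{\rho_x(s)}\le\norm s_I$.

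The only genuinely delicate point — the one I would take care with — is the bookkeeping in (c): verifying that $h\mapsto gh^{-1}$ surjects onto $G^{r(g)}$ so that the first inner sum is the ``$\sum\norm{s(\cdot^{-1})}$''-type term of $\norm{\cdot}_I$ at the object $r(g)$, and symmetrically that $g\mapsto gh^{-1}$ surjects onto $G_{r(h)}$ so that the second inner sum is the ``$\sum\norm{s(\cdot)}$''-type term at $r(h)$. The $\max$ in the definition of the $I$-norm is exactly what allows both of these to be bounded by $\norm s_I$ at once. Everything else (finiteness of the convolution sum, Tonelli, the line-bundle norm formula) is routine and already underwritten by the lemmas in the text.
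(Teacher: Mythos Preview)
Your proof is correct; it is the standard Schur-test argument for the $I$-norm bound on the left regular representation. The paper's proof is considerably terser and attempts to reach the same conclusion via a single Cauchy--Schwarz step, writing
\[
\sum_{g\in G_x}\Bigl\|\sum_{h\in G_x} s(gh^{-1})\xi(h)\Bigr\|^2 \;\le\; \sum_{g,h\in G_x}\norm{s(gh^{-1})}^2\norm{\xi(h)}^2 \;\le\; \norm s_I^2\norm\xi_2^2\;.
\]
The final inequality here is fine (swap sums and use $\sum_g\norm{s(gh^{-1})}^2\le\bigl(\sum_g\norm{s(gh^{-1})}\bigr)^2\le\norm s_I^2$ via your bijection in (c)), but the first displayed inequality is not valid as written: the estimate $\bigl\lvert\sum_h z_h\bigr\rvert^2\le\sum_h\lvert z_h\rvert^2$ fails already for two equal positive terms, and Cauchy--Schwarz gives the opposite direction. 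Your argument is precisely the careful version of this step: rather than collapsing the square directly, you apply Cauchy--Schwarz with weights $a_h=\norm{s(gh^{-1})}$, which produces an extra factor $\sum_h\norm{s(gh^{-1})}\le\norm s_I$ and leaves a mixed double sum that the reindexing bijections in (c) then dispatch. So the two proofs use the same ingredients (Lemma~\ref{hilbspnorm}, Cauchy--Schwarz, the bijections $h\mapsto gh^{-1}$ and $g\mapsto gh^{-1}$), but your Schur-test organisation makes the crucial inequality honest where the paper's displayed chain is too quick.
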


\begin{proof}
Both the conclusion that $\pi_x(s)\xi$ is square-summable and the condition on the operator norm follow from the following calculation, which uses the Cauchy--Schwarz inequality and Lemma~\ref{hilbspnorm}:
\begin{eqnarray*}
\left\|\sum_{g\in G_x}\bigl\langle\pi_x(s)\xi(g),\pi_x(s)\xi(g)\bigr\rangle\right\|
&\le&
\sum_{g\in G_x}\norm{\pi_x(s)\xi(g)}^2\\
& =&
\sum_{g\in G_x}\bigl\|\sum_{h\in G_x} s(gh^{-1})\xi(h)\bigr\|^2\\
&\le& \sum_{g,h\in G_x} \|s(gh^{-1})\|^2\|\xi(h)\|^2 \le\norm s_I^2\norm\xi_2^2\;.
\end{eqnarray*}
\end{proof}

Assuming now that $\pi:E\to G$ is a line bundle (as in the previous lemma), for all $s\in\sections_c(G,E)$ define
\[
\norm s_r = \sup_{x\in G_0}\norm{\rho_x(s)}\;.
\]
By construction $\norm~_r$ is a seminorm, and it satisfies
\[
\norm~_{\infty}\le\norm~_r
\]
because for all $s\in\sections_c(G,E)$, $x\in G_0$ and $g\in G_x$ we have
\begin{eqnarray*}
\norm s_r&\ge&\norm{\rho_x(s)}\ge \norm{\rho_x(s)\delta_{1_x}}_2\ge\norm{\rho_x(s)\delta_{1_x}(g)}\\
&=&\bigl\|\sum_{h\in G_x} s(gh^{-1})\delta_{1_x}(h)\bigr\|=\norm{s(g)}\;.
\end{eqnarray*}
Hence, $\norm~_r$ is a norm, the \emph{reduced C*-norm} on $\sections_c(G,E)$. Using Lemma~\ref{lem:opnormvsInorm}, for all $s\in\sections_c(G,E)$ we obtain
\[
\norm s_\infty\le \norm s_r\le \norm s_I\;,
\]
so, by Lemma~\ref{lem:Ieqinfty}, for all $U\in\ipi(G)$ and $s\in C_c(U,E)$ we get
\[
\norm s_\infty= \norm s_r= \norm s_I\;.
\]
The \emph{reduced C*-algebra} of the bundle, $C_r^*(G,E)$, is the completion of $\sections_c(G,E)$ in the reduced norm.

Assume again that $\pi:E\to G$ is a Fell line bundle, and let $H$ be a Hilbert space. By an \emph{I-bounded representation}
\[
\rho:\sections_c(G,E)\to B(H)
\]
will be meant a $*$-homomorphism such that for all $s\in\sections_c(G,E)$ we have
\[
\norm{\rho(s)}\le\norm s_I\;.
\]
By Lemma~\ref{lem:opnormvsInorm} the representations $\rho_x$ are I-bounded, so we have $\norm~_r\le\norm~_f$ where $\norm~_f$ is the \emph{full norm} on $\sections_c(G,E)$:
\[
\norm s_f=\sup\bigl\{\norm{\rho(s)}\st\rho\textrm{ is I-bounded}\bigr\}\quad\quad\quad\bigl(s\in\sections_c(G,E)\bigr)\;.
\]
The completion of $\sections_c(G,E)$ with respect to the full norm, $C^*(G,E)$, is the \emph{full C*-algebra} of the Fell bundle. Then $C_r^*(G,E)$ is a quotient of $C^*(G,E)$, and the various norms on the convolution algebra are related by
\[
\norm~_\infty\le\norm~_r\le\norm~_f\le\norm~_I\;.
\]
Moreover, if $U\in\ipi(G)$ and $s\in C_c(U,E)$ we have, by Lemma~\ref{lem:Ieqinfty},
\[
\norm s_\infty=\norm s_r=\norm s_f=\norm s_I\;.
\]
It follows, for Fell line bundles, that any C*-norm lying between the reduced norm and the full norm is a compatible norm:

\begin{theorem}\label{appthm:compcompletions}
Let $\pi:E\to G$ be a Fell line bundle, let $\norm~_\mu$ be a C*-norm on $\sections_c(G,E)$ such that $\norm~_r\le\norm~_{\mu}\le\norm~_f$, and let $C^*_{\mu}(G,E)$ be the completion of $\sections_c(G,E)$ in the norm $\norm~_{\mu}$. Then
$C^*_{\mu}(G,E)$ is a compatible completion, and for all $U\in\ipi(G)$ the closure $\overline{C_c(U,E)}$ in $C^*_{\mu}(G,E)$ is isometrically isomorphic to $C_0(U,E)$.
\end{theorem}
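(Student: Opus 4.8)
The plan is to verify the two defining conditions of a compatible norm directly from the inequality chain $\norm{\ }_\infty\le\norm{\ }_r\le\norm{\ }_\mu\le\norm{\ }_f\le\norm{\ }_I$ that has just been assembled. Condition \eqref{eq:cn1} is immediate: $\norm s_\infty\le\norm s_r\le\norm s_\mu$ for all $s\in\sections_c(G,E)$. Condition \eqref{eq:cn2} is the assertion that $\norm s_\mu=\norm s_\infty$ whenever $s\in C_c(U,E)$ for some $U\in\ipi(G)$; this too drops out of the chain, since for such $s$ we already know (Lemma~\ref{lem:Ieqinfty} and the computations preceding this theorem) that $\norm s_\infty=\norm s_r=\norm s_f=\norm s_I$, which forces $\norm s_\mu=\norm s_\infty$ because $\norm s_\mu$ is squeezed between $\norm s_r$ and $\norm s_f$. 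Hence $C^*_\mu(G,E)$ is a compatible completion by definition.

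For the second claim I would argue that for each $U\in\ipi(G)$ the inclusion $C_c(U,E)\hookrightarrow\sections_c(G,E)$ is isometric when $C_c(U,E)$ carries the supremum norm and $\sections_c(G,E)$ carries $\norm{\ }_\mu$ — this is exactly the equality $\norm s_\mu=\norm s_\infty$ on $C_c(U,E)$ just established. Passing to completions, $\overline{C_c(U,E)}$ inside $C^*_\mu(G,E)$ is then isometrically isomorphic to the completion of $C_c(U,E)$ in the supremum norm. The final step is to identify that completion with $C_0(U,E)$: since $U\in\ipi(G)$ is Hausdorff and locally compact (as noted at the start of section~\ref{sec:bundlesasmaps}, each local bisection is homeomorphic to an open subset of $G_0$), and the restricted bundle $\pi\vert_{E_U}:E_U\to U$ is an upper semicontinuous Banach bundle over a locally compact Hausdorff base with enough sections, the sup-norm completion of $C_c(U,E)$ is $C_0(U,E)$ by the standard description of sections vanishing at infinity; this is precisely the identification already invoked in section~\ref{sec:compatible} (``the closure of $C_c(U,E)$ in a compatible completion can be identified with $C_0(U,E)$'').

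The only point requiring care — and the one I would single out as the main obstacle — is making sure the chain $\norm{\ }_r\le\norm{\ }_\mu\le\norm{\ }_f$ really does pin down $\norm{\ }_\mu$ on $C_c(U,E)$: this works only because the outer two norms already \emph{coincide} with $\norm{\ }_\infty$ there, so one is not merely sandwiching but collapsing the interval to a point. I would state this explicitly rather than leave it implicit. Everything else is routine and relies entirely on results established earlier in appendix~\ref{appendixbundleCstaralgebras}, namely Lemma~\ref{lem:Ieqinfty} and the displayed equalities $\norm s_\infty=\norm s_r=\norm s_f=\norm s_I$ for $s\in C_c(U,E)$, together with the Douady--dal Soglio-Herault existence of enough sections over Hausdorff locally compact bases recalled in appendix~\ref{app:bundles}.
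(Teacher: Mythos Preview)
Your proposal is correct and matches the paper's approach exactly. In fact the paper does not give a separate proof of this theorem at all: it simply prefaces the statement with ``It follows, for Fell line bundles, that any C*-norm lying between the reduced norm and the full norm is a compatible norm,'' relying on the chain $\norm{\ }_\infty\le\norm{\ }_r\le\norm{\ }_f\le\norm{\ }_I$ and its collapse on $C_c(U,E)$ via Lemma~\ref{lem:Ieqinfty} --- which is precisely the argument you have written out.
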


\subsection{Hausdorff groupoids}\label{app:hausdorffgroupoids}

If $G$ is Hausdorff there is a more elegant way of describing the reduced C*-algebra, moreover without rank restrictions on the Fell bundles. We follow~\cite{Kumjian98}, whose exposition is in the context of continuous Fell bundles, so from here on all our bundles will have continuous norm.

Let $G$ be a Hausdorff groupoid and $\pi:E\to G$ a continuous Fell bundle. Using the restriction map $P:C_c(G,E)\to C_c(G_0,E)$ one defines a $C_c(G_0,E)$-valued inner product on $C_c(G,E)$ by
\[
\langle s,t\rangle = P(s^*t)\;.
\]
This makes $C_c(G,E)$ a pre-Hilbert $C_0(G_0,E)$-module. For each $s\in C_c(G,E)$ we define $\norm s_2=\norm{\langle s,s\rangle}_\infty^{1/2}$ and denote the completion of $C_c(G,E)$ under $\norm~_2$ by $L^2(G,E)$. This is a Hilbert $C_0(G_0,E)$-module. Left multiplication $C_c(G,E)\times L^2(G,E)\to L^2(G,E)$ is adjointable, so we obtain a $*$-monomorphism
\[
C_c(G,E)\to\mathcal L(L^2(G,E))\;.
\]
The reduced C*-algebra is the completion of $C_c(G,E)$ with respect to the operator norm, so we obtain a $*$-monomorphism
\[
C_r^*(G,E)\to\mathcal L(L^2(G,E))
\]
and the restriction map $P$ extends to a faithful conditional expectation
\[
P:C_r^*(G,E)\to C_0(G_0,E)\;.
\]
Conversely, we have~\cite{Kumjian98}*{Fact 3.11}:

\begin{lemma}\label{Kfact3.11}
Let $\pi:E\to G$ be a continuous Fell bundle on a Hausdorff groupoid $G$. The reduced norm $\norm~_r$ on $C_c(G,E)$ is the unique C*-norm extending the supremum norm on $C_0(G_0,E)$ for which $P$ extends to the completion as a faithful conditional expectation.
\end{lemma}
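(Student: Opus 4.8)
The plan is to reduce the uniqueness assertion to the standard fact that a faithful conditional expectation on a C*-algebra gives rise, by a GNS-type construction, to a faithful $*$-representation on a Hilbert module, and then to recognise that Hilbert module and representation as exactly the data $\bigl(L^2(G,E),\ C_c(G,E)\to\mathcal L(L^2(G,E))\bigr)$ used above to define $C_r^*(G,E)$. First I would note that the norm $\norm~_r$ is itself of the kind described in the statement: it extends the supremum norm on $C_0(G_0,E)$ and, by Lemma~\ref{lem:faithcondexp} together with the construction preceding the present lemma, $P$ extends to a faithful conditional expectation $P\colon C_r^*(G,E)\to C_0(G_0,E)$. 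So everything comes down to uniqueness: given any C*-norm $\norm~_\mu$ on $C_c(G,E)$ that restricts to $\norm~_\infty$ on $C_0(G_0,E)$ and for which the restriction map extends to a faithful conditional expectation $P_\mu\colon C_\mu^*(G,E)\to C_0(G_0,E)$, I must show $\norm~_\mu=\norm~_r$.

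To do this I would form the $C_0(G_0,E)$-valued inner product $\langle a,b\rangle_\mu = P_\mu(a^*b)$ on $C_\mu^*(G,E)$, take its separated completion to obtain a Hilbert $C_0(G_0,E)$-module $\mathcal E_\mu$, and let $C_\mu^*(G,E)$ act on $\mathcal E_\mu$ by left multiplication; the standard estimates (using $a^*a\le\norm a_\mu^2$ in the unitization) show this gives a contractive $*$-representation $\rho_\mu\colon C_\mu^*(G,E)\to\mathcal L(\mathcal E_\mu)$. The crucial point is that $\rho_\mu$ is faithful because $P_\mu$ is: if $\rho_\mu(a)=0$ then $P_\mu(e_i a^*a e_i)=0$ for an approximate unit $(e_i)$ of $C_\mu^*(G,E)$, and letting $i\to\infty$, using continuity and positivity of $P_\mu$, gives $P_\mu(a^*a)=0$, hence $a^*a=0$ and $a=0$; therefore $\norm{\rho_\mu(a)}=\norm a_\mu$ for all $a$.

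Next I would identify $\rho_\mu$ on the dense $*$-subalgebra $C_c(G,E)$. Since $P_\mu$ extends $P$ and $C_c(G,E)$ is a $*$-subalgebra, for $s,t\in C_c(G,E)$ we get $\langle s,t\rangle_\mu = P_\mu(s^*t)=P(s^*t)=\langle s,t\rangle$, so the module seminorm induced on $C_c(G,E)$ by $P_\mu$ is exactly $\norm~_2$; as $\norm~_{\mathcal E_\mu}\le\norm~_\mu$, the subspace $C_c(G,E)$ is dense in $\mathcal E_\mu$, whence $\mathcal E_\mu$ is isometrically $L^2(G,E)$ as a Hilbert $C_0(G_0,E)$-module and $\rho_\mu\vert_{C_c(G,E)}$ is the left-multiplication representation used to define $\norm~_r$. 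Consequently
\[
\norm s_\mu=\norm{\rho_\mu(s)}=\norm s_r\qquad\text{for all }s\in C_c(G,E),
\]
and density of $C_c(G,E)$ in both completions gives $\norm~_\mu=\norm~_r$.

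The step I expect to be the real obstacle, beyond the routine Hilbert-module bookkeeping (well-definedness and contractivity of $\rho_\mu$, and adjointability of left multiplication on $L^2(G,E)$, the latter already being part of the construction of $C_r^*(G,E)$ recalled above), is the implication ``$P_\mu$ faithful $\Rightarrow$ $\rho_\mu$ faithful'': this is the standard GNS argument for conditional expectations, and it is exactly what underlies \cite{Kumjian98}*{Fact 3.11}, so an alternative is simply to invoke that reference; but the route sketched above is how I would make the present account self-contained.
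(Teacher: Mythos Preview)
Your proposal is correct and follows essentially the same route as the paper's proof: form the GNS Hilbert $C_0(G_0,E)$-module from the conditional expectation, observe that on $C_c(G,E)$ the inner product $P_\mu(s^*t)=P(s^*t)$ already coincides with the one defining $L^2(G,E)$, and use faithfulness of $P_\mu$ to conclude that the left-multiplication representation into $\mathcal L(L^2(G,E))$ is isometric, whence $\norm~_\mu=\norm~_r$. The paper's version is terser---it goes directly to ``left multiplication extends to an injective $*$-homomorphism $A\to\mathcal L(L^2(G,E))$'' without separately building $\mathcal E_\mu$ and then identifying it---but the content is identical; your account simply unpacks the steps (existence, the approximate-unit argument for faithfulness of $\rho_\mu$, density of $C_c(G,E)$ in $\mathcal E_\mu$) that the paper leaves implicit.
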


\begin{proof}
Let $A$ be the C*-completion of $C_c(G,E)$ in such a norm $\norm~$. Since $P$ extends to a conditional expectation $A\to C_c(G_0,E)$, left multiplication $C_c(G,E)\times L^2(G,E)\to L^2(G,E)$ extends to a continuous $*$-homomorphism $A\to\mathcal L(L^2(G,E))$, which is injective because $P$ is faithful. Hence, $\norm~$ coincides with the operator norm of $\mathcal L(L^2(G,E))$.
\end{proof}

We also have $\norm~_\infty\le\norm~_2\le\norm~_r$, and thus the assignment $a\mapsto\hat a$ factors through $L^2(G,E)$:
\[
C_r^*(G,E)\stackrel {\iota}\longrightarrow L^2(G,E)\to C_0(G,E)\;.
\]
Moreover, the fact that $P$ is faithful implies that $\iota$ is injective.

An explicit relation between the construction of $C_r^*(G,E)$ we have just seen and the Hilbert $E_x$-modules $\ell^2(G_x,E)$ described in section~\ref{app:compcompl} goes as follows. Let $V$ be the disjoint union $\coprod_{x\in G_0}\ell^2(G_x,E)$, and write $\tilde\pi:V\to G_0$ for the natural projection. For each $s\in C_c(G,E)$ let $\tilde s:G_0\to V$ be the section of $\tilde\pi$ defined, for each $x\in G_0$ and $g\in G_x$, by $\tilde s(x)(g)=s(g)$. Topologize $\tilde\pi$ as a Banach bundle using these sections as in~\cite{FD1}*{\S II.13.18}.
The mapping $\widetilde{(-)}:C_c(G,E)\to C_0(G_0,V)$ extends to an isomorphism
$\widetilde{(-)}:L^2(G,E)\stackrel\cong\to C_0(G_0,V)$, so we can regard each element $\xi\in L^2(G,E)$ concretely as a section of $\pi$ defined by, for each $g\in G$,
\[
\xi(g) = \tilde\xi(g^{-1}g)(g)\;.
\]
Consequently, since $\iota:C_r^*(G,E)\to L^2(G,E)$ is injective, $C_r^*(G,E)$ itself can be regarded as an algebra of sections of $\pi$. This useful fact has been mentioned often in regard to twisted groupoids~\cites{Renault,RenaultLNMath}, and we record it here in the context of more general Fell bundles:

\begin{theorem}\label{thm:appalgofsections}
Let $\pi:E\to G$ be a continuous Fell bundle on a Hausdorff groupoid $G$.
The extension $\widehat{(-)}:C_r^*(G,E)\to C_0(G,E)$ of the inclusion
$C_c(G,E)\to C_0(G,E)$ is injective.
\end{theorem}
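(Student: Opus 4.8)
The plan is to factor $\widehat{(-)}$ through the Hilbert module $L^2(G,E)$ and reduce injectivity to two pieces, both of which are essentially already in hand. Concretely, $\widehat{(-)}$ is the composite
\[
C_r^*(G,E)\xrightarrow{\ \iota\ }L^2(G,E)\xrightarrow{\ j\ }C_0(G,E),
\]
where $\iota$ is the norm-decreasing map induced by $\norm~_2\le\norm~_r$ and $j$ sends $\xi$ to the section $g\mapsto\xi(g)=\tilde\xi(g^{-1}g)(g)$ under the identification $\widetilde{(-)}\colon L^2(G,E)\xrightarrow{\cong}C_0(G_0,V)$ with $V=\coprod_{x\in G_0}\ell^2(G_x,E)$. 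First I would observe that this composite really is the extension of the inclusion $C_c(G,E)\to C_0(G,E)$: both maps are continuous (using $\norm~_\infty\le\norm~_2$ for $j$) and agree on the dense subalgebra $C_c(G,E)$. So it suffices to prove that $\iota$ and $j$ are each injective.

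Injectivity of $\iota$ is the part that has already been established in the surrounding discussion: it follows from the faithfulness of the conditional expectation $P\colon C_r^*(G,E)\to C_0(G_0,E)$, since on $C_c(G,E)$ one has $P(s^*t)=\langle s,t\rangle$, whence $P(a^*a)=\langle\iota(a),\iota(a)\rangle$ for every $a\in C_r^*(G,E)$ by continuity, and $\iota(a)=0$ then forces $P(a^*a)=0$, hence $a^*a=0$, hence $a=0$. For injectivity of $j$ I would argue pointwise: if $\xi\in L^2(G,E)$ satisfies $\xi(g)=0_g$ for all $g\in G$, then for each unit $x\in G_0$ the element $\tilde\xi(x)\in\ell^2(G_x,E)$ vanishes at every $g\in G_x$; but an element of $\ell^2(G_x,E)$ is by definition a section of the restricted bundle over the discrete set $G_x$, so it is $0$ as soon as it is pointwise $0$. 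Hence $\tilde\xi(x)=0$ for all $x$, so $\tilde\xi=0$ and therefore $\xi=0$. Composing, $\widehat{(-)}=j\circ\iota$ is injective.

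I do not expect a genuine obstacle: all the substantive work — building $L^2(G,E)$ as a Hilbert $C_0(G_0,E)$-module, extending $P$ to a faithful conditional expectation, and identifying $L^2(G,E)$ with $C_0(G_0,V)$ — has been carried out already, and it is precisely there (in the reliance on \cite{Kumjian98}) that the hypotheses of continuity and second countability and the Hausdorff assumption on $G$ are used. The one genuinely new point the theorem pins down is the injectivity of the second map $j$, which is asserted only informally above (``since $\iota$ is injective, $C_r^*(G,E)$ can be regarded as an algebra of sections''); the routine pointwise argument sketched here is what actually justifies that passage, and the only place one must be slightly careful is to invoke the explicit description of elements of $L^2(G,E)$ as sections via the bundle $V\to G_0$, rather than the bare Hilbert-module structure.
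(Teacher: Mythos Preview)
Your proposal is correct and follows exactly the route the paper takes: factor $\widehat{(-)}$ as $C_r^*(G,E)\xrightarrow{\iota}L^2(G,E)\to C_0(G,E)$, deduce injectivity of $\iota$ from faithfulness of $P$, and identify $L^2(G,E)$ with $C_0(G_0,V)$ to see the second map is injective pointwise. The paper leaves the theorem without a formal proof, recording it as a consequence of the discussion that precedes it; your write-up is a faithful expansion of that discussion, and you are right that the pointwise injectivity of $j$ is the one step the paper asserts only implicitly.
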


We conclude this appendix by noting that in the present context of Fell bundles $\pi:E\to G$ on Hausdorff groupoids any representation
\[
\rho:C_c(G,E)\to B(H)
\]
is necessarily I-bounded~\cite{MW08}. Hence, we have
\[
\norm~_\infty\le\norm~_r\le\norm~_f\le\norm~_I
\]
and Lemma~\ref{lem:Ieqinfty} gives us an analogue of Theorem~\ref{appthm:compcompletions} for Fell bundles (without rank restrictions) on Hausdorff groupoids:

\begin{theorem}\label{appthm:compcompletions2}
Let $\pi:E\to G$ be a continuous Fell bundle on a Hausdorff groupoid, let $\norm~_\mu$ be a C*-norm on $C_c(G,E)$ such that $\norm~_r\le\norm~_{\mu}\le\norm~_f$, and let $C^*_{\mu}(G,E)$ be the completion of $C_c(G,E)$ in the norm $\norm~_{\mu}$. Then
$C^*_{\mu}(G,E)$ is a compatible completion, and for all $U\in\ipi(G)$ the closure $\overline{C_c(U,E)}$ in $C^*_{\mu}(G,E)$ is isometrically isomorphic to $C_0(U,E)$.
\end{theorem}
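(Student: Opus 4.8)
The plan is to check the two defining conditions \eqref{eq:cn1}--\eqref{eq:cn2} of a compatible norm directly for $\norm~_\mu$, using the chain of inequalities $\norm~_\infty\le\norm~_r\le\norm~_f\le\norm~_I$ on $C_c(G,E)$ recorded immediately above the statement (which rests on the fact from~\cite{MW08} that every $*$-representation of $C_c(G,E)$ is I-bounded when $G$ is Hausdorff), together with Lemma~\ref{lem:Ieqinfty}. Since $G$ is Hausdorff we have $\sections_c(G,E)=C_c(G,E)$, so there is no mismatch between the algebra carrying $\norm~_\mu$ and the convolution algebra appearing in the definition of a compatible norm.

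First I would note that for every $s\in C_c(G,E)$ the hypothesis $\norm~_r\le\norm~_\mu$ combined with $\norm~_\infty\le\norm~_r$ gives $\norm s_\infty\le\norm s_\mu$, which is exactly \eqref{eq:cn1}. Next, fix a local bisection $U\in\ipi(G)$ and $s\in C_c(U,E)$; then $s\in L^\infty(U,E)$, so Lemma~\ref{lem:Ieqinfty} yields $\norm s_I=\norm s_\infty$. Inserting this into $\norm s_\infty\le\norm s_r\le\norm s_\mu\le\norm s_f\le\norm s_I$ collapses the whole chain: every term equals $\norm s_\infty$, and in particular $\norm s_\mu=\norm s_\infty$, which is \eqref{eq:cn2}. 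Hence $\norm~_\mu$ is a compatible C*-norm and $C^*_\mu(G,E)$ is a compatible completion.

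For the final assertion, the previous step shows that $\norm~_\mu$ agrees with the supremum norm on $C_c(U,E)$, so the closure $\overline{C_c(U,E)}$ inside $C^*_\mu(G,E)$ is the completion of $C_c(U,E)$ in $\norm~_\infty$; since $U$ is locally compact Hausdorff, that completion is $C_0(U,E)$. (This is also just the general remark following the definition of compatible norm, that for $U\in\ipi(G)$ the closure of $C_c(U,E)$ in any compatible completion may be identified with $C_0(U,E)$.)

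I do not expect a genuine obstacle here. The only substantive ingredient, namely $\norm~_f\le\norm~_I$ via the I-boundedness of arbitrary representations, is imported from~\cite{MW08} and has already been stated; everything else is the same short bookkeeping as in the proof of Theorem~\ref{appthm:compcompletions}, the difference being that we may drop the rank-one hypothesis because we never invoke the operator-theoretic description of $\norm~_r$ through the $\rho_x$ — only the inequalities it satisfies.
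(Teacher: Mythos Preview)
Your proposal is correct and follows precisely the argument the paper itself gives: the chain $\norm~_\infty\le\norm~_r\le\norm~_f\le\norm~_I$ together with Lemma~\ref{lem:Ieqinfty} collapses on $C_c(U,E)$ for $U\in\ipi(G)$, yielding both compatibility conditions, and the identification with $C_0(U,E)$ is immediate. There is nothing to add.
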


\begin{bibdiv}

\begin{biblist}

\bib{Blackadar}{book}{
  author={Blackadar, B.},
  title={Operator algebras},
  series={Encyclopaedia of Mathematical Sciences},
  volume={122},
  note={Theory of $C^*$-algebras and von Neumann algebras; Operator Algebras and Non-commutative Geometry, III},
  publisher={Springer-Verlag, Berlin},
  date={2006},
  pages={xx+517},
  isbn={978-3-540-28486-4},
  isbn={3-540-28486-9},
  review={\MR {2188261}},
  doi={10.1007/3-540-28517-2},
}

\bib{Bun08}{article}{
  author={Buneci, M{\u {a}}d{\u {a}}lina Roxana},
  title={Groupoid categories},
  conference={ title={Perspectives in operator algebras and mathematical physics}, },
  book={ series={Theta Ser. Adv. Math.}, volume={8}, publisher={Theta, Bucharest}, },
  date={2008},
  pages={27--40},
  review={\MR {2433025 (2010b:22007)}},
}

\bib{BS05}{article}{
  author={Buneci, M{\u {a}}d{\u {a}}lina Roxana},
  author={Stachura, Piotr},
  title={Morphisms of locally compact groupoids endowed with Haar systems},
  eprint={https://arxiv.org/abs/math/0511613v1},
  date={2005},
}

\bib{BEW15}{article}{
  author={Buss, Alcides},
  author={Echterhoff, Siegfried},
  author={Willet, Rufus},
  title={Exotic crossed products},
  eprint={https://arxiv.org/abs/1510.02556v1},
  date={2015},
}

\bib{BE11}{article}{
  author={Buss, Alcides},
  author={Exel, Ruy},
  title={Twisted actions and regular Fell bundles over inverse semigroups},
  journal={Proc. Lond. Math. Soc. (3)},
  volume={103},
  date={2011},
  number={2},
  pages={235--270},
  issn={0024-6115},
  review={\MR {2821242}},
  doi={10.1112/plms/pdr006},
}

\bib{BE12}{article}{
  author={Buss, Alcides},
  author={Exel, Ruy},
  title={Fell bundles over inverse semigroups and twisted \'etale groupoids},
  journal={J. Operator Theory},
  volume={67},
  date={2012},
  number={1},
  pages={153--205},
  issn={0379-4024},
  review={\MR {2881538}},
}

\bib{BM16}{article}{
  author={Buss, Alcides},
  author={Meyer, Ralf},
  title={Inverse semigroup actions on groupoids},
  journal={Rocky Mountain J. Math.},
  volume={47},
  date={2017},
  number={1},
  pages={53--159},
  issn={0035-7596},
  review={\MR {3619758}},
  doi={10.1216/RMJ-2017-47-1-53},
}

\bib{Connes82}{article}{
  author={Connes, A.},
  title={A survey of foliations and operator algebras},
  conference={ title={Operator algebras and applications, Part I}, address={Kingston, Ont.}, date={1980}, },
  book={ series={Proc. Sympos. Pure Math.}, volume={38}, publisher={Amer. Math. Soc., Providence, R.I.}, },
  date={1982},
  pages={521--628},
  review={\MR {679730}},
}

\bib{Connes}{book}{
  author={Connes, Alain},
  title={Noncommutative Geometry},
  publisher={Academic Press Inc.},
  place={San Diego, CA},
  date={1994},
  pages={xiv+661},
  isbn={0-12-185860-X},
  review={\MR {1303779 (95j:46063)}},
}

\bib{DKR08}{article}{
  author={Deaconu, Valentin},
  author={Kumjian, Alex},
  author={Ramadan, Birant},
  title={Fell bundles associated to groupoid morphisms},
  journal={Math. Scand.},
  volume={102},
  date={2008},
  pages={305--319},
}

\bib{Exel08}{article}{
  author={Exel, Ruy},
  title={Inverse semigroups and combinatorial $C^\ast $-algebras},
  journal={Bull. Braz. Math. Soc. (N.S.)},
  volume={39},
  date={2008},
  number={2},
  pages={191--313},
  issn={1678-7544},
  review={\MR {2419901}},
  doi={10.1007/s00574-008-0080-7},
}

\bib{Exel}{article}{
  author={Exel, Ruy},
  title={Noncommutative Cartan subalgebras of $C^*$-algebras},
  journal={New York J. Math.},
  volume={17},
  date={2011},
  pages={331--382},
  issn={1076-9803},
  review={\MR {2811068}},
}

\bib{FD1}{book}{
  author={Fell, J. M. G.},
  author={Doran, R. S.},
  title={Representations of $^*$-algebras, locally compact groups, and Banach $^*$-algebraic bundles. Vol. 1},
  series={Pure and Applied Mathematics},
  volume={125},
  note={Basic representation theory of groups and algebras},
  publisher={Academic Press, Inc., Boston, MA},
  date={1988},
  pages={xviii+746},
  isbn={0-12-252721-6},
  review={\MR {936628 (90c:46001)}},
}

\bib{HeymansPhD}{thesis}{
  author={Heymans, Hans},
  title={Sheaves on Quantales as Generalized Metric Spaces},
  institution={Univ. Antwerp},
  year={2010},
  type={PhD thesis},
}

\bib{HeymansGrQu}{article}{
  author={Heymans, Hans},
  title={Sheaves on involutive quantales: Grothendieck quantales},
  journal={Fuzzy Sets and Systems},
  volume={256},
  date={2014},
  pages={117--148},
  issn={0165-0114},
  review={\MR {3263495}},
  doi={10.1016/j.fss.2013.07.008},
}

\bib{HS3}{article}{
  author={Heymans, Hans},
  author={Stubbe, Isar},
  title={Grothendieck quantaloids for allegories of enriched categories},
  journal={Bull. Belg. Math. Soc. Simon Stevin},
  volume={19},
  date={2012},
  number={5},
  pages={861--890},
  issn={1370-1444},
  review={\MR {3009020}},
}

\bib{Hofmann77}{article}{
  author={Hofmann, Karl Heinrich},
  title={Bundles and sheaves are equivalent in the category of Banach spaces},
  conference={ title={$K$-theory and operator algebras}, address={Proc. Conf., Univ. Georgia, Athens, Ga.}, date={1975}, },
  book={ publisher={Springer, Berlin}, },
  date={1977},
  pages={53--69. Lecture Notes in Math., Vol. 575},
  review={\MR {0487491}},
}

\bib{stonespaces}{book}{
  author={Johnstone, Peter T.},
  title={Stone Spaces},
  series={Cambridge Studies in Advanced Mathematics},
  volume={3},
  note={Reprint of the 1982 edition},
  publisher={Cambridge University Press},
  place={Cambridge},
  date={1986},
  pages={xxii+370},
  isbn={0-521-33779-8},
  review={\MR {861951 (87m:54001)}},
}

\bib{JT}{article}{
  author={Joyal, Andr{\'e}},
  author={Tierney, Myles},
  title={An extension of the Galois theory of Grothendieck},
  journal={Mem. Amer. Math. Soc.},
  volume={51},
  date={1984},
  number={309},
  pages={vii+71},
  issn={0065-9266},
  review={\MR {756176 (86d:18002)}},
}

\bib{KhSk02}{article}{
  author={Khoshkam, Mahmood},
  author={Skandalis, Georges},
  title={Regular representation of groupoid $C^*$-algebras and applications to inverse semigroups},
  journal={J. Reine Angew. Math.},
  volume={546},
  date={2002},
  pages={47--72},
  issn={0075-4102},
  review={\MR {1900993}},
  doi={10.1515/crll.2002.045},
}

\bib{KPRR}{article}{
  author={Kruml, David},
  author={Pelletier, Joan Wick},
  author={Resende, Pedro},
  author={Rosick{\'y}, Ji{\v {r}}{\'{\i }}},
  title={On quantales and spectra of $C\sp *$-algebras},
  journal={Appl. Categ. Structures},
  volume={11},
  date={2003},
  number={6},
  pages={543--560},
  issn={0927-2852},
  review={\MR {2017650 (2004i:46107)}},
}

\bib{KR}{article}{
  author={Kruml, David},
  author={Resende, Pedro},
  title={On quantales that classify $C\sp \ast $-algebras},
  language={English, with French summary},
  journal={Cah. Topol. G\'eom. Diff\'er. Cat\'eg.},
  volume={45},
  date={2004},
  number={4},
  pages={287--296},
  issn={1245-530X},
  review={\MR {2108195 (2006b:46096)}},
}

\bib{Kumjian}{article}{
  author={Kumjian, Alexander},
  title={On $C^\ast $-diagonals},
  journal={Canad. J. Math.},
  volume={38},
  date={1986},
  number={4},
  pages={969--1008},
  issn={0008-414X},
  review={\MR {854149 (88a:46060)}},
}

\bib{Kumjian98}{article}{
  author={Kumjian, Alex},
  title={Fell bundles over groupoids},
  journal={Proc. Amer. Math. Soc.},
  volume={126},
  date={1998},
  number={4},
  pages={1115--1125},
  issn={0002-9939},
  review={\MR {1443836 (98i:46055)}},
  doi={10.1090/S0002-9939-98-04240-3},
}

\bib{LL}{article}{
  author={Lawson, Mark V.},
  author={Lenz, Daniel H.},
  title={Pseudogroups and their \'etale groupoids},
  journal={Adv. Math.},
  volume={244},
  date={2013},
  pages={117--170},
  issn={0001-8708},
  review={\MR {3077869}},
  doi={10.1016/j.aim.2013.04.022},
}

\bib{maclane}{book}{
  author={Mac Lane, Saunders},
  title={Categories for the working mathematician},
  series={Graduate Texts in Mathematics},
  volume={5},
  edition={2},
  publisher={Springer-Verlag, New York},
  date={1998},
  pages={xii+314},
  isbn={0-387-98403-8},
  review={\MR {1712872 (2001j:18001)}},
}

\bib{MaRe10}{article}{
  author={Matsnev, Dmitry},
  author={Resende, Pedro},
  title={\'Etale groupoids as germ groupoids and their base extensions},
  journal={Proc. Edinb. Math. Soc. (2)},
  volume={53},
  date={2010},
  number={3},
  pages={765--785},
  issn={0013-0915},
  review={\MR {2720249}},
  doi={10.1017/S001309150800076X},
}

\bib{MW08}{article}{
  author={Muhly, Paul S.},
  author={Williams, Dana P.},
  title={Equivalence and disintegration theorems for Fell bundles and their $C^*$-algebras},
  journal={Dissertationes Math. (Rozprawy Mat.)},
  volume={456},
  date={2008},
  pages={1--57},
  issn={0012-3862},
  review={\MR {2446021}},
  doi={10.4064/dm456-0-1},
}

\bib{M86}{article}{
  author={Mulvey, Christopher J.},
  title={\&},
  note={Second topology conference (Taormina, 1984)},
  journal={Rend. Circ. Mat. Palermo (2) Suppl.},
  number={12},
  date={1986},
  pages={99--104},
  review={\MR {853151 (87j:81017)}},
}

\bib{Curacao}{misc}{
  author={Mulvey, Christopher J.},
  title={Quantales},
  note={Invited talk at the Summer Conference on Locales and Topological Groups (Cura\c {c}ao, 1989)},
}

\bib{MP1}{article}{
  author={Mulvey, Christopher J.},
  author={Pelletier, Joan Wick},
  title={On the quantisation of points},
  journal={J. Pure Appl. Algebra},
  volume={159},
  date={2001},
  number={2-3},
  pages={231--295},
  issn={0022-4049},
  review={\MR {1828940 (2002g:46126)}},
}

\bib{MP2}{article}{
  author={Mulvey, Christopher J.},
  author={Pelletier, Joan Wick},
  title={On the quantisation of spaces},
  note={Special volume celebrating the 70th birthday of Professor Max Kelly},
  journal={J. Pure Appl. Algebra},
  volume={175},
  date={2002},
  number={1-3},
  pages={289--325},
  issn={0022-4049},
  review={\MR {1935983 (2003m:06014)}},
}

\bib{Paterson}{book}{
  author={Paterson, Alan L. T.},
  title={Groupoids, inverse semigroups, and their operator algebras},
  series={Progress in Mathematics},
  volume={170},
  publisher={Birkh\"auser Boston Inc.},
  place={Boston, MA},
  date={1999},
  pages={xvi+274},
  isbn={0-8176-4051-7},
  review={\MR {1724106 (2001a:22003)}},
}

\bib{picadopultr}{book}{
  author={Picado, Jorge},
  author={Pultr, Ale{\v {s}}},
  title={Frames and locales --- topology without points},
  series={Frontiers in Mathematics},
  publisher={Birkh\"auser/Springer Basel AG, Basel},
  date={2012},
  pages={xx+398},
  isbn={978-3-0348-0153-9},
  review={\MR {2868166}},
  doi={10.1007/978-3-0348-0154-6},
}

\bib{PR12}{article}{
  author={Protin, M. Clarence},
  author={Resende, Pedro},
  title={Quantales of open groupoids},
  journal={J. Noncommut. Geom.},
  volume={6},
  date={2012},
  number={2},
  pages={199--247},
  issn={1661-6952},
  review={\MR {2914865}},
  doi={10.4171/JNCG/90},
}

\bib{RenaultLNMath}{book}{
  author={Renault, Jean},
  title={A groupoid approach to $C^{\ast } $-algebras},
  series={Lecture Notes in Mathematics},
  volume={793},
  publisher={Springer},
  place={Berlin},
  date={1980},
  pages={ii+160},
  isbn={3-540-09977-8},
  review={\MR {584266 (82h:46075)}},
}

\bib{Renault}{article}{
  author={Renault, Jean},
  title={Cartan subalgebras in $C^*$-algebras},
  journal={Irish Math. Soc. Bull.},
  number={61},
  date={2008},
  pages={29--63},
  issn={0791-5578},
  review={\MR {2460017 (2009k:46135)}},
}

\bib{RV}{article}{
  author={Resende, Pedro},
  author={Vickers, Steven},
  title={Localic sup-lattices and tropological systems},
  note={Topology in computer science (Schlo\ss \ Dagstuhl, 2000)},
  journal={Theoret. Comput. Sci.},
  volume={305},
  date={2003},
  number={1-3},
  pages={311--346},
  issn={0304-3975},
  review={\MR {2013577 (2004i:68130)}},
  doi={10.1016/S0304-3975(02)00702-8},
}

\bib{gamap2006}{article}{
  author={Resende, Pedro},
  title={Lectures on \'{e}tale groupoids, inverse semigroups and quantales},
  conference={ title={SOCRATES Intensive Program 103466-IC-1-2003-1-BE-ERASMUS-IPUC-3 --- GAMAP: Geometric and Algebraic Methods of Physics and Applications}, date={2006}, place={Univ. Antwerp}, },
  eprint={https://www.researchgate.net/publication/265630468},
}

\bib{Re07}{article}{
  author={Resende, Pedro},
  title={\'Etale groupoids and their quantales},
  journal={Adv. Math.},
  volume={208},
  date={2007},
  number={1},
  pages={147--209},
  issn={0001-8708},
  review={\MR {2304314 (2008c:22002)}},
}

\bib{GSQS}{article}{
  author={Resende, Pedro},
  title={Groupoid sheaves as quantale sheaves},
  journal={J. Pure Appl. Algebra},
  volume={216},
  date={2012},
  number={1},
  pages={41--70},
  issn={0022-4049},
  review={\MR {2826418}},
  doi={10.1016/j.jpaa.2011.05.002},
}

\bib{Re15}{article}{
  author={Resende, Pedro},
  title={Functoriality of groupoid quantales. I},
  journal={J. Pure Appl. Algebra},
  volume={219},
  date={2015},
  number={8},
  pages={3089--3109},
  issn={0022-4049},
  review={\MR {3320209}},
  doi={10.1016/j.jpaa.2014.10.004},
}

\bib{OMIQ}{article}{
   author={Resende, Pedro},
   title={Open maps of involutive quantales},
   journal={Appl. Categ. Structures},
   year={2017},
   doi={10.1007/s10485-017-9506-y},
}

\bib{SGQ}{report}{
  author={Resende, Pedro},
  title={The many groupoids of a stably Gelfand quantale},
  eprint={https://arxiv.org/abs/1706.06545v2},
  year={2017},
}

\bib{RS}{article}{
  author={Resende, Pedro},
  author={Santos, Jo\~ao Paulo},
  title={Open quotients of trivial vector bundles},
  journal={Topology Appl.},
  volume={224},
  date={2017},
  pages={19--47},
  issn={0166-8641},
  review={\MR {3646416}},
  doi={10.1016/j.topol.2017.04.001},
}

\bib{RS2}{article}{
  author={Resende, Pedro},
  author={Santos, Jo\~{a}o Paulo},
  title={Linear structures on locales},
  journal={Theory Appl. Categ.},
  volume={31},
  date={2016},
  number={20},
  pages={502--541},
  issn={1201-561X},
  eprint={http://www.tac.mta.ca/tac/volumes/31/20/31-20.pdf},
}

\bib{Rosenthal1}{book}{
  author={Rosenthal, Kimmo I.},
  title={Quantales and Their Applications},
  series={Pitman Research Notes in Mathematics Series},
  volume={234},
  publisher={Longman Scientific \& Technical},
  place={Harlow},
  date={1990},
  pages={x+165},
  isbn={0-582-06423-6},
  review={\MR {1088258 (92e:06028)}},
}

\bib{Sieben}{unpublished}{
  author={Sieben, N.},
  title={Fell bundles over $r$-discrete groupoids and inverse semigroups},
  date={1998},
  note={Unpublished draft, available at http://jan.ucc.nau.edu/$\sim $ns46/bundle.ps.gz},
}

\bib{Tom57}{article}{
  author={Tomiyama, Jun},
  title={On the projection of norm one in $W^{\ast } $-algebras},
  journal={Proc. Japan Acad.},
  volume={33},
  date={1957},
  pages={608--612},
  issn={0021-4280},
  review={\MR {0096140}},
}

\end{biblist}

\end{bibdiv}

\vspace*{5mm}
\noindent {\sc
Centro de An\'alise Matem\'atica, Geometria e Sistemas Din\^amicos
Departamento de Matem\'{a}tica, Instituto Superior T\'{e}cnico\\
Universidade de Lisboa\\
Av.\ Rovisco Pais 1, 1049-001 Lisboa, Portugal}\\
{\it E-mail:} {\sf pmr@math.tecnico.ulisboa.pt}

\end{document}